\newtheoremstyle{mine}
{\baselineskip}
{\baselineskip}
{\itshape}
{
}
{\bfseries}
{.}
{.5em}
{#1 #2\ifx#3\relax\else~(#3)\fi}
\theoremstyle{mine}
\newtheorem{theorem}{Theorem}
\numberwithin{theorem}{section}
\newtheorem{corollary}[theorem]{Corollary}
\newtheorem{proposition}[theorem]{Proposition}
\newtheorem{lemma}[theorem]{Lemma}
\newtheorem{claim}[theorem]{Claim}
\newtheorem{definition}[theorem]{Definition}
\numberwithin{equation}{section}
\theoremstyle{remark}
\newtheorem{remark}{Remark}
\colorlet{shadecolor}{blue!10}
\let\qed=\QED
\let \ln=\log
\renewcommand{\epsilon}{\varepsilon}
\newcommand{\F}{\mathcal{F}}
\newcommand{\R}{\mathbb{R}}
\newcommand{\D}{\mathbb{D}}
\newcommand{\Z}{\mathbb{Z}}
\newcommand{\N}{\mathbb{N}}
\renewcommand{\S}{\mathbb{S}}
\newcommand{\1}{\mathbf 1}
\renewcommand{\d}{\mathbf d}
\renewcommand{\L}{\mathcal L}
\newcommand{\K}{\mathbf K}
\NewDocumentCommand{\insquare}{omo}{%
	\begingroup
	\IfValueTF{#1}{%
		\setlength{\fboxsep}{#1}%
	}{%
	}%
	\IfValueTF{#3}{%
		\setlength{\fboxrule}{#3}%
	}{}%
	\ensuremath{\fbox{#2}}
	\endgroup 
}
\newcommand{\An}{\insquare[0.9pt]{$\mathsmaller{\square}$}}
\def\T{\mathbb{T}}
\def\calK{\mathcal{K}}
\def\SLE{\mathrm{SLE}}
\def\CLE{\mathrm{CLE}}
\def\diam{\mathrm{diam}}
\def\P{\mathbb{P}} 
\newcommand{\E}{\mathbb{E}} 
\def\md{\mid}
\def \eps {\epsilon}
\def\Bb#1#2{{\def\md{\bigm| }#1\bigl[#2\bigr]}}
\def\Pb{\Bb\P}
\def\Eb{\Bb\E}
\def\FK#1#2#3{{\def\md{\bigm| } \P_{#1}^{\,#2}  \bigl[  #3 \bigr]}}
\def\EFK#1#2#3{{\def\md{\bigm| } \E_{#1}^{\,#2}  \bigl[  #3 \bigr]}}
\def \p {{\partial}}
\def\<#1{\langle #1\rangle}
\definecolor{darkgreen}{rgb}{0,0.6,0.05}
\def\nn{\nonumber}
\def\bi{\begin{itemize}}  
\def\ei{\end{itemize}}
\def\bnum{\begin{enumerate}} 
\def\enum{\end{enumerate}}
\def\ni{\noindent}
\title[2D Heisenberg model and exit sets of vector valued GFF]
{Percolation for 2D classical Heisenberg model and exit sets of vector valued GFF}
\author{Juhan Aru}
\author{Christophe Garban}
\author{Avelio Sepúlveda}
\address
{École Polytechnique Fédérale de Lausanne (EPFL), Institute of Mathematics, CH-1015 Lausanne, Switzerland}
\email{juhan.aru@epfl.ch}
\address
{Université Claude Bernard Lyon 1, CNRS UMR 5208, Institut Camille Jordan, 69622 Villeurbanne, France \, and Institut Universitaire de France (IUF)}
\email{garban@math.univ-lyon1.fr}
\address{Universidad de Chile,  Centro de Modelamiento Matemático (AFB170001), UMI-CNRS 2807, Beauchef 851, Santiago, Chile.}
\email{lsepulveda@dim.uchile.cl}
\begin{document}
	\maketitle
	\begin{abstract}

Our motivation in this paper is twofold. First, we study the geometry of a class of exploration sets, called {\em exit sets}, which are naturally associated with a 2D vector-valued Gaussian Free Field : $\phi : \Z^2 \to \R^N, N\geq 1$. We prove that, somewhat surprisingly, these sets are a.s. degenerate as long as $N\geq 2$, while they are conjectured to be macroscopic and fractal when $N=1$. 

\smallskip 
This analysis allows us, when $N\geq 2$, to understand the percolation properties of the level sets of $\{ \|\phi(x)\|, x\in \Z^2\}$ and leads us to our second main motivation in this work: if one projects a spin $O(N+1)$ model (the case $N=2$ corresponds to the classical Heisenberg model) down to a spin $O(N)$ model, we end up with a spin $O(N)$ in a quenched disorder given by random conductances on $\Z^2$. Using the {\em exit sets} of the $N$-vector-valued GFF,  we obtain a local and geometric description of this random disorder in the limit $\beta\to \infty$.  This allows us in particular to revisit a series of celebrated works by Patrascioiu and Seiler (\cite{patrascioiu1992phase,patrascioiu1993percolation,patrascioiu2002percolation}) which argued against Polyakov's prediction that spin $O(N+1)$ model is massive at all temperatures as long as $N\geq 2$ (\cite{polyakov1975interaction}). We make part of their arguments rigorous and more importantly we provide the following counter-example: we build ergodic environments of (arbitrary) high conductances with (arbitrary) small and disconnected regions of low conductances in which, despite the predominance of high conductances, the $XY$ model remains massive. 

\smallskip 
Of independent interest, we prove that at high $\beta$, the fluctuations of a classical Heisenberg model near a north pointing spin are given by a $N=2$ vectorial GFF. This is implicit for example in \cite{polyakov1975interaction} but we give here the first (non-trivial)  rigorous proof. Also, independently of the recent work \cite{dubedat2022random}, we show that two-point correlation functions of the spin $O(N)$ model can be given in terms of certain percolation events in the {\em cable graph} for any $N\geq 1$. 
	\end{abstract}

\begin{figure*}
	\includegraphics[width=0.7\textwidth]{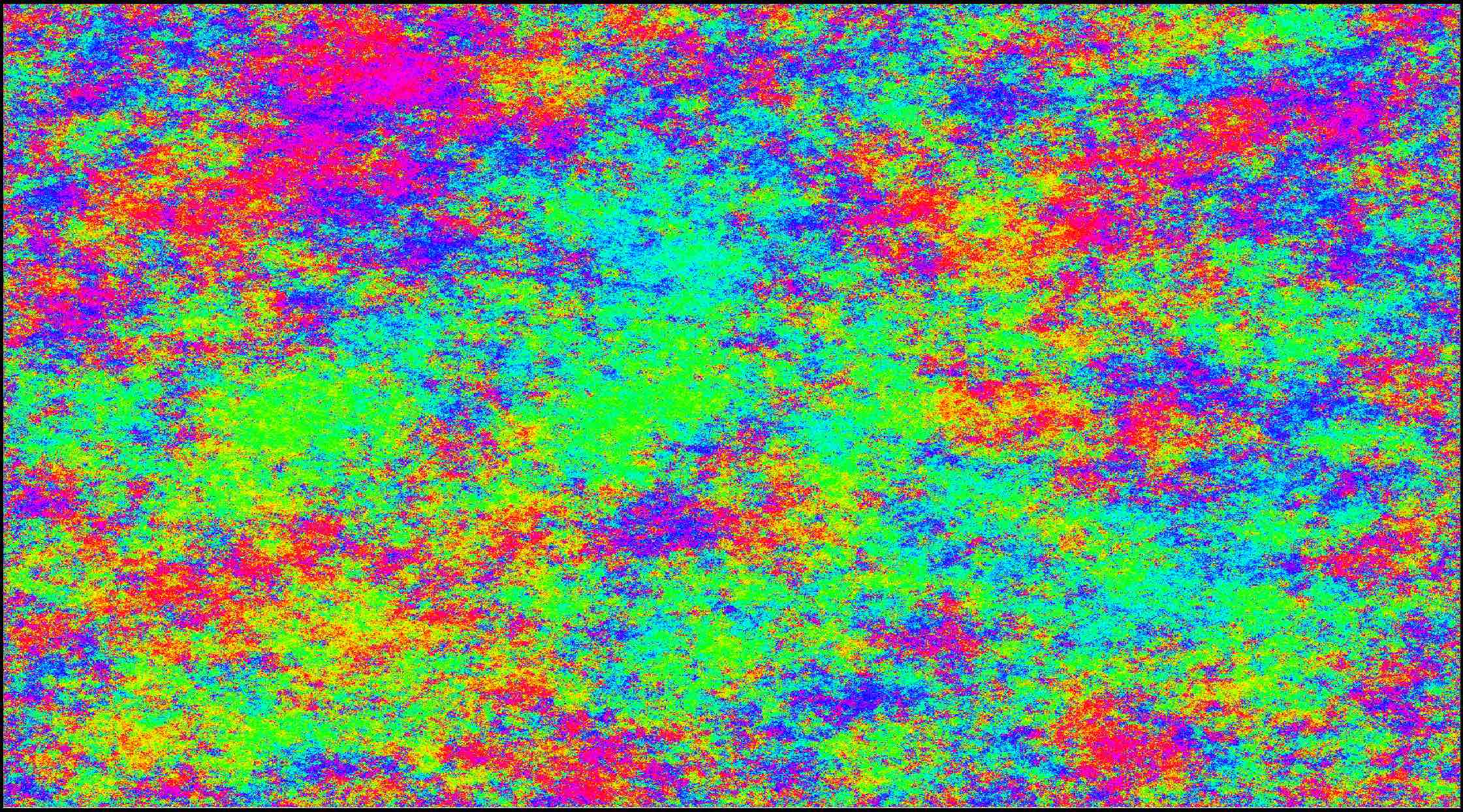}
	\caption{Angles of a vectorial GFF in $[0,15000]\times [0,15000]\subset \Z^2$. They are the protagonists of this paper.}
\end{figure*}

\section{Introduction}

\subsection*{Classical Heisenberg model.}  The spin $O(N)$ model on $\Z^d$ is a fundamental model in statistical physics which includes many celebrated models such as the Ising model ($N=1$), the plane rotator or $XY$ model ($N=2$) and the classical Heisenberg model ($N=3$). On a finite box $\Lambda\subset \Z^d$, its state space is given by $(\S^{N-1})^\Lambda$ (where $\S^{N-1}$ is the unit sphere in $\R^N$) and the interaction between spins is described for any $\sigma\in (\S^{N-1})^\Lambda$ by the following Hamiltonian (in the case of free boundary conditions)
\begin{align*}\label{}
H_\Lambda(\sigma):= - \sum_{i\sim j} \sigma(i)\cdot \sigma(j)\,,
\end{align*}
where the sums runs over neighbouring sites in $\Lambda$ and where $\cdot$ is the scalar product in $\R^N$. As always in statistical physics, the inverse temperature $\beta$ is parametrizing the family of Boltzmann measures 
\begin{align*}\label{}
\mu_{\Lambda, \beta}(d \sigma) \propto  \exp\left (\beta \sum_{i\sim j} \sigma(i)\cdot \sigma(j)\right ) \prod_{i\in \Lambda} \lambda_{\S^{N-1}}(d\sigma(i))\,,
\end{align*}
where $\lambda_{\S^{N-1}}$ stands for the uniform measure on the sphere $\S^{N-1}$. 
We refer the reader for example to \cite{friedli2017statistical,peled2019lectures} for excellent references on the spin $O(N)$ model. Its rich behaviour depends both on the dimension $d$ of the lattice as well as the parameter $N\in \N^+$ which indicates the $O(N)$ symmetry of the model. We shall focus in this paper only on the two-dimensional case $d=2$ (see the above references for a description of the rich phenomenology when $d\geq 3$ as well as \cite{FSS1976,garban2022continuous}). In the planar case, we distinguish three possible behaviours depending on the value of $N$ (the third scenario being only conjectural).
\bi
\item $N=1$ (Ising model). In this case it is well known since Peierls (\cite{peierls1933theorie}) that there is long-range order at low temperature. 
See for example the recent survey \cite{duminil2022100} for a broad overview of the recent intense research activity about the Ising model.

\item $N=2$ ($XY$ model). Due to its continuous symmetry, Mermin and Wagner proved in the 60's (\cite{mermin1966absence,mermin1967absence}) that such a spin system does not have long-range order at any $\beta >0$. (This also holds for any $N\geq 2$). Berezinskii, Kosterlitz and Thouless predicted in the 70's that this model should nevertheless undergo a phase transition with {\em quasi long-range order} at low temperatures. This is now known as the BKT transition and its existence was rigorously proved by Fröhlich and Spencer in the seminal paper \cite{FS}. See also the very interesting new proofs of this phase transition in \cite{van2021elementary,aizenman2021depinning}. 
\item $N\geq 3$ (including the classical Heisenberg model, $N=3$). In this case, it has been predicted since \cite{polyakov1975interaction} that this model should exhibit exponential decay of correlations at all inverse temperatures $\beta>0$. This remains unproven and it is considered to be one of the major conjectures in statistical physics (\cite{simon1984fifteen}). Even though the existence of a mass at arbitrary low temperatures is widely accepted across the theoretical physics community, some physicists have discussed the validity of this prediction, most prominently the series of works by Patrascioiu and Seiler 
\cite{patrascioiu1992phase,patrascioiu1993percolation,patrascioiu2002percolation}. Their arguments are very legitimate objections to what may go wrong in the RG approximation of Polyakov's argument \cite{polyakov1975interaction}. One major motivation of this work is to revisit their analysis using recent tools developed for level sets of the Gaussian free field. Let us also stress that numerical simulations are not very successful so far to help  deciding between both possible scenarios (power law decay  versus massive scenario) as they are facing the rapid divergence of the correlation length as $\beta \to \infty$. 
\ei

Patrascioiu-Seiler's approach roughly goes as follows (see Section \ref{s.PS} for a more detailed description of their works \cite{patrascioiu1992phase,patrascioiu1993percolation,patrascioiu2002percolation}). Similarly as  in \cite{polyakov1975interaction}, the idea is to project the spin system $(\sigma(i))_{i\in \Z^2} = (\theta^1(i),\theta^2(i),\theta^3(i))_{i\in \Z^2}$ down to a $XY$ model in $(\S^1)^{\Z^2}$ by conditioning on the value of the third coordinates $\{\theta^3(i)\}_{i \in \Z^2}$. It easy to see that conditioned on this partial information, the $\S^1$-valued spins $\{ \frac 1 {\sqrt{1 - \theta^3(i)^2}} (\theta^1(i), \theta^2(i))\}_{i\in \Z^2}$ are distributed as an $XY$ model in random conductances given by 
\begin{align}\label{e.Cij}
C_{ij}  =  \beta \sqrt{1-\theta^3(i)^2} \sqrt{1-\theta^3(j)^2}\,. 
\end{align}
(See Proposition \ref{pr.projection} for a more precise statement).  
By slightly modifying the interaction between neighbouring spins (and thus slightly modifying the model) in a way which still preserves the $O(3)$ symmetry, their analysis is essentially divided as follows:
\begin{enumerate} [A)]
\item Either, the set of edges which carry a very large conductance (say bigger than $\sqrt{\beta}$) is not percolating. In this case, modulo some natural hypothesis, they show 
that exponential decay is impossible. 
\item If on the other hand, this set of large conductances is percolating, then for $\beta$ large one should enter a BKT phase with power-law decay. 
\end{enumerate}
Most of the works \cite{patrascioiu1992phase,patrascioiu1993percolation,patrascioiu2002percolation} focus on scenario A) as their authors believe that scenario B) should not happen. In this present work, with a slightly different setup (namely we work on the true Heisenberg model, but our cut-offs for scenarios A) and B) are different, as well as our definition of edges with large conductances) our main contributions about Patrascioiu-Seiler's analysis may be summarized as follows:
\bi
\item[\~ A)] We give a geometric interpretation of spin to spin correlation functions in Theorem \ref{th.corr}. Then in Corollary \ref{th.cable}, we prove a rigorous version of their scenario A) with a slightly different notion of ``edge carrying a large conductance''. 
\item[\~ B)] In Theorem \ref{th.example} below, for any fixed $\beta$, we build ergodic environments of random conductances such that the set of conductances $\{C_{ij}, C_{ij}\geq \beta\}$ is a.s. percolating in a strong sense and yet, the $XY$ model in this field of conductances is massive.  (Note that \~B does not cover all cases where \~A does not apply). 
\ei

It turns out that these two results which revisit Patrascioiu-Seiler's program rely both on tools from the analysis  of vector-valued Gaussian free field as well as Brownian loop soups. The work \cite{polyakov1975interaction} in fact already relied on such a link between 2-component GFF and spin $O(3)$ model, and we make this link rigorous in Theorem \ref{th.PolyakovEasy} below. Consequentely, we now turn to the second main motivation of this paper which deals with the geometric analysis of two-dimensional vector-valued Gaussian free fields. 

\subsection*{Local sets of scalar and vector-valued Gaussian free field.}
Fix $N\geq 1, \beta > 0$, a finite domain $\Lambda \subset \Z^2$ and a boundary set $\emptyset \neq \p \Lambda \subset \Lambda$. The $N$-vector valued Gaussian free field (GFF) on $\Lambda$ at inverse temperature $\beta$ with 0-boundary conditions on $\p \Lambda$ is the field $\phi : \Lambda \to \R^N$ whose density is proportional to 
\begin{align*}\label{}
\exp\left (-\frac \beta 2 \sum_{i\sim j} \| \phi(i) - \phi(j)\|^2\right )\,.
\end{align*}
(See Definition \ref{d.GFF}). 
In the scalar case ($N=1$), the Gaussian free field has played a central role over the last twenty years in statistical physics. See for example \cite{sheffield2005random,schramm2009contour}. The analysis of its {\em level lines} has proved being very rich both for the GFF defined on a lattice  as in the present work (\cite{schramm2009contour}) or for its continuum analog defined, say on $[0,1]^2$ (\cite{schramm2013contour,WaWu,PW}). In both cases, level lines of the GFF are described by versions of $\SLE_4$ and $\CLE_4$. From then on, a systematic study of all possible {\em exploration sets} of the scalar Gaussian free field (in its discrete and continuum version) has been initiated. In the continuum, where this concept is more subtle, the right notion has been axiomatized under the name of {\em local sets} in \cite{schramm2013contour,dubedat2009sle,MS1} and was further investigated in \cite{aru2019bounded}. 

The  local sets (in the scalar case $N=1$) that are more relevant to this paper are the so-called {\em two-valued sets} which were introduced in \cite{aru2019bounded} and subsequently studied in \cite{AS2,ALS1,ALS2, schoug2022dimensions}. In the discrete setting, they may be defined for any $a,b>0$ informally as follows: we explore the values of the field $\phi$ starting from $\p \Lambda$ (on which $\phi \equiv 0$) and we keep exploring ``inward'' as long as $-a\leq \phi(x) \leq b$. Each time the exploration process visits a vertex $x$ at which $\phi(x)\in[a,b]^c$, the exploration process stops at that vertex. Defined this way, one obtains a random subset of $\Lambda$ which is called $\mathbb{A}_{-a,b}$. (For a more formal definition in the $N$-component case see \eqref{e.ARk} and Section 6 of \cite{aru2019bounded} for the continuum GFF). It turns out, perhaps surprisingly, that the geometric understanding of these sets is still lacking in the 2-dimensional discrete setting (see \cite{PFSPA} for a study of its percolation properties in $\Z^d$ for $d \geq 3$), while in the continuum setting, much more is known. For example, they are known to exist and be unique iff $a+b \geq \sqrt{\pi/2}$, their boundaries are locally SLE$_4$ type \cite{ASW}, the intersection properties of their different boundary components can be precisely described \cite{AS2} and  their almost sure Hausdorff dimension is computed in \cite{schoug2022dimensions} to be $1-\frac{\pi}{4(a+b)^2}$.  We also point out that in the discrete setting, {\em one-valued sets} are known to be macroscopic by \cite{LupuWerner2016Levy}, see also \cite{ding2022crossing} for an alternative argument.

\smallskip
We now turn to $N$-vector valued GFF on a graph $\Lambda \subset \Z^2$ which opens the way to new classes of local sets when $N\geq 2$.  In the discrete setting, we will focus in this paper on the {\em exit sets} which may be defined informally as follows for any radius $R>0$: we explore the field $\phi$ inward starting from $\p \Lambda$ and we keep exploring as long as $\|\phi(x)\| \leq R$. The exploration stops at each vertex where $\| \phi(x) \|>R$. We obtain this way a random subset of $\Lambda$ which we shall call $A_R=A_R(\phi)$. More precisely, 
\begin{align}\label{e.AR}
A_R:=
\Big\{  x \in \Lambda,  &\exists \, \gamma=\{x_0, \ldots, x_m\}, \text{ s.t. } x_0\in \p \Lambda, x_i\sim x_{i+1} \forall 0\leq i \leq m-1,  \nn \\  &  x_m\sim x \text{ and } \|\phi(x_i)\|\leq R, \,\, \forall 1 \leq i \leq m \Big\}
\end{align}
For any integer $k\geq 1$, we will also consider the following extension which allows ``jumps'' of distance $\leq k$ along the exploration. Namely, we define 
\begin{align}\label{e.ARk}
A_{R,k}:=
\Big\{  x \in \Lambda,  &\exists \, \gamma_k=\{x_0, \ldots, x_m\}, \text{ s.t. } x_0\in \p \Lambda, d(x_i,x_{i+1}) \leq k, \forall 0\leq i \leq m-1,  \nn \\  &  d(x_m,x)\leq k  \text{ and } \|\phi(x_i)\|\leq R, \,\, \forall 1 \leq i \leq m \Big\}
\end{align}
Note that with these notations, we have $A_{R}=A_{R,k=1}$. 

Another main motivation in this work, which is related to the above context of classical Heisenberg model but which is also interesting on its own, is the geometric study of such exploration sets as well as its consequences on the level sets of $N$-vectorial GFF and massive $N$-vectorial GFF in the plane.

\subsection*{Main results.} 

Our first main result is about the degeneracy of the above {\em exit sets} $A_{R,k}=A_{R,k}(\phi)$ for any $N\geq 2$. See Figure \ref{f.main_idea} for a summary of the main idea behind this result.
\begin{theorem}\label{th.ExitSet}
Let $N\geq 2$. For any $\eps>0$ and any $R>0, k\geq 1$, if $\phi : \Lambda_n=[-n,n]^2 \to \R^N$ is a vectorial GFF with zero-boundary conditions on $\Lambda_n$ and if $A_{R,k}$ is the exit set defined in~\eqref{e.ARk}, then 
\begin{align*}\label{}
\lim_{n \to \infty}  \Pb{A_{R,k} \cap \Lambda_{(1-\eps) n}} = 0\,. 
\end{align*}
More quantitatively, we may allow the thresholds $R$ and $k$ to depend on the scale $n$:
\begin{align*}\label{}
\lim_{n \to \infty}  \Pb{A_{R(n),k(n)} \cap \Lambda_{(1-\eps) n}} = 0\,,
\end{align*}
as long as $(R(n) + \log k(n))^6 = o(\log n)$.
\end{theorem}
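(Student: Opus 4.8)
The plan is to show that, with probability tending to one, the \emph{bad set} $B:=\{x\in\Lambda_n:\|\phi(x)\|\le R\}$ is too sparse to carry a $k$-jump path from $\p\Lambda_n$ into $\Lambda_{(1-\eps)n}$. By planar duality this amounts to producing, inside a fixed sub-annulus $\calA:=\Lambda_{(1-\eps/2)n}\setminus\Lambda_{(1-3\eps/4)n}$, a \emph{blocking circuit}: a circuit of sites where $\|\phi\|>R$, thick enough that no jump of size $\le k$ can leap across it. The guiding intuition --- illustrated in Figure~\ref{f.main_idea} --- is that deep inside the box the coordinates of $\phi$ behave like independent centred Gaussians of variance $\asymp\frac1\beta\log n$, so $\|\phi(x)\|$ is typically of order $\sqrt{\log n}\gg R$; and, crucially, for $\|\phi(x)\|\le R$ one needs all $N$ coordinates to be atypically small simultaneously, an event of probability $\asymp(R/\sqrt{\log n})^N$, so that here $N\ge2$ is what makes $B$ genuinely thin. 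The whole difficulty is to turn this pointwise thinness into the geometric statement above in the face of the strong, logarithmic correlations of the GFF.

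\emph{Step 1: removing the jumps.} First I would set $L_0:=\lceil\log_2 k\rceil+2$ and tile $\Lambda_n$ by dyadic boxes of side $2^{L_0}$, calling a box \emph{closed} when $\|\phi\|>R$ on all of it. Since $2^{L_0}>2k$, a single $k$-jump connects only equal or $*$-adjacent boxes, so if $A_{R,k}$ meets $\Lambda_{(1-\eps)n}$ there is a $*$-connected chain of non-closed boxes crossing $\calA$, hence no circuit of closed boxes in $\calA$. Using the domain Markov property, write $\phi=\phi^{\sharp}+\phi^{\flat}$ on a slight enlargement of $\calA$, where $\phi^{\sharp}$ collects the scales $\ge2^{L_0}$ (almost constant on each box, and up to an $O(1)$ error an $N$-vector GFF on the lattice $2^{L_0}\Z^2$) and $\phi^{\flat}$ is the complementary fine field (correlation length $\lesssim2^{L_0}$, with $\max_Q\|\phi^\flat\|$ having Gaussian tails around $\asymp L_0/\sqrt\beta$). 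Then a box fails to be closed only if $\max_Q\|\phi^\flat\|>CL_0/\sqrt\beta$ --- an event of probability $\le k^{-c(C)}$ with $c(C)\to\infty$, conditionally independent over non-adjacent boxes --- or $\|\phi^{\sharp}(Q)\|\le R+CL_0/\sqrt\beta+O(1)$. A union bound over $*$-connected crossing chains of ``fine-exceptional'' boxes (at most $8^m$ of length $m$, any crossing of $\calA$ having length $\gtrsim\eps n/(8\cdot2^{L_0})$, and $k=n^{o(1)}$) disposes of the first alternative once $C$ is large, and we are reduced to showing that the coarse field $\phi^{\sharp}$ --- an $N$-vector GFF on a box of side $n/2^{L_0}=n^{1-o(1)}$ --- is such that $\{Q:\|\phi^{\sharp}(Q)\|\le R'\}$ does not cross $\calA$, with $R':=R+CL_0/\sqrt\beta+O(1)\asymp R+\log k$. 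This is precisely the theorem in the nearest-neighbour case $k=1$ with threshold $R'$, and the hypothesis survives because $(R')^6\lesssim(R+\log k)^6=o(\log n)=o\!\big(\log(n/2^{L_0})\big)$.

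\emph{Step 2: the nearest-neighbour case.} It remains to treat $k=1$, threshold $R$ with $R^6=o(\log n)$, and exclude a bad nearest-neighbour crossing $\gamma$ of $\calA$. I would decompose $\phi=\sum_{j\ge0}\psi_j$ into independent scales through the dyadic decomposition of the Green function, with $\psi_j$ essentially constant on $2^j$-boxes (independent $\calN(0,\Theta(1/\beta)I_N)$ values there) and $\sum_{j\le m}\psi_j$ of variance $\asymp m/\beta$, and reveal the scales from coarsest to finest. For $\gamma$ to remain bad after all scales $\ge m$ are revealed, in every $2^m$-box $Q$ that $\gamma$ visits the already-revealed part $\phi_{\ge m}(Q)$ (almost constant on $Q$) must lie in a ball of radius $\asymp R+m/\sqrt\beta$ (the $R$ plus the as-yet-unrevealed fine fluctuation $\sup_Q\|\phi_{<m}\|$), while $\gamma$ visits $\gtrsim\eps n/2^m$ such boxes. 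Grouping scales into consecutive blocks where a block starting at scale $m$ has length $b\gtrsim\beta R^2+m^2$ --- so that the block's combined Gaussian, of standard deviation $\asymp\sqrt{b/\beta}$, genuinely exceeds the admissible radius $\asymp R+m/\sqrt\beta$ --- gives a per-box survival probability $\lesssim\big((\sqrt\beta R+m)/\sqrt b\big)^N$, where again $N\ge2$ is what makes this less than $1$ once $R$ grows (a single scale never suffices). The block-lengths obey $m_{i+1}\gtrsim m_i+\beta R^2+m_i^2$, so only $O(\log\log n)$ blocks exhaust all $\Theta(\log n)$ scales; multiplying the (conditionally independent over suitably separated boxes) per-block gains against the entropy $\mu^{\length(\gamma)}$ of self-avoiding crossings of $\calA$ then yields $\P[\exists\text{ bad crossing of }\calA]\to0$, and this balance closes exactly when $R^6=o(\log n)$. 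Together with Step~1 this proves the theorem.

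\emph{The main obstacle.} The hard part, and the reason for the specific quantitative threshold, is the tension between the strength of the logarithmic correlations and the combinatorial cost of enumerating crossing paths. Each scale of the decomposition contributes only an $O(1)$-sized Gaussian increment, so no single scale yields a per-site probability small enough to beat the path entropy; one is forced to combine $\Theta(\beta R^2)$ consecutive scales at a time, and simultaneously to keep the enumeration of crossings coarse enough --- at scale $\asymp 2^{\beta R^2}$, the correlation length of the finest useful block --- that its entropy does not overwhelm the per-block Gaussian gains. Making these two exponents match, and checking that the admissible radius really is $O(R+m/\sqrt\beta)$ uniformly over the boxes met by $\gamma$ with the right conditional tail, is the technical core; it is exactly this matching that forces $(R(n)+\log k(n))^6=o(\log n)$.
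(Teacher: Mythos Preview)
Your approach is entirely different from the paper's, and I believe it has a genuine gap. The paper does \emph{not} attempt a direct Peierls-type argument on the level set $\{\|\phi\|\le R\}$. Instead it uses that $\hat A=\hat A_{R,k}$ is a \emph{local set}: from the Markov decomposition $\phi=\phi_{\hat A}+\phi^{\hat A}$ one has $\E[\|\phi_{\hat A}(0)\|^2]+\E[G_{\Lambda_n\setminus\hat A}(0,0)]=G_{\Lambda_n}(0,0)$, and if $\hat A$ reaches $\Lambda_{(1-\eps)n}$ the second term drops by at least $C(\eps)$ (Proposition~\ref{pr.small_harmonic_function}). So the whole theorem reduces to showing $\E[\|\phi_{\hat A}(0)\|^2]=o(1)$. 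The crucial input, and the only place $N\ge 2$ enters, is a Mermin--Wagner argument: conditionally on $A$ and on the radii $r=\|\phi\|_{|A}$, the angles $\theta=\phi/\|\phi\|$ form an $O(N)$ model with conductances $r_sr_t$, and a Pinsker/rotation estimate gives $\E^A[\theta(x)\cdot\theta(y)]\lesssim (R+\log k)/\sqrt{\log|x-y|}$ (Proposition~\ref{pr.MW}). A separate conditional-FKG argument (Proposition~\ref{pr.fluctuations}) controls the radii $r_x\lesssim R+\log k$ near $\p A$. Since $\phi_{\hat A}(0)$ is a harmonic average of $r_x\theta(x)$, the angular decorrelation forces $\E[\|\phi_{\hat A}(0)\|^2]\lesssim (R+\log k)^6/\log n$; the exponent $6=4+2$ comes from two fourth-moment bounds on $r$ and one Mermin--Wagner factor.

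The concrete problem with your scheme is that the entropy-versus-gain balance does not close, and relatedly, nothing in it distinguishes $N\ge 2$ from $N=1$ (where the conclusion is conjecturally false and certainly false for one-valued sets). Your per-box bound $\big((\sqrt\beta R+m)/\sqrt b\big)^N$ holds equally for $N=1$; choosing $b=C(\sqrt\beta R+m)^2$ makes it $C^{-N/2}<1/\mu$ for $C>\mu^{2/N}$, again regardless of $N$. But the block field $\phi_{[m_i,m_{i+1})}$ has correlation length $\sim 2^{m_{i+1}}$, so the number of \emph{conditionally independent} constraints you extract from block $i$ is only $\asymp L/2^{m_{i+1}}$, while the union bound over nearest-neighbour paths carries entropy $\mu^L$. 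Multiplying the gains gives an exponent $\frac{N\log C}{2}\,L\sum_i 2^{-m_{i+1}}$, dominated by the first term $L\cdot 2^{-m_1}$ with $m_1\asymp C\beta R^2$; to beat $L\log\mu$ you would need $2^{C\beta R^2}\lesssim N\log C$, which forces $R=O(1)$ and cannot accommodate $R^6=o(\log n)$. In the paper, by contrast, $N\ge 2$ is used through the existence of a \emph{continuous} symmetry group acting on the angles, which is exactly what powers Mermin--Wagner; there is no scalar analogue, and this is why the argument correctly fails for $N=1$.
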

\begin{figure}
	\includegraphics[width=0.48\textwidth]{Angle2GFF.jpg}
	\includegraphics[width=0.48\textwidth]{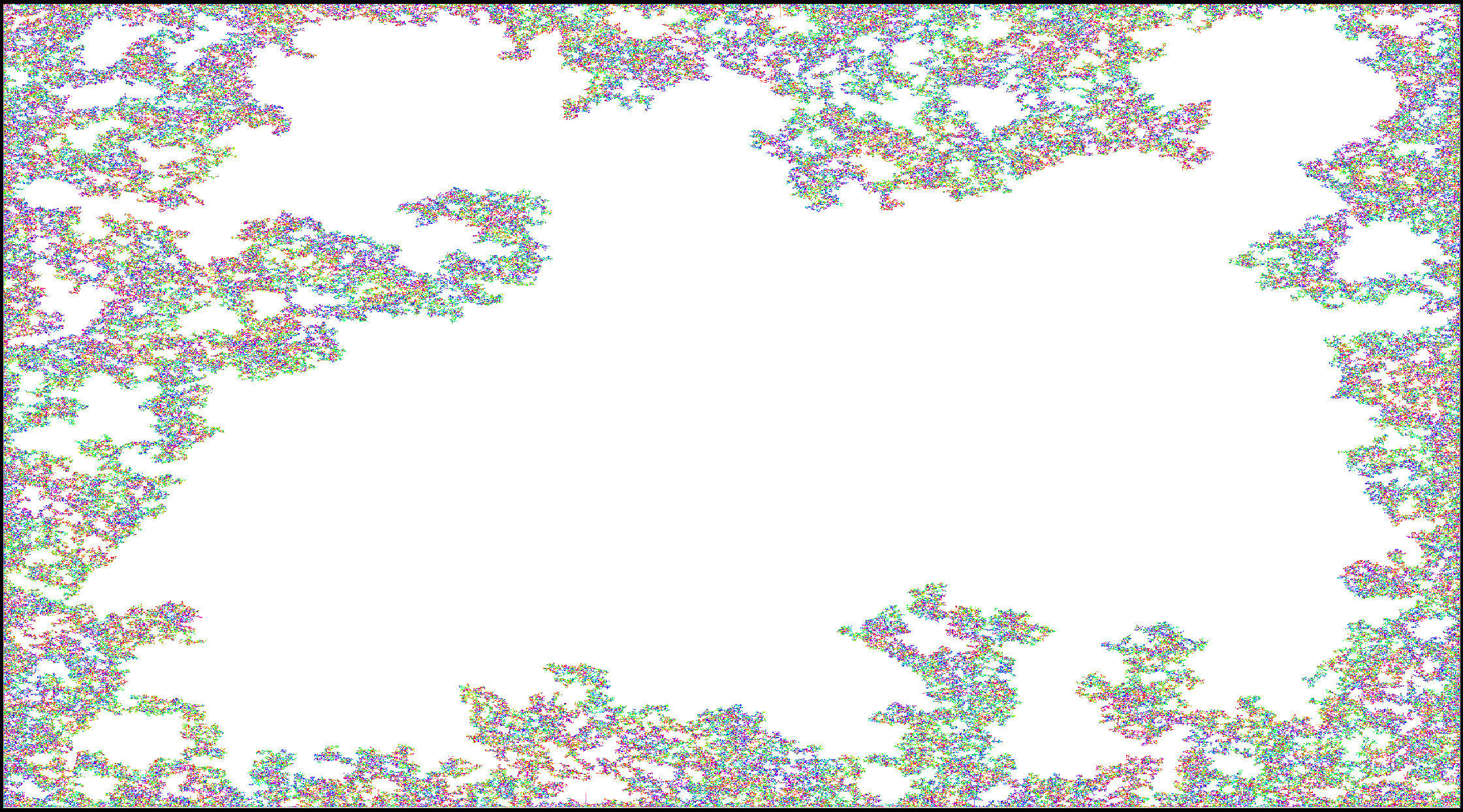}
	\caption{These simulations represent the main idea used to prove Theorem \ref{th.ExitSet}. To the left, we see the angles of a 2D vectorial GFF (15000$\times$ 15000), to the right we see  $A_{1,k}$ with $k$ being the smallest one that is ``macroscopic''. We see that even though the angles of the GFF have big connected components of almost constant angle (colour in the image), the angles in the exploration are fairly evenly distributed. This fact allows us to use a ``law of large numbers'' to  show that the harmonic extension of the values in the boundary are much smaller than the trivial bound.}
	\label{f.main_idea}
\end{figure}

This theorem shows that as opposed to the case $N=1$, one should not expect non-trivial limiting local sets for a continuum vectorial GFF. 

Our second main result describes the percolation properties of the sets $\{ \|\phi(x)\| < R\}_{x\in \Z^2}$ for both a massive and a non-massive GFF pinned at the origin. In this light, denote by $\{x \overset{\| \cdot \| \leq R}\longleftrightarrow  y \}$ the event that one can find a path connecting $x$ and $y$ on which the norm of $\phi$ stays below $R$ all along, and denote by $\{x \overset{\| \cdot \| \leq R, k}\longleftrightarrow  y \}$ the same event when we allow jumps up to distance $k$ on the path. 

\begin{theorem}\label{th.GFFmGFF}
Let $N\geq 2$. 
\bi
\item Consider the $N$-vectorial GFF on $\Z^2$ which is rooted at the origin. 
Then, for any $R > 0, k \geq 1$, there exists $\psi=\psi(R,k)>0$, such that for any $x,y\in \Z^2$, we have the following exponential decay:
\begin{align*}\label{}
\Pb{x \overset{\| \cdot \| \leq R,k}\longleftrightarrow  y } \leq e^{-\psi(R,k) \|x-y\|_2}\,.
\end{align*}
\smallskip

\item For a massive $N$-vectorial GFF $\phi^{(m)}$ on $\Z^2$, we have that for any $R>0, k\geq 1$, there exists a sufficiently small mass $\bar m=\bar m(R,k)>0$ and a positive $\psi(R,k)>0$ such that for any $m\leq \bar m$ and any  $x,y\in \Z^2$,
\begin{align*}\label{}
\FK{\phi^{(m)}}{}{x \overset{\| \cdot \| \leq R,k}\longleftrightarrow  y  \text{ in } \Z^2} \leq e^{-\psi(R,k) \|x-y\|_2}\,.
\end{align*}
\ei
\end{theorem}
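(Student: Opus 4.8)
We outline the strategy; both statements will come from one renormalisation scheme whose one-scale input is Theorem~\ref{th.ExitSet}. First I reduce to a penetration estimate. A $k$-path realising $\{x\overset{\|\cdot\|\le R,k}\longleftrightarrow y\}$ must reach $\ell_\infty$-distance $\rho$ from $x$ for every $\rho\le\|x-y\|_\infty$, and $\|x-y\|_\infty\ge\|x-y\|_2/\sqrt2$; so it suffices to find $\psi=\psi(R,k)>0$ such that, for all $x\in\Z^2$ and $r\ge1$, the probability that $x$ is joined to $\partial\Lambda_r(x)$ by a $k$-path along which $\|\phi\|\le R$ is at most $e^{-\psi r}$ (uniformly over $m\le\bar m(R,k)$ in the massive case). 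Next I record the one-scale input of Theorem~\ref{th.ExitSet}: for any $R,k$ there is a scale $L_0=L_0(R,k)$, which I take as large as needed, so that for the \emph{zero-boundary} $N$-vector GFF on a box of side $2L_0$ the exit set $A_{3R,k}$ of~\eqref{e.ARk} explored from the outer boundary reaches the concentric box of side $L_0$ only with probability $\delta(L_0)$, where $\delta(L_0)\to0$ as $L_0\to\infty$. This is precisely where $N\ge 2$ is used; the slack between $3R$ and $R$ leaves room for the mean part of the field below.

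The renormalisation itself is a Peierls argument. Tile $\Z^2$ into boxes $\{Q_\xi\}$ of side $L_0$ with centres $z_\xi$, and call $Q_\xi$ \emph{bad} if inside $\Lambda_{2L_0}(z_\xi)$ there is a $k$-path from $\partial\Lambda_{2L_0}(z_\xi)$ to $Q_\xi$ along which $\|\phi\|\le R$. On the penetration event the tiles met by a realising path form a $*$-connected family through $Q_{\xi(x)}$ reaching distance $r$, hence of size $\ge r/(CL_0)$, all but $O(1)$ of them bad. Splitting such a family into the nine residue classes of $\xi$ modulo $3$ makes the enlarged boxes $\Lambda_{2L_0}(z_\xi)$ within one class pairwise disjoint; by the Markov property of the (massive or rooted) GFF, conditionally on the field $\mathcal F_{\mathrm{ext}}$ outside these boxes the restrictions are independent, each equal to an ($m$-)harmonic extension $h_\xi$ of the conditioned boundary data plus an independent zero-boundary GFF $\varphi_\xi$. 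Granting a conditional one-scale estimate $\Pb{Q_\xi\text{ is bad}\md\mathcal F_{\mathrm{ext}}}\le\delta'(L_0)$ a.s.\ with $\delta'(L_0)\to0$, and using that the number of $*$-connected families of size $t$ through a fixed box is at most $C_0^{\,t}$, we choose $L_0=L_0(R,k)$ large enough that $C_0^{9}\delta'(L_0)<1$ and sum the geometric series over $t\ge r/(CL_0)$ to get $e^{-\psi r}$; combined with the reduction this yields the massive statement, with $\bar m(R,k)$ chosen small enough in terms of $L_0$ (see below). The rooted case is identical, the only change being that there the boundary data of $\phi$ has norm $\asymp\beta^{-1/2}\sqrt{\log(1+\|z_\xi\|)}$, which only makes badness rarer.

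The real work — and what I expect to be the main obstacle — is this conditional one-scale estimate. On the good event $\|h_\xi\|_{\infty,\Lambda_{2L_0}(z_\xi)}\le R$, badness forces $\varphi_\xi$ to have a level-$\le 2R$, $k$-crossing of $\Lambda_{2L_0}(z_\xi)\setminus\Lambda_{L_0}(z_\xi)$, which has conditional probability $\le\delta(L_0)$ by the one-scale input (for $\bar m\le1/L_0$ the mass is negligible at scale $L_0$, so the argument of Theorem~\ref{th.ExitSet} applies to $\varphi_\xi$). The trouble is that for small $m$, or $z_\xi$ far from the origin, the boundary data and hence $h_\xi$ are typically of norm $\asymp\beta^{-1/2}\sqrt{\log(1/m)}\gg R$, so that good event is atypical and one must instead exploit the mechanism behind Theorem~\ref{th.ExitSet} itself. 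Two points combine: on $\{Q_\xi\text{ is bad}\}$ the realising path reaches $\partial\Lambda_{2L_0}(z_\xi)$ while staying at norm $\le R+O(k)$, so the boundary data that is harmonically relevant along it has norm $O(R)$; and the remaining, large boundary data — coherent in direction at the scale $\asymp1/m\gg L_0$ — yields a harmonic extension of correspondingly large norm, against which the zero-boundary field $\varphi_\xi$ cannot compensate so as to bring $\|h_\xi+\varphi_\xi\|$ below $R$ anywhere in $\Lambda_{L_0}(z_\xi)$, except with probability $\delta'(L_0)\to0$, provided $\bar m$ is small enough in terms of $L_0$ — this last step being a first-moment/equidistribution estimate in the spirit of the proof of Theorem~\ref{th.ExitSet}. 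Turning the asymptotic, unconditional conclusion of Theorem~\ref{th.ExitSet} into a uniform conditional bound that controls this mean part of the field is the principal technical difficulty.
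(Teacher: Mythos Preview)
Your renormalisation scheme has the right skeleton, but the step you yourself flag as ``the principal technical difficulty'' is a genuine gap, not a routine matter. You need the conditional one-scale bound
\[
\Pb{Q_\xi \text{ is bad}\mid \mathcal F_{\mathrm{ext}}}\le \delta'(L_0)\quad\text{a.s.,}
\]
uniformly over the (random) harmonic extension $h_\xi$. Theorem~\ref{th.ExitSet} gives nothing of the sort: it is an \emph{unconditional} statement for a zero-boundary GFF, and its proof (via the Mermin--Wagner estimate on angles and the variance bound on $\phi_A(0)$) does not transfer to $h_\xi+\varphi_\xi$ with an arbitrary harmonic $h_\xi$. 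Your heuristic that a large, directionally coherent $h_\xi$ ``only makes badness rarer'' is plausible but is not an argument: the path can weave through regions where $h_\xi$ varies, and you would need to control the exit set of $\varphi_\xi$ in the random moving target $\{\|h_\xi+\varphi_\xi\|\le R\}$ for \emph{every} admissible $h_\xi$. Nothing in the paper provides this, and making it precise would essentially require reproving Theorem~\ref{th.ExitSet} in a much harder conditional form.

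The paper avoids this difficulty entirely by a different decomposition. Instead of the Markov property $\phi=h_\xi+\varphi_\xi$, it passes through the isomorphism theorem (Theorem~\ref{th.isom1}): $\|\phi\|^2$ has the law of the total occupation time $L$ of an $N$-component random walk loop soup (killed at $0$ in the rooted case, $m$-massive in the massive case). One then sets $L_i$ to be the occupation time of only those loops that stay inside the enlarged box $\An_i\cup\square_i$. The key point is the \emph{deterministic monotonicity} $L_i\le L$: any $k$-path with $\|\phi\|^2=L\le R^2$ is automatically a $k$-path with $L_i\le R^2$. Since $L_i$ has exactly the law of $\|\phi_i\|^2$ for a \emph{zero-boundary} vectorial GFF $\phi_i$ on $\An_i\cup\square_i$, Theorem~\ref{th.ExitSet} applies directly to each box with no conditioning and no harmonic remainder. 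Moreover $L_i$ and $L_j$ are genuinely independent whenever the enlarged boxes are disjoint, so one gets a finite-range dependent site percolation on the coarse lattice with arbitrarily small opening probability, and exponential decay follows by the standard domination argument. The massive case is then deduced from the massless one by the coupling of Lemma~\ref{l.coupling}, choosing $\bar m$ small enough that the loop soups agree on each box with high probability. In short, the loop-soup monotonicity replaces your conditional estimate by a trivial inclusion of events, and this is the idea you are missing.
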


\begin{remark}\label{}
This theorem can easily be shown to be true when $R$ (or $R=R(m)$) is sufficiently small even when $N=1$. On the other hand, for $R$ large, it is believed to be wrong in the scalar case $N=1$. 
\end{remark}

\begin{remark}\label{}
As detailed in Sections \ref{s.AR} and \ref{s.PS},  the reason why we need to relax our definition of connectedness in order to allow jumps of size $k\in \N^+$ is twofold:
\bnum
\item First, because of our percolation considerations. Indeed, since site-percolation is not self-dual and since the value of conductances $C_{ij}$ defined in~\eqref{e.Cij} involve two vertices, these two cases require to consider $k=2$ when we analyse their clusters. 
\item Second, because in the proof of Theorem \ref{th.example} below, we will need to show that the Ising model in the sub-graph of high conductances has long-range order for non-diverging inverse temperatures ($\beta=1$). To achieve this, will need to rely on diverging jumps $k(n)=C \log n$ while exploring the GFF exit sets. 
\enum
\end{remark}

Our third main result below makes rigorous for the first time the easier part in Polyakov's argument \cite{polyakov1975interaction}. This is definitively not the interesting part of \cite{polyakov1975interaction} and the statement below was considered {\em folklore} in the physics community. Nevertheless, we realised its proof is non-trivial and requires to exclude possible additional fluctuations coming from random harmonic functions in the whole punctured plane $\Z^2\setminus \{0\}$. See Section \ref{s.Taylor}. 

\begin{theorem}\label{th.PolyakovEasy}
	Let $\theta = (\theta^1, \ldots, \theta^N)$ denotes a spin $O(N)$ model on the torus $\T_{n}$ at inverse temperature $\beta$  rooted to point north at $0$, (i.e. such that $\theta^N(0) = 1$).
	
	For any sequence of $n=n_\beta$ with $n_\beta \to \infty$ as $\beta \to \infty$, the rescaled vector $\sqrt{\beta}(\theta^1, \ldots, \theta^{N-1})$ converges in law as $\beta\to \infty$  to the $(N-1)$-vectorial GFF rooted to be $0$ at $0$. Here, the topology corresponds to convergence in law  on compacts subsets of $\T_{n_\beta}$.
\end{theorem}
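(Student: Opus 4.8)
\noindent\emph{Strategy.} The plan is to turn the classical spin-wave computation into a proof. We may assume $N\ge 2$ (for $N=1$ there is nothing to prove). Write $\theta=(\pi,\theta^N)$ with $\pi=(\theta^1,\dots,\theta^{N-1})$ and set $\sigma_\beta:=\sqrt{\beta}\,\pi$; the goal is that for every finite $F\subset\Z^2$ the law of $\sigma_\beta|_F$ converges, as $\beta\to\infty$ (and hence $n_\beta\to\infty$), to the restriction to $F$ of the $(N-1)$-component GFF on $\Z^2$ rooted to be $0$ at the origin. In the chart $\theta=(\pi,\sqrt{1-\|\pi\|^2})$ around the north pole one has $d\lambda_{\S^{N-1}}=(1-\|\pi\|^2)^{-1/2}\,d\pi$ and $\|\theta(i)-\theta(j)\|^2=\|\pi(i)-\pi(j)\|^2+O(\|\pi(i)\|^2\|\pi(j)\|^2)$, so on any finite region on which all spins lie in a small northern cap the density of $\sigma_\beta$ equals $\exp\bigl(-\tfrac12\sum_{i\sim j}\|\sigma_\beta(i)-\sigma_\beta(j)\|^2\bigr)$ — the $\beta=1$ GFF density — times a factor which is $1+O(\beta^{-1}\mathrm{poly}(\|\sigma_\beta\|_\infty))$ per vertex, hence negligible on a \emph{fixed} vertex set as $\beta\to\infty$. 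The two genuine difficulties are that the torus $\T_{n_\beta}$ grows, so far from the origin the field leaves every northern cap (Mermin--Wagner), and that even after localising inside a large box the rescaled boundary spins are not $O(1)$ but fluctuate on scale $\sqrt{\log L}$, so one must rule out that these far-field values feed a random harmonic term into the limiting object — the ``random harmonic function on the punctured plane'' alluded to above.

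\smallskip\noindent\emph{Step 1: an a priori bound, uniform in $\beta$ and $n_\beta$.} In the rooted model $\|\pi(x)\|^2=1-(\theta^N(x))^2\le 2(1-\theta^N(x))=\|\theta(x)-\theta(0)\|^2$. Since $\|\theta(x)-\theta(0)\|^2$ is $O(N)$-invariant while the \emph{unrooted} torus Gibbs measure is $O(N)$-invariant with uniform marginal at $0$, one gets $\E_{\mathrm{rooted}}[\|\pi(x)\|^2]\le\E_{\mathrm{free},\T_{n_\beta}}[\|\theta(x)-\theta(0)\|^2]\le C\beta^{-1}\log(|x|+2)$, the last inequality being the standard a priori bound on transverse fluctuations of the planar $O(N)$ model on the torus (infrared bound / reflection positivity), which is uniform in the torus size. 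Hence $\E_{\mathrm{rooted}}[\|\sigma_\beta(x)\|^2]\le C\log(|x|+2)$, uniformly in $\beta$ and $n_\beta$. In particular $\{\sigma_\beta|_F\}_\beta$ is tight for every finite $F$, $\P_{\mathrm{rooted}}[\max_{F}\|\sigma_\beta\|>M]\le C_F/M^2$ uniformly in $\beta$, and on any fixed box the spins lie in an arbitrarily small northern cap with probability tending to one.

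\smallskip\noindent\emph{Steps 2--3: conditioning on a large box and killing the far field.} Fix the finite set $K$ and a large $L$ with $K\subset\Lambda_L:=[-L,L]^2$. The spin interaction is nearest-neighbour, hence Markov: conditionally on $\theta$ outside $\Lambda_L$, the law of $\theta|_{\Lambda_L}$ depends only on $b:=\theta|_{\partial\Lambda_L}$ and is the $O(N)$ model on $\Lambda_L$ with boundary condition $b$, rooted at $0$. On the event $\{\max_{\partial\Lambda_L}\|\sigma_\beta\|\le M\}$ (probability $\ge 1-C_L/M^2$ by Step 1) the data $b$ lie in an $O(M/\sqrt\beta)$-cap, whence, by a finite-volume version of Step 1, so do all spins of $\Lambda_L$ with probability $\to1$ as $\beta\to\infty$; on that event the change of variables of the preamble applies with total error $O\bigl(|\Lambda_L|\,\mathrm{poly}(M,\log L)/\beta\bigr)\to0$. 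Consequently, for $L,M$ fixed and up to an error $O(C_L/M^2)$, the law of $\sigma_\beta|_K$ is asymptotically (as $\beta\to\infty$) the mixture, over $b'=\sigma_\beta|_{\partial\Lambda_L}$, of $\mathcal N\bigl(m_L(b'),\Gamma_L\bigr)$, where $\Gamma_L$ is the $K$-covariance of the GFF on $\Lambda_L$ with zero boundary values and rooted at $0$, and $m_L(b')(z)=\sum_{w\in\partial\Lambda_L}\nu_z(w)b'(w)$ with $\nu_z(w)=\P_z[\,\text{SRW from $z$ hits $\partial\Lambda_L$ at $w$ before hitting $0$}\,]$. As $L\to\infty$, $\Gamma_L\to\Gamma_\infty$ (convergence of killed Green's functions), the $K$-covariance of the GFF on $\Z^2$ rooted at $0$, which is exactly the target. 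For the random mean, the total mass of $\nu_z$ is $p_z^{(L)}=\P_z[\,\text{hit $\partial\Lambda_L$ before $0$}\,]\asymp\mathfrak a(z)/\log L$ with $\mathfrak a$ the potential kernel of planar SRW, so by the triangle inequality (in $\R^{N-1}$ and in $L^2$) and $\E[\|b'(w)\|^2]\le C\log L$ from Step 1,
\[
\E\bigl[\|m_L(b')(z)\|^2\bigr]\ \le\ \Bigl(\textstyle\sum_{w}\nu_z(w)\sqrt{\E[\|b'(w)\|^2]}\Bigr)^2\ \le\ \bigl(p_z^{(L)}\bigr)^2\,C\log L\ =\ O\!\Bigl(\tfrac{\mathfrak a(z)^2}{\log L}\Bigr)\ \xrightarrow[L\to\infty]{}\ 0,
\]
uniformly in $\beta$. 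Hence $m_L(b')\to0$ in $L^2$, the Gaussian mixture converges in characteristic function to $\mathcal N(0,\Gamma_\infty)$, and optimising the errors (send $M\to\infty$, then $L\to\infty$) shows that $\sigma_\beta|_K$ converges in law to the restriction to $K$ of the GFF on $\Z^2$ rooted at $0$; the last coordinate $\theta^N$ tends to $1$ by the same localisation. Since $K$ is arbitrary this is the asserted convergence on compacts.

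\smallskip\noindent\emph{Main obstacle.} Steps 1--2 are essentially bookkeeping around the spin-wave change of variables. The heart of the matter is Step 3: a priori the rescaled boundary spins on $\partial\Lambda_L$ are \emph{not} small — they fluctuate on the GFF scale $\sqrt{\log L}$ — so one must exhibit a genuine cancellation, and the decisive input is that the harmonic measure of $\partial\Lambda_L$ seen from a fixed point, \emph{once the origin has been deleted}, decays like $1/\log L$; this single logarithmic gain is precisely what prevents a surviving random multiple of the potential kernel in the limit. A subsidiary technical point is to obtain the a priori estimate of Step 1, and its finite-volume analogue with near-north boundary data, uniformly over the growing torus $\T_{n_\beta}$, which is where reflection positivity enters.
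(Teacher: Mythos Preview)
Your approach is essentially correct and takes a genuinely different route from the paper's. Both arguments begin by establishing tightness and by identifying any subsequential limit on a finite box $\Lambda_L$ as ``zero-boundary GFF on $\Lambda_L$ rooted at $0$, plus the harmonic extension of the (random) boundary data''; where they diverge is in how the harmonic piece is eliminated.

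The paper (Section~\ref{s.Taylor}) uses a chessboard estimate to obtain exponential tails on individual edge gradients (Lemma~\ref{lemma:cb}), passes to a subsequential limit $\tilde\phi=\phi+h$, and then argues indirectly: a rerooting lemma shows $h$ is harmonic at $0$ as well; a growth bound on $\nabla\tilde\phi$ forces $h$ to be affine; and finally a symmetrisation trick (Lemma~\ref{l.2-point gradient negative}) shows $\E[\nabla\theta^i(e_1)\nabla\theta^i(e_2)]\le 0$ for parallel edges at even distance, which is incompatible with a nonzero random slope.

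Your argument is more direct and rests on a single quantitative observation: because the field is rooted at $0$, the relevant harmonic extension from $\partial\Lambda_L$ uses harmonic measure \emph{with killing at the origin}, whose total mass from a fixed $z$ is $p_z^{(L)}\asymp\mathfrak a(z)/\log L$. Since the infrared bound gives $\E[\|\sigma_\beta(w)\|^2]\le C\log L$ uniformly in $\beta$ on $\partial\Lambda_L$, the mean satisfies $\E[\|m_L(z)\|^2]=O(1/\log L)\to0$, and the random harmonic part vanishes in the limit $L\to\infty$. This is neat: the one-log gain from the killing exactly beats the half-log loss from the boundary fluctuations. What you buy is a shorter, more transparent proof that avoids the rerooting/linearity/symmetrisation chain; what the paper's route buys is exponential gradient tails (hence a.s.\ growth bounds and convergence of all moments) via chessboard rather than relying on the infrared bound.

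One point deserves care. Your ``finite-volume version of Step~1'' asserts that, conditionally on near-north boundary data on $\partial\Lambda_L$, the interior spins also lie in a small cap as $\beta\to\infty$. The infrared bound does \emph{not} apply with fixed boundary conditions, so this is not literally ``Step~1 again''. The clean fix is to stay on the torus: by your Step~1 and a union bound over $x\in\Lambda_L$, the event $\{\max_{x\in\Lambda_L}\|\sigma_\beta(x)\|\le M\}$ (over the \emph{whole} box, not just $\partial\Lambda_L$) already has probability $\ge 1-C'_L/M^2$ uniformly in $\beta$; on this event the Taylor expansion is valid throughout $\Lambda_L$, and only then do you condition on $b'=\sigma_\beta|_{\partial\Lambda_L}$ to obtain the Gaussian mixture. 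With that reordering, your Steps~2--3 go through, and the second-moment bound on $b'(w)$ for the limiting field follows from Step~1 and Fatou.
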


Combining this result with Theorem \ref{th.GFFmGFF} we obtain the following corollary. 
\begin{corollary}\label{c.drama}
For any $N\geq 3$ and $\beta>0$, let us consider any infinite volume limit of the spin $O(N)$ model on the torus $\T_n^2$ (which is then a translation invariant measure) and let us globally rotate spins so that $\theta^N(0)=1$ (i.e. a north pointing spin at the origin).  

Following the procedure described above, if we project the system down to an $XY$ model in random conductances $C_{ij}$ given in~\eqref{e.Cij}. Then for any $R>0, \, k\in \N^+$, as $\beta\to \infty$, the regions in the vicinity of the origin of high local conductance, i.e the set of edges
\begin{align*}\label{}
\{ e=(ij), \text{ with } C_{ij} > R \} 
\end{align*}
is a.s. strongly percolating (in the sense that its complement 
is exponentially clustering).  
\end{corollary}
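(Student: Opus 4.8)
The plan is to read the statement as a property of the \emph{limiting} random conductance environment as $\beta\to\infty$ and then to reduce that property to Theorem~\ref{th.GFFmGFF} by an elementary combinatorial observation, with Theorem~\ref{th.PolyakovEasy} only used to identify the limiting environment. So the first task is to compute that limit; the second is the combinatorial reduction; the third is the union bound that packages Theorem~\ref{th.GFFmGFF} into exponential clustering.

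First I would identify the $\beta\to\infty$ limit of the conductances. Since the spins live on $\S^{N-1}$ we have $1-\theta^N(i)^2=\sum_{j=1}^{N-1}\theta^j(i)^2$, hence
\[
C_{ij}=\sqrt{\beta(1-\theta^N(i)^2)}\;\sqrt{\beta(1-\theta^N(j)^2)}=\Bigl\|\sqrt{\beta}\,(\theta^1(i),\ldots,\theta^{N-1}(i))\Bigr\|\;\Bigl\|\sqrt{\beta}\,(\theta^1(j),\ldots,\theta^{N-1}(j))\Bigr\|.
\]
By Theorem~\ref{th.PolyakovEasy} (applicable since $N-1\ge 2$), along the rerooted model $\sqrt{\beta}\,(\theta^1,\ldots,\theta^{N-1})$ converges in law, on compact subsets, to the $(N-1)$-vectorial GFF $\phi$ on $\Z^2$ rooted to be $0$ at $0$; therefore, jointly over any finite family of edges, $C_{ij}$ converges to $\widetilde C_{ij}:=\|\phi(i)\|\,\|\phi(j)\|$. (Checking that the infinite-volume-limit-plus-global-rotation setup of the statement agrees in the limit with the rooted torus model of Theorem~\ref{th.PolyakovEasy} is routine bookkeeping.) It then suffices to show that in the environment $(\widetilde C_{ij})$, for every $R>0$ and $k\in\N^+$, the edge set $\mathcal C^-:=\{(ij):\widetilde C_{ij}\le R\}$ is a.s.\ exponentially clustering.

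Next I would make the combinatorial reduction to norm-level-set percolation of $\phi$. The key point is that $\widetilde C_{ij}\le R$ forces $\min(\|\phi(i)\|,\|\phi(j)\|)\le\sqrt R$, since otherwise $\|\phi(i)\|\,\|\phi(j)\|>R$. Consequently, given a nearest-neighbour path $x=v_0,v_1,\ldots,v_m=y$ all of whose edges lie in $\mathcal C^-$, one may choose for each $\ell$ a vertex $u_\ell\in\{v_\ell,v_{\ell+1}\}$ with $\|\phi(u_\ell)\|\le\sqrt R$; then $d(u_\ell,u_{\ell+1})\le 2$ for every $\ell$, while $d(x,u_0)\le 1$ and $d(y,u_{m-1})\le 1$. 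Hence, writing $S_R:=\{x\in\Z^2:\|\phi(x)\|\le\sqrt R\}$, on the event that $x$ and $y$ lie in the same $\mathcal C^-$-cluster there exist $u,u'\in S_R$ with $d(x,u)\le 1$, $d(y,u')\le 1$ and $u\overset{\|\cdot\|\le\sqrt R,\,2}{\longleftrightarrow}u'$ (connectivity inside $S_R$ with jumps of size at most $2$); for the $k$-jump notion of clustering the same statement holds with $2$ replaced by $2k+1$ and graph distance $\le 1$ by $\le k$.

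Finally I would conclude via Theorem~\ref{th.GFFmGFF}: applying its first bullet to the $(N-1)$-vectorial GFF $\phi$ rooted at the origin, with radius $\sqrt R$ and jump parameter $2$, gives $\psi=\psi(\sqrt R,2)>0$ with $\Pb{u\overset{\|\cdot\|\le\sqrt R,\,2}{\longleftrightarrow}u'}\le e^{-\psi\|u-u'\|_2}$ for all $u,u'$. A union bound over the boundedly many pairs $(u,u')$ with $d(x,u)\le 1$, $d(y,u')\le 1$, together with $\|u-u'\|_2\ge\|x-y\|_2-2$, yields $\Pb{x\overset{\mathcal C^-}{\longleftrightarrow}y}\le C\,e^{-\psi(\|x-y\|_2-2)}$, which is exponential clustering after absorbing the prefactor (the bound being trivial for small $\|x-y\|_2$); the general $k$ case is identical with $\psi(\sqrt R,2)$ replaced by $\psi(\sqrt R,2k+1)$, and exponential clustering of $\mathcal C^-$ is exactly the asserted strong percolation of $\{C_{ij}>R\}$. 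The main obstacle here is conceptual rather than technical: one must be careful to state and prove the conclusion directly for the limiting environment $(\widetilde C_{ij})$, because the convergence in Theorem~\ref{th.PolyakovEasy} holds only on compact sets and does not by itself control infinite-volume connectivity events at finite $\beta$; once one works with the limiting $(N-1)$-vectorial GFF, the remaining content is precisely Theorem~\ref{th.GFFmGFF} plus the elementary reduction above.
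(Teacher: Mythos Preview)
Your argument is correct and is exactly the approach the paper has in mind: the paper does not give a standalone proof of Corollary~\ref{c.drama} but simply states it as the combination of Theorem~\ref{th.PolyakovEasy} with Theorem~\ref{th.GFFmGFF}, and the remark following~\eqref{e.ARk} confirms that the passage from the edge condition $C_{ij}\le R$ to the vertex condition $\|\phi\|\le\sqrt R$ with $k=2$ jumps is precisely the intended bridge. Your write-up spells out cleanly the three steps (identification of the limiting conductances via $C_{ij}=\|\sqrt\beta\,\theta^{1,\dots,N-1}(i)\|\,\|\sqrt\beta\,\theta^{1,\dots,N-1}(j)\|$, the $\min$-trick reducing low-conductance edge paths to $k=2$ vertex paths in $\{\|\phi\|\le\sqrt R\}$, and the union bound), and you are right to flag that the conclusion is to be read for the limiting environment since the local convergence of Theorem~\ref{th.PolyakovEasy} does not control infinite-range events at finite~$\beta$.
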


This result is of course very far from proving Polyakov's prediction as it only describes the law of the random environment in a $\beta$-dependent neighbourhood of $0$ 
(see Remark \ref{r.quant}). Still it shows that around a typical north-pointing spin, the spins behave in such a way that very cold edges (conductances bigger than a given  arbitrary large threshold $R$) are strongly percolating in the vicinity of that north-pointing edge.

In order to reconcile Corollary \ref{c.drama} with Polyakov's prediction \cite{polyakov1975interaction}, we are then urged to find, for any $R>0$ 
examples of translation invariant, ergodic (even strongly mixing)\footnote{Note here that it is not known that infinite volume limits of spin $O(N)$ model are unique, nor ergodic, but it is of course strongly expected.} random environments of conductances which are such that: $i)$ edges of conductances $\leq R$ are exponentially clustering 
and yet $ii)$ the $XY$ model in this field of mostly large conductances is massive. 
This is what we achieve in the theorem below, thus providing a counter-example to the above scenario $B$ from Patrascioiu-Seiler works \cite{patrascioiu1992phase,patrascioiu1993percolation,patrascioiu2002percolation}.

\begin{theorem}\label{th.example}
Fix any inverse temperature $\beta>0$ and any scale $k\geq 1$. 
There exists a one-parameter family of translation-invariant and strongly mixing sub-graphs $(G_m)_{m > 0}$ of  $\Z^2$ on the same probability space such that for\,  $m < m'$ we have $G_m \supset G_{m'}$, $\bigcup_{m>0} G_m = \Z^2$ and
\begin{enumerate}[i)]
\item[i)]The $XY$-model on $G_m$ of inverse temperature $\beta$ exhibits exponential decay. More precisely, there exists a $K=K(m)$ such that for all $x,y \in G_m$
\begin{align*}
	\E^{XY}_{\beta,G_m}\left[\theta(x)\cdot \theta(y) \right]\leq K e^{-m\|x-y\|}.
\end{align*}
\end{enumerate}
Furthermore, for any $m$ sufficiently small:
\begin{enumerate}[i)]\setcounter{enumi}{1}
\item The set $(G_m)^c$ is exponentially clustering.
\item For any $p>p_c(\Z^2)$, i.i.d, edge percolation of intensity $p$ strongly percolates in $G_m$ when $m$ is sufficiently small (in the sense that its complement has exponentially decaying tails).   In other words, a.s. $p_c(G_m) \to p_c(\Z^2)$ as $m\searrow 0$.
\item The Ising model on the graph $G_m$ has long range order at $\beta^{Ising}=1$. 
\end{enumerate}
In particular, this means we obtain ergodic subgraphs $G$ arbitrarily close to $\Z^2$ and for which the ratio $\frac{\beta_c^{BKT}(G)}{\beta^{Ising}_c(G)} \geq \beta$ is as large as one wants. 
\end{theorem}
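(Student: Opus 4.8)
The plan is to obtain all four items from a single multiscale construction built on top of the exit sets of the planar vector-valued GFF, with the parameter $m$ controlling only the smallest scale at which ``defects'' are inserted; letting that scale diverge as $m\searrow 0$ then gives the monotonicity $G_m\supset G_{m'}$ for $m<m'$, the exhaustion $\bigcup_m G_m=\Z^2$, and all the limiting statements.

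First I would fix the randomness. Take a geometric sequence of scales $L_j=2^j$ and retain only those with $L_j\ge L(m)$, where $L(m)\to\infty$ as $m\searrow 0$. For each retained scale, tile $\Z^2$ by boxes of side of order $L_j$ with an independent uniform offset, and in each box run an independent $N=2$ vectorial GFF with $0$ boundary conditions; form its exit set $A_{R,k}$ with $R$ fixed and $k=k(L_j):=\lceil C\log L_j\rceil$. Since $(R+\log k(L_j))^6=O((\log\log L_j)^6)=o(\log L_j)$, the quantitative part of Theorem~\ref{th.ExitSet} applies, and with probability tending to $1$ as $L_j\to\infty$ the exploration is confined to a thin collar of the box; equivalently $\{\,\|\phi\|>R\,\}$ contains, inside that collar, a sparse ``almost-annular'' separator of the box. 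In each box where this succeeds I would remove the (few) edges incident to this separator and the small pockets it encloses --- these are the ``low conductance'' edges --- and keep conductance $\beta$ on every other edge; $G_m$ is the graph obtained after deleting, over all retained scales, all these edges. Translation invariance and strong mixing follow because each box contributes an event measurable with respect to a GFF localised in it and the offsets are i.i.d.; ergodicity is immediate, and $G_m\supset G_{m'}$ for $m<m'$ since fewer scales are retained.

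The heart of the argument is item (i): showing that this sparse, pairwise disjoint family of exit-set collars nonetheless gives the $XY$ model at the fixed inverse temperature $\beta$ a strictly positive mass. I would run a renormalisation scheme in which, at every retained scale $L_j$ and around every coarse cell, the confined collar is treated as a ``weakly transmitting interface'', and bound the coarse-grained $XY$ coupling transmitted across it by a constant $\rho<1$ --- using that Theorem~\ref{th.ExitSet} makes the collar a full annular separator of its cell while still confining it to a $o(L_j)$-neighbourhood of the cell skeleton, so that it does not swallow the cell. Since a path from $x$ to $y$ meets of order $\|x-y\|/L_j$ cells at scale $j$, a telescoping estimate over the retained scales yields $\E^{XY}_{\beta,G_m}[\theta(x)\cdot\theta(y)]\le K e^{-m\|x-y\|}$ with $m\asymp\sum_{L_j\ge L(m)}\tfrac{\log(1/\rho)}{L_j}>0$, which also makes transparent that $m\searrow 0$ as $L(m)\to\infty$. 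I expect this step to be the main obstacle: one must reconcile ``barrier-like enough that the $XY$ renormalisation contracts'' with ``sparse and disconnected enough for items (ii)--(iv)'', and it is precisely the strength of Theorem~\ref{th.ExitSet} (confinement of the exit set even for slowly growing thresholds $R$ and jump sizes $k$) that makes this possible.

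The remaining items are softer. For (ii), the confinement in Theorem~\ref{th.ExitSet} forces the scale-$L_j$ collar into a $o(L_j)$-neighbourhood of the cell skeleton with a width that has light tails, while the retained-scale densities form a summable sequence once $L(m)$ is large; a stochastic-domination estimate then shows $(G_m)^c$ is exponentially clustering. For (iii), the edge-density of $(G_m)^c$ tends to $0$ as $m\searrow 0$, so a block-coarse-graining argument of Liggett--Schonmann--Stacey type gives that for every $p>p_c(\Z^2)$, i.i.d.\ Bernoulli($p$) percolation restricted to $G_m$ still percolates with exponentially clustering complement once $m$ is small, hence $p_c(G_m)\to p_c(\Z^2)$. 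For (iv), I would run a Peierls argument for the Ising model on $G_m$ at $\beta^{\mathrm{Ising}}=1$: a contour in $G_m$ around a flipped cluster costs at least its length minus the number of deleted edges it crosses, and because the construction uses jumps $k(n)=\lceil C\log n\rceil$ the deleted edges at each scale form a ``dotted'' rather than a solid pattern, so a contour can save at most a small constant fraction of its length; the Peierls sum then converges at $\beta=1$ for $m$ small, giving long-range order. Finally, combining (i) and (iv), $\beta^{\mathrm{BKT}}_c(G_m)>\beta$ while $\beta^{\mathrm{Ising}}_c(G_m)\le 1$, so $\beta^{\mathrm{BKT}}_c(G_m)/\beta^{\mathrm{Ising}}_c(G_m)\ge\beta$, which can be taken as large as desired.
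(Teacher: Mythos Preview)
Your construction is genuinely different from the paper's, and the gap lies precisely where you flag it as ``the main obstacle'': item (i).

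The paper does \emph{not} build $G_m$ by a multiscale removal of exit-set collars. Instead it takes an $m$-massive $2$-vectorial GFF $\phi^{(m)}$ on $\Z^2$ (coupled monotonically in $m$ via loop soups) and sets
\[
G_m:=\{x\in\Z^2:\ \|\phi^{(m)}(x)\|>\sqrt{\beta}\}.
\]
With this choice, item (i) is a three-line computation rather than the heart of the proof: by Proposition~\ref{pr.anglesGFF}, conditionally on $\|\phi^{(m)}\|$ the angles form an $XY$ model with conductances $\|\phi^{(m)}(x)\|\,\|\phi^{(m)}(y)\|\ge\beta$ on $G_m$, and Ginibre's inequality then gives
\[
\E\bigl[\langle\sigma_x\cdot\sigma_y\rangle_{G_m,\beta}\,1_{x,y\in G_m}\bigr]\le \tfrac1\beta\,\E[\phi^{(m)}(x)\cdot\phi^{(m)}(y)]\le \tfrac1\beta\,e^{-Cm\|x-y\|}.
\]
So exponential decay of the $XY$ model is inherited \emph{for free} from the mass of the GFF. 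The hard work in the paper is items (ii)--(iv), which reduce to the percolation statement for level sets of the massive vectorial GFF (Theorem~\ref{th.GFFmGFF}), itself proved via the exit-set degeneracy (Theorem~\ref{th.ExitSet}). Item (iv) is then obtained from (iii) by FK domination, not by a Peierls argument.

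In your scheme the difficulty is inverted, and the step you need is not supplied. You assert that each collar acts as a ``weakly transmitting interface'' for the $XY$ model with a transmission factor $\rho<1$, and then telescope over scales. But there is no known inequality that turns ``a sparse dotted separator of a box'' into a multiplicative contraction of $XY$ two-point functions at a \emph{fixed} $\beta$; Ginibre lets you raise conductances but gives you nothing when you merely delete a sparse set of edges, and standard $XY$ renormalisation/block-spin schemes do not produce such a bound. Worse, your collar must simultaneously (a) be porous enough that Bernoulli$(p)$ percolation for every $p>1/2$ still crosses it (item (iii)) and (b) block $XY$ correlations uniformly in $\beta$. These pull in opposite directions, and you give no mechanism reconciling them. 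The paper's trick---embedding the $XY$ model as the angular part of a massive GFF---is exactly what makes (i) automatic while leaving (ii)--(iv) amenable to the exit-set analysis.
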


\begin{remark}\label{}
As in Theorem \ref{th.GFFmGFF}, one could ask more in condition $ii)$: for any $k \geq 1$, the $k-$neighbourhood of $(G_m)^c$ is still exponentially clustering. However, this "lee-room" is morally included already in $iii)$. 

Further, the item $iv)$ follows by a standard FK-percolation argument from the item $iii)$. We still include it as a separate condition, as we find it illuminating.

Finally, we also remark that as $m\to 0$, the rate of exponential decay decreases to 0, however, exponential decay persists, which already is quite surprising.
\end{remark}

\begin{remark}\label{}
Our random sub-graphs will be built using the $m$-massive vectorial GFF (Definition \ref{d.GFF}) with $N = 2$. Item $i)$ will be straightforward from the definition. The interesting part of this Theorem is the fact $i)$ is compatible with $ii)$ and  $iii)$. It may well be that more hands-on examples of such graphs $G_m$ may be built (for example out of Poisson Point Processes of sparser and sparser {\em barriers}), but it would still be challenging to check items $i)$, $ii)$, $iii)$, and more importantly by definition our graphs $G_m$ share the same local quenched environment as the transverse fluctuations in the low temperature classical Heisenberg model (described by Theorem \ref{th.PolyakovEasy} and Corollary \ref{c.drama}). 
\end{remark}

\begin{remark}\label{}
Note that there are continuous spin systems for which exponential decay is known to hold at all temperatures. Let us mention in particular the case where $\sigma_i\in [-1,1]$ in \cite{McBryanSpencer} or the case where spins belong to the semi-spheres $\S^{N-1}_+$ in \cite{bauerschmidt2021geometry}. This latter case is especially remarkable due to its $O(N-1)$ symmetry. Also this example from \cite{bauerschmidt2021geometry} fits well into our analysis as in the case $N=3$, one may also project it to a  $XY$ model in random conductances. In that model it is less clear though that the conductances would indeed be strongly percolating.   
\end{remark}

In Section \ref{s.PS}, we shall extend the spin $O(N)$ model to the cable graph (inspired by the very fruitful extension of the GFF on a graph $\Lambda$ to its cable graph $\tilde \Lambda$ by Titus Lupu in \cite{lupu2016loop}).  This extension has two consequences, first it enables us to reinterpret correlation functions of the spin $O(N)$ model in terms of natural percolation events. 
\begin{theorem}\label{th.corr}
	For any $N \geq 1$, let $x$ and $y$ be two vertices of $\Lambda_n$ and $\theta$ be an $O(N)$-model in $\Lambda_n$ at inverse temperature $\beta$ with free-boundary condition in $\partial \Lambda_n$. We have that
	\begin{align}
		\frac{1}{N} \P(x\stackrel{\tilde \L^1}{\longleftrightarrow} y ) \leq \E\left[\theta(x)\cdot \theta(y) \right] \leq N\, \P(x\stackrel{\tilde \L^1}{\longleftrightarrow } y) 
	\end{align}
\end{theorem}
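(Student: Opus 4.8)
\emph{Sketch of the intended proof.} The plan is to realise the right-hand side $\P(x\stackrel{\tilde\L^1}{\longleftrightarrow}y)$ as a colour-blind connectivity in a \emph{loop representation} of the spin $O(N)$ model living on the cable graph $\tilde\Lambda_n$, and then to read off the two-sided comparison by counting the ``colour'' degrees of freedom that separate $\E[\theta(x)\cdot\theta(y)]$ from this connectivity probability.

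\textbf{Step 1 (loop representation).} First I would set up the representation on the discrete graph. Expanding $\exp(\beta\,\sigma(i)\cdot\sigma(j))=\sum_{k\ge0}\frac{\beta^k}{k!}(\sigma(i)\cdot\sigma(j))^k$ on every edge, writing $(\sigma(i)\cdot\sigma(j))^k=\sum_{a_1,\dots,a_k}\prod_l\sigma^{a_l}(i)\sigma^{a_l}(j)$, and integrating the resulting monomials against $\lambda_{\S^{N-1}}$ at each vertex, one is left with a sum over edge configurations carrying ``strands'' coloured in $\{1,\dots,N\}$, subject to the constraint that at every vertex and for every colour the number of incident strands of that colour is even (a sphere integral of a monomial vanishes unless all exponents are even, and is nonnegative otherwise); for the two-point function one inserts one extra strand of some colour $a$ at $x$ and at $y$ and sums over $a$. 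Each admissible configuration then decomposes into a family of monochromatic loops plus, in the two-point case, a single monochromatic ``backbone'' joining $x$ to $y$, all carrying nonnegative weights. Passing to the cable graph as in Lupu's construction \cite{lupu2016loop} — each strand being replaced by an independent Brownian-type bridge on the relevant cable — turns this into a genuine configuration of Brownian loops (and a Brownian backbone) whose trace is a closed subset of $\tilde\Lambda_n$. Forgetting the colours, this is $\tilde\L^1$, and $\{x\stackrel{\tilde\L^1}{\longleftrightarrow}y\}$ means $x$ and $y$ lie in the same connected component of that trace.

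\textbf{Step 2 (the two-sided bound).} Granting Step 1, the comparison is soft. Write $\E[\theta(x)\cdot\theta(y)]=\sum_{a=1}^N\E[\theta^a(x)\theta^a(y)]$; in the coloured representation each term is a weighted sum over the \emph{same} geometric configurations that contribute to $\P(x\stackrel{\tilde\L^1}{\longleftrightarrow}y)$ (all of which realise $\{x\leftrightarrow y\}$), now weighted by the number of admissible colourings whose backbone has colour $a$. Summing over $a$, the backbone's colour contributes a factor at most $N$, while each remaining loop still contributes one free colour — exactly the weight $N$ per loop already carried by that configuration inside the colour-blind $\P(x\stackrel{\tilde\L^1}{\longleftrightarrow}y)$; hence $\E[\theta(x)\cdot\theta(y)]\le N\,\P(x\stackrel{\tilde\L^1}{\longleftrightarrow}y)$. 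Conversely, whenever $\{x\leftrightarrow y\}$ occurs the trace contains a path from $x$ to $y$, and assigning all strands along it (and, say, everywhere else) the single colour $a$ is always admissible and yields a colour-$a$ backbone; this monochromatic choice carries relative weight at least $N^{-1}$ against the total colour weight of the loops, which gives $\P(x\stackrel{\tilde\L^1}{\longleftrightarrow}y)\le N\,\E[\theta(x)\cdot\theta(y)]$. For $N=1$ both bounds collapse to the familiar identity $\langle\sigma_x\sigma_y\rangle=\P(x\leftrightarrow y)$ in the loop representation.

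\textbf{Main obstacle.} The real content is Step 1: rigorously constructing the cable-graph loop representation of the \emph{spin} $O(N)$ model and proving that the law of $(\theta(v))_{v\in\Lambda_n}$ under it is the discrete $O(N)$ Gibbs measure. Since the single-site law is uniform on $\S^{N-1}$ rather than Gaussian, Lupu's GFF-to-loop-soup isomorphism cannot be imported verbatim; the natural route is through the random-path representation above combined with an independent Brownian interpolation on each cable, after which one must check that the reconstructed vertex spins together with the loop ensemble have the correct joint law, that the loop ensemble is measurable, and that its trace is a.s. closed so that ``$x\stackrel{\tilde\L^1}{\longleftrightarrow}y$'' is well posed. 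A secondary, lesser difficulty is to make the colour-counting of Step 2 rigorous on the cable graph, where a cluster contains infinitely many microscopic loops and the ``factor $N$ per loop'' is encoded by a measure rather than a literal count: this is handled either by first proving the two inequalities for a discrete loop $O(N)$ model on $\Lambda_n$ and then taking the cable-graph limit, or by organising the colour argument cluster by cluster instead of loop by loop. I expect Step 1 to absorb essentially all of the work, the bound itself being purely combinatorial.
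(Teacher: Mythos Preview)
Your route is genuinely different from the paper's, and the difficulty you flag as the ``main obstacle'' is precisely what the paper bypasses. Rather than building a random-path expansion and then lifting it to the cable graph, the paper realises the $O(N)$ model directly as the angular part of an $N$-vectorial GFF $\tilde\Phi$ on $\tilde\Lambda_n$ conditioned to have $\|\tilde\Phi(v)\|=\sqrt\beta$ at every vertex (Proposition~\ref{pr.anglesGFF}); Lupu's isomorphism then applies verbatim to $\tilde\Phi$, and the event $\{x\stackrel{\tilde\L^1}{\longleftrightarrow}y\}$ is simply $\{\tilde\Phi^1\neq 0\text{ along a path from }x\text{ to }y\}$. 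The key identity is obtained by a one-line sign-flip: conditionally on the cluster $A$ of $x$ in $\{\tilde\Phi^1\neq 0\}$, the law of $\tilde\Phi^1$ on $\tilde\Lambda_n\setminus A$ is symmetric, so $\E[\theta(x)\cdot\theta(y)]=\tfrac{N}{\beta}\E[\tilde\Phi^1(x)\tilde\Phi^1(y)]=\tfrac{N}{\beta}\E[|\tilde\Phi^1(x)||\tilde\Phi^1(y)|\mathbf{1}_{x\leftrightarrow y}]$. The upper bound is then trivial ($|\tilde\Phi^1|\le\sqrt\beta$), and the lower bound comes from a conditional FKG for the Bernoulli colouring of loops (conditionally on the full loop soup $\tilde\L$, the occupation time $\tilde L^1$ is an increasing function of i.i.d.\ Bernoulli labels), which yields $\E[\sqrt{L^1(x)L^1(y)}\mid x\leftrightarrow y]\ge\beta/N^2$.

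Beyond the construction, your Step~2 is not as soft as you present it. In the random-path expansion, the numerator of $\E[\theta^a(x)\theta^a(y)]$ is a sum over configurations with a parity defect (a colour-$a$ backbone) at $x$ and $y$, whereas the connectivity probability lives in the \emph{sourceless} ensemble of closed loops; these are not ``the same geometric configurations'', and passing between them requires a genuine switching or resampling argument (this is essentially what \cite{dubedat2022random} supplies). Your lower-bound heuristic --- force everything monochromatic and pay $N^{-1}$ --- does not account for the fact that the sphere-integral weights at each vertex depend on the full colour pattern, not just on the parity per colour, so the relative cost of the monochromatic choice is not obviously $N^{-\#\text{loops}}$. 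The paper's FKG argument sidesteps all of this by working with occupation times rather than individual strands.
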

See Section \ref{s.PS} for all the notations. A similar result was proved in the case $N=3$ in \cite{campbell1998isotropic}. Moreover, when completing this draft we learned that an analogous result has been recently proved in \cite{dubedat2022random}. The two proofs are very different, ours relies on a simple FKG, while \cite{dubedat2022random} relies on a nice resampling argument; furthermore we obtain explicit values for the multiplicative constants.   

The second advantage of this extension to the cable graph is that it allows us to prove a rigorous version of scenario $A$ in Patrascioiu-Seiler works. Namely, consider any translation-invariant infinite volume limit of the spin $O(N)$ model on the cable graph and let $\hat E$ denote the set of edges dual to edges $e$ of $\Z^2$ on which one can find $t\in e \simeq [0,1]$ so that the $N$-component at $t$ vanishes.  Note that if one were to project the spin $O(N)$ model on a spin $O(N-1)$ model, then at low temperature this set $\hat E$ would be made of a vast majority of edges with large conductances. Our analog of scenario $A$ reads as follows. (See also Corollary \ref{pr.PSB}). 
\begin{corollary}\label{th.cable}
For any $N\geq 1$,  if the set $\hat E$ does not percolate (i.e. does not have infinite connected components a.s.), then there exists $C>0$, such that for any $x\neq y$
\begin{align*}\label{}
\EFK{\beta, \Z^2}{}{\sigma(x)\cdot \sigma(y)} \geq \frac 1 {|x-y|^3}\,.
\end{align*}
(when $N\geq 3$, this statement holds for any possible translation invariant infinite volume limit). 
\end{corollary}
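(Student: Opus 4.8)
The plan is to plug the geometric representation of correlations from Theorem~\ref{th.corr} into a planar-duality argument on the cable graph, so that the non-percolation hypothesis on $\hat E$ becomes a polynomial lower bound on the connection probability that controls the two-point function.

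First, by Theorem~\ref{th.corr} applied in the box $\Lambda_n$ and then letting $n\to\infty$ (the inequality passes to any weak infinite-volume limit, and for $N\geq 3$ one simply works along the subsequence defining the given translation-invariant limit), it suffices to prove that in the infinite-volume spin $O(N)$ model on the cable graph of $\Z^2$ one has $\P\bigl(x\stackrel{\tilde\L^1}{\longleftrightarrow}y\bigr)\geq N\,|x-y|^{-3}$; this yields $\EFK{\beta,\Z^2}{}{\sigma(x)\cdot\sigma(y)}\geq|x-y|^{-3}$, with small $|x-y|$ being trivial. Next I would use the duality built into the definition of $\tilde\L^1$ on the cable graph (Section~\ref{s.PS}): the event $\{x\stackrel{\tilde\L^1}{\not\longleftrightarrow}y\}$ holds exactly when some connected subset of $\hat E$ separates $x$ from $y$ in the plane. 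Since $\hat E$ does not percolate, any such separating set is a.s.\ a finite circuit, and, separating points at distance $|x-y|$, it must surround one of them at Euclidean distance $\geq\tfrac12|x-y|$; in particular it ``winds around'' that point within an annulus $B_R\setminus B_{R/2}$ for some $R\geq\tfrac12|x-y|$.

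The quantitative core is then to bound $\P(\text{such a macroscopic }\hat E\text{-circuit exists})$ by $1-c|x-y|^{-3}$, i.e.\ to exhibit a connecting $\tilde\L^1$-cluster with probability $\geq c|x-y|^{-3}$; this is where non-percolation of $\hat E$ is genuinely used. I would run a Zhang/Burton--Keane-type dichotomy on the matching pair ``$\tilde\L^1$ versus $\hat E$'', exploiting the positive association of the Poissonian loop soup $\tilde\L^1$: either $\tilde\L^1$ has a (unique) unbounded cluster, in which case $x$ and $y$ lie in it with probability $\geq\rho^2>0$ by FKG and uniqueness; or the pair is critical-type, in which case a crude one-arm/crossing estimate for $\tilde\L^1$ at the dyadic scales between $1$ and $|x-y|$, glued by quasi-multiplicativity, gives $\P\bigl(x\leftrightarrow\partial B_{|x-y|/2}(x)\bigr)$ and its $y$-analogue each at least of order $|x-y|^{-3/2}$, and one more local gluing joins the two arms. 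Chaining the two arms produces the exponent $3$, which is merely a convenient, far-from-optimal bound.

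The main obstacle is precisely this last step: $\hat E$ is a strongly dependent percolation, so standard RSW theory and Menshikov-type sharpness are not directly available. One must instead substitute the exact loop-soup and Green's-function identities on Lupu's cable graph, together with the Poissonian structure of $\tilde\L^1$, both to make the matching-pair duality rigorous and to produce the crossing/one-arm estimates in the critical-type regime. Since everything is local and only uses translation invariance of the infinite-volume measure, the argument applies verbatim to any translation-invariant infinite-volume limit when $N\geq3$.
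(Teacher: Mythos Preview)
Your reduction via Theorem~\ref{th.corr} and the planar duality between $\tilde\L^1$-connectivity and separating $\hat E$-circuits is correct and is exactly how the paper begins. The genuine gap is in your ``quantitative core'': the dichotomy you propose (either $\tilde\L^1$ percolates, or one has one-arm estimates of order $|x-y|^{-3/2}$) is not established and is essentially as hard as the statement itself. Non-percolation of $\hat E$ does not automatically put the primal model into an RSW/critical regime with polynomial arm exponents; you yourself flag that sharpness and RSW are unavailable for this dependent percolation, and nothing in the Poissonian or cable-graph structure of $\tilde\L^1$ supplies the missing crossing estimates. So the chain-of-arms argument is incomplete.

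The paper bypasses all of this with a much softer, classical trick going back to Russo. If $\hat E$ does not percolate on the dual lattice, then by planar topology the primal configuration $\omega$ (edges on which $\theta^N$ does not vanish) must, almost surely for every $R\geq 1$, contain an open path joining the half-line $\{(n,0):n\geq R\}$ to the half-line $\{(0,m):m\geq R\}$. Borel--Cantelli then forces
\[
\sum_{n\geq 1,\,m\geq 1}\P\bigl((n,0)\overset{\omega}{\longleftrightarrow}(0,m)\bigr)=\infty,
\]
and since $\sum_{n,m}(n+m)^{-3}<\infty$, there is a sequence $(n_k,m_k)$ with $n_k+m_k\to\infty$ along which the connection probability exceeds $(n_k+m_k)^{-3}$. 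Theorem~\ref{th.corr} converts this into the desired polynomial lower bound on $\E[\sigma(x)\cdot\sigma(y)]$. Note that this argument, as carried out in the paper (Corollary~\ref{pr.PSB}), produces the bound only along a sequence of pairs $x,y$ with $|x-y|\to\infty$, which is enough to rule out exponential decay; the ``for any $x\neq y$'' in the displayed statement should be read in that weaker sense.
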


\section{Setup and preliminaries}

\subsection{Notations, definitions.}

For all $n\geq 1$, let $\Lambda_n$ be the box $[-n,n]^2\subset \Z^2$. We will set in this case $\p \Lambda_n$ to be the set of vertices in $\Lambda_n$ which are next to $\Lambda_n^c$.  We shall also consider the annulus $\An_{m,n}:= \Lambda_{m} \setminus \Lambda_n \subset \Z^2$ for $m>n$.  (N.B. If we ever take $m\notin \N$, for example $3n/2$, we shall mean its approximation from below, i.e., $\lfloor 3n/2 \rfloor$).

\subsection{Vectorial GFF and its relation to the $O(N)$ model}

\begin{definition}[$N$-vectorial Gaussian free field]\label{d.GFF}
Take $m\geq0$, $\Lambda\subseteq \Z^2$ a finite graph, and set $\partial \Lambda \subseteq \Lambda$ an arbitrary subset. We say that a function $\phi:\Lambda\mapsto \R^N$ is an $m$-massive vectorial GFF with $0$-boundary condition if a.s. $\phi(v)=0$ for all $v\in \partial \Lambda$ and
	\begin{align*}
		\P(d\phi)\propto \exp\left (-\frac{1}{2}\sum_{u\sim u'} \|\phi(u)-\phi(u')\|^2-\frac{m^2}{2}\sum_{u}\|\phi(u)\|^2\right ) \prod_{v\in \Lambda\backslash \partial \Lambda} d\phi(v),
	\end{align*}where here and later sums written as $u \sim u'$ are over undirected edges.
We say that $\phi$ is a GFF if it is a $0$-massive GFF.
\end{definition}

%

\begin{definition}[$O(N)$-model] Take $\Lambda\subseteq \Z^2$ a finite graph. A random function $\theta:\Lambda\mapsto \S^{N-1}$ is said to be an $O(N)$-model with conductances $C:E(\Lambda)\to \R^+$ if
	\begin{align}
		\P(d\theta)\propto \exp\left(\sum_{u\sim u'} C_{uu'} \,\, \theta(u)\cdot\theta(u') \right) \prod_{v\in \Lambda} \lambda^{\S^{N-1}}(d\theta(v)) 
	\end{align}
\end{definition}
The main reason why the massive GFF is related to the $O(N)$-model lies in the following proposition, which is implicit in a lot of the physics literature and that has been recently used in the case where $N=1$ and $\phi$ is non-massive in the works \cite{LW, duminil2020existence}. %
\begin{proposition}\label{pr.anglesGFF}
	Take $m\geq 0$, a finite graph $\Lambda$ with boundary $\partial \Lambda \neq \emptyset$ and let $\phi:\Lambda\mapsto \R^N$ be an $m$-massive 		vectorial GFF. The law of $\theta:=\frac{\phi}{\|\phi\|}\in \S^{N-1}$, the angles of $\phi$, conditionally on $\|\phi\|$ is that of an $O(N)$ model in $\Lambda\backslash \partial \Lambda$ with conductances given by
	\begin{align}
		C_{uu'}= \|\phi(u)\|\|\phi(u')\|, \text{ for any }u\sim u'.
	\end{align}
\end{proposition}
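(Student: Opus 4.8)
The plan is to write the density of the $m$-massive vectorial GFF in polar-type coordinates, separating the radial part $\|\phi\|$ from the angular part $\theta = \phi/\|\phi\|$. Concretely, I would introduce, for each vertex $v \in \Lambda \setminus \partial\Lambda$, the change of variables $\phi(v) = r(v)\,\theta(v)$ with $r(v) = \|\phi(v)\| \in [0,\infty)$ and $\theta(v) \in \S^{N-1}$; the Lebesgue measure $d\phi(v)$ on $\R^N$ factorizes as $r(v)^{N-1}\,dr(v)\,\lambda^{\S^{N-1}}(d\theta(v))$ up to a constant depending only on $N$. Plugging this into the density in Definition \ref{d.GFF}, the key observation is that the interaction term expands as
\begin{align*}
\|\phi(u) - \phi(u')\|^2 = \|\phi(u)\|^2 + \|\phi(u')\|^2 - 2\,\|\phi(u)\|\,\|\phi(u')\|\,\theta(u)\cdot\theta(u'),
\end{align*}
and similarly the mass term $\|\phi(u)\|^2$ depends only on the radii. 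Therefore the exponent splits into a piece that is a function of $(r(v))_v$ alone and a piece $\sum_{u\sim u'} \|\phi(u)\|\,\|\phi(u')\|\,\theta(u)\cdot\theta(u')$ coupling angles and radii.

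Next I would condition on the radial variables $\|\phi\| = (\|\phi(v)\|)_{v}$ (which includes the prescribed values on $\partial\Lambda$, namely $\phi \equiv 0$ there, so $\theta$ on the boundary is irrelevant). By the disintegration of the joint density, the conditional law of $(\theta(v))_{v \in \Lambda\setminus\partial\Lambda}$ given $\|\phi\|$ is proportional — as a measure on $(\S^{N-1})^{\Lambda\setminus\partial\Lambda}$ against $\prod_v \lambda^{\S^{N-1}}(d\theta(v))$ — to
\begin{align*}
\exp\left( \sum_{u\sim u'} \|\phi(u)\|\,\|\phi(u')\|\;\theta(u)\cdot\theta(u') \right),
\end{align*}
since every factor depending only on the (now fixed) radii, as well as the Jacobian factors $r(v)^{N-1}$, is absorbed into the normalizing constant. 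Comparing with the definition of the $O(N)$-model, this is exactly the $O(N)$-model on $\Lambda\setminus\partial\Lambda$ with conductances $C_{uu'} = \|\phi(u)\|\,\|\phi(u')\|$ for $u\sim u'$, which is the claim. One small point to handle cleanly: the set $\{\exists\, v : \|\phi(v)\| = 0\}$ has Lebesgue measure zero, so the map $\phi \mapsto (\|\phi\|, \theta)$ is well-defined a.s. and the change of variables is valid off a null set.

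I do not expect a serious obstacle here — the statement is essentially a computation. The only mildly delicate step is being careful that conditioning on the full radial field $\|\phi\|$ (rather than on some coarser $\sigma$-algebra) really does kill all the radius-dependent prefactors and the Jacobian, leaving a clean product-of-exponentials form; this is just the standard disintegration-of-densities argument, but it is worth stating explicitly since the conductances $C_{uu'}$ genuinely depend on the conditioning. Everything else — the polar change of variables on $\R^N$, the algebraic expansion of $\|\phi(u)-\phi(u')\|^2$, and matching against the $O(N)$ density — is routine.
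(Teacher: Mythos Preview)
Your proposal is correct and follows essentially the same route as the paper: write the GFF density, expand $\|\phi(u)-\phi(u')\|^2$ to isolate the cross term $\|\phi(u)\|\,\|\phi(u')\|\,\theta(u)\cdot\theta(u')$, condition on the radii so that all radius-only factors (including the mass term and the Jacobian) are absorbed into the normalising constant, and identify what remains with the $O(N)$ density. Your write-up is in fact slightly more careful than the paper's (you make the polar Jacobian and the null-set issue explicit), but there is no substantive difference in approach.
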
 
\begin{proof}
	This follows from the fact that 
	\begin{align*}
		&\P\left(d\theta \mid \|\phi\|=\chi \right)\\
		&\hspace{0.1\textwidth}\propto  \exp\left (-\frac{1}{2}\sum_{u\sim u'} \|\chi\theta(u)-\chi\theta(u')\|^2-\frac{m^2}{2}\sum_{u}\chi(u)^2\right ) \prod_{v\in \Lambda\backslash \partial \Lambda} \lambda^{\S^{n-1}}(d\theta(v))\\
		&\hspace{0.1\textwidth}\propto \exp\left(-\chi(u)\chi(v)(\theta(u)\cdot\theta(v)) \right).
	\end{align*}
	where the proportionality constant depends on $\chi$. This is exactly what we wanted.
\end{proof}

By the exact same proof, we also obtain the following result on the projection of a spin $O(N)$ model down to a spin $O(N-1)$ model.
\begin{proposition}\label{pr.projection}
	Take a finite graph $\Lambda$  and let $\sigma:\Lambda\mapsto \S^{N-1}$ be a spin $O(N)$ model on $\Lambda$ at inverse temperature $\beta$ (with arbitrary boundary conditions). Then, the law of $\frac 1 {\sqrt{1- (\theta^N)^2}}{ (\theta^1,\ldots, \theta^{N-1})} \in \S^{N-2}$, conditionally on $\theta^N$ is that of an $O(N-1)$ model in $\Lambda$ with (random) conductances given by
	\begin{align}
		C_{uu'}= \beta \, \sqrt{1- (\theta^N(u))^2}  \sqrt{1- (\theta^N(u'))^2},\, \text{ for any }u\sim u'.
	\end{align}
\end{proposition}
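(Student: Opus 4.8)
The plan is to repeat, essentially verbatim, the computation behind Proposition~\ref{pr.anglesGFF}, the only extra ingredient being the standard ``slice'' disintegration of the uniform measure on a sphere. First I would pick coordinates adapted to the distinguished north direction: for each vertex $u\in\Lambda$ write
\begin{align*}
\sigma(u)\;=\;\Bigl(\sqrt{1-(\theta^N(u))^2}\;\tau(u)\,,\ \theta^N(u)\Bigr),\qquad \theta^N(u)\in[-1,1],\quad \tau(u)\in\S^{N-2},
\end{align*}
so that $\tau(u)=\frac{1}{\sqrt{1-(\theta^N(u))^2}}\bigl(\theta^1(u),\ldots,\theta^{N-1}(u)\bigr)$ is exactly the rescaled transverse spin appearing in the statement; it is well defined outside the $\lambda_{\S^{N-1}}$-null set $\{\theta^N(u)=\pm1\}$, and $\sigma(u)\mapsto(\theta^N(u),\tau(u))$ is a measurable bijection away from that set. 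The geometric input I would use is that under this parametrisation
\begin{align*}
\lambda_{\S^{N-1}}(d\sigma(u))\;=\;c_N\,\bigl(1-(\theta^N(u))^2\bigr)^{\frac{N-3}{2}}\,d\theta^N(u)\;\lambda_{\S^{N-2}}(d\tau(u))
\end{align*}
for some constant $c_N>0$ (Archimedes' hat-box theorem when $N=3$).

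Next I would rewrite the interaction: for a neighbouring pair $u\sim u'$,
\begin{align*}
\sigma(u)\cdot\sigma(u')\;=\;\theta^N(u)\,\theta^N(u')\;+\;\sqrt{1-(\theta^N(u))^2}\,\sqrt{1-(\theta^N(u'))^2}\;\;\tau(u)\cdot\tau(u').
\end{align*}
Set $C_{uu'}:=\beta\sqrt{1-(\theta^N(u))^2}\sqrt{1-(\theta^N(u'))^2}$. Plugging this identity and the measure decomposition into the $O(N)$ Boltzmann weight, the joint law of $(\theta^N,\tau)$ factorises into a part involving $\theta^N$ alone,
\begin{align*}
\exp\Bigl(\beta\sum_{u\sim u'}\theta^N(u)\theta^N(u')\Bigr)\prod_{u}\bigl(1-(\theta^N(u))^2\bigr)^{\frac{N-3}{2}}\,d\theta^N(u),
\end{align*}
times a part involving $\tau$ alone,
\begin{align*}
\exp\Bigl(\sum_{u\sim u'}C_{uu'}\,\tau(u)\cdot\tau(u')\Bigr)\prod_{u}\lambda_{\S^{N-2}}(d\tau(u)).
\end{align*}
Conditioning on the whole configuration $\theta^N=(\theta^N(u))_{u\in\Lambda}$ freezes the first part (it is absorbed into the configuration-dependent normalisation), leaving exactly
\begin{align*}
\P\bigl(d\tau\,\big|\,\theta^N\bigr)\;\propto\;\exp\Bigl(\sum_{u\sim u'}C_{uu'}\,\tau(u)\cdot\tau(u')\Bigr)\prod_{u}\lambda_{\S^{N-2}}(d\tau(u)),
\end{align*}
that is, the law of an $O(N-1)$ model on $\Lambda$ with conductances $C_{uu'}$. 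Prescribed boundary conditions on $\sigma$ are transported verbatim to boundary conditions on $\tau$, and edges with $C_{uu'}=0$ (possible when some $\theta^N(u)=\pm1$) cause no difficulty.

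The only step that needs a little care to make rigorous is the change-of-variables/disintegration: one must check that $\sigma\mapsto(\theta^N,\tau)$ is a bijection up to a $\lambda_{\S^{N-1}}^{\otimes\Lambda}$-null set and that the displayed product form of $\lambda_{\S^{N-1}}$ holds. This is the coarea (Fubini) formula for $\S^{N-1}$ fibred over its last coordinate and is completely standard, so I do not anticipate any genuine obstacle: as the authors note, the statement is just a reorganisation of the Boltzmann weight with the same structure as Proposition~\ref{pr.anglesGFF}.
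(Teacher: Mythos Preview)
Your proof is correct and follows exactly the approach the paper intends: the authors simply write ``By the exact same proof'' (referring to Proposition~\ref{pr.anglesGFF}), and your computation is precisely that argument, expanding $\sigma(u)\cdot\sigma(u')$ in the $(\theta^N,\tau)$ coordinates and absorbing the $\theta^N$-dependent factors into the normalisation. If anything you are slightly more explicit than the paper, since you write out the Jacobian factor $(1-(\theta^N)^2)^{(N-3)/2}$ for the sphere disintegration, which the paper leaves implicit.
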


\subsection{An FKG inequality for a conditioned GFF.}\label{ss.condFKG}

In order to understand the level structure of the GFF, we will need to have estimates on the fluctuations of the GFF near its exploration boundary. To do this, a key tool will be a conditional FKG inequality presented here. See also Lemma 1.3 in \cite{PFFKG} for a statement in a very similar spirit.  

We start by making explicit the law of a GFF conditioned on the event that the field at each point $v$ takes values in some subset $A(v)$ of $\R$.
\begin{definition}\label{d.conditionedGFF}
Let $(A(v))_{v\in \Lambda}$ a family of subsets of $\R$, each made of a finite union of (possibly infinite) intervals. Further, assume that there is a $v_0 \in \Lambda$ such that $A(v_0)$ is bounded. We define $\P^A$ the law of a GFF conditioned on $\phi(v)\in A(v)$ as
	\begin{align*}
		\P^{A}(d\phi)\propto e^{-\frac{1}{2}\sum_{i\sim j} (\phi(i)-\phi(j))^2} \prod_{v}\1_{\phi(v)\in A(v)} d\phi(v).
	\end{align*}
Furthermore, when $A(v) \subseteq \R$ is a discrete set for some $v\in \Lambda_{dis}\subseteq \Lambda$, we extend the definition as follows
\begin{align*}
\P^{A}(d\phi)\propto e^{-\frac{1}{2}\sum_{i\sim j} (\phi(i)-\phi(j))^2} \prod_{v\in \Lambda\backslash \Lambda_{dis}}\1_{\phi(v)\in A(v)} d\phi(v) \prod_{v'\in \Lambda_{dis}}\sum_{a \in A(v)} \delta_{\phi(v')=a}.
\end{align*}
\end{definition}

The following FKG inequality for the conditioned field $\P^A$ might be of independent interest.
\begin{lemma}\label{l.FKG}
	We have that for any  $A$ as in Definition \ref{d.conditionedGFF}, $\P^A$ satisfies the FKG inequality. That is to say, for any increasing function $f,g$
	\begin{align*}
		\E^A\left[ f(\phi)g(\phi)\right]\geq \E^A\left[f(\phi)\right]\E^A\left[g(\phi)\right] 	\end{align*}
\end{lemma}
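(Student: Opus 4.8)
The plan is to deduce the FKG inequality for $\P^A$ from the classical FKG/Holley inequality for product measures together with the fact that the (discrete, massless) GFF density is log-supermodular. Recall that a probability density $p$ on $\R^\Lambda$ satisfies the FKG lattice condition (and hence the FKG inequality for increasing functions, by Holley's theorem in the continuous setting, cf. Preston or the standard lattice-gas argument) provided that for all $\phi, \psi$
\[
p(\phi \vee \psi)\, p(\phi \wedge \psi) \geq p(\phi)\, p(\psi),
\]
where $\vee, \wedge$ are coordinatewise max and min. First I would check this condition for the unconditioned GFF density $p(\phi) \propto \exp(-\tfrac12 \sum_{i\sim j}(\phi(i)-\phi(j))^2)$. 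Writing $H(\phi) = \tfrac12\sum_{i\sim j}(\phi(i)-\phi(j))^2$, the key elementary inequality is that for each edge $i\sim j$,
\[
(\phi\vee\psi)(i) - (\phi\vee\psi)(j)\big)^2 + \big((\phi\wedge\psi)(i) - (\phi\wedge\psi)(j)\big)^2 \leq (\phi(i)-\phi(j))^2 + (\psi(i)-\psi(j))^2 .
\]
This is a two-variable statement (it only involves the four reals $\phi(i),\phi(j),\psi(i),\psi(j)$) and follows by a short case analysis on the relative order of $\phi(i),\psi(i)$ versus $\phi(j),\psi(j)$: when the max is achieved by the same field at both endpoints of the edge the two sides are equal, and when it switches one checks the inequality is strict. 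Summing over edges gives $H(\phi\vee\psi) + H(\phi\wedge\psi) \leq H(\phi) + H(\psi)$, i.e. log-supermodularity of $p$.

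Next I would incorporate the conditioning. The point is that each constraint $\1_{\phi(v)\in A(v)}$ is a function of a single coordinate, and any single-coordinate factor $h_v(\phi(v))$ trivially satisfies $h_v((\phi\vee\psi)(v))\,h_v((\phi\wedge\psi)(v)) = h_v(\phi(v))\,h_v(\psi(v))$ since $\{(\phi\vee\psi)(v),(\phi\wedge\psi)(v)\} = \{\phi(v),\psi(v)\}$ as multisets. Hence multiplying $p$ by $\prod_v \1_{\phi(v)\in A(v)}$ preserves the lattice condition, and the normalization (finite and positive — this is where the hypothesis that some $A(v_0)$ is bounded, ensuring integrability, and that the $A(v)$ are nonempty is used) does not affect it. Therefore $\P^A$ satisfies the FKG lattice condition, and by the Holley–Preston theorem for measures on $\R^\Lambda$ (or by discretization plus a limiting argument) it satisfies the FKG inequality $\E^A[fg]\geq \E^A[f]\,\E^A[g]$ for bounded increasing $f,g$.

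For the discrete-set extension (the $\Lambda_{dis}$ part of Definition \ref{d.conditionedGFF}), I would either repeat the same argument directly for the mixed continuous/atomic reference measure — the lattice condition and Holley's theorem both hold verbatim on a product of $\R$'s and discrete chains $A(v)\subset\R$ ordered by inclusion in $\R$ — or obtain it as a weak limit by replacing each Dirac mass $\delta_{a}$ with a narrow interval (or a sharply peaked density) around $a$, applying the continuous case, and passing to the limit; FKG is preserved under weak limits. The main obstacle is purely bookkeeping: stating the Holley–Preston FKG criterion in the right generality for a non-compact, mixed continuous–atomic state space and making sure the partition function is finite. The supermodularity computation itself is the short, essential step and is genuinely elementary; everything else is standard measure-theoretic plumbing. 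I would be careful to record the single-coordinate-factor observation cleanly, since that is exactly what makes the conditioning "free" and is the conceptual content of the lemma.
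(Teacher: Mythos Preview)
Your proposal is correct and follows essentially the same approach as the paper: both rely on the elementary edgewise inequality showing log-supermodularity of the GFF density, observe that the constraint sets are closed under $\vee,\wedge$ (equivalently, your single-coordinate-factor remark), and conclude via Holley's criterion. The only cosmetic difference is that the paper carries out the discretization route explicitly (approximating $\P^A$ by measures $\mu^\eta$ on $\eta\Z$-valued configurations and passing to the limit), whereas you invoke the continuous Holley--Preston theorem directly and mention discretization as an alternative; the substantive computation is identical.
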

\begin{proof}
	First, take $\eta>0$ and approximate the law $\P^A$ by the discrete law $\mu^\eta$ that measures functions $\psi \in \calK^{\eta}$. Here $\calK^\eta$ is the set of $\psi:\Lambda\mapsto \R$ such that $\psi(v)\in \eta\Z$ for all $v\in \Lambda\backslash \Lambda_{dis}$ and $\psi(v)\in A(v)$ for all $v\in \Lambda_{dis}$. It is possible to show that $\mu^\eta\to \P^A$ as $\eta \to 0$. Furthermore, $\mu^\eta$ satisfies FKG, as it satisfies the Holley condition \cite{holley1974remarks}	
	\begin{align*}
		\mu^\eta(\psi)\mu^\eta(\psi')\leq \mu^\eta(\psi\vee \psi')\mu^\eta(\psi\wedge \psi').
	\end{align*}
This last inequality follows from the fact that if $\psi,\psi'\in \calK^{\eta}$, the $\psi\vee\psi', \psi\wedge \psi' \in \calK^{\eta}$ and the fact $i,j \in \Lambda$
\begin{align}\label{e.basic_ineq_min_max}
	&(\psi(i)-\psi(j))^2+(\psi'(i)-\psi'(j))^2\\
	\nonumber&\hspace{0.2\textwidth}\geq (\psi\vee \psi'(i)-\psi\vee \psi'(j))^2 + (\psi\wedge \psi'(i)-\psi\wedge \psi'(j))^2.
\end{align}
\end{proof}

\subsection{Bounding the size of local sets using their conditional expectation.}

The goal of this subsection is to introduce a result saying that if the variance of $\phi_A(0)$ is small for a local set $A$, then the set itself cannot be that big. Similar techniques to control the size of local sets have been used in \cite{Aru, ASW, ALS1}. 

We start by recalling the definition of a local set.

\begin{definition}[Local set]\label{d.LocalSet}
We call a random subset $A\subseteq \Lambda$ a local set, if there exist two random functions, $\phi_A$ and $\phi^A$, on $\Lambda$ with $0$-boundary condition in $\partial \Lambda$, such that that conditionally on $A = A_0$ we have
	\begin{enumerate}
		\item $\phi=\phi_A+\phi^A$.
		\item $\phi_A$ is harmonic in $\Lambda\backslash A_0$.
		\item $\phi^A$ is a GFF in $\Lambda$ with $0$-boundary condition in $A_0\cup \partial \Lambda$.
	\end{enumerate}
\end{definition}

\begin{remark}\label{r.optional}
	A sufficient condition for a random set $A\subseteq \Lambda$ to be a local set is that $A$ is an optional set that is to say for any deterministic $C\subseteq \Lambda$
	\begin{align*}
		\{A\subseteq C\} \in \sigma(\phi(x):x \in C).
	\end{align*}
\end{remark}

The statement alluded to above can be then formalized as follows.	

\begin{proposition} \label{pr.small_harmonic_function}
	There exists a function $C:\R^+\mapsto \R^+$ such that the following is true uniformly in $n$. Let $\phi$ be a GFF in $\Lambda_n$ and $A$ be a local set such that $A\cup \partial \Lambda_n$ is connected and $\E\left[\phi_A^2(0) \right]\leq \delta$, then
	\begin{align}
		\P(A\cap \Lambda_{n(1-\epsilon)}\neq \emptyset)\leq \frac{\delta}{C(\epsilon)}
	\end{align}
\end{proposition}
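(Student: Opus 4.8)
The statement says: if $A$ is a local set with $A \cup \partial\Lambda_n$ connected and the conditional harmonic part $\phi_A$ has small variance at $0$ (i.e. $\E[\phi_A(0)^2] \le \delta$), then $A$ avoids the inner box $\Lambda_{(1-\epsilon)n}$ with high probability, quantitatively $\P(A \cap \Lambda_{(1-\epsilon)n} \ne \emptyset) \le \delta / C(\epsilon)$.

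The key idea: on the event $\{A \cap \Lambda_{(1-\epsilon)n} \ne \emptyset\}$, the harmonic function $\phi_A$ cannot be too small at $0$. Let me think about why. First I would set up the following. Condition on $A = A_0$. Then $\phi_A$ is the harmonic extension to $\Lambda_n \setminus A_0$ of the (random) boundary data $\phi|_{A_0 \cup \partial\Lambda_n}$ (which equals $\phi$ on $A_0$ and $0$ on $\partial\Lambda_n$), and $\phi^A$ is an independent GFF vanishing on $A_0 \cup \partial\Lambda_n$. By the local set decomposition, $\phi_A(0) = \E[\phi(0) \mid A, (\phi(x))_{x \in A}]$. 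Now the crucial observation is a variance/conductance estimate. Since $A_0 \cup \partial\Lambda_n$ is connected and meets $\Lambda_{(1-\epsilon)n}$ and also $\partial\Lambda_n$ (which is at distance $\gtrsim \epsilon n$ from $0$ — wait, no: $\partial\Lambda_n$ is far, but $A_0$ itself reaches into $\Lambda_{(1-\epsilon)n}$, hence passes within distance $n$ of $0$... I need $A_0$ to come close to $0$). Actually the right statement: on this event $A_0$ contains a point $z$ with $\|z\| \le (1-\epsilon)n$; but that does not make $z$ close to $0$. Let me reconsider — the point is that $A_0$, being connected to $\partial\Lambda_n$ and reaching $\Lambda_{(1-\epsilon)n}$, is a "long" connected set, so the harmonic measure from $0$ of $A_0 \cup \partial\Lambda_n$ within $\Lambda_n \setminus A_0$ gives positive mass to a portion of $A_0$ at macroscopic distance — no. The cleaner route is via effective resistance: $\var(\phi_A(0) \mid A = A_0) = G_{\Lambda_n}(0,0) - G_{\Lambda_n \setminus (A_0 \cup \partial\Lambda_n)}^{\text{with 0-bc on }A_0\cup\partial\Lambda_n}(0,0)$, i.e. $\var(\phi^A(0)\mid A_0) = G_{\Lambda_n \setminus A_0}(0,0)$, and since removing a connected set $A_0$ that separates/approaches $0$ drops the Green's function, we should get that $\var(\phi^A(0) \mid A_0) \le G_{\Lambda_n}(0,0) - c(\epsilon)$ when $A_0 \cap \Lambda_{(1-\epsilon)n} \ne\emptyset$ — hmm, I want a lower bound on $\var(\phi_A(0))$, equivalently an upper bound on $\var(\phi^A(0)\mid A_0) = G_{\Lambda_n\setminus A_0}(0,0)$.

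Here is the argument I would actually run. Since $A_0 \cup \partial\Lambda_n$ is connected, reaches a point of $\Lambda_{(1-\epsilon)n}$, and contains all of $\partial\Lambda_n$, this set contains a connected subset joining $\partial\Lambda_n$ to the sphere of radius $(1-\epsilon)n$; in particular it surrounds... no. Cleanest: because it is connected and touches both $\Lambda_{(1-\epsilon)n}$ and $\partial\Lambda_n$, and $0 \in \Lambda_{(1-\epsilon)n}$, the set $A_0 \cup \partial\Lambda_n$ comes within distance $2n$ of $0$ while also "surrounding" — actually I'll use: a random walk from $0$ in $\Lambda_n$ must hit $\partial\Lambda_n$ before leaving, and hitting $A_0 \cup \partial\Lambda_n$ happens no later; but I want to show the walk hits $A_0$ (not just $\partial\Lambda_n$) with probability bounded below by $c(\epsilon)$, OR that it hits $A_0 \cup \partial\Lambda_n$ "deep" with good probability so that $G_{\Lambda_n\setminus A_0}(0,0)$ is reduced. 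The standard fact (used e.g. in \cite{Aru, ASW}): if $K$ is a connected subset of $\Lambda_n$ containing $\partial\Lambda_n$ and intersecting $\Lambda_{(1-\epsilon)n}$, then the harmonic measure of $K \setminus \partial\Lambda_n$ seen from $0$ in $\Lambda_n$ is at least some $c(\epsilon) > 0$, and moreover $G_{\Lambda_n \setminus K}(0,0) \le G_{\Lambda_n}(0,0) - c'(\epsilon)$ — this is a Beurling-type estimate: a connected set reaching from near the boundary to within the inner box has nontrivial harmonic measure, forcing the Green's function on the complement to drop by a constant. Hence $\var(\phi_A(0) \mid A = A_0) = G_{\Lambda_n}(0,0) - G_{\Lambda_n \setminus A_0}(0,0) \ge c'(\epsilon)$ on the event $\{A_0 \cap \Lambda_{(1-\epsilon)n} \ne \emptyset\}$.

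Putting it together: I would write, using $\E[\phi_A(0)^2 \mid A] = \var(\phi_A(0)\mid A) + \E[\phi_A(0)\mid A]^2 \ge \var(\phi_A(0)\mid A)$ (the conditional mean of $\phi_A(0)$ given $A$ is not zero in general, but $\E[\phi_A(0)^2\mid A] \ge \var(\phi_A(0)\mid A)$ always holds),
\begin{align*}
\delta \ge \E[\phi_A(0)^2] = \E\big[\E[\phi_A(0)^2 \mid A]\big] \ge \E\big[\var(\phi_A(0)\mid A)\big] \ge c'(\epsilon)\, \P(A \cap \Lambda_{(1-\epsilon)n} \ne \emptyset),
\end{align*}
so the proposition holds with $C(\epsilon) = c'(\epsilon)$.

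\textbf{Main obstacle.} The crux is the Beurling-type estimate: showing that for \emph{any} connected set $K \subseteq \Lambda_n$ with $\partial\Lambda_n \subseteq K$ and $K \cap \Lambda_{(1-\epsilon)n} \ne \emptyset$, one has $G_{\Lambda_n \setminus K}(0,0) \le G_{\Lambda_n}(0,0) - c'(\epsilon)$ with $c'(\epsilon)$ \emph{uniform in $n$}. One must rule out pathological thin tentacles — but discrete Beurling estimates handle exactly this: a connected path from distance $\asymp n$ to distance $\asymp \epsilon n$ of the origin is hit by simple random walk started near $0$ with probability bounded below, uniformly in $n$ (this is where connectivity of $A \cup \partial\Lambda_n$ is essential). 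I would cite the discrete Beurling projection / harmonic measure lower bound (as in \cite{Aru} or standard references) rather than reprove it. A secondary point to be careful about: $\phi_A$ and $\phi^A$ are defined only conditionally on $A$, and one should phrase everything via the conditional law given $A$ and then take expectation — also one must make sure the variance identity $\var(\phi_A(0) \mid A) + \var(\phi^A(0)\mid A) = G_{\Lambda_n}(0,0)$ (orthogonal decomposition) is invoked correctly, which it is since conditionally on $A$ the two parts are independent with $\phi = \phi_A + \phi^A$.
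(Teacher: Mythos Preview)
Your approach is essentially the paper's: both rest on the Green's function drop $G_{\Lambda_n \setminus A_0}(0,0) \le G_{\Lambda_n}(0,0) - C(\epsilon)$ whenever the connected set $A_0 \cup \partial\Lambda_n$ reaches $\Lambda_{(1-\epsilon)n}$, together with the orthogonal decomposition of $\phi(0)$. There is, however, one genuine slip in your write-up. The identity you invoke,
\[
\var(\phi_A(0)\mid A=A_0) + \var(\phi^A(0)\mid A=A_0) = G_{\Lambda_n}(0,0),
\]
is false: conditioning on the event $\{A=A_0\}$ alters the law of $\phi$, so the right-hand side should be $\var(\phi(0)\mid A=A_0)$, which need not equal $G_{\Lambda_n}(0,0)$. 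In particular you have no pointwise lower bound on $\var(\phi_A(0)\mid A=A_0)$, and your chain $\E[\phi_A(0)^2] \ge \E[\var(\phi_A(0)\mid A)] \ge c'(\epsilon)\,p$ breaks at the last inequality.

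The paper avoids this by staying unconditional: from $\E[\phi(0)^2] = \E[\phi_A(0)^2] + \E[\phi^A(0)^2]$ (the cross term vanishes since $\E[\phi^A(0)\mid A,\phi_A]=0$) and $\E[\phi^A(0)^2\mid A] = G_{\Lambda_n\setminus A}(0,0)$, one obtains directly
\[
\delta \ge \E[\phi_A(0)^2] = G_{\Lambda_n}(0,0) - \E\bigl[G_{\Lambda_n\setminus A}(0,0)\bigr] \ge p\,C(\epsilon),
\]
using the Green's function drop only inside the expectation. So the fix is simply to delete the intermediate conditional-variance step and run the orthogonal decomposition unconditionally; after that your proof coincides with the paper's.
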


\begin{proof}
	Note that there exists a function $C:\R^+\mapsto \R^+$ such that uniformly on all $n\in \N$ and uniformly on all $A$ s.t.  $A\cap \Lambda_{n(1-\epsilon)}\neq \emptyset$, we have 
	\begin{equation}
		G_{\Lambda_n \backslash A,\partial \Lambda_n}(x,x)\leq G_{\Lambda_n,\partial \Lambda_n}(x,x)-C(\epsilon).
	\end{equation}
Now, we define $p:=\P(A\cap \Lambda_{n(1-\epsilon)}\neq \emptyset)$. 
By the domain Markov property for the GFF,  we have 
\begin{align*}
\Eb{\phi^2(0)} & = \Eb{\phi_A(0)^2 + \phi^A(0)^2} \\
& \leq \delta + \Eb{G_{\Lambda_n \setminus A, \p \Lambda_n}(0,0)} \\
& \leq \delta + p(G_{\Lambda_n,\partial \Lambda_n}(0,0)-C(\epsilon)) + (1-p) G_{\Lambda_n,\partial \Lambda_n}(0,0)\,,
\end{align*}
where we used the fact that $\Eb{\phi^A(0)^2 \md \F_A} \leq G_{\Lambda_n,\partial \Lambda_n}(0,0)$ a.s. Using now the fact that $\Eb{\phi^2(0)}=G_{\Lambda_n,\partial \Lambda_n}(0,0)$, we deduce 
\begin{align*}
	p\leq \frac{\delta}{C(\epsilon)}.
\end{align*}	
\end{proof}

\subsection{Loop soups and isomorphism theorems on subgraphs of $\Z^2$ and on the associated metric graphs.}

We denote a vectorial random walk loop-soup on a subgraph of $\Z^2$ by $\L = (\L^1, \dots, \L^N)$ where each of $\L^i$ is a random walk loop soup at the critical intensity $1/2$. We denote the vector of the local times by $(L^1, \dots, L^N)$ and the total local time over the coordinates  by $L = L^1+\dots+L^N$. There is also a natural $m$-massive version of the loop soup, denoted $\L^{(m)}$. See e.g. \cite{Sznitmantopics,WP} for precise definitions of loop soups and their local times. As we will only work at intensity $1/2$, we will often omit this detail and refer just to the random walk loop soup.

We will be using at several places the isomorphism theorem between the local times of the random walk loop soup and the Gaussian free field. We again refer the reader to \cite{WP} or \cite{LeJan2011Loops}, but state a version of it here for the convenience of the reader.

\begin{theorem}[Isomorphism theorem for the massive and non-massive GFF]\label{th.isom1}
Consider a vectorial random walk loop soup $\L$ on a subgraph of $\Z^2$ with non-zero boundary and zero boundary conditions on that boundary. Then $(L^1, \dots, L^N)$ has the same law as $((\phi^1)^2, \dots, (\phi^N)^2)$ where $\phi$ is an $N$-vectorial GFF on the same graph with the same boundary conditions.

The same holds for the massive versions with either free or zero boundary conditions for both the loop soup and the field. 
\end{theorem}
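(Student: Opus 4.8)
The plan is to deduce this vectorial statement from the classical scalar isomorphism theorem by exploiting the fact that both sides are assembled from $N$ independent scalar copies.

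First I would recall the scalar second Ray--Knight / Le Jan isomorphism (see \cite{WP, LeJan2011Loops}): on a finite graph $\Lambda$ with $\partial\Lambda\neq\emptyset$, the occupation field of a single random walk loop soup at intensity $1/2$ with zero boundary conditions has the same law as the pointwise square of a scalar GFF, in the normalization fixed by Definition~\ref{d.GFF} with $m=0$. I would similarly invoke the massive version, in which the killed walk is replaced by the $m$-massive walk, valid either with zero boundary conditions on a nonempty $\partial\Lambda$, or with free boundary conditions as soon as $m>0$ (so that the associated walk is still strictly sub-Markovian and the Green's function finite).

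Next I would record the two factorizations that make the reduction work. On the loop-soup side, by definition $\L=(\L^1,\dots,\L^N)$ is a vector of $N$ independent intensity-$1/2$ loop soups, so $(L^1,\dots,L^N)$ is a vector of $N$ i.i.d.\ scalar occupation fields. On the field side, the density in Definition~\ref{d.GFF} splits as a product over coordinates,
\[
\exp\!\Big(-\tfrac12\!\sum_{u\sim u'}\!\|\phi(u)-\phi(u')\|^2-\tfrac{m^2}{2}\!\sum_u\!\|\phi(u)\|^2\Big)=\prod_{i=1}^N\exp\!\Big(-\tfrac12\!\sum_{u\sim u'}\!(\phi^i(u)-\phi^i(u'))^2-\tfrac{m^2}{2}\!\sum_u\!\phi^i(u)^2\Big),
\]
so $\phi^1,\dots,\phi^N$ are $N$ i.i.d.\ scalar GFFs with the prescribed boundary data and mass. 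Combining the scalar isomorphism with these two independence statements yields at once that $(L^1,\dots,L^N)$ and $((\phi^1)^2,\dots,(\phi^N)^2)$ have the same joint law, in both the massive and non-massive settings and for the boundary conditions listed.

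The main thing to be careful about --- and essentially the only nontrivial point --- is the bookkeeping of normalization constants: one must check that the loop-soup intensity $1/2$, the GFF density normalization of Definition~\ref{d.GFF}, and the convention for the Green's function are mutually consistent, so that the scalar identity reads $L^1\overset{d}{=}(\phi^1)^2$ with multiplicative constant exactly $1$ (rather than $\tfrac12$, as in some references). A secondary point is to ensure that in the free-boundary massive case both objects are genuinely well defined, which is precisely why the condition $m>0$ is imposed there.
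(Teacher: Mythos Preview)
Your proposal is correct. The paper does not actually give its own proof of this statement: it is stated ``for the convenience of the reader'' with the proof deferred to the references \cite{WP, LeJan2011Loops}. Your reduction to the scalar Le~Jan isomorphism via the observation that both the vectorial loop soup and the vectorial GFF factor as $N$ independent scalar copies is exactly the standard argument and is what those references effectively contain; there is nothing to compare against.
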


We will at some point also work with the metric graph: for a subgraph of $\Z^2$ its metric graph version can be just seen as the set $V \cup E \subseteq \R^2$ with the subset metric induced from the usual Euclidean metric on $\R^2$. One can define both (vector-valued) random walk loop soups $\tilde \L$ and GFFs $\tilde \phi$ on the metric graphs \cite{lupu2016loop, zhai}. See again \cite{WP} for more details.

\begin{theorem}[Signed isomorphism theorem on the metric graph \cite{lupu2016loop}]
\label{th.isom2}
Consider a vectorial random walk loop soup $\tilde \L$ on the metric version of a subgraph of $\Z^2$ with non-zero boundary and zero boundary conditions on that boundary. Then $(\tilde L^1, \dots, \tilde L^N)$ has the same law as $((\tilde \phi^1)^2, \dots, (\tilde \phi^N)^2)$ where $\tilde \phi$ is an $N$-vectorial GFF on the same graph with the same boundary conditions.

Further, if we let $s^i$ to be equal to a $\pm 1$ valued random function defined by sampling an independent Rademacher r.v. for each connected component of $\{\tilde L^i \neq 0 \}$, we have that $(s^1 \sqrt{\tilde L^1}, \dots, s^N\sqrt{\tilde L^N})$ is equal in law to $(\tilde \phi^1, \dots, \tilde \phi^N)$.

The same holds for the massive versions with either free or zero boundary conditions for both the loop soup and the field. 
\end{theorem}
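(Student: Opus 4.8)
The plan is to split the statement into an \emph{unsigned} part, which pins down the law of the occupation field, and a \emph{sign} part, which pins down the conditional law of the signs of $\tilde\phi$ given that occupation field.

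For the unsigned part I would show $(\tilde L^1,\dots,\tilde L^N)\overset{d}{=}((\tilde\phi^1)^2,\dots,(\tilde\phi^N)^2)$. Since the $N$ coordinates are independent on both sides, this reduces to the scalar identity $\tilde L\overset{d}{=}\tilde\phi^2$ on the metric graph, which I would deduce from the discrete isomorphism theorem (Theorem~\ref{th.isom1}) by a subdivision argument: replace each edge of the underlying finite subgraph of $\Z^2$ by a path of $k$ edges of conductance $k$ (leaving effective resistances unchanged), apply Theorem~\ref{th.isom1} on the subdivided graph, and let $k\to\infty$. The discrete Green's functions converge to the metric-graph Green's function, so the discrete GFFs, interpolated along the cables, converge in law to $\tilde\phi$, while the discrete random walk loop soups converge to the Brownian loop soup on the metric graph with occupation fields converging to $\tilde L$ (continuity of Brownian local time). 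The massive version is identical after replacing the Green's function by the massive one; since the mass already kills the walk this also covers free boundary conditions.

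For the sign part it is enough to prove that, conditionally on $|\tilde\phi|$ (equivalently, by the first part, on $\tilde L$), the signs of $\tilde\phi$ on the distinct connected components of $\{\tilde\phi\neq0\}$ are i.i.d.\ uniform on $\{\pm1\}$; granting this, the stated identity $(\tilde\phi^1,\dots,\tilde\phi^N)\overset{d}{=}(s^1\sqrt{\tilde L^1},\dots,s^N\sqrt{\tilde L^N})$ follows coordinate by coordinate. Since $|\tilde\phi|$ is continuous it determines the open set $\{\tilde\phi\neq0\}$, hence a measurable enumeration $C_1,C_2,\dots$ of its components, on each of which $\tilde\phi$ has a constant sign $\epsilon_j$, and $\partial C_j\subseteq\{\tilde\phi=0\}$. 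The engine is the symmetry $\tilde\phi\mapsto-\tilde\phi$ together with the domain Markov property of the metric-graph GFF: knowing the field off $C_j$ (hence that it vanishes on $\partial C_j$) and knowing $|\tilde\phi|$ on $C_j$ (hence that $\tilde\phi$ does not vanish inside $C_j$), the only remaining freedom in $\tilde\phi|_{C_j}$ is its global sign, which is a fair coin because the conditional law is invariant under a sign flip on $C_j$; as distinct components are separated by zeros and the field is conditionally independent across them, iterating this yields the claim. It is essential that we are on the metric graph, where $\{\tilde\phi=0\}$ genuinely disconnects the space; on the underlying lattice the GFF signs are positively correlated given the magnitudes and the analogous statement fails. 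Concretely I would first run this sign analysis cable by cable, using It\^o's excursion theory for the Brownian bridge (the excursions of a bridge away from $0$, marked with independent fair signs, reconstruct the bridge), and then glue at the vertices of the underlying graph via the Markov property, as in Lupu's original approach \cite{lupu2016loop}.

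The main obstacle is precisely the gluing step: $C_j$ is a random closed set determined by $|\tilde\phi|$ and it is \emph{not} an optional set in the sense of Remark~\ref{r.optional} (knowing $\tilde\phi$ on a deterministic set does not tell us whether it vanishes off that set), so the domain Markov property cannot be invoked off the shelf; the heuristic ``resample the sign on $C_j$'' must be made rigorous by a careful disintegration over the probability-zero shape of the zero set, which on each cable is a Cantor-type set. The cable-first strategy is meant precisely to confine all of this delicate conditioning to the well-understood excursion theory of the one-dimensional Brownian bridge, after which the vertex-by-vertex gluing is routine.
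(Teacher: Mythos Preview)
The paper does not prove this theorem at all: it is stated in the preliminaries section as a known result imported from Lupu~\cite{lupu2016loop} (with further references to \cite{WP}), so there is no ``paper's own proof'' to compare against. Your proposal is a reasonable high-level sketch of Lupu's original argument---the subdivision/limit for the unsigned part and the excursion-theoretic sign analysis on cables followed by gluing---and you correctly flag the delicate point (that the sign clusters are not optional sets and the conditioning must go through Brownian-bridge excursion theory rather than a naive domain Markov appeal). As a self-contained proof it remains a sketch, since the convergence of the subdivided loop soups and their local times to the metric-graph objects, as well as the precise disintegration in the sign step, each require real work; but since the paper treats the statement as a black box, nothing more was expected here.
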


Finally, we also make use of the following standard coupling between massive and non-massive random walk loop soups on $\Lambda_n$ that stems just from the coupling between massive and non-massive random walks (see e.g. Proposition 3.2 in \cite{camiamass}).
\begin{lemma}[Coupling of massive and non-massive loop soups]\label{l.coupling}
	Fix	 $n\in \N$ and $\epsilon>0$, there exists $m_0=m_0(\eps)$ such that for all $m<m_0$ the following is true. There exists a coupling between $\mathcal L^{(0)}$ a massless-random walk loop soup and $\mathcal L^{(m)}$ an $m$-mass random walk loop soup, both defined on $\Lambda_n$ with zero boundary conditions, such that with probability $1 -\eps$ we have that $\mathcal L^{(0)}=\mathcal L^{(m)}$.
\end{lemma}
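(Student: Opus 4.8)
The plan is to reduce the statement about loop soups to the analogous statement about the single random walks out of which the loop soups are built, and then to control those random walks directly. Recall that a random walk loop soup at intensity $1/2$ on $\Lambda_n$ is a Poisson point process of rooted loops with intensity $\tfrac12 \mu_n$, where $\mu_n$ is the loop measure associated to simple random walk on $\Lambda_n$ killed on $\partial \Lambda_n$; the $m$-massive loop soup is the same construction with the massive walk, i.e. the walk that at each step is killed with probability $m^2/(4+m^2)$ in addition to being killed on exiting $\Lambda_n$. The key point is that for fixed $n$ the massive loop measure $\mu_n^{(m)}$ converges in total variation to $\mu_n^{(0)}$ as $m\to 0$: indeed both are finite positive measures on the countable set of rooted loops in $\Lambda_n$ (there are only finitely many loops of each length, and the total masses are finite since $\Lambda_n$ is a finite graph with non-empty killing boundary), the weight of a given loop of length $\ell$ is an explicit continuous function of $m^2$ of the form $(4/(4+m^2))^\ell$ times the massless weight, and these converge to the massless weights monotonically with a convergent majorant. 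Hence $\|\mu_n^{(m)} - \mu_n^{(0)}\|_{TV} \to 0$ as $m\to 0$.

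Next I would use the standard coupling of Poisson point processes: if two $\sigma$-finite intensity measures $\nu_1 \le \nu_2$ (or more generally with $\nu_1 \wedge \nu_2$ large) are given, one can couple the corresponding PPPs so that they agree off an event whose probability is controlled by the total variation distance between the intensities. Concretely, write $\mathcal L^{(0)}$ as a PPP of intensity $\tfrac12\mu_n^{(0)}$ and $\mathcal L^{(m)}$ as a PPP of intensity $\tfrac12\mu_n^{(m)}$; take the common part to be a PPP of intensity $\tfrac12(\mu_n^{(m)}\wedge\mu_n^{(0)})$ and add independent PPPs of intensities $\tfrac12(\mu_n^{(0)} - \mu_n^{(m)})_+$ and $\tfrac12(\mu_n^{(m)} - \mu_n^{(0)})_+$ respectively. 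Then $\{\mathcal L^{(0)} = \mathcal L^{(m)}\}$ contains the event that both of these extra processes are empty, which has probability
\begin{align*}
\exp\!\left(-\tfrac12 \|\mu_n^{(m)} - \mu_n^{(0)}\|_{TV}\right),
\end{align*}
and by the total-variation convergence established above this is at least $1-\eps$ once $m$ is smaller than some threshold $m_0 = m_0(\eps)$ (depending also on $n$, which is fixed). Alternatively, and perhaps more transparently, one can couple at the level of the walks: realize both loop soups via Wilson-type / Lawler--Werner loop constructions driven by the same underlying simple random walk excursions, letting the massive walk be the massless walk run up to an independent geometric killing time; on the event that no killing occurs before the walk would anyway exit $\Lambda_n$ — an event of probability tending to $1$ as $m\to0$ for fixed $n$ — the two loop soups coincide. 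This is exactly the content of Proposition 3.2 in \cite{camiamass}, which I would cite for the details.

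The main (very mild) obstacle is purely bookkeeping: one must make sure the coupling is done simultaneously for the entire loop soup rather than loop-by-loop, i.e. that the "bad" event is a single event of small probability and not a union over infinitely many loops. This is handled cleanly by the PPP superposition/thinning argument above, since the relevant bad event is just "a certain auxiliary Poisson process is empty", whose probability is $e^{-\text{(small)}}$. Everything else — finiteness of the loop masses on $\Lambda_n$, continuity in $m^2$, monotone convergence — is immediate from $\Lambda_n$ being a finite graph with non-empty boundary, and no quantitative control in $n$ is needed since $n$ is fixed throughout.
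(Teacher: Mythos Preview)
Your proposal is correct and in fact more detailed than what the paper does: the paper does not give a proof at all, it simply states the lemma and points to Proposition~3.2 in \cite{camiamass}, noting that it ``stems just from the coupling between massive and non-massive random walks''. Your PPP superposition/thinning argument is a clean way to carry this out, and you also mention the same random-walk-level coupling and the same reference the paper defers to, so the approaches are aligned.

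One small caveat worth flagging: the assertion that both loop measures have finite total mass is correct in the discrete-time formulation you are using (total mass $=-\log\det(I-P)<\infty$), but in the continuous-time (Le~Jan) formalism --- which is the one implicitly underlying the clean isomorphism $L^i=(\phi^i)^2$ the paper states --- the loop measure has infinite mass because of arbitrarily short loops. This does not break your argument, since in either setting $\mu_n^{(m)}\le \mu_n^{(0)}$ pointwise and the \emph{difference} $\mu_n^{(0)}-\mu_n^{(m)}$ has finite total mass (for continuous time, $\int_0^\infty t^{-1}(1-e^{-m^2 t})\,\mathrm{Tr}\,p_t\,dt<\infty$); so your thinning coupling and the bound $\P(\mathcal L^{(0)}=\mathcal L^{(m)})\ge \exp(-\tfrac12(\mu_n^{(0)}-\mu_n^{(m)})(\text{total}))$ go through unchanged. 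You might want to phrase the finiteness claim in terms of the difference rather than the individual measures to be safe against the choice of convention.
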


\begin{remark}\label{}
If one would want to make our results more quantitative, it is easy to check that in this Lemma, one may choose the mass $m_0$ to be of order $\eps^{1/2} n^{-1}$.
\end{remark}

\subsection{A percolation estimate.}

To finish preliminaries, we recall a classical result on the size of open clusters for subcritical Bernoulli percolation on $\Z^2$.	

\begin{theorem}[\cite{aizenman1987sharpness,menshikov1986coincidence,duminil2017new}]\label{th.sharp}
For any $0\leq p<p_c$ there exists a $C=C(p)$ and $\alpha:=\alpha(p,C)>0$ such that the probability of the  event $(\mathcal C_C)^c$ is smaller than $Cn^{-\alpha}$ as $\Lambda_n\nearrow \Z^2$.
\end{theorem}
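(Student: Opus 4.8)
The statement is a direct repackaging of the cited sharpness theorems, so the plan is simply to feed their output into a union bound over vertices. Recall that $\mathcal C_C$ denotes the event that every open cluster meeting the box $\Lambda_n$ has (Euclidean) diameter at most $C\log n$ — equivalently, that no two vertices of $\Lambda_n$ at mutual distance $\geq C\log n$ are joined by an open path — so $(\mathcal C_C)^c$ is the event that some vertex of $\Lambda_n$ sits in a ``large'' cluster. The single input is the standard consequence of \cite{aizenman1987sharpness,menshikov1986coincidence,duminil2017new}: for every $p<p_c(\Z^2)$ there is a constant $c=c(p)>0$ such that $\P_p\bigl(0\longleftrightarrow \partial B_k\bigr)\leq e^{-ck}$ for all $k\geq 1$, where $B_k$ is the ball of radius $k$ around the origin (i.e. the radius of the cluster of a fixed vertex has an exponential tail; this is the usual submultiplicative upgrade of finiteness of the susceptibility).

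First I would observe that if some cluster meeting $\Lambda_n$ has diameter exceeding $C\log n$, then it contains a vertex $x\in\Lambda_n$, and since $x\in\mathcal K$ forces $\max_{z\in\mathcal K}|x-z|\geq\tfrac12\diam(\mathcal K)$, this $x$ is connected by an open path to $\partial B\bigl(x,\tfrac12 C\log n\bigr)$. Hence, by a union bound and translation invariance,
\[
\P_p\bigl((\mathcal C_C)^c\bigr)\;\leq\;\sum_{x\in\Lambda_n}\P_p\Bigl(x\longleftrightarrow \partial B\bigl(x,\tfrac12 C\log n\bigr)\Bigr)\;\leq\;(2n+1)^2\,e^{-\frac{c}{2}C\log n}\;\leq\;9\,n^{2}\,n^{-cC/2}\;=\;9\,n^{-(cC/2-2)}.
\]

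Finally I would choose $C=C(p)$ large enough that $cC/2>3$ and set $\alpha:=\alpha(p,C):=cC/2-2\,(>1>0)$; then the right-hand side is $9\,n^{-\alpha}$, which is at most $C\,n^{-\alpha}$ after enlarging the constant, and this holds uniformly in $n$ as $\Lambda_n\nearrow\Z^2$, exactly as claimed. There is no real obstacle here beyond bookkeeping: the only mild points are keeping the diameter-versus-radius factor of $\tfrac12$ straight and noting that the argument is insensitive to whether one measures distance in $\ell^1$, $\ell^2$ or $\ell^\infty$, so the precise shape of the boxes in the definition of $\mathcal C_C$ plays no role.
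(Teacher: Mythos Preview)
Your argument is correct and is the standard way to extract a polynomial bound on the ``no large cluster'' event from the exponential decay of connection probabilities established in the cited works. The paper itself does not give a proof of this statement: it is recorded in the preliminaries as a classical result, with the references \cite{aizenman1987sharpness,menshikov1986coincidence,duminil2017new} serving as the proof. Your interpretation of the (undefined in the paper) event $\mathcal C_C$ is consistent with how the theorem is used later, where it is invoked to bound the probability that some subcritical Bernoulli cluster in a box has diameter exceeding $C\log n$.
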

\section{Discrete two-valued sets are not macroscopic}\label{s.AR}

The aim of this section is to prove Theorem \ref{th.ExitSet}, restated here for the convenience of the reader.
\begin{theorem}\label{th.ExitSet2}
Let $N\geq 2$. For any $\eps>0$ and any $R>0, k\geq 1$, if $\phi : \Lambda_n=[-n,n]^2 \to \R^N$ is a vectorial GFF with zero-boundary conditions on $\Lambda_n$ and if $A_{R,k}$ is the exit set defined in~\eqref{e.ARk}, then 
\begin{align*}\label{}
\lim_{n \to \infty}  \Pb{A_{R,k} \cap \Lambda_{(1-\eps) n}} = 0\,. 
\end{align*}
More quantitatively, we may allow the thresholds $R$ and $k$ to depend on the scale $n$:
\begin{align*}\label{}
\lim_{n \to \infty}  \Pb{A_{R(n),k(n)} \cap \Lambda_{(1-\eps) n}} = 0\,,
\end{align*}
as long as $(R(n) + \log k(n))^2 = o(\log n)$.
\end{theorem}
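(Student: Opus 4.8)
The plan is to apply Proposition \ref{pr.small_harmonic_function} in its straightforward vector-valued form: the same argument, with $\phi^2(0)$ replaced by $\|\phi(0)\|^2$ and $G_{\Lambda_n}$ by $N\,G_{\Lambda_n}$, shows that if $A$ is a local set with $A\cup\partial\Lambda_n$ connected and $\E[\|\phi_A(0)\|^2]\le\delta$, then $\P(A\cap\Lambda_{(1-\epsilon)n}\ne\emptyset)\le\delta/C(\epsilon)$. Now $A:=A_{R,k}$ is a local set: by \eqref{e.ARk} it is an optional set in the sense of Remark \ref{r.optional}, because $\{A_{R,k}\subseteq C\}$ is measurable with respect to $\|\phi\|$ restricted to $C$ and its outer boundary; moreover $A\supseteq\partial\Lambda_n$ and $A$ is connected for every $k\ge1$ (two consecutive vertices of a good $k$-path are within $k$, so their $k$-neighbourhoods overlap), and crucially $A$ depends on $\phi$ \emph{only through the radial field} $\chi:=\|\phi\|$. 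Hence it suffices to prove
\[
\E\bigl[\,\|\phi_{A_{R,k}}(0)\|^2\,\bigr]\;\le\;\delta_n ,\qquad \delta_n=\delta_n(R,k)\longrightarrow 0\ \text{ as }n\to\infty,
\]
and more precisely $\delta_n\lesssim_{\epsilon}(R+\log k)^2/\log n$, which tends to $0$ under the hypothesis $(R+\log k)^2=o(\log n)$.

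To estimate $\E[\|\phi_{A}(0)\|^2]$ I would condition on the whole radial field $\chi$. Then $A$ becomes deterministic, and by Proposition \ref{pr.anglesGFF} the angles $\theta=\phi/\|\phi\|$ form an $O(N)$-model on $\Lambda_n\setminus\partial\Lambda_n$ with conductances $C_{uv}=\chi(u)\chi(v)$; by the $O(N)$-symmetry $\langle\theta(x)\rangle=0$, and $0\le\langle\theta(x)\cdot\theta(y)\rangle\le1$. Writing the harmonic extension as $\phi_{A}(0)=\sum_{x\in A}\omega(0,x)\,\chi(x)\,\theta(x)$, where $\omega(0,\cdot)$ is the harmonic measure of $\Lambda_n\setminus A$ seen from $0$ (deterministic given $\chi$), one gets
\[
\E\bigl[\,\|\phi_{A}(0)\|^2\,\big|\,\chi\,\bigr]=\sum_{x,y\in A}\omega(0,x)\,\omega(0,y)\,\chi(x)\,\chi(y)\,\langle\theta(x)\cdot\theta(y)\rangle .
\]
On the ``good'' part $\{\chi\le R\}$ of $A$ the radial factor is at most $R$; on the remaining $k$-neighbourhood part $\chi$ is at most $R+O(\sqrt{\log k})$, conditionally on being adjacent to a good vertex. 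Using only this, $\langle\theta(x)\cdot\theta(y)\rangle\le1$, and $\sum_x\omega(0,x)\le1$ gives the \emph{trivial} bound $\E[\|\phi_A(0)\|^2\mid\chi]\le(R+O(\sqrt{\log k}))^2$, which is $o(\log n)$ but does not tend to $0$. The idea is to beat it by a law of large numbers over the angles: split the harmonic measure according to which dyadic annulus $\Lambda_{2^{j+1}}\setminus\Lambda_{2^{j}}$ around $0$ the walk exits in. For $A$ to carry non-negligible harmonic measure in annulus $j$, its good part must cross that annulus through a long passage of bounded conductance (at most $R^2$, and of spatial width at most $k$); along such a passage the $O(N)$ two-point function $\langle\theta(x)\cdot\theta(y)\rangle$ decays in $|x-y|$ by a Mermin--Wagner / McBryan--Spencer type estimate, so the angles seen in annulus $j$ are ``effectively independent'' over many blocks and the cancellation in $\sum_x\omega(0,x)\chi(x)\theta(x)$ produces the extra vanishing factor. (The confinement of the passage to width $k$ is what turns logarithms into the term $\log k$ in the hypothesis.)

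The main obstacle is making this decorrelation rigorous, and I expect it to require an induction over scales. A priori the $O(N)$-model with conductances as large as $R^2$ need not decorrelate at the required scale in two dimensions, and $\theta$ inside the good passage is moreover coupled to the surrounding ``warm bulk'' $\{\chi\gg R\}$, whose angle is essentially frozen; so a naive decorrelation statement is false. The resolution --- matching the heuristic of Figure \ref{f.main_idea} --- is to run the harmonic estimate inductively from the boundary scale inward: the bound on $\E[\|\phi_A(0)\|^2]$ that controls $\P(A\text{ reaches }\Lambda_{2^{j}})$ only needs the already-established fact that $A$ reaches $\Lambda_{2^{j+1}}$ with small probability, which forces any passage down to scale $2^{j}$ to be genuinely long and thin, hence decorrelating. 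The input driving the induction is that away from $\partial\Lambda_n$ one has $\chi\sim\sqrt{\log n}$, so $\P(\chi(x)\le R)\lesssim(R^2/\log n)^{N/2}\to0$ for $N\ge2$ and good vertices are sparse; tracking how the gains $R^2/\log n$ and the losses $\log k$ accumulate along a traversal is precisely what yields the condition $(R+\log k)^2=o(\log n)$. A minor technical point is that the $k$-neighbourhood part of $A$, on which $\chi$ is not bounded by $R$, contributes only negligibly: seen from $0$ it lies behind the good part, so it carries no more harmonic measure than the good part while its radial factor is only larger by $O(\sqrt{\log k})$.
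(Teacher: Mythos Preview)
Your high-level strategy matches the paper's: reduce to a bound on $\E[\|\phi_A(0)\|^2]$ via Proposition~\ref{pr.small_harmonic_function}, expand $\phi_A(0)$ using harmonic measure, and exploit decorrelation of the angles $\theta=\phi/\|\phi\|$ through a Mermin--Wagner mechanism. However, your execution has a genuine gap at exactly the place you yourself flag.

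By conditioning on the \emph{entire} radial field $\chi=\|\phi\|$, you turn $\theta$ into an $O(N)$ model on all of $\Lambda_n$ with conductances $\chi(u)\chi(v)$. Off the exit set $A$ these conductances are of order $\log n$, so the angles in the ``warm bulk'' are essentially rigid on the relevant scales; since every point of $\partial A$ sits within distance $k$ of this bulk, the angles $\theta(x),\theta(y)$ with $x,y\in\partial A$ can inherit strong long-range correlations through the bulk. Your proposed fix (an induction over dyadic annuli using sparsity of good vertices) is only a heuristic, and it is not clear how the exponential freezing of angles in the bulk would be beaten by the polynomial sparsity $\P(\chi(x)\le R)\lesssim(R^2/\log n)^{N/2}$. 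The paper does \emph{not} run such an induction.

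The paper's resolution is to condition only on the event $\{A_{R,k}=A\}$, not on $\chi$ outside $A$. By the domain Markov property, the restriction $\phi|_A$ then has the law of a vectorial GFF on an \emph{effective graph on $A$}, with edge weights $d_s\,p^1_{s\to t}$ given by first-hitting probabilities (equation~\eqref{e.energyphi_A}). Conditioning further on $r_A=\|\phi\|\big|_A$, the angles $\theta|_A$ become an $O(N)$ model on this effective graph, and the warm bulk has been integrated out completely. A direct Mermin--Wagner/Pinsker argument on this model (Proposition~\ref{pr.MW}) gives the quantitative bound
\[
\E\bigl[\theta(x)\cdot\theta(y)\mid A_{R,k}=A\bigr]\;\le\;K\,\frac{R+\log(k+2)}{\sqrt{\log d(x,y)}}\wedge 1,
\]
with the factor $R+\log k$ entering through the fluctuation bound of Proposition~\ref{pr.fluctuations} on $r_A$. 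Plugging this and the fourth-moment bound on $r_A$ into your harmonic-measure expansion, and using that the \emph{stopped} set $\hat A_{R,k}\subseteq\Lambda_n\setminus\Lambda_{n/2}$ forces $\nu_x\le n^{-c}$, yields $\E[\|\phi_{\hat A}(0)\|^2]\le C(R+\log k)^6/\log n$ without any multi-scale recursion. Two further points: your claim that $\chi\le R+O(\sqrt{\log k})$ on the $k$-fringe is essentially Proposition~\ref{pr.fluctuations}, whose proof (via the conditional FKG of Subsection~\ref{ss.condFKG} and an entropic-repulsion comparison) is not immediate; and you should work with $\hat A_{R,k}$ rather than $A_{R,k}$ to get the harmonic-measure bound.
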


Recall also the notation of $A_{R,k}$: this is the set of all vertices of $\Lambda_n$ which can be reached from the boundary by a path where the norm of the GFF remains less than $\R$, and where a path can jump to at most graph distance $k$ on every step. Formally, $A_{R,k}$ is equal to 
\begin{align*}\label{e.ARk2}
\left \{  x \in \Lambda_n:  (\exists x_0\in \partial \Lambda_n, \ldots, x_m=x) (\forall 0\leq i\leq m-1)\,   d(x_i,x_{i+1}) \leq k, \|\phi(x_i)\|\leq R \right \}
\end{align*}
It is easy to see that these sets are optional sets of the GFF as defined in Remark \ref{r.optional} and thus give rise to a Markovian decomposition in the sense of Definition \ref{d.LocalSet}.

For technical reasons, it will be also useful to consider the optional set $\hat A_{R,k}$ where we add the additional constraint that all vertices `explored' are at most at distance $n/2$ from the boundary. More precisely $x\in \hat A_{R,k}$ if there exist a path  $\{ x_0\in \partial \Lambda_n, \ldots, x_m=x\}\subseteq \Lambda_n\backslash \Lambda_{n/2}$ such that:
\begin{align}
 d(x_i,x_{i+1}) \leq k \text{ and } \|\phi(x_i)\|\leq R, \,\, \ \ \ \ \forall 0 \leq i \leq m-1.
\end{align}

The theorem is proved in several steps:
\begin{itemize}
\item First, in Subsection \ref{subseq:flct} we obtain bounds on the fluctuations of the GFF near the boundary of our exit set.
\item Second, in Subsection \ref{subseq:MW} we develop a Mermin-Wagner type argument to see that the angles on the boundary of the exit set mix well.
\item Finally, in Subsection \ref{subseq:fnl} we use this argument to obtain that the harmonic function of $\hat \phi_{A_{R,k}}$ has negligible variance at the origin.
\end{itemize}

This final step results in the following proposition:

\begin{proposition}\label{p.small_harmonic_function_for_explorations}
	Let $\hat A_{R,k}$ be the exit set of height $R$ on the graph with jumps allowed to distance $k$ and stopped when entering $\Lambda_{n/2}$. We have that 
	\begin{align*}
		\E\left[\phi_{\hat A_{R,k}}^2(0) \right] \leq C\frac{(R+\log(k+2))^6}{\log n}.
	\end{align*}
\end{proposition}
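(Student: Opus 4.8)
The plan is to combine the three ingredients described in the section outline into a single bound on $\E[\phi_{\hat A_{R,k}}^2(0)]$. Recall that by locality of $\hat A_{R,k}$ we have the decomposition $\phi = \phi_{\hat A} + \phi^{\hat A}$, where $\phi_{\hat A}$ is the harmonic extension to $\Lambda_n\setminus \hat A$ of the boundary values of $\phi$ on $\hat A\cup\partial\Lambda_n$. Since $\phi\equiv 0$ on $\partial\Lambda_n$ and $\hat A \subseteq \Lambda_n\setminus\Lambda_{n/2}$, we can write $\phi_{\hat A}(0) = \sum_{x\in\partial \hat A} H_{\Lambda_n\setminus \hat A}(0,x)\,\phi(x)$, where $H$ is the harmonic measure from $0$; moreover on $\partial\hat A$ the norm $\|\phi(x)\|$ is of order $R$ (more precisely it jumped just above $R$, so it is bounded by $R + O(k)$ using the Gaussian tails of a single increment across the at-most-$k$ jump — this is the kind of estimate isolated in Subsection~\ref{subseq:flct}). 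The trivial bound is then $\|\phi_{\hat A}(0)\|\lesssim (R+\text{something}) \sum_x H(0,x) = O(R+\ldots)$, and the whole point is to beat this using cancellation.

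The key step is that the \emph{directions} $\theta(x) = \phi(x)/\|\phi(x)\|$ for $x\in\partial\hat A$ are, despite visual clustering, sufficiently well-spread on $\S^{N-1}$ that a law-of-large-numbers cancellation occurs when we sum $\sum_x H(0,x)\|\phi(x)\|\theta(x)$ against the harmonic weights. Concretely, I would condition on $\hat A$ and on the \emph{radial} part $\|\phi\|$ restricted to $\hat A$ together with $\phi^{\hat A}$-data — i.e.\ on everything except the angular field $\theta$ on $\partial\hat A$; then by Proposition~\ref{pr.anglesGFF} (applied on the metric/discrete graph, conditionally) the angles on $\partial\hat A$ form an $O(N)$-model in a quenched environment of conductances $C_{uu'}=\|\phi(u)\|\|\phi(u')\|$, which here are $O(R^2)$. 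The Mermin--Wagner / McBryan--Spencer complex-rotation argument of Subsection~\ref{subseq:MW} then shows that for an $O(N)$-model with $N\ge 2$ on a subgraph of $\Z^2$ at inverse temperature $O(R^2)$, two-point correlations $\E[\theta(x)\cdot\theta(y)]$ decay (at worst polynomially, with exponent controlled by $1/R^2$) in the graph distance within the explored annulus. This is exactly where $N\ge 2$ is essential. Plugging this into the second moment
\begin{align*}
\E\bigl[\phi_{\hat A}(0)^2 \,\big|\, \mathcal F\bigr] = \sum_{x,y\in\partial\hat A} H(0,x)H(0,y)\,\|\phi(x)\|\|\phi(y)\|\, \E[\theta(x)\cdot\theta(y)\mid\mathcal F],
\end{align*}
the off-diagonal terms are damped by the correlation decay, and the diagonal is controlled because $\sum_x H(0,x)^2$ is small — of order $1/\log n$ by standard estimates on harmonic measure / effective resistance in $\Z^2$ from a point deep inside to the boundary of an annulus at distance $\gtrsim n$ (the $\log n$ is the discrete-GFF variance growth). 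Carefully optimizing the interplay between the polynomial correlation-decay exponent $\sim c/R^2$, the number of terms $\sim n^2$, the jump size $k$ (which enters through $\log(k+2)$ via the allowed geometry of paths and the tail of a single $k$-step increment), and the $1/\log n$ harmonic-measure gain is what produces the stated bound $C(R+\log(k+2))^6/\log n$; the power $6$ should emerge from this optimization (roughly: a factor $R^2$ from conductances times $R^2$ from inverting the decay exponent, times another $R^2$-type loss from summing the polynomial tail over the annulus, all multiplied by the base-scale factors).

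The main obstacle, and the technical heart, will be making the Mermin--Wagner argument work \emph{quenched}: the conductances $C_{uu'}=\|\phi(u)\|\|\phi(u')\|$ are random, spatially correlated, and only typically $O(R^2)$ — one must control the rare regions where they are much larger. I expect the fix to be a two-layer argument: first a deterministic McBryan--Spencer bound that gives correlation decay with a rate depending on $\sup C_{uu'}$ along geodesics, then a large-deviation input (from the Gaussian field, e.g.\ via the isomorphism with the loop soup or directly from GFF tail bounds) showing that on the scales $\gtrsim \exp((R+\log k)^{O(1)})$ the environment is uniformly good with overwhelming probability; the exponent $6$ is presumably where the slack in these crude bounds is absorbed. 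A secondary nuisance is that $\partial\hat A$ need not be a nice curve, so one has to run the angular $O(N)$-model on whatever (possibly fractal, possibly disconnected) subgraph the exploration produced, and verify that Proposition~\ref{pr.anglesGFF} and the conditional FKG of Lemma~\ref{l.FKG} apply verbatim to the conditioned field on that subgraph; restricting to $\hat A\subseteq\Lambda_n\setminus\Lambda_{n/2}$ (so that $0$ is at macroscopic distance) is what keeps the harmonic-measure estimates clean, and that is presumably the technical reason the hatted version is introduced. Finally, Proposition~\ref{p.small_harmonic_function_for_explorations} feeds directly into Proposition~\ref{pr.small_harmonic_function} with $\delta = C(R+\log(k+2))^6/\log n$, which is $o(1)$ precisely under the hypothesis $(R(n)+\log k(n))^6 = o(\log n)$, yielding Theorem~\ref{th.ExitSet2}.
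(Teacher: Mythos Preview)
Your overall architecture is right and matches the paper: expand $\phi_{\hat A}(0)$ as a harmonic-measure average over $\hat A$, control the radial part by the fluctuation bounds of Subsection~\ref{subseq:flct}, and extract cancellation in the angular part via a Mermin--Wagner argument for the induced $O(N)$ model. But several of the technical mechanisms you describe are not what actually works, and following them literally would lead you astray.

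First, the angular decay is \emph{not} polynomial of McBryan--Spencer type with exponent $\sim 1/R^2$. The paper's Proposition~\ref{pr.MW} uses the Pinsker/relative-entropy form of Mermin--Wagner and only delivers
\[
\E[\theta(x)\cdot\theta(y)\mid A_{R,k}=A]\ \le\ K\,\frac{R+\log(k+2)}{\sqrt{\log d(x,y)}}\wedge 1,
\]
i.e.\ logarithmic decay. The good news is that this is exactly enough, and that the Pinsker route makes your ``quenched conductances'' worry disappear: the random conductances $r_s r_t$ enter linearly in the entropy bound, so one simply takes a conditional expectation and bounds $\E[r_s r_t\mid A]$ by Proposition~\ref{pr.fluctuations}. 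No large-deviation or ``good environment'' argument is needed.

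Second, the diagonal is not handled via $\sum_x H(0,x)^2\sim 1/\log n$ (that scaling is in any case incorrect; you are conflating effective resistance with an $\ell^2$-norm of harmonic measure). What the paper uses is a \emph{pointwise} Beurling-type bound $\nu_x\le n^{-c}$, available because $\hat A\subset \Lambda_n\setminus\Lambda_{n/2}$. One then splits the double sum into $\|x-y\|\le n^{c/3}$ (trivial bound, at most $n^{2c/3}$ terms, each carrying an extra $n^{-c}$) and $\|x-y\|\ge n^{c/3}$ (where the logarithmic Mermin--Wagner bound already gives $O((R+\log k)^2/\log n)$). With only $1/\sqrt{\log}$ decay this splitting via a pointwise harmonic-measure bound is essential; your scheme based on $\sum H^2$ would not close.

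Third, the exponent $6$ is not the outcome of an optimisation: it is simply $4+2$. A H\"older step produces $(\E[r_x^4])^{1/4}(\E[r_y^4])^{1/4}$, contributing $(R+\log(k+2))^2$ (so $(R+\log(k+2))^4$ after squaring), and the angular bound contributes the remaining $(R+\log(k+2))^2/\log n$. Finally, the ``jump above $R$ by $O(k)$'' picture is too naive: the relevant input is the moment bound of Proposition~\ref{pr.fluctuations}, which gives $(R+\log(k+2))^{2p}$ for points at distance $\le k$ from the sublevel set, and this is where the $\log(k+2)$ originates.
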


Theorem \ref{th.ExitSet2} follows easily by plugging this estimate into Proposition \ref{pr.small_harmonic_function}. 

\begin{proof}[Proof of Theorem \ref{th.ExitSet2}]
Notice that it suffices to prove the theorem with $\hat A_{R,k}$ instead of $A_{R,k}$, which for the ease of notation we just denote by $A$. 
By Proposition \ref{p.small_harmonic_function_for_explorations}, we see that for any $\delta>0$, there exists $n$ sufficiently big so that $\E\left[\phi_{A}^2(0) \right]\leq \delta$. We can now use Proposition \ref{pr.small_harmonic_function} to see that for any $\epsilon,\delta>0$
	\begin{align}
		\limsup_{n\to \infty}\P[A\cap \Lambda_{n(1-\epsilon)}\neq \emptyset]\leq \frac{\delta}{f(\epsilon)},
	\end{align} 
from where we conclude by letting $\delta \to 0$.
\end{proof}

\subsection{Fluctuations of the field near an exploration boundary.}\label{subseq:flct}

The goal of this subsection is to understand the fluctuations of the free field close to the explored region. This is summarised in the following proposition.
\begin{proposition}\label{pr.fluctuations}
	Let $\Lambda$ be any finite subset of $\Z^2$ and $\phi: \Lambda \to \R^N$ be an $N$-vectorial GFF with $0$ boundary condition on $V_{\partial}\subseteq \Lambda$ and $a:\Lambda \to \R^+$ a function upper bounded by $a_\infty \in \R^+$. Let  $V_{\leq}$ and $V_{>}$ be non-empty subsets of $\Lambda$  and let $\calK$ be the event that $\|\phi(v)\| \leq a(v)$ on $V_{\leq}$ and $\|\phi(v)\|>a(v)$ on $V_{>}$. 
	
Then there exists a constants $C=C(N)<\infty$ which does not depend on $\Lambda$ nor $V_\p,V_{\leq},V_{>},$ such that
	\begin{align}\label{}
		\sup_{d(v,V_{\leq })\leq \rho}\Eb{e^{\lambda \|\phi(v)\|}\md \calK} \leq Ce^{\lambda N a_\infty + \frac{2N}{\sqrt{2\pi}}\lambda \log (\rho+1)+\frac{N}{2\pi}\frac{\lambda^2}{2}\log(\rho+1)}, \text{ for all }\lambda>0,\, \rho\geq 0.
\end{align}
In particular, for $p \in \N$ there exists $K_p=K(p, N) > 0$ such that 
\begin{align}\label{eq:2nd}
	\sup_{d(v,V_{\leq })\leq \rho}\E\left[\|\phi(v)\|^{2p} \md \calK \right]\leq K_p(a_\infty+\log(\rho+2))^{2p}. 
\end{align}
\end{proposition}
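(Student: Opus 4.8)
The plan is to reduce the whole estimate to a Gaussian computation near the zero--set of an auxiliary GFF. Fix $v$ with $d(v,V_\leq)\le\rho$ and a witness $w\in V_\leq$ with $d(v,w)\le\rho$. Since $V_\leq$ is deterministic we condition on $\phi|_{V_\leq}$: on $\calK^{-}:=\{\|\phi(x)\|\le a(x)\ \forall x\in V_\leq\}$ we have $\phi=h+\psi'$ on $\Lambda\setminus V_\leq$, where $h$ is the $\R^N$--valued discrete harmonic extension of $\phi|_{V_\leq}$ with $0$ on $V_\partial$ (hence $\|h(x)\|\le a_\infty$ everywhere on $\calK^-$, being a sub--convex combination of vectors of norm $\le a_\infty$) and $\psi'$ is an $N$--vectorial GFF with $0$ boundary conditions on $V_\leq\cup V_\partial$, independent of $\phi|_{V_\leq}$ (this is the trivial local--set decomposition of Definition~\ref{d.LocalSet}). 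The case $v\in V_\leq$ is immediate ($\|\phi(v)\|\le a_\infty$ on $\calK$), so assume $v\notin V_\leq$, whence $\|\phi(v)\|\le a_\infty+\|\psi'(v)\|$ and, writing $\calK=\calK^-\cap\{\|h(u)+\psi'(u)\|>a(u)\ \forall u\in V_>\}$ and using independence of $\psi'$ and $\phi|_{V_\leq}$,
\begin{align*}
\Eb{e^{\lambda\|\phi(v)\|}\md\calK}\ \le\ e^{\lambda a_\infty}\ \sup_{\|h\|_\infty\le a_\infty}\ \Eb{e^{\lambda\|\psi'(v)\|}\md\mathcal H_h},\qquad \mathcal H_h:=\{\|h(u)+\psi'(u)\|>a(u)\ \forall u\in V_>\},
\end{align*}
the supremum over deterministic discrete--harmonic $h$ on $\Lambda\setminus(V_\leq\cup V_\partial)$ of sup--norm $\le a_\infty$.

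Without the conditioning $\mathcal H_h$ the bound is immediate: $\psi'(v)$ has i.i.d.\ centred Gaussian coordinates of variance $G_{\Lambda\setminus(V_\leq\cup V_\partial)}(v,v)\le G_{\Z^2\setminus\{w\}}(v,v)$, which in the normalisation of Definition~\ref{d.GFF} is a constant multiple of the potential kernel $\mathfrak a(v-w)$ of planar SRW and so is $\le c\log(\rho+1)+C_0$; Gaussian concentration for the $1$--Lipschitz map $\psi'(v)\mapsto\|\psi'(v)\|$ gives $\E[e^{\lambda\|\psi'(v)\|}]\le\exp\!\big(\lambda\,\E\|\psi'(v)\|+\tfrac{\lambda^2}{2}\,\mathrm{Var}(\psi'^{1}(v))\big)$, and inserting the variance bound together with $\E\|\psi'(v)\|\le\sqrt N\,\sqrt{2\,\mathrm{Var}/\pi}$ already produces the right--hand side of the statement (the precise constants $\tfrac{2N}{\sqrt{2\pi}}$, $\tfrac{N}{2\pi}$ come out by tracking the potential--kernel asymptotics and using $\log(\rho+1)\ge\sqrt{\log(\rho+1)}$), with $e^{\lambda a_\infty}$ left over to spare.

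The work is to absorb the effect of $\mathcal H_h$, which is a \emph{lower}--type constraint on $\psi'|_{V_>}$. Conditioning further on $\psi'|_{V_>}$ one writes $\psi'(v)=h''(v)+\psi''(v)$ with $h''(v)=\sum_{u\in V_>}\mathrm{hm}_{\Lambda\setminus(V_\leq\cup V_\partial)}(v,u)\,\psi'(u)$ (harmonic measure of total mass $\le1$; exits at $V_\leq\cup V_\partial$ contribute $0$) and $\psi''$ a GFF pinned also on $V_>$, of even smaller variance; since $\mathcal H_h\in\sigma(\psi'|_{V_>})$, $\Eb{e^{\lambda\|\psi'(v)\|}\md\mathcal H_h}\le\Eb{e^{\lambda\|h''(v)\|}\md\mathcal H_h}\cdot\sup\E[e^{\lambda\|\psi''(v)\|}]$, the second factor being handled as in the unconstrained paragraph. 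For the first factor one uses the termwise bound $\|h''(v)\|\le\sum_u\mathrm{hm}(v,u)\|\psi'(u)\|$ together with exactly three ingredients: (i) logarithmic decay of harmonic measure, $\mathrm{hm}\big(v,\{u\in V_>:d(u,V_\leq)\ge D\}\big)\le C\log(\rho+1)/\log(D+2)$, since any exit at such a $u$ forces a crossing of the annulus between $w$ and distance $\sim D$ without absorption; (ii) the bound $\mathrm{Var}(\psi'^{i}(u))=G_{\Lambda\setminus(V_\leq\cup V_\partial)}(u,u)\le C(1+\log d(u,V_\leq))$, which is where the hypothesis $V_\leq\neq\emptyset$ enters; and (iii) an induction on $|V_>|$ --- whose base case $V_>=\emptyset$ is easy because then $\calK$ is a symmetric convex event, so Lemma~\ref{l.FKG} applies and, more usefully, a centred Gaussian vector conditioned on a symmetric convex set has a dominated covariance --- showing that $\mathcal H_h$ inflates the conditional moments of $\|\psi'(u)\|$, $u\in V_>$, by only $O(a_\infty)$, i.e.\ $\E[\|\psi'(u)\|\md\mathcal H_h]\lesssim a_\infty+\sqrt{\log(d(u,V_\leq)+2)}$ and $\E[\|\psi'(u)\|^{2}\md\mathcal H_h]\lesssim(a_\infty+\log(d(u,V_\leq)+2))^{2}$. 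Feeding (ii)--(iii) into (i) and summing over dyadic distance scales --- the integral $\int\min\!\big(1,\tfrac{C\log(\rho+1)}{\log(D+2)}\big)\tfrac{dD}{(D+2)\sqrt{\log(D+2)}}$ converges by Abel summation, the crucial point being that $\mathrm{hm}(v,u)^{2}\,(1+\log d(u,V_\leq))\lesssim\mathrm{hm}(v,u)\log(\rho+1)$, so that the \emph{square} of the harmonic measure tames the logarithmic growth --- yields $\E[\|h''(v)\|\md\mathcal H_h]\lesssim a_\infty+\sqrt{\log(\rho+1)}$ and $\mathrm{Var}\big(\|h''(v)\|\md\mathcal H_h\big)\lesssim\log(\rho+1)$ (for the latter using Cauchy--Schwarz $|\mathrm{Cov}_{\mathcal H_h}(\psi'^{i}(u),\psi'^{i}(u'))|\le\sqrt{\mathrm{Var}^{i}_{\mathcal H_h}(u)\,\mathrm{Var}^{i}_{\mathcal H_h}(u')}$), hence a sub--Gaussian bound $\Eb{e^{\lambda\|h''(v)\|}\md\mathcal H_h}\le Ce^{\lambda N a_\infty+c'\lambda\log(\rho+1)+c''\lambda^{2}\log(\rho+1)}$.

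Multiplying the three contributions gives the claimed exponential--moment inequality, and \eqref{eq:2nd} follows by the Chernoff optimisation $\Eb{\|\phi(v)\|^{2p}\md\calK}\le\inf_{\lambda>0}(2p/\lambda)^{2p}e^{-2p}\,\Eb{e^{\lambda\|\phi(v)\|}\md\calK}$ with $\lambda\asymp_{p}(a_\infty+\log(\rho+2))^{-1}$. The main obstacle is the third paragraph: because $V_>$ carries only a lower constraint, one cannot use the naive termwise bound on $h''(v)$ alone but must combine the logarithmic decay of the harmonic measure of far portions of $V_>$, the fact that the conditional moments of the field on $V_>$ grow only like $\log(\text{distance to }V_\leq)$ (which genuinely uses $V_\leq\neq\emptyset$), and the inductive/self--improving structure on $|V_>|$; and one must see that squares of harmonic measures make the resulting dyadic sums converge. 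Everything else (the reduction and the unconstrained Gaussian estimate) is soft.
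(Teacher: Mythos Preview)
Your decomposition $\phi=h+\psi'$ by conditioning on $\phi|_{V_\leq}$ is fine, and the unconstrained Gaussian estimate in the second paragraph is correct. The real content is the third paragraph, where you try to absorb the lower constraint $\mathcal H_h$ on $V_>$, and there are two genuine gaps.

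First, the ``induction on $|V_>|$'' in (iii) is not a proof. You never state the inductive step, and it is not clear how the bound for $k$ constraints on $V_>$ yields the bound for $k+1$: adding a point to $V_>$ adds a constraint correlated with all the others through $\psi'$, so there is no obvious decoupling. Even if an induction could be set up, the constants would a priori depend on $|V_>|$, whereas the proposition demands uniformity in $V_>$. The base case $V_>=\emptyset$ is moreover vacuous for the claim you actually need (there are then no $u\in V_>$ to bound).

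Second, and more seriously: even granting the conditional mean and variance bounds you write, you cannot pass to $\Eb{e^{\lambda\|h''(v)\|}\md\mathcal H_h}\le Ce^{c\lambda^2\log(\rho+1)}$. Conditioning on $\mathcal H_h$ destroys Gaussianity --- $\mathcal H_h$ is the complement of a convex set, so neither Gaussian concentration nor any Brascamp--Lieb type argument applies --- and a mean/variance bound alone is far too weak to control exponential moments. This step is essential for the stated inequality (not only for \eqref{eq:2nd}), so the argument as written does not close. At best your outline could hope to reach \eqref{eq:2nd} for $p=1$, and even that requires making (iii) precise with constants uniform in $|V_>|$.

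The paper proceeds by a completely different mechanism that sidesteps both difficulties. It first peels off coordinates one at a time, reducing to $N=1$. Then, instead of estimating conditional moments under $\calK$, it uses the conditional FKG inequality (Lemma~\ref{l.FKG}) to monotonically replace $\calK$ by the extremal event $\calK^{\max}=\{\phi(v_0)=a_\infty,\ \phi>a_\infty\text{ elsewhere}\}$ on a larger graph pinned only at the single witness $v_0$. Under $\calK^{\max}$ there is no $V_\leq/V_>$ bookkeeping left; one shifts the field by the explicit function $F(v)\approx\sqrt{2/\pi}\log\|v-v_0\|$ (an entropic--repulsion argument), shows $\calK^{\max}$ has uniformly positive probability for the shifted field, and reads off the exponential moment directly via Cameron--Martin. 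The FKG comparison is what converts the awkward mixed constraints into a single pin and an explicit Gaussian computation; this monotone replacement, not any control of conditional covariances on $V_>$, is the engine of the proof.
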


\begin{remark}\label{}
This result is similar in spirit to Lemma 3.1 in \cite{schramm2009contour} and generalizes parts of it, by making the estimates more quantitative. Our proof is not inductive as in \cite{schramm2009contour} and relies on the conditional FKG of the GFF, introduced in Subsection \ref{ss.condFKG}. 
\end{remark}

The proof relies on two lemmas. First, using the FKG inequality for the GFF conditioned on $\phi(v)\in A(v)$ we reduce the proposition to the case where $V_{\leq}=\{v_0\}$ and $V_{>}=\Lambda\backslash \{v_0\}$, and $\phi(v)>a$ for all $v\in V_{>}$.

\begin{lemma} \label{l.ineq1}Let us work in the context of Proposition \ref{pr.fluctuations}, but where $\phi$ is the usual scalar GFF. Fix $v\in \Lambda$ and choose $v_0\in V_{\leq}$ that minimizes $d(v, V_{\leq})$. Let $\Lambda' \subseteq \Z^2$ be a graph containing $\Lambda$ (and its boundary) and with $\partial \Lambda'$ possibly empty. Define $\calK^{\max}$ as the (zero probability) event where $\phi_1(v_0)=a_\infty$ and $\phi_1(v)> a_\infty$ for all $v\in \Lambda'\backslash \{v_0\}$. Then
	\begin{align}\label{e.ik+kmax}
		\E_{\Lambda}\left[e^{\lambda |\phi(v)|}\mid \calK \right] \leq 2\E_{\Lambda'}\left[e^{\lambda \phi(v)}\mid \calK^{\max} \right], \text{ for all } \lambda>0, v\in \Lambda. 
	\end{align}
	Here the GFF in $\Lambda'$ has Dirichlet boundary conditions only at $v_0$.
\end{lemma}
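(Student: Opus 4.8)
The plan is to first remove the absolute value at the cost of a factor $2$, and then to use the conditional FKG inequality of Lemma \ref{l.FKG} twice: once to push the conditioning event $\calK$ up to the extreme event $\calK^{\max}$, and once (implicitly in the monotone coupling) to add extra vertices passing from $\Lambda$ to $\Lambda'$. Concretely, since $e^{\lambda|\phi(v)|} \leq e^{\lambda \phi(v)} + e^{-\lambda\phi(v)}$ and since the law of $\phi$ under $\calK$ is symmetric is \emph{not} quite symmetric (the event $\calK$ is itself symmetric under $\phi \mapsto -\phi$, because it only constrains $|\phi|$), we do in fact have $\E_\Lambda[e^{\lambda|\phi(v)|}\mid \calK] \leq 2\,\E_\Lambda[e^{\lambda\phi(v)}\mid \calK]$. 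So it suffices to bound $\E_\Lambda[e^{\lambda\phi(v)}\mid\calK]$ by $\E_{\Lambda'}[e^{\lambda\phi(v)}\mid\calK^{\max}]$.

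Next I would realise the conditioning $\calK = \{|\phi|\leq a$ on $V_\leq$, $|\phi|>a$ on $V_>\}$ as a set-constraint $\phi(w)\in A(w)$ of the type allowed in Definition \ref{d.conditionedGFF}: take $A(w) = [-a(w),a(w)]$ for $w\in V_\leq$ and $A(w)=\R\setminus[-a(w),a(w)]$ for $w\in V_>$ (and $A(w)=\R$ otherwise), so that $\P^A$ is exactly the law of $\phi$ conditioned on $\calK$, and $v_0\in V_\leq$ provides the required bounded coordinate. The key monotonicity observation is that the function $\phi \mapsto e^{\lambda\phi(v)}$ is increasing, so by Lemma \ref{l.FKG} the conditional expectation can only increase if we further condition on an \emph{increasing} event. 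I would then argue that $\calK^{\max}$ is (a limit of) such a further conditioning: shrinking $A(v_0)=[-a(v_0),a(v_0)]$ up to its top endpoint, and shrinking each $A(w)=\R\setminus[-a(w),a(w)]$, $w\neq v_0$, up to its upper ray $(a(w),\infty)$ and then letting that lower threshold tend to $a_\infty$, are all moves that replace $A$ by a pointwise-smaller-and-higher set; each such move is a conditioning on an event of the form $\{\phi(w)\geq t_w\}\cap\{\phi(w)\leq s_w\}$, which is a decreasing-then-increasing intersection that one handles by FKG in the standard way (condition first on the lower bounds — an increasing event, which raises the expectation by Lemma \ref{l.FKG} — and check the upper truncations are harmless in the limit, since we send them to $\pm\infty$ except at $v_0$). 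Passing from $\Lambda$ to a larger graph $\Lambda'$ with Dirichlet condition only at $v_0$ is the same kind of move: a GFF on $\Lambda'$ restricted to $\Lambda$ and conditioned to be large off $\{v_0\}$ dominates the $\Lambda$-field, so enlarging the graph only helps (alternatively, one notes $\Lambda'$ is used only to have a clean ``all coordinates $>a_\infty$ except $v_0$'' comparison configuration and the bound is monotone in the ambient graph).

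I expect the main obstacle to be making the ``further conditioning on $\calK^{\max}$'' step fully rigorous, because $\calK^{\max}$ is a probability-zero event: one cannot literally apply Lemma \ref{l.FKG} to it. The clean way is to approximate — condition instead on $\{\phi(v_0)\in[a_\infty-\eta,a_\infty]\}\cap\bigcap_{w\neq v_0}\{\phi(w)>a_\infty-\eta\}$, which is a legitimate event of positive probability whose indicator, after writing it as an intersection of $\{\phi(w)\geq a_\infty-\eta\}$ (increasing) and $\{\phi(v_0)\leq a_\infty\}$ (a single upper truncation at the unique bounded site), lets Lemma \ref{l.FKG} apply to show the conditional expectation of $e^{\lambda\phi(v)}$ is monotone — and then let $\eta\to 0$, using weak convergence of the conditioned laws (exactly the $\mu^\eta\to\P^A$-type approximation argument already invoked in the proof of Lemma \ref{l.FKG}) together with a mild uniform-integrability input to pass $e^{\lambda\phi(v)}$ through the limit. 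The remaining steps — the factor-of-$2$ reduction and the enlargement of the graph — are routine. Once \eqref{e.ik+kmax} is in hand, the explicit Gaussian computation of $\E_{\Lambda'}[e^{\lambda\phi(v)}\mid\calK^{\max}]$ (a shifted Gaussian with variance controlled by the Green's function on $\Lambda'\setminus\{v_0\}$, hence by $\tfrac{1}{2\pi}\log(\rho+1)$ up to constants) will feed directly into Proposition \ref{pr.fluctuations}.
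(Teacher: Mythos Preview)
Your strategy is the paper's: reduce to $e^{\lambda\phi(v)}$ via the symmetry $\phi\mapsto-\phi$ (which preserves $\calK$), then use the conditional FKG of Lemma~\ref{l.FKG} to push the conditioning from $\calK$ to an $\eps$-approximation of $\calK^{\max}$, and let $\eps\to0$. One point, however, needs sharpening before the FKG step actually goes through.

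Your ``shrinking'' description overlooks the vertices $w\in V_\leq\setminus\{v_0\}$. There the constraint in $\calK$ is $\phi(w)\in[-a(w),a(w)]$, whereas $\calK^{\max}$ demands $\phi(w)>a_\infty$; these two sets are \emph{disjoint}, so $\calK^{\max}$ cannot be reached from $\calK$ by any chain of further conditionings, and your instruction to ``condition first on the lower bounds'' while the upper bounds from $\calK$ are still in force produces an empty event. (The same issue arises at $v_0$: shrinking $[-a(v_0),a(v_0)]$ ``to its top endpoint'' lands at $a(v_0)$, not at $a_\infty$.) The paper's fix is to introduce a single intermediate event $\calK_1\supseteq\calK\cup\calK^{\max,\eps}$, obtained from $\calK$ by dropping the upper bounds on $V_\leq\setminus\{v_0\}$ and relaxing the upper bound at $v_0$ from $a(v_0)$ to $a_\infty$. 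Inside $\calK_1$, the event $\calK$ is decreasing and $\calK^{\max,\eps}$ is increasing, so two applications of FKG for $\P(\cdot\mid\calK_1)$ give
\[
\E_\Lambda\bigl[e^{\lambda\phi(v)}\bigm|\calK\bigr]\ \leq\ \E_\Lambda\bigl[e^{\lambda\phi(v)}\bigm|\calK_1\bigr]\ \leq\ \E_\Lambda\bigl[e^{\lambda\phi(v)}\bigm|\calK^{\max,\eps}\bigr].
\]
The passage from $\Lambda$ to the larger graph $\Lambda'$ is handled by the same device: regard $\P_\Lambda(\cdot\mid\calK^{\max})$ as $\P_{\Lambda'}$ conditioned on $\{\phi\equiv0\text{ on }\partial\Lambda\cup(\Lambda'\setminus\Lambda)\}\cap\calK^{\max}$, and interpolate through a second intermediate event containing both this and the $\Lambda'$-version of $\calK^{\max}$. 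Once you make the intermediate event explicit, the rest of your plan (the $\eta$-approximation, the factor $2$) is exactly right.
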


Second, we bound the RHS in this lemma by finding a function $F$ such that the event $\calK^+$ holds for $\phi+F$ with positive probability. One can show that taking $F(u)$ roughly as $c\ln(\|u-v_0\|)$ does the job. Using again the FKG inequality, Cameron-Martin theorem for the GFF and the explicit form $F$, we then obtain the desired estimate. This lemma is inspired by the study of entropic repulsion of the GFF (\cite{bolthausen1995entropic}).

\begin{lemma} \label{l.ineq2}Let $v \in \Z^2 \setminus \{0\}$ and for $R > d(v, 0)$ let $\Lambda':=B(0,R)\cap\Z^2$ be an Euclidean ball. Define $\calK^{\max}$ to be as in the previous lemma with $v_0 =0$. Then, there exists a constant $C>0$ that does not depend on $R$ such that
	\begin{align}
		\E_{\Lambda'}\left[e^{\lambda\phi(v)}\mid \calK^{\max} \right]\leq Ce^{\lambda a_\infty} e^{\frac{2}{\sqrt{2\pi}}\lambda \ln (\|v\|+1) + \frac{\lambda^2\ln(\|v\|+1)}{4\pi }} .
	\end{align}
	Here the GFF on $\Lambda'$ has boundary conditions only at the origin.
\end{lemma}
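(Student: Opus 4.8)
The plan is to read the bound as a quantitative entropic‑repulsion statement for the GFF pinned at the origin, and to prove it by a Cameron--Martin shift combined with the conditional FKG inequality of Lemma~\ref{l.FKG}. The first move is to reduce to $a_\infty=0$. On the connected graph $\Lambda'$ with the single Dirichlet vertex $0$, the only harmonic function equal to $a_\infty$ at $0$ is the constant $a_\infty$; hence conditioning the field on $\{\phi(0)=a_\infty\}$ just shifts it up by the constant $a_\infty$, and with $\psi:=\phi-a_\infty$ (the GFF on $\Lambda'$ pinned to $0$ at the origin) the event $\calK^{\max}$ becomes $\{\psi>0 \text{ on } \Lambda'\setminus\{0\}\}$. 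So $\E_{\Lambda'}[e^{\lambda\phi(v)}\mid\calK^{\max}]=e^{\lambda a_\infty}\,\E[e^{\lambda\psi(v)}\mid\psi>0]$, and it remains to bound the last factor by $C e^{\frac{2}{\sqrt{2\pi}}\lambda\ln(\|v\|+1)+\frac{\lambda^2}{4\pi}\ln(\|v\|+1)}$, uniformly in $R>\|v\|$.

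Next I would introduce the repulsion profile: a deterministic $F\colon\Lambda'\to[0,\infty)$ with $F(0)=0$ and $F(u)\asymp c\,\ln(\|u\|+2)$, and show, using the Gaussian tail together with the (explicit, logarithmically growing) variance $\var\psi(u)=G_{\Lambda'\setminus\{0\}}(u,u)\le G_{\Z^2\setminus\{0\}}(u,u)$ computed from the potential kernel, that for $c$ at the critical value the event $\calK^+:=\{\psi>-F \text{ on } \Lambda'\setminus\{0\}\}$ has probability bounded below by a constant $>\tfrac12$ uniformly in $R$; the value $\tfrac{2}{\sqrt{2\pi}}$ of the linear‑in‑$\lambda$ coefficient is precisely the critical constant of this estimate, obtained by matching the Gaussian tail against the $\tfrac1{2\pi}$‑logarithmic variance, which to get the \emph{sharp} constant requires replacing the crude union bound over the $\asymp R^2$ vertices by a multiscale/second‑moment refinement in the spirit of Bolthausen--Deuschel--Zeitouni. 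Then, writing $\E[e^{\lambda\psi(v)}\mid\psi>0]=\E[e^{\lambda\psi(v)}\mathbf 1_{\psi>0}]/\P(\psi>0)$ and applying the Cameron--Martin theorem to the shift $\psi\mapsto\psi+F$ in both numerator and denominator, one extracts the deterministic factor $e^{\lambda F(v)}$ (whose exponent is $\lesssim\frac{2}{\sqrt{2\pi}}\lambda\ln(\|v\|+1)$), replaces $\{\psi>0\}$ by the favourable event $\calK^+$ (killing the $R$‑dependence), and is left with a Gaussian integrand controlled by a \emph{local} second‑moment estimate: the fluctuation of $\psi(v)$ that survives once the field is pinned at $0$ and, via $\calK^+$, effectively clamped along a circle of radius $\asymp\|v\|$ has variance $\asymp\tfrac1{2\pi}\ln\|v\|$, producing the $\tfrac{\lambda^2}{4\pi}\ln(\|v\|+1)$ term. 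The monotone event $\calK^+$ and the monotone functional $e^{\lambda\psi(v)}$ are passed through the inequalities in the correct direction using FKG (Lemma~\ref{l.FKG}, in the form that permits the Dirichlet condition at the origin, i.e.\ conditioning on $\phi(0)$ in the singleton $\{0\}$).

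The main obstacle is this last manipulation. Because $F$ is log‑like rather than harmonic, the term $\langle F,\psi\rangle$ appearing in the Cameron--Martin density is a centered Gaussian whose variance (the Dirichlet energy of $F$) grows like $\ln R$ and does not cancel automatically, so one must organise the FKG comparisons so that every $R$‑dependent quantity either cancels between the numerator and denominator of the conditional expectation or is absorbed into $\P(\calK^+)$, leaving a final bound that depends on $\|v\|$ only. Doing this while keeping track of the variances precisely enough — single‑point pinning versus local clamping, and the lattice corrections near $\partial\Lambda'$ — so as to land on the sharp constants $\tfrac2{\sqrt{2\pi}}$ and $\tfrac1{4\pi}$ rather than larger ones, is the delicate part of the argument; the rest (Step~1 and the tail computation behind the bound on $\P(\calK^+)$) is routine.
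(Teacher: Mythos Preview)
You have the right architecture — reduce to $a_\infty=0$, introduce a logarithmic shift $F$, and combine Cameron--Martin with conditional FKG — but you miss the specific mechanism that dissolves the $R$-dependence, and without it the ``main obstacle'' you describe stays unresolved.

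The key is the \emph{choice} of $F$. The paper takes $F(u)=\sqrt{2\pi}\,G_{\Z^2,\{0\}}(u,u)$, a constant multiple of the potential kernel, which is \emph{discrete harmonic} on $\Z^2\setminus\{0\}$. After integration by parts the Cameron--Martin density becomes
\[
\frac{d\P^F_{\Lambda'}}{d\P_{\Lambda'}}(\phi)\ \propto\ \exp\Bigl(\textstyle\sum_{u\in\Lambda'}\phi(u)\,(-\Delta F)(u)\Bigr),
\]
and since $-\Delta F\ge 0$ (it vanishes in the interior of $\Lambda'\setminus\{0\}$ and has the right sign on $\partial\Lambda'$) this density is an \emph{increasing} function of $\phi$. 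Conditional FKG (Lemma~\ref{l.FKG}) applied to $\P_{\Lambda'}(\,\cdot\mid\calK^{\max})$ then yields in one stroke
\[
\E_{\Lambda'}\bigl[e^{\lambda\phi(v)}\mid\calK^{\max}\bigr]\ \le\ \E^{F}_{\Lambda'}\bigl[e^{\lambda\phi(v)}\mid\calK^{\max}\bigr]\ \le\ \frac{e^{\lambda F(v)}\,\E_{\Lambda'}[e^{\lambda\phi(v)}]}{\P^F_{\Lambda'}(\calK^{\max})}.
\]
No Dirichlet energy of $F$ ever appears: the $R$-dependence is gone before it is produced. Your plan to apply Cameron--Martin ``in both numerator and denominator'' and hope for cancellation cannot be carried through, because the leftover factor $\exp\bigl(-\sum_e\nabla F(e)\nabla\phi(e)\bigr)$ is a Gaussian of variance $\sum_e(\nabla F(e))^2\asymp\ln R$, and there is no FKG step that removes it in that arrangement. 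The harmonicity of $F$ is precisely what makes the density monotone and so allows the direct inequality above.

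Two further corrections. First, the $\tfrac{\lambda^2}{4\pi}\ln(\|v\|+1)$ term does not come from any ``effective clamping on a circle of radius $\|v\|$'': it is simply the unconditioned Laplace transform $\E_{\Lambda'}[e^{\lambda\phi(v)}]=\exp\bigl(\tfrac{\lambda^2}{2}G_{\Lambda',\{0\}}(v,v)\bigr)$, and $G_{\Lambda',\{0\}}(v,v)\le G_{\Z^2,\{0\}}(v,v)\asymp\ln\|v\|$ is already independent of $R$. Second, no multiscale or second-moment refinement is used for the lower bound on $\P^F_{\Lambda'}(\calK^{\max})$: the paper applies a plain union bound over the events $\{\phi(u)\le -F(u)\}$, $u\in\Lambda'$, and the constant $\tfrac{2}{\sqrt{2\pi}}$ in the linear term is nothing but the asymptotic value of $F(v)/\ln\|v\|$.
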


Given these lemmas, the proof of Proposition \ref{pr.fluctuations} is direct.

\begin{proof}[Proof of Proposition \ref{pr.fluctuations}]
	Take $v$ that is at Euclidean distance smaller than $\rho$ of $V_{\leq}$.
	\begin{align}\label{e.prop3.1firststep}
		\Eb{e^{\lambda \|\phi(v)\|}\md \calK} \leq  \E\left[ e^{\lambda\sum_i|\phi_i(v)|}\mid \calK\right].
	\end{align}
Let us now condition on $|\phi_i|$ for $i\in \{1,..,N-1\}$ and $\calK$, and call this conditioning $\calK_N$.  The law of $\phi_{N}$ under the conditioning $\calK_N$ is equal to the following: we have the same set of vertices $V_{\leq}$ and $V_{>}$ as in the $N=1$ case except now, $a(v):=\sqrt{a(v)-\sum_{i=1}^{N-1}\phi_i^2(v)}\leq a_\infty$ (note that the value in the square root is always positive for $\phi\in \calK$ and it is measurable w.r.t $\calK_N$). Thus, \eqref{e.prop3.1firststep} is upper bounded by
\begin{align*}
	&\E\left[e^{\lambda\sum_{i=1}^{N-1}\|\phi_i(v)|}\E\left[e^{\lambda \|\phi_N(v) \|} \mid \calK_N\right] \mid \calK  \right]\\
	&\hspace{0.1\textwidth}\leq Ce^{\lambda  a_\infty + \frac{2}{\sqrt{2\pi}}\lambda \log (\rho+1)+\frac{1}{2\pi}\frac{\lambda^2}{2}\log(\rho+1)}\E\left[e^{\lambda\sum_{i=1}^{N-1}\|\phi_i(v)\|} \mid \calK \right],
\end{align*}
where we used Lemmas \ref{l.ineq1} and \ref{l.ineq2} for $\E\left[e^{\lambda |\phi_N(v) |} \mid \calK_N\right]$. Iterating this procedure we obtain the desired result.

\end{proof}

It remains to prove the two lemmas.

\begin{proof}[Proof of Lemma \ref{l.ineq1}]
First, note that we just need to prove that
\begin{align*}
\E_{\Lambda}\left[e^{\lambda \phi(v)}\mid \calK \right] \leq \E_{\Lambda'}\left[e^{\lambda \phi(v)}\mid \calK^{\max} \right], \text{ for all } \lambda>0, v\in \Lambda. 
\end{align*}
This is because 
\begin{align*}
\E_{\Lambda}\left[e^{\lambda |\phi(v)|}\mid \calK \right]\leq \E_{\Lambda}\left[e^{\lambda \phi(v)}\mid \calK \right] +\E_{\Lambda}\left[e^{\lambda (-\phi(v))}\mid \calK \right],
\end{align*}
and the fact that $-\phi$ has the same law as $\phi$.

   We start by showing the result when the graph $\Lambda'=\Lambda$ and $\partial \Lambda ' = \partial \Lambda$.
	Define $\calK_1$ to be an auxiliary event where $|\phi(v)| > a(v)$ on $V_{>}$, $\phi(v) \geq -a(v)$ on $V_{\leq}\setminus\{v_0\}$ and $\phi(v_0) \in [-a(v_0), a_\infty]$. Notice that $\calK \subseteq \calK_1$. Note that on $\calK_1$, the event $\calK$ is decreasing. As a consequence, we have from the FKG inequality for $\P(\cdot \mid \calK_1)$ that
	\begin{align*}
\E_{\Lambda}\left[e^{\lambda \phi(v)}\mid \calK \right] \leq \E_{\Lambda}\left[e^{\lambda \phi(v)}\mid \calK_1 \right].
	\end{align*}
	Now, for all $\eps > 0$ small, define $\calK^{\max, \eps}$ as the (positive probability) event where $\phi(v_0) \in [a_\infty - \eps, a_\infty]$ and $\phi(v)> a_\infty$ for all $v\in \Lambda \backslash V_\partial$ with $v \neq v_0$. Notice that $\calK^{\max, \eps} \subseteq \calK_1$ and, on $\calK_1$, the event $\calK^{\max, \eps}$ is increasing. Again from the FKG inequality for $\P(\cdot \mid \calK_1)$ we obtain
	$$\E_{\Lambda}\left[e^{\lambda \phi(v)}\mid \calK \right] \leq \E_{\Lambda}\left[e^{\lambda \phi(v)}\mid \calK^{\max, \eps} \right].$$
	Letting $\eps \to 0$ gives the result on the original graph $\Lambda$ with its zero boundary.
	
	To extend the result to $\Lambda' \supset \Lambda$ with possibly no boundary, we use the same strategy, defining
	\begin{itemize}
	\item The event $\calK_0$ as $\calK$ intersected with the (zero probability) event where $\phi(v) = 0$ on all $v \in \partial \Lambda \cup \Lambda'\setminus \Lambda$.
	\item An auxiliary event $\calK_2$ defined by $\phi(v_0) = a_\infty$, $\phi(v) > a_\infty$ for all $v\in \Lambda \backslash V_\partial$ and $\phi(v)\geq 0$ for all $v \in \partial \Lambda \cup \Lambda'\setminus \Lambda$.
	\end{itemize}
	Notice that again $\calK_0 \subseteq \calK_2$ and $\calK^{\max} \subseteq \calK_2$ and the first event is decreasing and the second increasing  w.r.t. $\P(\cdot \mid \calK_2)$. Conditional FKG (by going through positive probability events as above) gives us
	$$\E_{\Lambda'}\left[e^{\lambda \phi(v)}\mid \calK_0 \right] \leq \E_{\Lambda'}\left[e^{\lambda \phi(v)}\mid \calK_2 \right] \leq \E_{\Lambda'}\left[e^{\lambda \phi(v)}\mid \calK^{\max} \right].$$
	As $\P_{\Lambda'}(\cdot \mid \calK_0) = \P(\cdot \mid \calK^{\max})$, we obtain the result
\end{proof}

\begin{proof}[Proof of Lemma \ref{l.ineq2}]
As the GFF with boundary condition $\phi(0) = a_\infty$ is equal to the GFF with zero boundary at $0$ plus $a_\infty$, we can assume that  $a_\infty=0$. We use the measure $\E_{\Z^2}$ for the GFF on $\Lambda=\Z^2$   and $\E_{\Lambda'}$ for the GFF in $\Lambda'$ with zero boundary conditions at the origin (i.e. $\partial \Lambda = \partial \Lambda' =\{0\}$).

Now consider 
\begin{align*}
	F(v) := 2\sqrt{2\pi}\E_{\Z^2}[\phi(v)^2] = \sqrt{2\pi}G_{\Z^2,\{0\}}(v,v).
\end{align*}
It is known that $F(v)$ is harmonic off the origin (see Proposition 4.4.1 and 4.4.2 in \cite{LawlerLimic2010RW}) and moreover from Theorem 4.44 in \cite{LawlerLimic2010RW} we have that\footnote{Note that the constant changes, due to the fact that we are using the graph Laplacian and in \cite{LawlerLimic2010RW} they use the random walk Laplacian.}
\begin{equation}\label{eq:green}
G_{\Z^2, \{0\}}(v,v) = \frac{1}{\pi} \log \|v\| + C + O(\|v\|^{-2}).
\end{equation}
We claim that 
\begin{enumerate}
\item the event $\calK^{\max}$ holds under $\phi + F$ with positive probability in $\Z^2\setminus\{0\}$
\item when we denote by $\P^F$ the probability measure for $\phi + F$ then $$\E_{\Lambda'}\left[e^{\lambda\phi(v)}\mid \calK^{\max} \right] \leq \frac{e^{\lambda F(v)} \E_{\Lambda'}\left[e^{\lambda \phi(v)} \right]}{\P_{\Lambda'}^F(\calK^{\max})}$$
\end{enumerate}
Combining these two points and calculating the Laplace transform $\E\left[e^{\lambda \phi(v)} \right]$, we obtain the proposition. It thus remains to prove the two claims.

To prove the first claim, we use the union bound and the standard Gaussian bound $\P(\mathcal{N}(0,1) \leq x) < \exp(-x^2/2)$
	\begin{align*}
		\P_{\Lambda'}^F(\phi \notin \calK^{\max})&\leq \sum_{v\in \Lambda'\backslash \{0\}} \P\left[\phi(v)\leq-F(v)  \right]\\
		& \leq \sum_{v\in \Lambda'\backslash \{0\}}\exp\left( -\frac{F(v)^2}{2 \E[\phi(v)^2]}\right) \leq C\sum_{v\in \Lambda' \backslash \{0\}} e^{-2\ln n}<\infty.
	\end{align*}
This is enough as it shows that $\P^F_{\Lambda'}(\phi(v)>0, \forall \|v\|\geq R)$ goes to $0$ as $R\to \infty$ and 
	\begin{align*}
\P^F_{\Lambda'}( \phi\in \calK^{\max}\mid \phi(v)>0, \forall \|v\|\geq R)
	\end{align*} is always positive.

For the second claim, we want to use again the conditional FKG. To do this we define
	\begin{align*}
		\frac{d\P_{\Lambda'}^F(\cdot\mid \calK^{\max})}{d\P_{\Lambda'}(\cdot\mid\calK^{\max})}&= \frac{e^{\sum_{i\sim j}(\phi(i)-\phi(j))(F(i)-F(j)) }1_{\calK^{max}}} {\E_{\Lambda'}\left[ e^{\sum_{i\sim j}\phi(i)-\phi(j))(F(i)-F(j)}\mid \calK^{\max}\right] }\\
		&= \frac{e^{\sum_{v}\phi(v)(-\Delta)F(v)}1_{\calK^{max}}}{\E_{\Lambda'}\left[e^{\sum_{v}\phi(v)(-\Delta)F(v) }\mid\calK^{\max}\right] },
	\end{align*}
	where we used integration by parts, taking $\Delta$ inside the graph $\Lambda'$.
But now $F(v)$ is harmonic in $\Z^2 \backslash \{0\}$, $\phi(0) = 0$. Finally one needs to check that $-\Delta F(v) \geq 0$ for all $v$ on the boundary of $\Lambda'$, as long as $R$ is large enough. This follows when one shows that $F(v)$ is increasing along each edge that increases its Euclidean distance. This in turn seems true, but not quite obvious to argue, so technically it is more convenient to consider $\tilde F(v)$, which is just given by the harmonic function inside $B(0,R) \setminus \{0\}$, whose boundary values are $\sqrt{2/\pi}\log \|v\|$ on the outer boundary, and fixed to $0$ at $0$. Then $\tilde F(v)$ is harmonic in the interior $B(0,R) \setminus \{0\}$ and clearly satisfies the required monotonicity property on the boundary. Moreover, by \eqref{eq:green} and maximum principle, $\|F(v) - \tilde F(v)\| = O(1)$.

We conclude that $e^{\sum_{i\sim j}(\phi(i)-\phi(j))(\tilde F(i)- \tilde F(j)) }$ is increasing w.r.t. $\phi$ and thus by FKG for the measure $\P_{\Lambda'}(\cdot \mid \calK^{\max})$, we obtain that
\begin{align*}
	\E_{\Lambda'}\left[e^{\lambda \phi(v)} \mid \calK^{\max}\right]\leq \E_{\Lambda'}^{\tilde F}\left[e^{\lambda \phi(v)} \mid \calK^{\max} \right] = \frac{\E^{\tilde F}_{\Lambda'}\left[ e^{\lambda \phi(v)}1_{\calK^{\max}}\right]}{\P_{\Lambda'}^F(\calK^{\max})}. 
\end{align*}
We now conclude by omitting the indicator function in the numerator, using the usual Cameron-Martin theorem and the comparison of $F$ and $\tilde F$.

\end{proof}

\subsection{Mermin-Wagner theorem for the exit sets of the GFF.}\label{subseq:MW}
The goal of this section is to study the correlation of the angles of the GFF on the boundary of its level set exploration. We show a Mermin-Wagner type of result \cite{mermin1966absence, mermin1967absence}, proving a quantitative decay of the correlations. We will state and prove it here for the $2$-vectorial GFF, but as in the case of $O(N)$ models, it can be then generalized to $N \geq 3$.

\begin{proposition}[Mermin-Wagner for the GFF] \label{pr.MW}
Let $\phi$ be a $2$-vectorial GFF in a graph $\Lambda_n$, $\theta:=\frac{\phi}{\|\phi\|}$ be the angles of $\phi\in \R^2$ and let $A_{R,k}$ be a level set exploration of $\phi$ and $A$ some subset of $\Lambda_n$. 

Then there exists a constant $K$ such that for any $x, y \in A$
	\begin{align}\label{eq:mw}
			\E\left[\theta(x)\cdot \theta(y) \mid A_{R,k}=A\right]\leq \left (K\frac{R+\ln(k+2)}{\sqrt{\ln(d(x,y)+1)}}\right )\wedge 1,
		\end{align}
\end{proposition}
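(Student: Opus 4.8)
The plan is to prove Proposition~\ref{pr.MW} by a Mermin--Wagner shift argument performed \emph{inside} the explored region, i.e. conditionally on $A_{R,k}=A$. Recall that by the local set / Markov decomposition, conditionally on $\{A_{R,k}=A\}$ the field restricted to the explored vertices is a GFF conditioned to satisfy $\|\phi(v)\|\le R$ on the boundary-type constraints (roughly: the values on the last visited vertices lie in a ball of radius $O(R)$, and the fluctuation estimates of Subsection~\ref{subseq:flct} control them). The key point is that under this conditioning the two-component field $\phi=(\phi^1,\phi^2)$ is still log-correlated with an explicit Dirichlet form, so one can apply the classical trick: compare the law of $\phi$ with the law of a rotated field $R_{f}\phi$, where $R_{f(v)}$ is a rotation of $\R^2$ by a slowly varying angle $f(v)$, with $f$ a cutoff harmonic-type bump that equals some fixed rotation near $x$ and $y$ (say $f\equiv a$ on a small ball around $x$, $f\equiv 0$ near $y$ and at the boundary of $A$), interpolating logarithmically in between so that the Dirichlet energy $\sum_{i\sim j}|f(i)-f(j)|^2$ is of order $a^2/\log d(x,y)$.

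The main steps, in order. First I would set up the conditional law: write $\P[\,\cdot\mid A_{R,k}=A]$ explicitly as a GFF on $A$ (with zero boundary on $\partial\Lambda_n$) whose ``last-visited'' vertices are constrained to have norm $\le R$, and record from Proposition~\ref{pr.fluctuations} (or directly) that the relevant constrained marginals have sub-Gaussian tails with parameter $O(R+\log k)$ — the $\log(k+2)$ enters exactly because a jump of size $k$ lets the exploration reach a vertex at distance up to $k$ from a constrained point, and the fluctuation bound picks up an additive $\log(k+2)$ there. Second, I would perform the change of variables $\phi\mapsto R_{f}\phi$ in the partition-function-style ratio $\E[\theta(x)\cdot\theta(y)\mid A]$; since rotations preserve $\|\phi(v)\|$ they preserve the conditioning event, and the Radon--Nikodym derivative is $\exp(\text{quadratic in }\phi\text{ with coefficients }\nabla f)$, so the usual Mermin--Wagner/McBryan--Spencer computation (complexify $f$, use $|\E[e^{\,i\text{-linear in }\phi}]|\le e^{\text{variance}/2}$, or the Jensen-type bound) gives that $\E[\theta(x)\cdot\theta(y)\mid A]$ is comparable, up to $\exp(C(\text{Dirichlet energy of }f)(R+\log k)^2)$, to the same quantity with the roles interpolated — and optimizing the rotation amplitude $a$ against the Dirichlet cost $\asymp a^2/\log d(x,y)$ yields a bound of the form $\cos(a)+C\exp(-c\,\log d(x,y)/(R+\log k)^2)$, hence the claimed $O((R+\log(k+2))/\sqrt{\log d(x,y)})\wedge 1$.

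Concretely, the function $f$ I would use is the (discrete) harmonic function in the annulus between a radius-$O(1)$ ball around $x$ and a radius-$\tfrac12 d(x,y)$ ball, equal to the target rotation angle on the inner circle and $0$ outside, so that $f$ is constant ($=$ target) near $x$, constant ($=0$) outside the big ball (in particular $\theta(y)$ and the norm constraints far from $x$ are untouched), and $\sum_{i\sim j}|f(i)-f(j)|^2 \le C/\log d(x,y)$ by the standard logarithmic capacity estimate. One then splits symmetrically (rotate around $x$ and around $y$) to handle the case where $x,y$ are both deep inside $A$; if $x$ or $y$ is close to the constrained boundary of $A$, the constraint $\|\phi\|\le R$ there already forces $\theta(x)$ or $\theta(y)$ to be ``small-variance'' and the bound is even easier, or one simply notes the RHS of \eqref{eq:mw} is $\ge 1$ anyway in the relevant regime.

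The hard part will be twofold. The first difficulty is that the conditioning $\{A_{R,k}=A\}$ is not a product event and the constrained marginals are not Gaussian, so the McBryan--Spencer bound $|\E[e^{i\langle g,\phi\rangle}]|\le e^{\frac12\Var}$ must be replaced by a soft argument that survives conditioning — this is precisely where Lemma~\ref{l.FKG} (conditional FKG) and the exponential-moment bound of Proposition~\ref{pr.fluctuations} do the work: one controls $\E[e^{\langle g,\phi\rangle}\mid\calK]$ rather than a characteristic function, at the cost of the $(R+\log k)^2$ factor instead of a clean variance. The second difficulty, more bookkeeping than conceptual, is making the rotation shift legitimate: $R_f\phi$ has a different quadratic form only through the ``twist'' terms $\phi(i)\cdot(R_{f(i)-f(j)}-\mathrm{Id})\phi(j)$, which are genuinely quadratic (not just linear) in $\phi$ unless one linearizes $f$ small — so one should either take $f$ of small amplitude and Taylor expand, keeping the quadratic-in-$\phi$ error term controlled by the sub-Gaussianity, or carry the exact cosine identity through; I would take the small-amplitude route since we only need the $1/\sqrt{\log}$ rate, not sharp constants.
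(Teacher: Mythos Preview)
Your overall strategy --- a Mermin--Wagner rotation argument inside the explored set, with the fluctuation bounds of Proposition~\ref{pr.fluctuations} supplying the factor $(R+\log(k+2))$ --- is the right one and matches the paper's. But you are missing one key simplification that the paper uses, and this simplification is precisely what dissolves your ``second difficulty''.

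The paper does \emph{not} rotate $\phi$ directly. Instead it conditions further, not only on $\{A_{R,k}=A\}$ but also on the norms $r_A:=(\|\phi(s)\|)_{s\in A}$. By Proposition~\ref{pr.anglesGFF} applied to the effective Dirichlet form on $A$ (written via the hitting kernel $p^1_{s\to t}$), the conditional law of the angles $\theta|_A$ is then an \emph{exact} $XY$ model with conductances $d_s\, r_s r_t\, p^1_{s\to t}$. The rotation $\theta\mapsto e^{iS}\theta$ is now a volume-preserving change of variables on a product of circles, and the Radon--Nikodym derivative is simply $\exp\bigl(\tfrac12(H(\theta)-H(e^{iS}\theta))\bigr)$. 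The twist terms $\phi(i)\cdot(R_{f(i)-f(j)}-\mathrm{Id})\phi(j)$ that you worry about are gone, because the norms are frozen: no Taylor expansion, no small-amplitude trick is needed. From there the paper uses Pinsker's inequality (not McBryan--Spencer): with $S(x)=0$, $S(y)=\pi$ one has $|\E^{r_A}[\theta(x)\cdot\theta(y)]-\hat\E^{r_A}[\theta(x)\cdot\theta(y)]|=2\E^{r_A}[\theta(x)\cdot\theta(y)]$, and the relative entropy computes to $\tfrac14\sum_{s,t}d_s p^1_{s\to t}(S(s)-S(t))^2 r_s r_t$ after the $\sin$ terms cancel by the symmetry $\theta\sim-\theta$. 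Only at the very end does one take expectation over $r_A$, controlling $\E[r_s^2\mid A_{R,k}=A]$ by~\eqref{eq:2nd}. The shift $S$ is the normalized Green's function $\pi\,G_{\Lambda_n,\{x\}}(y,\cdot)/G_{\Lambda_n,\{x\}}(y,y)$ rather than your annular harmonic bump, but either choice gives Dirichlet energy $O(1/\log d(x,y))$.

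Your proposed route --- rotate $\phi$, linearize, and bound the resulting exponentials via conditional FKG and exponential moments --- has a real gap as written: the Radon--Nikodym derivative after rotating $\phi$ is $\exp(\text{quadratic in }\phi)$, and even after linearizing in $f$ the twist term $(f(i)-f(j))\,\phi(i)\cdot J\phi(j)$ remains quadratic in $\phi$. Controlling $\E[e^{\langle g,\phi\rangle}\mid\calK]$ for \emph{linear} $g$ (which is what Proposition~\ref{pr.fluctuations} gives) does not directly handle this. The conditioning-on-norms step is exactly what converts the problem into one where the standard $XY$/Pinsker machinery applies verbatim.
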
 

The proof mimics the one for the $XY$ model (see e.g. \cite{velenikBook} Chapter 9), by making use of the fact that conditionally on the norm of the GFF, we obtain an $XY$ model with interaction strengths depending on the norm of the field. As we control the norm of the field near the exploration boundary by the last subsection, we can conclude. 

\begin{proof}
We start by describing what is the conditional law of $(\theta(v))_{v\in A}$ given that the exploration set $A_{R,k} = A$. By the Markov property we can write $\phi=\phi_A+\phi^A$. Here, conditionally on $A_{R,k} = A$, $\phi^A$ is a GFF in $D\backslash A$ independent of $\phi_A$ and $\phi_A$ is equal to $\phi$ on $A$, $0$ in $\partial \Lambda$ and is harmonic on $\Lambda_n\backslash (A\cup \partial \Lambda_n)$. Furthermore, 
	\begin{align*}
		\P\left[d\phi_A \right]\propto e^{-\frac{1}{2}\sum_{u \sim u'} \|\phi_A(u)-\phi_A(u')\|^2} \prod_{v\in A}d\phi(v).
	\end{align*}
 Now, for any $v\in \Lambda_n$
\begin{align}
	\phi_A(v)= \sum_{s\in A} \phi_A(y)p^0_{v\to s},
\end{align}
where $p^0_{v\to s}$ is the probability that a random walk started from $v$ first hits $A\cup \partial \Lambda_n$ at the point $s$ (if $v\in A$, this value is $\delta_{s=v}$). Furthermore, note that $\sum_{u \sim u'} \|\phi_A(u)-\phi_A(u')\|^2= \sum_{s\in A} \phi_A(-\Delta\phi_A)(s)$. Thus,

\begin{align}\label{e.energyphi_A}
\sum_{u \sim u'} \|\phi_A(u)-\phi_A(u')\|^2 = \frac{1}{2}\sum_{(s,t) \in A\times A} d_s\|\phi_A(s)-\phi_A(t)\|^2p^1_{s\to t},
\end{align}
where we recall that by our convention the first sum is over unordered pairs and $d_s$ is the degree of $s$.
Here,  $p^{1}_{s\to t}$ is the probability that a random walk started from $s$ at any time $T\geq 1$ enters $A\cup \partial \Lambda_n$ at the point $t$.

Denoting by $r = \|\phi\|$ and its restriction to $A$ as $r_A$, we see that the law of $\theta$ restricted to $A$ given that $A_{R,k} = A$ and $(\|\phi_A(s)\|)_{s \in A}$ can be described by
\begin{align*}
 \P^{r_A}(d\theta)\propto e^{-\frac{1}{2}H(\theta) }\prod_{s\in A} d\theta(s),
\end{align*}
where 
\begin{align*}
	H(\theta)=\frac{1}{2}\sum_{s,t \in A} d_s\|r_A\theta(s)-r_A\theta(t)\|^2p^1_{s\to t}.
\end{align*}

	We now aim to show that this law changes minimally when one rotates all angles $\theta$ by a well chosen angle $(S(v))_{v \in \Lambda_n}$. 
	In this respect, for $x \neq y\in A$ and $v \in \Lambda_n$ define
	\begin{align}
		S(v) := S_{x,y}(v) :=\pi\frac{ G_{\Lambda_n,\{x\}}(y,v)}{G_{\Lambda_n,\{x\}}(y,y)}.
	\end{align}
	Here $G_{\Lambda_n,\{x\}}$ is the Green's function with zero boundary conditions at the vertex $x$ only. We can then calculate
	\begin{align}\label{eq:Sbound}
		\sum_{v\sim v'} (S(v)-S(v'))^2 = \sum_{v} S(v)(-\Delta)S(v) = \frac{\pi}{G_{\Lambda_n,\{x\}}(y,y)} \leq \frac{C}{\ln(\|x-y\|)},
	\end{align}
	where the last inequality follows from \eqref{eq:green} and the fact that the zero boundary Green's function of $\Lambda_n\setminus\{x\}$ converges to that of $\Z^2 \setminus \{x\}$. 
	
	Now denote by $\hat\P^{r_A}$ the law of $ e^{iS}\theta$, i.e. the angles $\theta$ rotated by $e^{iS}$, conditionally on $A_{R,k} = A$ and $\|\phi_A(s)\| = r_s$ for all $s \in A$. Its Radon-Nikodym derivative w.r.t. the original law on angles $\P^{\theta}$ under the same conditioning is given by
	\begin{align}
		\frac{d\hat\P^{r_A}}{d\P^{r_A}}= e^{\frac{1}{2}(H(\theta)- H(e^{iS} \theta))}.
	\end{align}
We can now use Pinsker's inequality (Lemma B.67 of \cite{velenikBook}) to obtain
	\begin{align*}
		2\E^{r_A}\left[\theta(x)\cdot \theta(y) \right] =
		|\E^{r_A}\left[\theta(x)\cdot \theta(y) \right] - \hat \E^{r_A}\left[\theta(x)\cdot \theta(y) \right]| \leq \sqrt{2 \mathrm h(\P^{r_A}\mid \hat \P^{r_A})},
	\end{align*}
where $\mathrm h(\P^{r_A}\mid \hat \P^{r_A})$ is the relative entropy between $\P^{r_A}$ and $ \hat \P^{r_A}$. We have that $\mathrm h(\P^{r_A}\mid \hat \P^{r_A})$ is equal to $1/4$ times 

\begin{align*}
 \E^{r_A}\left[ \sum_{s, t \in A \times A}d_sp^1_{s\to t}(\|e^{iS(s)}r_s\theta(s)- e^{iS(t)}r_t\theta(t)\|^2)-\|r_s\theta(s)- r_t\theta(t)\|^2 \right].
\end{align*}
By opening the squares we can write each summand  as
$$d_sp^1_{s\to t}2r_sr_t\left[\cos(\theta(s)-\theta(t))(1-\cos(S(s) - S(t)))+\sin(\theta(s)-\theta(t))\sin(S(s)-S(t))\right].$$
Now the $\sin$ term will cancel after taking expectations as $\theta(s) \sim -\theta(s)$ in law under $\E^{r_A}$.
Thus using $2- 2\cos(x) \leq x^2$ we obtain that $2(\E^{r_A}\left[\theta(x)\cdot \theta(y) \right])^2$ can be bounded by
\begin{equation}\label{eq:condbnd}
\mathrm h(\P^{r_A}\mid \hat \P^{r_A})\leq \frac{1}{4}\E^{r_A}\left[\sum_{(s,t) \in A \times A}d_s p^1_{s\to t} \|S(s)-S(t)\|^2	r_sr_t\right]
\end{equation}
and hence for $x \neq y$
$$\E^{A}\left[\theta(x)\cdot \theta(y) \right]=\E^A\left[\E^{r_A}\left [ \theta(x)\cdot \theta(y) \right ]\right]$$
can be upper bounded by
$$\frac{1}{2\sqrt 2}\sqrt{\sum_{s,t \in A} d_sp^1_{s\to t}\|S(s)-S(t)\|^2\sqrt{\E\left[|\phi_A(s)|^2\mid A_{R,k} = A \right]  \E\left[|\phi_A(t)|^2\mid A_{R,k} = A \right]} }.$$
By using \eqref{eq:2nd} we can upper bound $$\sqrt{\E\left[|\phi_A(s)|^2\mid A_{R,k} = A \right]  \E\left[|\phi_A(t)|^2\mid A_{R,k} = A \right]} $$ by $K(R+\log(k+2))^2$ and thus 
$$\E^{A}\left[\theta(x)\cdot \theta(y) \right] \leq C\log(k+2)\sqrt{ \sum_{u \sim u'} |S(u)-S(u')|^2} \leq C\frac{R+\log(k+2)}{\sqrt{\ln \|x-y\|}}$$	
where the final sums are over all of $\Lambda_n$, we made use of \eqref{eq:Sbound}. As $|\theta(x)\cdot\theta(y)|\leq 1$ we obtain the result.
\end{proof}

\subsection{The harmonic function of the exit set does not fluctuate.}\label{subseq:fnl}
In this section we will prove Proposition \ref{p.small_harmonic_function_for_explorations}: we show that the harmonic function associated to the exit set $A_{R,k}$ does not fluctuate too much - more precisely, we show that $\E\left[\|\phi_{A^i}\|^2(0) \right]=o(1)$ as $n\to \infty$. 

 The main idea of the proof is to use Proposition \ref{pr.MW} to see that even though the absolute values of the GFF near the exploration set are close to $R$, the angles are mixing well enough so that the harmonic extension of the values follows the law of large numbers.  

\begin{proof}[Proof of Proposition \ref{p.small_harmonic_function_for_explorations}]
For simplicity of notation denote $A = \hat A_{R,k}$. 
Recalling the notation $\theta= \frac{\phi}{\|\phi\|}$ and $r=\|\phi\|$, we can write $\E\left[\|\phi_{A}\|^2(0) \right]$ as

	$$\E\left( \sum_{x,y\in A} \nu_x \nu_y \E\left[r_xr_y \E^{r_A}\left[\theta(x)\cdot \theta(y)\right] \mid A\right]\right),  $$
 where $\nu_x, \nu_y$ denote the harmonic measure of $x,y \in A$ seen from $0$ in the connected component of $x \in \Lambda_n \setminus A$.
 By Hölder inequality we can bound this further by
 	$$\E\left( \sum_{x,y\in A} \nu_x \nu_y (\E\left[r_x^4\mid A\right])^{1/4}(\E\left[r_y^4\mid A\right])^{1/4}(\E\left[(\E^{r_A}\left[\theta(x)\cdot \theta(y)\right])^2\mid A\right])^{1/2}\right]. $$
 Each of the two first conditional expectations can be upper bounded using \eqref{eq:2nd} by $C(R+\log(k+2))^4$. The third expectation can be bounded by $1$ when $x = y$. When $x \neq y$ we can use \eqref{eq:condbnd} and bound it by a constant times
$$ \E\left[\E^{r_A}\left[\sum_{(s,t) \in A \times A}d_s p^1_{s\to t} \|S(s)-S(t)\|^2	r_sr_t\right]\mid A\right],$$
which like in the proof of the Mermin-Wagner theorem (there is just no square root) is bounded by $(C\frac{(R+\log(k+2))^2}{\log\|x-y\|})\wedge 1$ for $x \neq y$. Putting everything together we arrive at
$$\E\left[\|\phi_{A}\|^2(0) \right] \leq C(R+\log(k+2))^4\E\left( \sum_{x,y\in A} \nu_x \nu_y \left\{(C\frac{(R+\log(k+2))^2}{\log\|x-y\|})\wedge 1\right\}\right).$$
Now, as $A \subseteq \Lambda_n \setminus \Lambda_{n/3}$, there is a universal constant $c > 0$ such that $\nu_x \leq n^{-c}$. \footnote{ Beurling's estimate would give us a quantitative exponent $c$. We shall only need here the existence of a positive $c>0$ which follows easily by considering concentric dyadic annuli around each given $x$.} 
Decomposing the sum inside the expectation over near-diagonal and off-diagonal parts and using the trivial bound $1$ for the near-diagonal parts, we can bound the expectation by 
$$\E\left(\sum_{x \in A} \nu_x \left(\sum_{\|x-y\| \leq n^{c/3}} n^{-c} + \sum_{\|x-y\| \geq n^{c/3}} \nu_y \left\{(C\frac{(R+\log(k+2))^2}{\log\|x-y\|})\wedge 1\right\}\right)\right) .$$ 
The first sum contains at most $O(n^{2c/3})$ terms on a two-dimensional lattice and thus is bounded by $n^{-c/3}$. The second term we can bound by $$\sup_{\|x-y\| \geq n^{c/3}} \left\{\left (C\frac{(R+\log(k+2))^2}{\log\|x-y\|}\right )\wedge 1\right\} = O\left (\frac{R+\log(k+2)^2}{\log n}\right ),$$
as long as $(R+\log(k+2))^2 = o(\log n)$. We conclude that
$$\E\left[\|\phi_{A}\|^2(0) \right] \leq C\frac{(R+\log(k+2))^6}{\log n}.$$
\end{proof}

\section{Level set percolation of the 2D-GFF}\label{s.perco}
In this section, we will prove Theorem \ref{th.GFFmGFF}. We will first consider the $N$-vectorial GFF rooted at the origin, and then deduce from this the case of the massive GFF. The two propositions thus proved in the two following subsections correspond to the two cases of Theorem \ref{th.GFFmGFF}.

Recall the definition of the event $\{x \overset{\| \cdot \| \leq R, k}\longleftrightarrow  y \}$ defined for the $N$-vectorial GFF $\phi$ : this event holds if and only one can find a sequence of vertices $x_0 = x, x_1, \dots, x_n = y$ such that $d(x_i, x_{i-1}) \leq k$ and $\|\phi(x_i)\| \leq R$ for all $i \in \{0, 1, \dots, n\}$.

\subsection{Level-set percolation of the two-dimensional GFF rooted at $0$.}
The goal of this subsection is to prove that if $\phi:\Z^2\mapsto \R^2$ is a 2D-GFF in $\Z^2$ with value $0$ at $0$, then the sets $\|\phi\| \leq R$ are exponentially clustering. More precisely, 
\begin{proposition}\label{pr.percolation_GFF}
Let $N\geq 2$ and consider the $N$-vectorial GFF on $\Z^2$ which is rooted at the origin. 
Then, for any $R,k>0$, there exists $\psi=\psi(R,k)>0$, such that for any $x,y\in \Z^2$
\begin{align*}\label{}
\Pb{x \overset{\| \cdot \| \leq R,k}\longleftrightarrow  y } \leq e^{-\psi(R,k) \|x-y\|_2}\,.
\end{align*}
\end{proposition}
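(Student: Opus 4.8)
\textbf{Proof strategy for Proposition \ref{pr.percolation_GFF}.}
The plan is to reduce the connectivity event to a statement about the ``norm-small'' local set, very much in the spirit of the proof of Theorem \ref{th.ExitSet2}, but now localized around the vertex $x$. Fix $x,y\in\Z^2$ and set $D=\|x-y\|_2$; we may assume $D$ is large. Consider the box $\Lambda=x+\Lambda_{\lfloor D/3\rfloor}$ centered at $x$. If $\{x\overset{\|\cdot\|\le R,k}\longleftrightarrow y\}$ occurs, then in particular there is a $k$-path starting at $x$, staying inside the annulus $x+\An_{\lfloor D/3\rfloor,\,\lfloor D/6\rfloor}$ for at least its first crossing, on which $\|\phi\|\le R$ all along; so it suffices to bound the probability that the exit set $A_{R,k}$ of $\phi$ \emph{restricted to the above annulus}, explored inward from $x$, reaches the inner box $x+\Lambda_{\lfloor D/6\rfloor}$. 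This is exactly the type of statement controlled by Proposition \ref{pr.small_harmonic_function} combined with Proposition \ref{p.small_harmonic_function_for_explorations}, once we re-root the GFF at $x$ instead of at $0$.

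\medskip
First I would set up the local-set decomposition: starting from the singleton $\{x\}$ (where $\phi$ is pinned, say to an arbitrary value of norm $\le R$, otherwise the event is empty), explore outward all vertices reachable by a $k$-path on which $\|\phi\|\le R$, stopping upon exiting $x+\Lambda_{\lfloor D/3\rfloor}$; call the resulting optional set $A$. On the event that $A$ reaches $x+\Lambda_{\lfloor D/6\rfloor}$, the harmonic part $\phi_A$ at the far point (or, re-centered, at $x$ from the outside) is forced to be $O(R)$ in size, whereas the Mermin--Wagner mixing of the angles along $\partial A$ — Proposition \ref{pr.MW}, applied verbatim with the roles of inside/outside swapped — forces $\E[\|\phi_A\|^2]$ to be $O\bigl((R+\log(k+2))^6/\log D\bigr)$, exactly as in Proposition \ref{p.small_harmonic_function_for_explorations}. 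Plugging into Proposition \ref{pr.small_harmonic_function} gives
\[
\Pb{A\text{ reaches }x+\Lambda_{\lfloor D/6\rfloor}}\le \frac{C(R+\log(k+2))^6}{\log D}.
\]

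\medskip
This only gives a polylogarithmic (indeed $o(1)$) bound on a single scale, so the second and crucial step is to \emph{iterate over dyadic annuli} to upgrade it to exponential decay. Along a $k$-path from $x$ to $y$ one must successively cross each of the $\asymp \log_2 D$ disjoint dyadic annuli $x+\An_{2^{j+1},2^j}$, $j=j_0,\dots,\log_2 D$; moreover, the events ``the norm-small exit set started in annulus $j$ crosses annulus $j$'' can be made conditionally independent (or at least conditionally dominated) across a fixed sublattice of well-separated scales, using the domain Markov property of the GFF and the fact that the exit-set events are decreasing in $\|\phi\|$ so that FKG-type negative association applies after conditioning on the pinning. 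Since on each fixed scale $j$ the crossing probability is at most some $q<1$ — here we need $j$ large enough that $C(R+\log(k+2))^6/j\le q$, i.e. all but finitely many scales — we obtain $\P[x\overset{\|\cdot\|\le R,k}\longleftrightarrow y]\le q^{\,c\log_2 D}=D^{-c'}$ first, and then, by choosing to look only at scales $\ge e^{2C(R+\log(k+2))^6}$ so that $q$ can be taken arbitrarily small and re-running the multiscale argument with a genuinely summable bound, promote $D^{-c'}$ to $e^{-\psi(R,k)D}$ with $\psi(R,k)>0$. (Alternatively: the $D^{-c'}$ bound with $c'>1$ already lets one run a standard Borel--Cantelli / block-renormalization argument on a coarse lattice of boxes of side $e^{C'(R+\log(k+2))^6}$ to get stretched-exponential, then exponential, decay.)

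\medskip
\textbf{Main obstacle.} The genuinely delicate point is the independence/monotonicity across scales needed for the multiscale step: the single-scale estimate is robust, but to multiply probabilities one must handle the conditioning carefully. The exit-set crossing event in annulus $j$ depends on $\phi$ restricted to that annulus, and after conditioning on the harmonic part entering from outside (which is all one needs, by the Markov property) the remaining field is a GFF with boundary data whose size I do not control pointwise. The clean way around this is to use that the crossing events are \emph{decreasing} functions of $\{\|\phi(v)\|\}_v$ and to invoke the FKG inequality for the GFF (and its conditioned versions, Lemma \ref{l.FKG}) to get $\P[\text{cross all scales}]\le\prod_j\P[\text{cross scale }j]$ directly, each factor being $o(1)$; this sidesteps exact independence entirely. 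The rooting at $0$ (rather than at $x$) is a minor extra wrinkle: one checks that adding a fixed bounded harmonic perturbation coming from the pin at $0$ only shifts $R$ by an $O(1)$ amount on each fixed finite scale, which is harmless, and for the far scales the influence of the pin at $0$ on a box around $x$ is negligible by the Markov property applied to $\Z^2\setminus\{0,x\}$ — this is precisely the kind of ``random harmonic function on the punctured plane'' subtlety flagged in the discussion of Theorem \ref{th.PolyakovEasy}, and it is handled the same way there.
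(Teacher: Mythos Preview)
Your multiscale step has a genuine gap, and it is precisely at the point you yourself flag as the main obstacle. You write that the crossing events are decreasing in $\{\|\phi(v)\|\}_v$ and that FKG then yields $\P[\text{cross all scales}]\le\prod_j\P[\text{cross scale }j]$. This inequality goes the wrong way: for positively associated variables (which the $\|\phi(v)\|^2$ are, via the isomorphism with loop-soup local times), two decreasing events are \emph{positively} correlated, so FKG gives $\P[\bigcap_j A_j]\ge\prod_j\P[A_j]$, not $\le$. Lemma \ref{l.FKG} is of no help here either: it is a statement about monotone functions of a scalar field $\phi$, and the events $\{\|\phi(v)\|\le R\}$ are not monotone in $\phi$. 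Without some genuine decoupling across scales, the single-scale bound cannot be iterated; and even if it could, dyadic annuli only give $O(\log D)$ factors and hence polynomial, not exponential, decay. Your ``alternatively'' block-renormalization suggestion is on the right track but undeveloped, and it still needs the decoupling.

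The paper's proof bypasses all of this with a loop-soup localization trick that you do not use. By the isomorphism theorem, $\|\phi\|^2$ equals the occupation field $L$ of a random-walk loop soup in $\Z^2$ killed at $0$. Tile the plane by translates $\square_i$ of a fixed box $\Lambda_n$, and let $L_i$ be the occupation field of only those loops that stay inside the enlarged box $\square_i\cup\An_i$. Three things happen at once: (a) $L_i\le L$ pointwise, so any $k$-path crossing $\An_i$ with $L\le R$ is also a crossing with $L_i\le R$; (b) each $L_i$ has \emph{exactly} the law of $\|\phi_i\|^2$ for a zero-boundary GFF in that box, so Theorem \ref{th.ExitSet2} applies verbatim to make the crossing probability in each box arbitrarily small by choosing $n$ large; and (c) the $L_i$ are \emph{genuinely independent} for boxes at graph distance $>2$. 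Point (c) is the key idea you are missing: it replaces the (unavailable) correlation inequality by honest $2$-dependent site percolation on a fixed-scale coarse lattice, from which exponential decay is classical. This also disposes of the rooting issue for free, since $L_i$ does not see the killing at $0$ once $0$ lies outside the enlarged box.
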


This is relatively direct given that two-valued sets are not macroscopic, i.e. Theorem \ref{th.ExitSet2}. Indeed, we can tesselate the space with translated copies of $\Lambda_n$, and then see that the probability that $\Lambda_n$ is connected to a $\Lambda_{2n}$ using a path of $L_a^i$ is upper bounded by the corresponding probability for a GFF in $\Lambda_{2n}$ with $0$-boundary condition. By Theorem \ref{th.ExitSet2} we can make this probability arbitrarily small by taking $n$ large, which allows to conclude via standard percolation theory. Let us spell it out.
\begin{proof}

Recall from Theorem \ref{th.isom1} that the law of $\|\phi\|^2$ is equal to that of $L$, the occupation time of a random walk loop soup of parameter $1/2$ in $\Z^2$ and killed at $\{0\}$. From now on, we work only with the random walk loop soup.
	
Tesselate $\Z^2$ is translations of $\Lambda_n$. We call this tesselation $T$ and denote its elements $\square_i$. Furthermore, we denote by $\An_i$ the translation of the annulus $\An_{n,3n/2}$, chosen such that its inner square coincides with $\square_i$. Note that $T$ naturally comes with a graph structure: $\square_i$ neighbours $\square_j$ if one of its sides intersects. With this structure $T$ is graph-isomorphic to $\Z^2$. 
	
Further, define $L_i$ as the occupation time of all loops in the loop soup that do not exit $\An_i \cup \square_i$. Note that $L_i\leq L$ for all $i$ such that $\An_i\cup\square_i$ does not contain $0$. By the isomorphism theorem, $L_i$ is equal in law to $\|\phi_i\|^2$ where $\phi_i$ is a GFF in $\An_i\cup\square_i$ with boundary condition $0$ in the outer boundary of $\An_i$. Also, observe that $L_i$ is independent of $L_j$ as long as $\An_i$ does not intersect $\An_j$, in other words, as long as the distance between $\square_i$ and $\square_j$ is greater than 2.
	
We now define a percolation on $T$ by saying that a square $\square_i$ is open if there is a path $\eta$ going from one boundary to the other in $\An_i$ where $L_i$ is smaller than or equal to $R$, where each step is taken to distance at most $k$ from the previous location. We call such a path a $k-$path. This generates a $2$ dependent percolation whose open clusters contain those of $\{\|\phi\|^2 \leq R\}$, and if we ignore the finite amount of $\square_i$ where $\An_i\cup \square_i$ contains $0$ this percolation is translationally invariant. 

From Theorem \ref{th.ExitSet2}, we directly obtain the following claim \footnote{We include the possible dependency of $k$ on $n$ for later use. It is not used here.}.
\begin{claim}\label{cl.openprob}
For every $C,\eps > 0$, we can find $n$ large enough so that for every $k \leq C\log n$, $$\P(\square_i \text{ is open}) = \P(\text{there is a }k\text{-path crossing }\An_i\text{ on which }L_i \leq R) < \eps.$$
\end{claim}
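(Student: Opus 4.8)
The claim follows immediately from Theorem \ref{th.ExitSet2}, and the plan is simply to translate the connectivity event into the language of exit sets. First I would observe that the event $\{\text{there is a }k\text{-path crossing }\An_i\text{ on which }L_i \leq R\}$ is, via the isomorphism theorem (Theorem \ref{th.isom1}), exactly the event that in the GFF $\phi_i$ on $\An_i \cup \square_i$ (with zero boundary condition on the outer boundary of $\An_i$), the exit set $A_{R,k}(\phi_i)$ started from the outer boundary reaches the inner square $\square_i$. Indeed, $L_i \overset{d}{=} \|\phi_i\|^2$, so $\{L_i \leq R\}$ corresponds to $\{\|\phi_i\|^2 \leq R\} = \{\|\phi_i\| \leq \sqrt R\}$, and a $k$-path from the outer to the inner boundary along which $\|\phi_i\| \leq \sqrt R$ is precisely a witness for $A_{\sqrt R, k}(\phi_i) \cap \square_i \neq \emptyset$.

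Next I would reduce the annular geometry to the box geometry of Theorem \ref{th.ExitSet2}. The annulus $\An_i \cup \square_i$ is a translate of $\An_{n,3n/2}\cup \Lambda_n = \Lambda_{3n/2}$, a box of side comparable to $n$, and crossing it requires the exit set to penetrate from $\partial \Lambda_{3n/2}$ all the way to $\Lambda_n \subset \Lambda_{(1-\eps_0)\cdot (3n/2)}$ for a fixed $\eps_0 \in (0,1/3)$ (e.g. $\eps_0 = 1/4$). Hence
\begin{align*}
\P(\square_i \text{ is open}) \leq \P\!\left(A_{\sqrt R, k}(\phi_{\Lambda_{3n/2}}) \cap \Lambda_{(1-\eps_0)(3n/2)} \neq \emptyset\right).
\end{align*}
By Theorem \ref{th.ExitSet2} applied with parameters $\sqrt R$, $k$, $\eps_0$ and box side $\lfloor 3n/2\rfloor$, this probability tends to $0$ as $n \to \infty$, so for any $\eps > 0$ we may choose $n$ large to make it less than $\eps$. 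To get the version allowing $k = k(n) \leq C \log n$, I would simply note that the quantitative hypothesis of Theorem \ref{th.ExitSet2}, namely $(\sqrt R + \log k(n))^2 = o(\log n)$, holds for $k(n) \leq C\log n$ since then $\log k(n) \leq \log C + \log\log n = o(\sqrt{\log n})$, and $\sqrt R$ is a fixed constant, so the square is indeed $o(\log n)$; thus the same uniform smallness holds across this range of $k$.

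There is essentially no obstacle here: the only minor points to be careful about are (i) the bookkeeping that a crossing of $\An_i$ forces the exit set to reach deep inside the enclosing box (one should make sure the inner square $\Lambda_n$ sits well inside $\Lambda_{(1-\eps_0)(3n/2)}$, which it does for $\eps_0 < 1/3$), and (ii) the translation of the $L_i \le R$ threshold to $\|\phi_i\| \le \sqrt R$, which is harmless since Theorem \ref{th.ExitSet2} is stated for arbitrary fixed $R > 0$. The footnote's remark that $k$ may depend on $n$ is handled exactly as above by invoking the quantitative form of Theorem \ref{th.ExitSet2}. I would therefore present the argument as a short two-line deduction: rewrite the crossing event via the isomorphism theorem, embed the annulus in a box, and quote Theorem \ref{th.ExitSet2}.
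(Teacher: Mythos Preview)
Your proposal is correct and matches the paper's approach exactly: the paper simply states that the claim follows directly from Theorem \ref{th.ExitSet2}, and you have spelled out precisely the translation (via the isomorphism $L_i \overset{d}{=} \|\phi_i\|^2$, embedding $\An_i\cup\square_i$ as a translate of $\Lambda_{3n/2}$, and checking the quantitative hypothesis for $k\le C\log n$). The only cosmetic slip is writing $A_{R,k}$ in your first sentence before correcting to $A_{\sqrt R,k}$; otherwise this is exactly the intended two-line deduction.
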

We can now conclude, as it is well known that two-dependent translation invariant percolation models exhibit exponential decay of cluster size for sufficiently small opening probability. As the existence any cluster of diameter $l$ in $\{\|\phi\| \leq R\}$ would imply the existence of a cluster of diameter $0.2l/n$ in $T$, and this has exponential decay, we conclude.
\end{proof}

\subsection{Level-set percolation of the two-dimensional massive GFF}
We now explain how to obtain a similar percolation result for a massive GFF. \begin{proposition}\label{pr.percolation_massive_GFF}
	Let $N \geq 2$ and consider a massive $N$-vectorial GFF $\phi^{(m)}$ on $\Z^2$. We have that for any $R>0, k\geq 1$, there exists a sufficiently small mass $\bar m=\bar m(R,k)>0$ and a positive $\psi(R,k)>0$ such that for any $m\leq \bar m$ and any  $x,y\in \Z^2$,
\begin{align*}\label{}
\FK{\phi^{(m)}}{}{x \overset{\| \cdot \| \leq R,k}\longleftrightarrow  y  \text{ in } \Z^2} \leq e^{-\psi(R,k) \|x-y\|_2}\,.
\end{align*}
\end{proposition}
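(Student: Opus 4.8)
The plan is to reduce the massive case to the non-massive case already proved in Proposition~\ref{pr.percolation_GFF}, using the coupling between massive and non-massive random walk loop soups from Lemma~\ref{l.coupling} together with the isomorphism theorem (Theorem~\ref{th.isom1}). The point is that Proposition~\ref{pr.percolation_GFF} gives exponential decay with rate $\psi(R,k)>0$ for the non-massive field rooted at $0$, and the coupling lets us transfer this to the massive field provided the mass is small enough relative to the relevant scale.

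Concretely, first I would fix $R>0$, $k\geq 1$ and choose, via Proposition~\ref{pr.percolation_GFF} (and the $2$-dependent percolation argument in its proof), a scale $n=n(R,k)$ large enough that the crossing probability of an annulus $\An_i$ by a $k$-path on which the occupation time stays $\leq R$ is below the threshold for exponential decay in a $2$-dependent percolation model. Then I would invoke Lemma~\ref{l.coupling}: for this fixed $n$ and any $\eps>0$ there is $m_0(\eps,n)$ so that for $m<m_0$ the massive and non-massive loop soups on $\Lambda_{3n/2}$ (restricted to the single translated block $\An_i\cup\square_i$) agree with probability $1-\eps$. Since the event ``$\square_i$ is open'' for the massive field is measurable with respect to the massive loop soup restricted to $\An_i\cup\square_i$, on the coupling event it coincides with the corresponding non-massive event; hence the massive opening probability exceeds the non-massive one by at most $\eps$. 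Choosing $\eps$ small keeps us below the percolation threshold, so the induced $2$-dependent percolation on the tessellation $T$ (built exactly as in the non-massive proof, but now using massive occupation times $L_i^{(m)}$ of loops staying in $\An_i\cup\square_i$) still has exponentially decaying clusters. Via the isomorphism theorem this transfers to $\{\|\phi^{(m)}\|\leq R\}$, and as before any $k$-connected cluster of diameter $\ell$ in the massive field forces a cluster of diameter $\gtrsim \ell/n$ in $T$, giving the claimed bound with $\psi(R,k)$ depending only on $R,k$ (through $n$ and the residual exponential rate), and $\bar m(R,k)=m_0(\eps,n)$.

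One technical point to handle carefully is that the block estimate must hold uniformly over all (or all but finitely many) translated blocks $\square_i$; this is fine because for the non-massive field rooted at $0$ the blocks not containing $0$ are translation-invariant, and for the massive field on $\Z^2$ every block is translation-invariant, so we only need the coupling and the crossing estimate for a single block of side $\sim n$, and both are translation-invariant. Another minor point: Lemma~\ref{l.coupling} as stated couples loop soups on $\Lambda_n$ with zero boundary conditions, so I would apply it to the finite domain $\An_i\cup\square_i$ (of diameter comparable to $n$), which is exactly the domain in which $L_i$ and $L_i^{(m)}$ are defined; the relevant events depend only on the loops that never exit this domain, so the coupling on this domain suffices and the mass threshold $m_0$ depends only on $n$ and $\eps$, hence only on $R$ and $k$.

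The main obstacle is essentially bookkeeping rather than a new idea: one must make sure that the choice of $n$ (to get below the $2$-dependent percolation threshold) is made first and independently of $m$, so that the coupling error $\eps$ can then be taken as small as needed \emph{after} $n$ is fixed; otherwise one risks a circular dependence between $m$, $n$, and $\eps$. With the quantitative remark after Lemma~\ref{l.coupling} (that $m_0$ can be taken of order $\eps^{1/2}n^{-1}$) this ordering is transparent, and no genuinely new estimate beyond Proposition~\ref{pr.percolation_GFF} and Lemma~\ref{l.coupling} is required.
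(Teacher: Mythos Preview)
Your proposal is correct and matches the paper's own proof essentially line for line: the paper also reduces to Proposition~\ref{pr.percolation_GFF} by proving the massive analogue of Claim~\ref{cl.openprob} (stated there as Claim~\ref{cl.massive}), first fixing $n$ so the non-massive crossing probability is below $\eps/2$ and then invoking Lemma~\ref{l.coupling} to choose $m$ small enough that $L_i=L_i^{(m)}$ with probability at least $1-\eps/2$. Your discussion of the ordering of choices and of translation invariance is exactly the bookkeeping the paper leaves implicit.
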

\begin{proof}
	The proof is the same as that of Proposition \ref{pr.percolation_GFF}. We only need to explain why we can prove the equivalent of Claim \ref{cl.openprob}.
\begin{claim}\label{cl.massive}
For every $\eps > 0$, we can find $n$ sufficiently large and $m$ sufficiently small such that $k \leq C\log n$, $$\P(\square_i \text{ is open}) = \P(\text{there is a }k\text{-path crossing }\An_i\text{ on which }L_i^m \leq R) < \eps.$$
\end{claim}
To prove this claim, first using Claim \ref{cl.openprob} we choose $n$ so that this probability is less than $\eps/2$ for the non-massive loop soup $L^i$ with zero boundary conditions. Second, via Lemma \ref{l.coupling}, we choose $m > 0$ small enough so that $L_i = L_i^m$ with probability more than $1-\eps/2$ in $\An_i$.

We can now argue exactly in the same way as Proposition \ref{pr.percolation_GFF}.
\end{proof}

\section{The local convergence of $O(N)$ model towards the vectorial GFF as $\beta \to \infty$}\label{s.Taylor}

The following theorem shows that around any fixed point, when we rotate the $O(N)$ model so that it north-pointing there, the $O(N)$ model converges to the $(N-1)$ vectorial GFF rooted to $0$ at this fixed point. 

This theorem is both intuitive and may be considered folklore in the physics literature (appearing e.g. in \cite{polyakov1975interaction}, but also already in \cite{BerBlank}), however, we could not find any proof in the literature and at least the proof presented here requires several non-trivial ingredients. As mentioned earlier, we will need for example to rule out possible fluctuations coming from random harmonic functions in $\Z^2 \setminus \{0\}$.

To simplify our presentation we work in the case of $O(3)$-model and in $\Z^d$ with $d = 2$, but the proof easily extends to any $N\geq 2$ and any $d \geq 2$. 
\begin{theorem}\label{t.convergence to GFF}
	Let $\theta = (\theta^1, \theta^2, \theta^3)$ denote the $O(3)$ model on the torus $\T_{{n}}$ at inverse temperature $\beta$  rooted to point north at $0$, i.e. such that $\theta^3(0) = 1$.
	
	For any sequence of $n=n_\beta$ with $n_\beta \to \infty$ as $\beta \to \infty$, the rescaled angle vector $\sqrt{\beta}(\theta^1, \theta^{2})$ converges in law as $\beta\to \infty$  to the $(N-1)$-vectorial GFF rooted to be $0$ at $0$. Here, the topology corresponds to convergence in law and on compacts subsets of $\T_{n_\beta}$.
\end{theorem}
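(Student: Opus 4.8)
The strategy is to fix a large box $\Lambda_\ell$ (with $\ell$ fixed, independent of $\beta$) inside $\T_{n_\beta}$ and show that the law of $\sqrt{\beta}(\theta^1,\theta^2)$ restricted to $\Lambda_\ell$ converges to that of a $2$-vectorial GFF rooted at $0$. The starting point is a reparametrization: write $\theta(x)=(\sqrt{1-\|\varrho(x)\|^2}\,\omega(x),\ \sqrt{1-\|\varrho(x)\|^2})$ where I really mean the decomposition $\theta(x)=(\theta^1(x),\theta^2(x),\theta^3(x))$ with $(\theta^1,\theta^2)$ the ``transverse'' part $\varrho(x)\in\R^2$, $\|\varrho(x)\|\le 1$, and $\theta^3(x)=\sqrt{1-\|\varrho(x)\|^2}$ near the north pole (this is the relevant branch since a cheap a priori estimate — e.g.\ from the Mermin--Wagner-free ``one-arm'' bound on $\Z^2$, or simply from controlling $\sum_{x\sim y}(1-\theta(x)\cdot\theta(y))$ — forces $\theta^3>0$ on $\Lambda_\ell$ with probability $\to 1$ as $\beta\to\infty$). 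Plugging this into the Hamiltonian and using $\theta(x)\cdot\theta(y)=\varrho(x)\cdot\varrho(y)+\sqrt{1-\|\varrho(x)\|^2}\sqrt{1-\|\varrho(y)\|^2}$, a Taylor expansion gives
\begin{align*}
\beta\sum_{x\sim y}\theta(x)\cdot\theta(y)
= \mathrm{const} - \frac{\beta}{2}\sum_{x\sim y}\|\varrho(x)-\varrho(y)\|^2 + \beta\, \mathcal{R}(\varrho),
\end{align*}
where $\mathcal{R}$ collects the higher-order terms (quartic and beyond in $\varrho$). Substituting $\varrho=\phi/\sqrt\beta$, the quadratic term becomes exactly the GFF Dirichlet energy $-\tfrac12\sum_{x\sim y}\|\phi(x)-\phi(y)\|^2$ (no $\beta$), the Jacobian of $\lambda_{\S^2}$ contributes $\prod_x(1-\|\phi(x)\|^2/\beta)^{-1/2}\to 1$, and the remainder $\beta\,\mathcal{R}(\phi/\sqrt\beta)=O(\|\phi\|^4/\beta)$ vanishes pointwise — so heuristically the density converges to that of a $2$-vectorial GFF.

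**Key steps, in order.** First, establish the a priori localization estimate: with probability $\to 1$, $\max_{x\in\Lambda_\ell}\|\varrho(x)\|$ is small, say $\le \beta^{-1/2+\epsilon}$. This can be bootstrapped from a crude energy bound plus a chaining/union-bound argument, or imported from a Mermin--Wagner-type spreading estimate. Second, on this good event, perform the change of variables $\varrho\mapsto\phi=\sqrt\beta\,\varrho$ rigorously: the torus is finite but $n_\beta\to\infty$, so I must handle the boundary-at-infinity issue — this is precisely the ``random harmonic function'' subtlety flagged in the excerpt. The clean way is to only claim convergence of finite-dimensional marginals on a fixed $\Lambda_\ell$, conditioning on the field outside: use the domain Markov property of the GFF and the fact that the conditional law inside $\Lambda_\ell$ is a GFF plus a harmonic function determined by boundary data, then show the boundary data's contribution to the harmonic part on $\Lambda_\ell$ is tight and in fact the harmonic correction vanishes because the field is pinned at $0$ and the Green's function of $\Z^2\setminus\{0\}$ grows like $\log$, so harmonic functions with sublinear growth are constant — zero at $0$ forces them to be identically zero in the limit. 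Third, upgrade weak convergence of marginals: show the sequence $\sqrt\beta\,\varrho|_{\Lambda_\ell}$ is tight (from the a priori bound plus the energy estimate controlling increments), and that every subsequential limit has the GFF-rooted-at-$0$ density, by passing to the limit in the explicit densities above using dominated convergence on the good event and the negligibility of the complement.

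**Main obstacle.** The real difficulty — and the one the authors explicitly call out — is Step 2's infinite-volume harmonic ambiguity. On a finite torus $\T_{n_\beta}$ with a single pinned site, the $O(3)$ model has fluctuations whose ``zero mode''/low-frequency part is only tamed by the finite size; as $n_\beta\to\infty$ one must argue that no extra random harmonic function on $\Z^2\setminus\{0\}$ survives in the limit. Since bounded harmonic functions on $\Z^2\setminus\{0\}$ vanishing at $0$ are identically zero, the content is quantitative: one needs a uniform-in-$\beta$ modulus-of-continuity or a uniform bound on $\E[\|\varrho(x)\|^2]$ away from $0$ (of order $\frac{\log\|x\|}{\beta}$, matching the GFF) to prevent the harmonic part from escaping. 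Establishing this uniform second-moment bound — plausibly via a comparison with the GFF through an interpolation / Helffer--Sj\"ostrand or through a direct Mermin--Wagner lower bound combined with a reflection-positivity upper bound — is where the genuine work lies; the rest is Taylor expansion and bookkeeping.
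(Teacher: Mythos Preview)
Your overall architecture---tightness, Taylor expansion of the density on a fixed box to get a GFF plus an independent random harmonic function $h$, then removing $h$---matches the paper's four-step proof, and you correctly identify the harmonic-function removal as the crux. Your tightness via reflection positivity corresponds to their chessboard estimate on gradients (their Lemma in Step~1), and your density calculation is essentially their Step~2.

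The gap is in how you propose to kill $h$. You suggest that a uniform bound $\E[\|\varrho(x)\|^2]=O(\tfrac{\log\|x\|}{\beta})$ (an infrared-bound-type input) combined with ``harmonic functions with sublinear growth are constant'' does the job. But a second-moment bound of order $\log\|x\|$ on $\tilde\phi=\phi+h$ does \emph{not} yield almost-sure sublinear growth of $h$, so Liouville does not apply directly. The paper's route is different and uses two ideas you are missing. First, a \emph{rerooting} argument (globally rotate so that $\theta(v)$ points north, compare laws, and use the explicit Rodrigues formula) shows that $\Delta h(0)$ has the same law as $\Delta h(v)$ for any $v$, hence $h$ is harmonic at $0$ as well, not just on $\Z^2\setminus\{0\}$. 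Second, the chessboard estimate gives exponential tails on \emph{gradients}, hence a.s.\ $\|\nabla\tilde\phi(v)\|=O(\log d)$ at distance $d$; since GFF gradients share this bound, Liouville applied to $\nabla h$ (not to $h$ itself) shows only that $h$ is \emph{linear}. The remaining slope is killed by a symmetrization trick: for two edges $e_1,e_2$ along an axis at even distance, one has $\E[\nabla\theta^i(e_1)\nabla\theta^i(e_2)]\le 0$ for the $O(N)$ model on the torus (condition on the two perpendicular separating lines and use the reflection symmetry of each half), and this sign passes to the limit by convergence of second moments; a nonzero random slope in $h$ would force the same correlation for $\tilde\phi$ to be strictly positive, a contradiction. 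Your appeal to Helffer--Sj\"ostrand or reflection-positivity upper bounds does not supply either the rerooting step or the symmetrization trick, and without them the argument does not close.
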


\begin{remark}\label{r.quant}
From the point of view of Polyakov's approach in \cite{polyakov1975interaction}, it would be interesting to provide a more quantitative version of this convergence. Namely until which quantitative scale $L(\beta)$ do we still see GFF fluctuations. We shall not pursue this here.
\end{remark}
The proof follows in three steps: 
\begin{itemize}
\item we first show tightness via a chessboard estimate, 
\item then argue by a rerooting argument that the limit is equal to GFF plus an independent random harmonic function
\item and finally, via a symmetrisation trick we rule out a possible non-zero harmonic function.
\end{itemize}

\subsection*{Step 1: tightness.}	
	Tightness stems from the following chessboard estimate on individual gradients via an union bound.
	
	\begin{lemma}\label{lemma:cb}
	There exist a constant  $K_0\geq 1$ 
such that for any $n\geq 1$ and $\beta\geq 1$, the following holds.  Consider the $O(3)$ model at inverse temperature $\beta$ in $\T_{n}$ conditioned to have value $(0,0,1)$ at $0$. Then, for all $K\geq K_0$ and for any fixed edge $e$ of $\T_n$, we have 
		$$\P(\|\nabla \theta(e)\| \in [K/\sqrt{\beta}, 2K/\sqrt{\beta}]) \leq \exp(- K^2).$$  

	\end{lemma}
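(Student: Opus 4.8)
The plan is to prove this chessboard-type estimate by reflection positivity of the spin $O(3)$ model on the torus $\T_n$, following the standard Fr\"ohlich--Simon--Spencer machinery (as in, e.g., \cite{FSS1976} or Chapter 10 of \cite{friedli2017statistical}). The key point is that the measure $\mu_{\T_n,\beta}$ is reflection positive with respect to reflections through hyperplanes of the torus (both those passing through sites and those passing through edges), so that chessboard estimates apply: for any single-edge event $\mathcal B$ supported on an edge $e$, one has $\mathbb P(\mathcal B) \le \big( \mathbb P_{\mathrm{per}}(\mathcal B^{\square})\big)^{1/|E(\T_n)|}$, where $\mathcal B^{\square}$ is the event that a reflected copy of $\mathcal B$ occurs on every edge simultaneously, i.e.\ $\|\nabla\theta(e')\| \in [K/\sqrt\beta, 2K/\sqrt\beta]$ for all edges $e'$ in a sublattice of translates (and periodic reflections). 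The conditioning at $0$ is a minor annoyance rather than a real obstruction: it can be removed at the cost of a factor polynomial in $n$ absorbed into $K_0$, or handled by noting that conditioning on $\theta^3(0)=1$ only changes the reference measure at one site.

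The main step is then to estimate the probability that \emph{every} gradient in the system has norm in $[K/\sqrt\beta, 2K/\sqrt\beta]$. First I would write the constraint $\|\nabla\theta(e)\|^2 = 2(1 - \theta(i)\cdot\theta(j)) \ge K^2/\beta$ for each edge $e=(ij)$; summing over all edges, on the event $\mathcal B^\square$ we have $\sum_{i\sim j}(1 - \theta(i)\cdot\theta(j)) \ge \tfrac12 |E| K^2/\beta$, so the Hamiltonian satisfies $-H_{\T_n}(\theta) = \sum_{i\sim j}\theta(i)\cdot\theta(j) \le |E| - \tfrac12 |E| K^2/\beta$. Hence the Boltzmann weight of any configuration in $\mathcal B^\square$ is at most $e^{\beta|E|} e^{-|E|K^2/2}$. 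To conclude I need a matching lower bound on the partition function $Z_{\T_n,\beta}$; the cheapest such bound comes from restricting to configurations with all spins near the north pole, $\theta^3(i) \ge 1-c/\beta$, which forces all gradients to be $O(1/\sqrt\beta)$ and hence $-H \ge |E| - C'|E|/\beta$, giving $Z \ge e^{\beta|E|} e^{-C'|E|} \cdot (\text{vol})$, where the volume factor is $(\mathrm{const}/\beta)^{(\dim-1)|\T_n|/2}$ up to constants. Dividing, the $e^{\beta|E|}$ factors cancel, the volume and $e^{-C'|E|}$ factors contribute at most $e^{C''|E|}$, and one is left with $\mathbb P(\mathcal B^\square) \le e^{(C'' - K^2/2)|E|}$. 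Taking the $|E|$-th root in the chessboard inequality kills the $|E|$ in the exponent and yields $\mathbb P(\mathcal B) \le e^{C'' - K^2/2} \le e^{-K^2}$ once $K \ge K_0$ with $K_0$ chosen so that $K^2/2 - C'' \ge K^2$, i.e.\ actually one wants the bookkeeping to leave $-K^2$ in the exponent; this is arranged by being slightly more careful (e.g.\ comparing to $[K/(2\sqrt\beta), 4K/\sqrt\beta]$ or absorbing constants into the definition of $K_0$ and noting $C'' - K^2/2 \le -K^2$ for $K$ large).

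The step I expect to be the main obstacle is the clean execution of the chessboard/reflection-positivity argument in the presence of the pinning at $0$, together with getting the combinatorial bookkeeping of constants so that what survives the $|E|$-th root is genuinely of order $e^{-K^2}$ rather than $e^{-cK^2}$ with a small $c$ — this forces one to choose the reference comparison configuration (all spins near north) with the optimal width $c/\beta$ and to track the Gaussian volume factor $\beta^{-(\dim-1)|\T_n|/2}$ precisely, since it is exponentially large in $|\T_n|$ and must be dominated. A cleaner alternative, which I would pursue if the constants are stubborn, is to avoid reflection positivity entirely: since conditioned on $\theta^3(0)=1$ the single-edge marginal of $\theta(i)\cdot\theta(j)$ is stochastically dominated (by a direct domain-Markov / FKG-free comparison, or simply by the large-$\beta$ saddle-point analysis developed later in Section \ref{s.Taylor}) by that of a pair of independent spins with $O(1/\sqrt\beta)$ fluctuations, the event $\|\nabla\theta(e)\| \ge K/\sqrt\beta$ directly has probability at most $e^{-cK^2}$ from the Gaussian/sub-Gaussian tail of $\|\theta(i) - \theta(j)\|$, and the upper cutoff $2K/\sqrt\beta$ is only used to make the event a single ``dyadic shell'' for the later union bound over $K$ and over edges. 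Either way, the output feeding into the tightness argument is the same: a summable-in-$K$ bound $e^{-K^2}$ per edge, uniform in $n$ and $\beta \ge 1$.
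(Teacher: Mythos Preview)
Your approach via the chessboard estimate is exactly the one the paper uses, but there is a genuine gap in your energy--entropy bookkeeping that makes the argument fail as written.

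The problem is your claim that ``the upper cutoff $2K/\sqrt{\beta}$ is only used to make the event a single dyadic shell for the later union bound''. This is false, and it is precisely here that your $\beta$-dependence leaks through. In your estimate of $\P(\mathcal B^\square)$ you bound the Boltzmann weight of each configuration by $e^{\beta|E|}e^{-|E|K^2/2}$, but you never bound the \emph{volume} of the event $\mathcal B^\square$ in the numerator. Your lower bound on $Z$ correctly carries a factor $(\mathrm{const}/\beta)^{|\T_n|}$, and since the numerator has no matching $\beta^{-|\T_n|}$, the ratio picks up $\beta^{|\T_n|}$; after the $|E|$-th root you are left with a factor of $\beta$, and the bound is useless as $\beta\to\infty$. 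The paper fixes this by using the upper cutoff: on $E_K$, each spin is within distance $2K/\sqrt\beta$ of its horizontal neighbour, so integrating vertex by vertex gives $\int \mathbf 1_{E_K}\,\prod d\theta \le (CK^2/\beta)^{n^2}$. This $\beta^{-n^2}$ then cancels against the one in the lower bound for $Z$.

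A second, smaller issue: even with the volume of $E_K$ correctly bounded, your reference set has \emph{fixed} width $c/\beta$, so the volume ratio is $(CK^2/c)^{n^2}$ and after the root you get $e^{-K^2/2 + O(\log K)}$ rather than $e^{-K^2}$. The paper instead takes the reference set $F_K$ to be the cap of radius $K/(2\sqrt\beta)$: then the volume ratio is $(C/c)^{n^2}$ with no $K$-dependence, while the energy on $F_K$ is at most $n^2K^2/4$, and one ends up with $(C/c)^2 e^{-3K^2/2}\le e^{-K^2}$ for $K$ large. Your ``being slightly more careful'' paragraph circles around this without identifying the fix.

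Two minor points. The conditioning $\theta(0)=(0,0,1)$ costs nothing, not a polynomial factor: the gradient event is invariant under global rotations, so the conditional and unconditional probabilities coincide. And your ``cleaner alternative'' is circular: the saddle-point analysis in Section~\ref{s.Taylor} uses this very lemma to establish tightness, so it cannot be invoked to prove it.
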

	
	This lemma improves on Proposition C.3 from \cite{GS2}, whose proof would only allow to bound gradients on the scale $\log \beta$. 
	\begin{proof}[Proof of lemma.] Let us note that the probability of the event in question remains the same if one removes the constraint of having value $(0,0,1)$ at $0$. We shall work in this case. 		
	
	Denote by $E_K$ the event that $ \|\nabla \theta(e)\| \in [K/\sqrt{\beta}, 2K/\sqrt{\beta}]$ for all horizontal edges $e$. From the chessboard estimate (\cite{FrohlichLieb1978}) for the $O(3)$ model we have that
	$$\P\left (\|\nabla \theta(e)\| \in [K/\sqrt{\beta}, 2K/\sqrt{\beta}]\right ) \leq \P(E_K)^{2/n^2}.$$
	We can write 
	$$\P(E_K) = \frac{1}{Z_\beta}\int_{(\S^2)^{\T_n}} 1_{E_K} \exp\left (-\frac{\beta}{2} \sum_e \|\nabla \theta(e)\|^2_2\right )\prod_{x\in \T_n}d\theta(x).$$
	Now, notice that on the event $E_K$, we have
	 $$\frac{\beta}{2} \sum_e \|\nabla \theta(e)\|^2_2 \geq n^2K^2.$$
	We further claim that there is some $C >0$ such that 
	$$\int_{(\S^2)^{\T_n}} 1_{E_K}\prod_{x\in \T_n}d \theta(x)\leq \left(C\frac{K^2}{\beta}\right)^{n^2}$$
	Indeed, this can easily be done by integrating out vertex by vertex and see that the integration range of values it can take is at most of the order $\frac{K^2}{\beta}$ for each vertex other than the very first vertex chosen.
	
On the other hand, we can bound 
\begin{align*}
	Z_\beta \geq \int_{(\S^2)^{\T_n}} 1_{F_{K}} \exp\left (-\frac{\beta}{2} \sum_e \|\nabla \theta(e)\|^2_2\right )\prod_{x\in \T_n}d \theta(x),
\end{align*} where $F_{K}$ is the event that all $\theta(x)$ are in the $K/2\sqrt{\beta}$ neighbourhood of the north pole. On the event $F_K$ we have that 
		\begin{align*}
			\frac{\beta}{2} \sum_e \|\nabla \theta(e)\|^2_2 \leq n^2K^2/4\,.
		\end{align*}
Similarly as above, we also claim that there exists another universal constant $c>0$ such that 
$\int_{(\S^2)^{n^2}} 1_{F_K} \geq \left(c\frac{K^2}{\beta}\right)^{n^2}$. Hence we obtain,
	\begin{align*}
		\P(\|\nabla \theta(e)\| \in [K/\sqrt{\beta}, 2K/\sqrt{\beta}] \leq  \left (\frac C c\right )^2 \exp\left (-\frac 3 2 K^2\right ) \leq \exp(- K^2)
	\end{align*} by choosing $K$ large enough and conclude.
	\end{proof}
	
	Using this lemma we can take $K = 2^m, 2^{m+1},\dots$ and obtain from the union bound that for all $K$ large enough 
	\begin{equation}\label{e.exponentialtail}\P(\|\nabla \theta(e)\| \geq K/\sqrt{\beta}) \leq e^{-\frac 1 2 K^2}.
	\end{equation} In particular, we obtain tightness on compacts in the sup norm. This is summarised in the following lemma.
	\begin{lemma}\label{l.tightness}Take any sequence $\beta \to \infty$ and $n_\beta \to \infty$ and let $\theta_\beta$ be an  $O(3)$ model on the torus $\T_{n_\beta}$ at inverse temperature $\beta$ and that takes values $(0,0,1)$ at the point $0$. We have that the sequence $\sqrt \beta(\theta^1_{\beta},\theta_\beta^2)$ is tight for the topology of uniform convergence on compact sets of $\Z^2$.
		
	Furthermore, choose a subsequence of $\beta$ (that we denote the same way) such that the sequence $\sqrt{\beta}(\theta^1, \theta^{2})$ converges on compacts to some limit $\tilde \phi$. For any edges $e,e' \in E(\Z^2)$, we have that
	\begin{align}\label{e.2-points grandient}
		\E_\beta\left[\nabla \theta_\beta^{1,2}(e) \nabla \theta_\beta^{1,2} \right] \to \E\left[\nabla \tilde \phi (e) \nabla \tilde \phi(e') \right] 
	\end{align}
	\end{lemma}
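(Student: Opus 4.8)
The plan is to extract both assertions from the exponential gradient bound \eqref{e.exponentialtail}, which I first observe applies in the pinned model and to every edge. Indeed, the event $\{\|\nabla\theta(e)\|\ge t\}$ depends only on the gradient along $e$ and is invariant under a global rotation of all spins; hence \eqref{e.exponentialtail} holds verbatim for the law pinned to $(0,0,1)$ at $0$, and by translation invariance of the unpinned torus measure it holds for every edge $e$ of $\T_{n_\beta}$ with one and the same constant.

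For tightness, I would fix a box $\Lambda_L=[-L,L]^2$, take $\beta$ large enough that $\Lambda_L\subset\T_{n_\beta}$, and for each $x\in\Lambda_L$ choose a nearest-neighbour path $\gamma_x\subset\Lambda_L$ from $0$ to $x$ of length $\ell_x\le 2L$. Since $\theta^1_\beta(0)=\theta^2_\beta(0)=0$, one has $\sqrt\beta\,\theta^{1,2}_\beta(x)=\sqrt\beta\sum_{e\in\gamma_x}\nabla\theta^{1,2}_\beta(e)$ and hence $\|\sqrt\beta\,\theta^{1,2}_\beta(x)\|\le \ell_x\max_{e\in\gamma_x}\sqrt\beta\,\|\nabla\theta_\beta(e)\|$. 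A union bound over the $O(L^2)$ edges of $\Lambda_L$ and over $x\in\Lambda_L$, combined with \eqref{e.exponentialtail}, then yields a bound of the form $\P(\sup_{x\in\Lambda_L}\|\sqrt\beta\,\theta^{1,2}_\beta(x)\|\ge M)\le C(L)\,e^{-cM^2/L^2}$, uniform in $\beta$, for all $M$ large. As $\Z^2$ is discrete, ``uniform convergence on compacts'' is the product topology on $(\R^2)^{\Z^2}$, whose compact sets are exactly the box-bounded ones, so choosing a rapidly growing family of radii $M_L$ produces, for each $\varepsilon>0$, a single compact set of mass $\ge 1-\varepsilon$ for all $\beta$. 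Tightness follows, and extracting a subsequence converging on compacts is then standard.

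For the gradient correlations, let $\tilde\phi$ be such a subsequential limit and fix edges $e,e'$ with endpoints $e_\pm,e'_\pm$. Convergence on compacts forces convergence of the finite-dimensional marginals, so the vector $(\sqrt\beta\,\theta^{1,2}_\beta(e_\pm),\sqrt\beta\,\theta^{1,2}_\beta(e'_\pm))$ converges in law to $(\tilde\phi(e_\pm),\tilde\phi(e'_\pm))$; applying the continuous mapping theorem to the bilinear map $(a_-,a_+,b_-,b_+)\mapsto (a_+-a_-)\otimes(b_+-b_-)$ shows that $\sqrt\beta\,\nabla\theta^{1,2}_\beta(e)\otimes\sqrt\beta\,\nabla\theta^{1,2}_\beta(e')$ converges in law to $\nabla\tilde\phi(e)\otimes\nabla\tilde\phi(e')$. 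To upgrade this to convergence of expectations, which is \eqref{e.2-points grandient}, I would check uniform integrability: by \eqref{e.exponentialtail} the variable $\beta\|\nabla\theta_\beta(e)\|^2$ has a uniform sub-exponential tail, $\P(\beta\|\nabla\theta_\beta(e)\|^2\ge t)\le e^{-t/2}$ for $t$ large, hence $\sup_\beta\E[e^{c\beta\|\nabla\theta_\beta(e)\|^2}]<\infty$ for small $c>0$; since the entries of the product matrix are dominated by $\tfrac12(\beta\|\nabla\theta_\beta(e)\|^2+\beta\|\nabla\theta_\beta(e')\|^2)$, they form a uniformly integrable family, and finiteness of the limiting second moments follows from Fatou. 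Together with the weak convergence this gives \eqref{e.2-points grandient}.

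Granting \eqref{e.exponentialtail}, neither part poses a real difficulty: the only points needing a little attention are the transfer of the gradient tail from the translation-invariant unpinned torus to the pinned model (handled by rotation invariance above) and the uniform-integrability step, for which the exponential --- not merely polynomial --- control furnished by Lemma \ref{lemma:cb} is what makes things clean. The genuinely delicate work of Theorem \ref{t.convergence to GFF}, namely excluding an extra random harmonic function, happens in the subsequent steps rather than in this lemma.
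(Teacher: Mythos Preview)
Your proof is correct and follows essentially the same approach as the paper, which simply states that both parts follow directly from the exponential tail bound \eqref{e.exponentialtail} (the second via uniform boundedness of all moments of $\sqrt\beta\,\nabla\theta_\beta(e)$). You have just spelled out the details the paper leaves implicit, including the rotation-invariance remark for the pinned model and the uniform-integrability argument.
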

\begin{proof}
	The first part follows directly \eqref{e.exponentialtail}. The second part follows also from \eqref{e.exponentialtail}, as it implies all the moments of $\nabla \theta^{1,2}_\beta(e)$ are bounded.
\end{proof}

	\subsection*{Step 2: Any subsequential limit $\tilde \phi$ is equal to GFF plus an harmonic function.}
	We need three ingredients to conclude this step. First, we use a direct calculation to obtain a certain format for the limiting density on finite boxes.
	
	\begin{lemma}
	Let $\tilde \phi$ be any subsequential limit of $\sqrt{\beta}(\theta^1, \theta^{2})$. Then for any finite box $\Lambda \subset \Z^2$ containing the origin, the law of $\tilde \phi|_{\Lambda}$ can be written as
$$\frac{1}{Z} \exp\left (-\frac{1}{2} \sum_{e \in E(\Lambda)}\|\nabla \tilde \phi(e)\|^2\right )  \delta_{\tilde \phi(0)} \Pi_{x \in \Lambda \backslash \partial \Lambda} \, d \tilde \phi(x) \,  \d\nu (\tilde \phi_{|\p \Lambda})\,,$$
with $d$ denoting the Lebesgue measure and $d \nu$ some to be determined measure on $\R^{\p \Lambda}$. 
	\end{lemma}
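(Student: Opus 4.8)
The statement describes the limiting law of $\tilde\phi$ restricted to a finite box $\Lambda$, starting from the $O(3)$ model at inverse temperature $\beta$. The natural strategy is to write down the exact density of $(\theta^1,\theta^2)$ on $\Lambda$ for the spin system at finite $\beta$, perform the change of variables $\phi_\beta := \sqrt{\beta}(\theta^1,\theta^2)$, Taylor-expand the resulting density as $\beta\to\infty$, and identify the limit using the tightness and moment control from Step~1 (Lemma \ref{l.tightness}).

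\textbf{Step A: exact density at finite $\beta$.} On the box $\Lambda$ (with the rest of the torus integrated out, producing some boundary interaction $\nu_\beta$ on $\partial\Lambda$), the $O(3)$ spins $\theta(x)=(\theta^1(x),\theta^2(x),\theta^3(x))\in\S^2$ have law proportional to $\exp(-\tfrac\beta2\sum_{e}\|\nabla\theta(e)\|^2)$ times the product surface measure $\prod_x\lambda^{\S^2}(d\theta(x))$, with a $\delta$ at $0$ pinning the north pole. Parametrizing the upper hemisphere by $(\theta^1,\theta^2)$ with $\theta^3=\sqrt{1-(\theta^1)^2-(\theta^2)^2}$, the surface measure becomes $\frac{d\theta^1\,d\theta^2}{\sqrt{1-(\theta^1)^2-(\theta^2)^2}}$. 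Substituting $\theta^i = \phi^i/\sqrt\beta$ gives the Jacobian $\beta^{-|\Lambda\setminus\partial\Lambda|}$ (absorbed into $Z$) and the measure $\prod_x \frac{d\phi^1(x)\,d\phi^2(x)}{\sqrt{1-\|\phi(x)\|^2/\beta}}$, which tends to $\prod_x d\phi^1(x)\,d\phi^2(x)$.

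\textbf{Step B: expansion of the energy.} Writing $a_x := \|\phi(x)\|^2/\beta = (\theta^1(x))^2+(\theta^2(x))^2$, one has $\theta(x)\cdot\theta(y) = \theta^1(x)\theta^1(y)+\theta^2(x)\theta^2(y) + \sqrt{(1-a_x)(1-a_y)}$, hence
\[
\|\nabla\theta(e)\|^2 = 2 - 2\,\theta(x)\cdot\theta(y) = a_x + a_y - 2\big(\theta^1(x)\theta^1(y)+\theta^2(x)\theta^2(y)\big) + 2\big(1-\sqrt{(1-a_x)(1-a_y)} - \tfrac12 a_x - \tfrac12 a_y\big).
\]
The first three terms combine to $\tfrac1\beta\|\nabla\phi(e)\|^2$; the last bracket is $O(a_x a_y) = O(\beta^{-2})$. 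Therefore $\tfrac\beta2\sum_e\|\nabla\theta(e)\|^2 = \tfrac12\sum_e\|\nabla\phi(e)\|^2 + O(\beta^{-1}|E(\Lambda)|)$, and the error term is a deterministic $O(\beta^{-1})$ on $\Lambda$ — \emph{here} is where the exponential tail bound \eqref{e.exponentialtail} on gradients is useful: it guarantees that the correction does not blow up, so that the error in the Boltzmann weight is uniformly controlled and integrable against the limiting measure.

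\textbf{Step C: passing to the limit.} With Steps A–B in hand, the finite-$\beta$ density on $\Lambda$ equals $\frac{1}{Z_\beta}\exp(-\tfrac12\sum_e\|\nabla\phi(e)\|^2)(1+O(\beta^{-1}))\,\delta_{\phi(0)}\prod_{x\in\Lambda\setminus\partial\Lambda}\frac{d\phi^1(x)d\phi^2(x)}{\sqrt{1-\|\phi(x)\|^2/\beta}}\,d\nu_\beta(\phi_{|\partial\Lambda})$. Using tightness (so that the $\partial\Lambda$-marginals $\nu_\beta$ are tight and, along the chosen subsequence, converge to some $\nu$), uniform integrability of all polynomial moments of gradients from \eqref{e.exponentialtail}, and dominated convergence, the density converges to $\frac{1}{Z}\exp(-\tfrac12\sum_{e\in E(\Lambda)}\|\nabla\tilde\phi(e)\|^2)\,\delta_{\tilde\phi(0)}\prod_{x\in\Lambda\setminus\partial\Lambda}d\tilde\phi(x)\,d\nu(\tilde\phi_{|\partial\Lambda})$, which is the claimed form. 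The main obstacle, and the point needing care, is Step C: justifying that convergence in law on compacts of $\phi_\beta$ upgrades to convergence of these explicit densities on $\Lambda$ — in particular controlling the boundary measures $\nu_\beta$ and ensuring no mass escapes — which is precisely why the moment bounds coming from the chessboard estimate of Step~1 are invoked rather than a soft compactness argument alone.
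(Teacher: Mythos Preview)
Your proposal is correct and follows essentially the same route as the paper: write the conditional density on $\Lambda$ given the boundary values, change variables to $\phi_\beta=\sqrt\beta\,\theta^{1,2}$, Taylor-expand $\sqrt{1-a_x}\sqrt{1-a_y}$ in the Hamiltonian, and pass to the limit using the gradient tail bound. One point the paper makes explicit that you leave implicit: before parametrising by the upper hemisphere you need $\theta^3(x)>0$ throughout $\Lambda$, which is not automatic. The paper handles this by first conditioning on the high-probability event $\K_\beta=\{\|\nabla\theta(e)\|\le K/\sqrt\beta \text{ on } E(\Lambda)\}$; since $\theta^3(0)=1$ and gradients are uniformly small on $\K_\beta$, one gets $\theta^3>0$ on $\Lambda$ for $\beta$ large, and then lets $K\to\infty$ at the end. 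Your Step~A silently assumes this, and your phrase ``a deterministic $O(\beta^{-1})$'' for the error is slightly loose (the error is $O(\|\phi\|^4/\beta)$, hence random), but you immediately invoke the tail bound to control it, so the argument goes through.
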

	
	\begin{proof}
	Let us note that the law of $(\theta\md_{\Lambda})$ given $\theta \mid  _{\partial\Lambda}=\bar \theta$ is proportional to
	\begin{align}\label{e.density}
		\exp\left (\beta \sum_{i\sim j} \theta^{1,2}(i)\cdot \theta^{1,2}(j) + \theta^3(i) \theta^3(j) \right ) \prod_{x\in \Lambda \backslash  (\partial \Lambda \cup \{0\})}d^{\S^2} \theta(x),
	\end{align}
where $\theta^{1,2}=(\theta^1,\theta^2)$. Furthermore, let us condition on the event
\begin{align*}
	\K_\beta:=\left \{ \|\nabla \theta(e)\| \leq \frac K {\sqrt{\beta}}: \text{ for all } e\in E(\Lambda)\right \}.
\end{align*}
By Lemma \ref{lemma:cb}, we have that there exists $K=K(\Lambda)\geq 1$ large enough such that for any $\delta>0$ and uniformly for all $\beta\geq 1$, $\P_\beta(\K_\beta)\geq 1-\delta$.

Now, consider $\beta$ large enough so that $ \diam(\Lambda) \,  \frac K {\sqrt\beta} <1$. Then on the event $\K_\beta$, we have that $\theta^3(x)>0$ for any $x \in \Lambda$. As a consequence, on the event $\K_\beta$ we can determine $\theta^3\mid_{\Lambda}$ from the value of $\theta^{1,2}\mid_{\Lambda}$. Thus, conditionally on $\K_\beta$ and $\theta^{1,2}\mid_{\partial \Lambda}=\bar \theta^{1,2}$, the law of $\phi_\beta^{1,2} := \sqrt{\beta}\theta^{1,2}_{\md \Lambda}$ is proportional to
\begin{align}\label{e.density2}
	&\exp\left (\sum_{i\sim j} \phi_\beta^{1,2}(i)\cdot \phi_\beta^{1,2}(j) + \beta\left (\sqrt{1-\frac{\|\phi_\beta^{1,2}(i)\|^2}{\beta}}\sqrt{1-\frac{\|\phi_\beta^{1,2}(j)\|^2}{\beta}}-1 \right ) \right )\\
\nonumber	&\hspace{0.6\textwidth}\1_{\K_\beta}\prod_{x\in \Lambda \backslash (\partial \Lambda\cup \{0\})}d^{\sqrt{\beta} \S^2}\phi_\beta(x),
\end{align}
Here $\phi_\beta^{1,2}(i)= \sqrt{\beta}\, \bar  \theta^{1,2}(i)$ when $i\in \p \Lambda$ and $\phi_\beta^{1,2}(0):=0$. As we can write $$d^{\sqrt{\beta} \S^2}\phi_\beta(x) = \frac{c_{\mathbb{S}^2}}{\sqrt{1 - \frac 1 \beta \|\phi_\beta^{1,2}(x)\|^2}} 1_{\phi_\beta^{1,2}(x) \in \sqrt{\beta}\D} d\phi^1_\beta(x)d\phi^2_\beta(x),$$ we obtain as $\beta\to\infty$ convergence to a measure which is proportional to
 
\begin{align*}\label{e.dens 3}
	&\exp\left (\sum_{i\sim j} \tilde \phi(i)\cdot \tilde \phi(j) - \frac{1}{2}\|\tilde \phi (i)\|^2-\frac{1}{2}\|\tilde \phi (j)\|^2 \right ) \1_{\K}\prod_{x\in \Lambda\backslash  (\partial \Lambda\cup \{0\})}d\tilde \phi(x),\\
	&= \exp\left (-\frac{1}{2}\sum_{i\sim j}(\tilde \phi(i)-\tilde \phi(j))^2 \right )\1_{\K}\prod_{x\in \Lambda\backslash  (\partial \Lambda\cup \{0\})}d\tilde \phi(x).
\end{align*}
where 
\begin{align*}
\K:=\{ \|\nabla \tilde \phi(e)\| \leq  K: \text{ for all } e\in E(\Lambda)\}.
\end{align*}
Finally, from Lemma \ref{lemma:cb} it also follows that with arbitrary high probability over $\tilde \phi \mid_{\partial \Lambda }$ the conditional probability of $\K_\beta$ is still arbitrary close to $1$ as $K \nearrow \infty$; thus we conclude.
	\end{proof}
	
	The orthogonal decomposition of the Dirichlet energy thus tells us that any subsequential limit $\tilde \phi$ satisfies the domain Markov property of the Gaussian free field on finite boxes $\Lambda$. Using this, we can identify $\tilde \phi$ on $\Z^2$ as a sum of a random function that is $0$ at the origin and harmonic elsewhere, and an independent GFF rooted to be $0$ at the origin.	
	\begin{lemma} \label{l.GFF+harmonic}
		Let $\tilde \phi$ be a random function $\Z^2 \to \R^{2}$ such that $\tilde \phi(0) = 0$ and $\tilde \phi$ satisfies the domain Markov property of the Gaussian free field on finite boxes $\Lambda$ in the following sense: 
		\begin{itemize} 
		\item for any finite box $0\in \Lambda \subset \Z^2$, $\tilde \phi$ can be decomposed on $\Lambda$ into an independent sum of a zero boundary GFF rooted to $0$ at $0$ and a random harmonic function, whose value is zero at $0$. 
		\end{itemize}
		Then $\tilde \phi$ can be written as $\phi + h$, where $\phi$ is a $2$-vectorial GFF rooted to be $0$ at $0$ and $h$ is a (random) function that is harmonic everywhere except at the origin, is equal to $0$ at $0$ and is independent of $\phi$.
	\end{lemma}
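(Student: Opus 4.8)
The plan is to realise the decomposition as a limit along the exhaustion $\Lambda_1\subset\Lambda_2\subset\cdots\nearrow\Z^2$. Writing $\Lambda_n^\circ:=\Lambda_n\setminus\partial\Lambda_n$ and applying the hypothesis in each box, we get $\tilde\phi|_{\Lambda_n}=\phi^{(n)}+h^{(n)}$, where $\phi^{(n)}$ is a GFF on $\Lambda_n$ with zero boundary on $\partial\Lambda_n$ and rooted at $0$, $h^{(n)}$ is harmonic off $0$ with $h^{(n)}(0)=0$, and $\phi^{(n)}\perp h^{(n)}$. The first step is a pathwise uniqueness statement: any function on $\Lambda_n$ admits at most one decomposition into a part vanishing on $\partial\Lambda_n\cup\{0\}$ plus a part that is discrete-harmonic on $\Lambda_n\setminus\{0\}$ and vanishes at $0$, since that second part is forced to equal the unique solution of the Dirichlet problem on the connected finite graph $\Lambda_n\setminus\{0\}$ with data $\tilde\phi|_{\partial\Lambda_n}$ on $\partial\Lambda_n$ and $0$ at the origin. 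In particular $h^{(n)}$ is a deterministic functional of $\tilde\phi|_{\partial\Lambda_n}$, so $\sigma(h^{(n)})\subseteq\mathcal G_n:=\sigma(\tilde\phi(x):x\notin\Lambda_n^\circ)$, and the $\sigma$-fields $\mathcal G_n$ are decreasing in $n$.

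The heart of the argument is a sharpened Markov property: for every $n$, the field $\phi^{(n)}$ is independent of the whole exterior data $\tilde\phi|_{\Z^2\setminus\Lambda_n^\circ}$, and not merely of the trace $\tilde\phi|_{\partial\Lambda_n}$. To obtain this, fix $m>n$ and apply the ordinary domain Markov property of the GFF to $\phi^{(m)}$ on the sub-box $\Lambda_n$: it writes $\phi^{(m)}|_{\Lambda_n}$ as the harmonic-off-$0$ extension of $\phi^{(m)}|_{\partial\Lambda_n}$ plus an independent zero-boundary GFF on $\Lambda_n$ rooted at $0$, and this GFF part is moreover independent of $\phi^{(m)}|_{\Lambda_m\setminus\Lambda_n^\circ}$. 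Substituting into $\tilde\phi|_{\Lambda_n}=\phi^{(m)}|_{\Lambda_n}+h^{(m)}|_{\Lambda_n}$ and invoking the uniqueness above identifies that GFF part with $\phi^{(n)}$; combined with $\phi^{(m)}\perp h^{(m)}$ this shows $\phi^{(n)}$ is independent of the pair $\big(\phi^{(m)}|_{\Lambda_m\setminus\Lambda_n^\circ},h^{(m)}|_{\Lambda_m\setminus\Lambda_n^\circ}\big)$, hence of their sum $\tilde\phi|_{\Lambda_m\setminus\Lambda_n^\circ}$. Letting $m\to\infty$ yields $\phi^{(n)}\perp\mathcal G_n$.

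With this in hand I would conclude in three moves. First, since $\phi^{(n)}(x)$ is centred and (for $n$ with $x\in\Lambda_n^\circ$) independent of $\mathcal G_n$, we have $h^{(n)}(x)=\E[\tilde\phi(x)\mid\mathcal G_n]$, integrability of $\tilde\phi(x)$ being guaranteed in our setting by the exponential gradient tails \eqref{e.exponentialtail}. As $(\mathcal G_n)$ is decreasing, L\'evy's downward theorem gives that $h(x):=\lim_n h^{(n)}(x)=\E[\tilde\phi(x)\mid\mathcal G_\infty]$ exists almost surely for every $x$, with $\mathcal G_\infty=\bigcap_n\mathcal G_n$; being a pointwise limit of functions harmonic off $0$ and vanishing at $0$, $h$ is harmonic off $0$ with $h(0)=0$. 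Set $\phi:=\tilde\phi-h$, so $\tilde\phi=\phi+h$ and $\phi(x)=\lim_n\phi^{(n)}(x)$ almost surely. Second, each $\phi^{(n)}$ is a centred Gaussian field with covariance $G_{\Lambda_n\setminus\{0\},\partial\Lambda_n}$, which increases to the finite pinned Green's function $G_{\Z^2,\{0\}}$ (finiteness being recurrence of planar simple random walk, cf. \eqref{eq:green}); hence the finite-dimensional marginals of $\phi^{(n)}$ converge in law to the corresponding Gaussians, and together with $\phi^{(n)}(x)\to\phi(x)$ a.s. this identifies the law of $\phi$ with that of the $2$-vectorial GFF rooted at $0$. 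Third, $\phi^{(n)}\perp\mathcal G_n\supseteq\mathcal G_\infty$ while $h$ is $\mathcal G_\infty$-measurable, so any finite-dimensional marginal of $\phi^{(n)}$ is independent of any finite-dimensional marginal of $h$; passing to the a.s. limit in $n$ gives $\phi\perp h$. (The vector-valued statement follows by running the argument coordinatewise, or simply verbatim.)

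The only genuinely delicate point is the sharpened Markov property: the hypothesis supplies independence of $\phi^{(n)}$ from the trace $\tilde\phi|_{\partial\Lambda_n}$, whereas the backward-martingale mechanism needs independence from the full external field $\mathcal G_n$, and closing this gap is exactly what forces the comparison with a larger box together with the uniqueness-of-decomposition bookkeeping. The remaining ingredients — uniqueness of the finite discrete Dirichlet decomposition, L\'evy's downward theorem, and convergence in law of Gaussian vectors under convergence of covariances — are all standard.
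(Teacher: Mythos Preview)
Your proof is correct and takes a genuinely different route from the paper. The paper's argument is a two-line tightness argument: $\phi_n$ converges in law to the rooted whole-plane GFF by convergence of covariances, and then $h_n=\tilde\phi|_{\Lambda_n}-\phi_n$ is tight because both $\tilde\phi$ and $\phi_n$ are; independence of $\phi_n$ and $h_n$ passes to subsequential limits, and the law of the limiting $h$ is pinned down because the Gaussian characteristic function of $\phi$ does not vanish. No sharpened Markov property, no martingales, no integrability of $\tilde\phi$ is ever invoked.

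Your approach is more constructive. The pathwise uniqueness of the Dirichlet decomposition lets you realise all the $\phi^{(n)},h^{(n)}$ simultaneously as measurable functions of $\tilde\phi$; the nested-box comparison then upgrades the hypothesis to $\phi^{(n)}\perp\mathcal G_n$; and the backward martingale identity $h^{(n)}(x)=\E[\tilde\phi(x)\mid\mathcal G_n]$ gives almost sure (not merely distributional) convergence, together with the extra structural fact that $h$ is measurable with respect to the tail $\sigma$-field $\mathcal G_\infty$. The price is that you need $\tilde\phi(x)\in L^1$, which is not part of the abstract hypothesis; you correctly note this holds in the paper's application via \eqref{e.exponentialtail}. (If you wanted to drop that assumption, observe that your nested decomposition also makes $(\phi^{(n)}(x))_n$ a \emph{forward} $L^2$-bounded martingale with respect to $\sigma(\phi^{(n)})$, which yields a.s.\ convergence without any moment condition on $\tilde\phi$.) In short: the paper's proof is shorter and fully self-contained at the level of the abstract lemma, while yours yields a sharper statement---an almost sure decomposition with a tail-measurable $h$---at the cost of a mild extra hypothesis.
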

	
	\begin{proof}
	By the condition of the lemma, we can write in each box $\Lambda_n$ the field $\tilde\phi$ as the sum of a zero boundary GFF $\phi_n$ inside $\Lambda_n$ and an independent random harmonic function $h_n$.  We now argue in two steps. 
	
	First, it is known that the sequence of zero boundary GFF on $\Lambda_n$ rooted to be $0$ at $0$ converges in law to the whole plane GFF rooted to be $0$ at $0$. 
	
	Second, we argue that the harmonic part $\{h_n\}_n$ is tight as $n\to \infty$. To see this, notice that in law, for any $n\geq 1$, $h_n = \tilde \phi_{\md \Lambda_n} - \phi_n$. Since both $\tilde \phi_{\md \Lambda_n}$ and $\phi_n$ are tight and even converge as $n\to \infty$,   we obtain that the random harmonic function $h_n$ converges in law as $n\to \infty$ as well. 
	\end{proof}
	
	Finally, to obtain harmonicity of $h$ at the origin, we need the following rerooting lemma.
	
	\begin{lemma}\label{l.rerooting}
		Let $v \in \Z^2$. Then any subsequential limit $\tilde \phi_{v}$ of $\sqrt{\beta}(\theta^1, \dots, \theta^{N-1})$ rooted to be $0$ at $v$ satisfies the following rerooting property. We have that $(\tilde \phi_{v}(z))_{z \in \Z^2}$ is equal in law to $(\tilde \phi(z) - \tilde \phi(v))_{z \in \Z^2}$.	
	\end{lemma}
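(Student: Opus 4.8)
The plan is to exploit the $O(N)$ symmetry of the spin model together with the fact that, on the torus $\T_n$ with free Hamiltonian, the law of the unrooted $O(N)$ model is invariant under a global rotation of all spins. Concretely, fix $v \in \Z^2$ and let $\mu_{n,\beta}$ denote the (unrooted) $O(3)$ Gibbs measure on $\T_n$ at inverse temperature $\beta$. The rooted-at-$0$ measure is obtained by conditioning on $\theta^3(0) = 1$ (equivalently $\theta(0) = $ north pole), and similarly the rooted-at-$v$ measure conditions on $\theta(v)=$ north. The key observation is this: there is a measurable map $\rho \mapsto \rho \cdot \theta$, where $\rho = \rho_\theta \in SO(3)$ is the (a.s.\ unique for $\theta(v)$ away from the south pole, which holds with probability one) rotation of smallest angle taking $\theta(v)$ to the north pole and fixing the geodesic through them; applying it to the unrooted measure and then looking near $v$ gives the rooted-at-$v$ measure, up to the explicit Jacobian $\det$ factors coming from the change of variables on $(\S^2)^{\T_n}$, which are the same on both sides of the comparison and cancel. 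I would first make this precise: on the high-probability event $\calK_\beta$ from the tightness step (all gradients $\le K/\sqrt\beta$, hence all spins in a small cap), the rotation $\rho_\theta$ is smooth and close to the identity, and rerooting at $v$ amounts to $\theta \mapsto \rho_\theta\cdot\theta$.

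Next I would pass to the rescaled fields. Under $\calK_\beta$ the rotation $\rho_\theta$ acts on the transversal coordinates $\sqrt\beta(\theta^1,\theta^2)$ asymptotically as the translation $\tilde\phi(\cdot) \mapsto \tilde\phi(\cdot) - \tilde\phi(v)$: indeed, writing a spin near the north pole as $\theta(x) = (\beta^{-1/2}\phi^{1,2}_\beta(x), \sqrt{1-\beta^{-1}\|\phi^{1,2}_\beta(x)\|^2})$, the rotation taking $\theta(v)$ to north, composed with the projection onto the first two coordinates, equals $\phi^{1,2}_\beta(x) \mapsto \phi^{1,2}_\beta(x) - \phi^{1,2}_\beta(v) + O(\beta^{-1/2}(\text{polynomial in the }\phi^{1,2}_\beta))$, where the error is controlled uniformly on compacts by the exponential gradient tail \eqref{e.exponentialtail}. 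So along the subsequence defining $\tilde\phi$, the rooted-at-$v$ rescaled field converges on compacts to $\tilde\phi(\cdot) - \tilde\phi(v)$ in law. Since by definition $\tilde\phi_v$ is a subsequential limit of $\sqrt\beta(\theta^1,\dots,\theta^{N-1})$ rooted at $v$, and we may (passing to a further subsequence) assume both limits exist simultaneously, we conclude $\tilde\phi_v \overset{d}{=} \tilde\phi - \tilde\phi(v)$ as processes on $\Z^2$.

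The main obstacle I anticipate is the uniform control of the nonlinear rotation map and of the Jacobian factors: one has to check that, on $\calK_\beta$, the composition ``rotate $\theta(v)$ to north, then rescale'' really does converge to the affine shift, with errors that vanish on every fixed compact set even though $n_\beta\to\infty$. This is where \eqref{e.exponentialtail} (equivalently Lemma~\ref{lemma:cb}) is essential: it gives not just tightness but exponential moments of all gradients, hence of $\|\phi^{1,2}_\beta(x)\|$ on compacts, so the $O(\beta^{-1/2})$ remainder terms — which are polynomials in these quantities — are genuinely negligible in the limit. A secondary, more bookkeeping point is to make sure the conditioning events $\{\theta(0)=\text{north}\}$ and $\{\theta(v)=\text{north}\}$ interact correctly with the rotation: one handles this by first proving the statement on $\T_n$ for the unrooted measure (where global rotation invariance is exact), then using that conditioning to point north at $0$ on the left and the induced conditioning after rerooting agree, which is automatic from the construction of $\rho_\theta$. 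Once these are in place the identity is immediate.
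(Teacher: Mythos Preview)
Your approach is correct and essentially the same as the paper's: both use the global $SO(3)$ invariance of the unrooted model to realise the rooted-at-$v$ field as $R_v\theta$ for a rotation $R_v$ sending $\theta(v)$ to north, then Taylor-expand this rotation on the rescaled transversal coordinates to obtain the shift $\tilde\phi(\cdot)-\tilde\phi(v)$ with errors controlled by Lemma~\ref{lemma:cb}. The paper carries this out more briskly by writing $R_v$ via the Rodrigues formula $R_v = I + Q_v + \tfrac{1}{1+\cos\alpha}Q_v^2$ and reading off directly that $\sqrt\beta (R_v\theta)^i(z) = \sqrt\beta(\theta^i(z)-\theta^i(v)) + o(1)$.

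Two minor points where you overcomplicate things: first, there are no Jacobian factors to worry about, since a global rotation is an isometry of each copy of $\S^{N-1}$ and hence preserves the product of uniform measures exactly; second, your worry about ``conditioning events interacting correctly with the rotation'' dissolves once you note (as you essentially do) that the rooted-at-$v$ law is exactly the pushforward of the rooted-at-$0$ law under $\theta\mapsto R_v\theta$, since the rooted law retains the residual $SO(N-1)$ symmetry fixing north, so any rotation sending $\theta(v)$ to north works. With those simplifications your argument coincides with the paper's.
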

This implies that the function $h$ from Lemma \ref{l.GFF+harmonic} is in fact harmonic everywhere on $\Z^2$. Indeed, as $\tilde \phi$ is equal to the independent sum of $\phi$ and $h$ with $h$ harmonic everywhere but $0$, we obtain from this lemma that $\Delta h(0)$ is equal in law to $\Delta h(v)$. But the latter is a.s. equal to $0$, and thus is the former.	
	\begin{proof}[Proof of Lemma \ref{l.rerooting}.]
	The original $O(3)$ model on $\T_n$ has the following rerooting invariance: we pick any point $v \in \T_n$ and then do a global rotation $R_v$ to bring $\theta(v)$ to the north pole, then $(R_v\theta(z+v))_{z \in \T_n} $ is equal in law to $(\theta(z))_{z \in \T_n}$. 
	Moreover, this rotation matrix can be explicitly written down using the Rodrigues rotation formula: $R_v = I + Q_v + Q_v^2\frac{1}{1+\cos \alpha}$ where $\alpha$ is the angle between the north pole and
	\begin{align*}
		Q_v=\begin{pmatrix}
			0 & 0 & -\theta^2(v)\\
			0 & 0 & -\theta^1(v)\\
			\theta^2(v) & \theta^1(v)&0
		\end{pmatrix}.
	\end{align*}
	 Applying this to $\sqrt{\beta}\theta(z)$ we obtain that $(R_v \sqrt{\beta} \theta^i(z)) = \sqrt{\beta}(\theta^i(z) - \theta^i(v)) + o(\beta (\theta^{i}(v)^2 + \theta^{1}(v)\theta^{2}(v))$ for $i = 1,2$. Using now Lemma \ref{lemma:cb} to see that the error term goes to zero along any subsequence uniformly in $z$, we obtain the lemma.
	\end{proof}
	In the reminder of the proof we will study this harmonic function $h$ more closely and show it must be in fact equal to zero.  
	
	\subsection*{Step 3: harmonic function is equal to zero.}	We argue in two steps. First, we start with the following classical lemma (see for example \cite{heilbronn1949discrete}) which implies that $h$ needs to be linear.
	\begin{lemma}
		Let $\tilde h$ be a (multidimensional) harmonic function on $\Z^2$ such that $\|\tilde h(v)\| = O(\log d)$ for all vertices of distance $d$. Then $\tilde h$ is constant.
	\end{lemma}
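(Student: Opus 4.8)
The plan is to derive constancy from the standard interior gradient estimate for discrete harmonic functions. Since $\tilde h$ is vector valued but each of its coordinates is discrete harmonic on all of $\Z^2$ with the same growth bound $\|\tilde h(v)\|=O(\log\|v\|)$, and since constancy is a coordinatewise property, I would first reduce to the case of a scalar discrete harmonic function $\tilde h\colon\Z^2\to\R$ with $|\tilde h(x)|\le C_0(1+\log\|x\|)$ for all $x\in\Z^2$.

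The key input is the classical interior gradient estimate (see e.g.\ \cite{LawlerLimic2010RW}): there is a universal constant $C_1<\infty$ such that whenever $u$ is discrete harmonic on the lattice ball $B(0,2r)$ and $x\sim x'$ are neighbouring vertices of $B(0,r)$, then
\begin{align*}
|u(x)-u(x')|\le \frac{C_1}{r}\,\sup_{\|y\|\le 2r}|u(y)|\,.
\end{align*}
One way to see this is via the identity $u(x)-u(x')=\sum_{y}\bigl(H(x,y)-H(x',y)\bigr)u(y)$, where $H$ is the Poisson kernel (exit distribution) of $B(0,2r)$, together with the bound $\sum_y|H(x,y)-H(x',y)|\le C_1/r$; the latter is a difference estimate for the discrete harmonic measure and ultimately rests on the asymptotics $a(z)=\tfrac{2}{\pi}\log\|z\|+\kappa+O(\|z\|^{-2})$ of the potential kernel together with a last-exit decomposition.

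Granting the estimate, I fix an arbitrary vertex $x\in\Z^2$ and a neighbour $x'=x+e$, and apply it on the balls $B(0,2r)$ for all $r$ large enough that $x,x'\in B(0,r)$. The growth hypothesis gives $\sup_{\|y\|\le 2r}|\tilde h(y)|\le C_0(1+\log(2r))$, so
\begin{align*}
|\tilde h(x)-\tilde h(x')|\ \le\ \frac{C_1C_0\,(1+\log(2r))}{r}\ \xrightarrow[\ r\to\infty\ ]{}\ 0\,,
\end{align*}
hence $\tilde h(x)=\tilde h(x')$. As $x$ and the neighbour $x'$ were arbitrary and $\Z^2$ is connected, $\tilde h$ is constant. (Alternatively one may invoke Heilbronn's theorem \cite{heilbronn1949discrete} directly: a discrete harmonic function on $\Z^2$ of at most linear growth is affine, and an affine function of logarithmic growth is constant.)

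I do not expect any genuine obstacle here, since the statement is classical; the one point that deserves care is the rate $1/r$ in the gradient estimate. A naive coupling of two simple random walks started at the neighbours $x$ and $x'$ only makes them coalesce before exiting $B(0,2r)$ with probability $1-O(1/\log r)$ in the recurrent dimension $d=2$, which merely yields a bounded gradient; the sharp $O(1/r)$ rate needed above relies on the fine difference estimates for the Green's function / harmonic measure recalled in the second paragraph.
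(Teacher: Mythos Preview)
Your proof is correct and self-contained. Both your argument and the paper's go through the Poisson representation $\tilde h(x)=\sum_y H(x,y)\tilde h(y)$ on a large ball and estimate the difference of harmonic measures at two interior points, but you invoke a different (sharper) estimate than the paper does. The paper uses a Beurling/coupling bound: for $v,w\in B(0,r)$, the total variation distance between the exit distributions on $\partial B(0,R)$ is $O((r/R)^\alpha)$ for some $\alpha>0$, and combining this with $|\tilde h|=O(\log R)$ on the boundary yields $|\tilde h(v)-\tilde h(w)|\le C(r/R)^\alpha\log R\to 0$. You instead use the interior gradient estimate $|\tilde h(x)-\tilde h(x')|\le \tfrac{C}{r}\sup_{B(0,2r)}|\tilde h|$ for neighbouring $x,x'$, which is stronger (rate $1/r$ rather than $R^{-\alpha}$) but also classical. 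Your route is the more textbook one and gets the sharper quantitative bound; the paper's route needs less input, since any polynomial decay beats $\log R$, and fits the probabilistic style of the surrounding text. Your aside about the naive coupling only giving $1/\log r$ in $d=2$ is well observed and is precisely why the paper's argument does \emph{not} try to couple neighbouring walks but instead compares arbitrary points in $B(0,r)$ against the scale $R\to\infty$.
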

	As the proof is short, we provide it for the convenience of the reader.
	\begin{proof}[Proof of lemma.]
		Let $B(0,r)$ be some fixed ball around the origin and consider some bigger ball $B(0,R)$. Then by the discrete Beurling theorem, we see that uniformly in $B(0,r)$, we have that each $\tilde h(v)$ and $\tilde h(w)$ agree with probability larger than $1-(r/R)^\alpha$ for some $\alpha > 0$. Moreover, on the opposite event, by using the Poisson representation of the harmonic function $\tilde h$ and the growth condition, we see that $\|\tilde h(w) - \tilde h(v)\| = o(R).$ Thus letting $R \to \infty$ we obtain that $\tilde h(r)$ is constant inside $B(0,r)$ and hence in fact on whole of $\Z^2$.
	\end{proof}

	\begin{corollary} \label{c.harmonic->linear}Let $\tilde \phi$ be an accumulation point of $\sqrt{\beta}(\theta^1, \theta^{2})$. Write $\tilde \phi=\phi + h$, where $\phi$ is a GFF in $\Z^2$ taking value $0$ at $0$ and $h$ be a harmonic function. We have that $h$ is linear.
	\end{corollary}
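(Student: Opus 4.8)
The plan is to deduce linearity of $h$ from the discrete Liouville estimate stated just above, applied not to $h$ itself but to each of its discrete partial derivatives $D_ih(v):=h(v+e_i)-h(v)$, $i\in\{1,2\}$. The point is that although $\tilde\phi$, and hence $h$, may a priori grow faster than $\log d$, its lattice gradient does not, since the chessboard estimate produces a tail bound on gradients that is uniform across scales.

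First I would upgrade the gradient tail bound \eqref{e.exponentialtail} to the limit. Since $\sqrt\beta\,\|\nabla\theta_\beta(e)\|$ has a sub-Gaussian tail which is uniform in $\beta\geq 1$ and in the edge $e$, and since $\sqrt\beta\,\nabla\theta_\beta(e)$ converges in law along the chosen subsequence with all moments converging (as used in Lemma \ref{l.tightness}), the Portmanteau theorem gives that any accumulation point satisfies $\P(\|\nabla\tilde\phi(e)\|\geq t)\leq e^{-ct^2}$ for some $c>0$ independent of $e$. Writing $\tilde\phi=\phi+h$ as in Lemma \ref{l.GFF+harmonic} and using \eqref{eq:green} to see that the increments $\phi(v+e_i)-\phi(v)$ of the GFF rooted at the origin are centered Gaussians of variance bounded uniformly over edges, I get the same uniform sub-Gaussian control for $D_ih(v)=\nabla\tilde\phi(e)-\nabla\phi(e)$, where $e$ denotes the edge $(v,v+e_i)$.

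Next, I would recall that by the rerooting argument following Lemma \ref{l.rerooting}, $h$ is harmonic on all of $\Z^2$, so each $D_ih$ is a harmonic $\R^2$-valued function. A union bound over the $O(d^2)$ vertices of $B(0,d)$ using the uniform sub-Gaussian tail above, summed over dyadic scales $d=2^m$, yields by Borel--Cantelli that almost surely $\|D_ih(v)\|=O(\sqrt{\log(2+\|v\|)})=O(\log d)$ at distance $d$. The discrete Liouville lemma stated just above then forces $D_ih$ to be a.s. constant, for $i=1,2$; hence $h$ is affine, and since $h(0)=0$ it is linear.

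I expect the only delicate point to be the first step: one must make sure it is the genuine exponential-square tail — not merely the finite second moment recorded in Lemma \ref{l.tightness} — that is inherited by $\tilde\phi$, since this is exactly what makes the subsequent union bound over $O(d^2)$ edges summable; this is where the full strength of the chessboard estimate \eqref{e.exponentialtail}, bounding all moments uniformly in $\beta$, is used. Everything after that is a routine union bound together with the Liouville lemma already at our disposal.
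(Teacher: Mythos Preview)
Your proposal is correct and follows essentially the same route as the paper: both pass the chessboard tail bound~\eqref{e.exponentialtail} to the limit, use it together with the $O(\log d)$ growth of GFF gradients to get $\|D_ih(v)\|=O(\log d)$ almost surely via a union bound and Borel--Cantelli, and then apply the discrete Liouville lemma to each $D_ih$ to conclude that $h$ is affine. Your write-up is in fact slightly more careful about the passage to the limit (via Portmanteau) and extracts the sharper $O(\sqrt{\log d})$ bound, but the argument is otherwise the same.
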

	\begin{proof}
			Consider $\tilde h(v) := h(v) - h(v+e)$, where $e$ is the unit vector in the direction of the $x-$axis. By taking $K = C\log d$ in Lemma \ref{lemma:cb}, we see from the union bound and Borel-Cantelli that almost surely, for all $d\geq 2$ and all vertices $v$ at distance $d$ from the origin, $\| {\nabla} \tilde \phi(v)\| = O(\log d)$. 
			
Using the fact that gradients of the 2D GFF also grow at most like $O(\log d)$ almost surely, we obtain that $\|\tilde h(v)\| = O(\log d)$ for all all vertices of distance at most $d$ almost surely. Thus,  we can conclude from the above lemma that $\tilde h$ is almost surely constant. Repeating the same argument with $e$ being the unit vector in the direction of the $y-$axis we deduce that $h$ must be linear.
	\end{proof}
	To finish off, we need to argue that the slope of the linear function $h$ is zero. The following claim shows that a certain correlation of gradients is always non-positive for the $O(N)$ model while it is non-positive and tending to zero with distance for the vectorial GFF. 
	
	\begin{lemma}\label{l.2-point gradient negative}
		Let $\phi$ be a vectorial GFF and $e_1$, $e_2$ be two edges along the real axis or the imaginary axis of even distance. Then $\E(\nabla \phi(e_1) \nabla \phi(e_2)) \leq 0$ and moreover $\E(\nabla \phi^i(e_1) \nabla \phi^i(e_2)) \to 0$ as the distance between $e_1$ and $e_2$ goes to infinity for any $i$-th coordinate of $\phi$. 
		
		Similarly, let $\theta$ bet the $O(N)$ model on $\T_{2n}$. Then if $e_1$, $e_2$ are any two edges along one line of the torus of even distance, we have that  $\E(\nabla \theta^i(e_1) \nabla \theta^i(e_2)) \leq 0$ for any $i$-th component of $\theta$.
	\end{lemma}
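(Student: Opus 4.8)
The plan is to derive both inequalities from reflection positivity with respect to \emph{bond reflections} of $\Z^2$ (resp.\ of the torus), i.e.\ reflections in a hyperplane perpendicular to a coordinate direction that passes through edge-midpoints rather than through vertices. The hypothesis that $e_1$ and $e_2$ are at \emph{even} distance is exactly what makes this work: writing $e_1,e_2$ as the edges $[a,a{+}1]$ and $[b,b{+}1]$ along a common horizontal line with $b-a=:k$ even and $k\ge 2$, the reflection $\rho$ in the line $\{x_1=\tfrac{a+b+1}{2}\}$ is a bond reflection, it sends $e_1$ onto $e_2$ while reversing their orientation, and $e_1$ lies strictly on one side of $\rho$. (For $k$ odd one would be forced to reflect through a vertex; even distances are all we need.) The key elementary point is that, with both edges oriented in the positive axis direction, if $F:=\nabla\phi(e_1)$ then $F\circ\rho=-\nabla\phi(e_2)$, so a manifestly nonnegative quantity $\E[F\cdot(F\circ\rho)]$ produces the desired sign.

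\emph{The GFF.} It suffices to treat $N=1$, the coordinates being i.i.d.\ scalar GFFs. The lattice GFF is reflection positive with respect to $\rho$: the Dirichlet energy splits as the two half-space energies plus a cross term $\tfrac12\sum_i(\phi_i-\phi_{\rho i})^2$ over the mirror pairs straddling the reflection hyperplane, and linearising this term by a Gaussian integration exhibits the law as $\int(\cdot\ \text{in }\Lambda_+)\,(\cdot\ \text{in }\Lambda_-)\,d(\text{positive measure})$ (for the whole-plane field one passes to the limit from toroidal approximations). Since $F=\nabla\phi(e_1)$ is a function of $\phi$ on the side containing $e_1$, reflection positivity gives $\E[F\cdot(F\circ\rho)]\ge0$, i.e.\ $\E[\nabla\phi(e_1)\nabla\phi(e_2)]\le0$; summing over the $N$ coordinates gives the vector statement. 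For the decay, expand $\E[\nabla\phi^i(e_1)\nabla\phi^i(e_2)]=G(x_1,x_2)-G(x_1,x_2{+}e)-G(x_1{+}e,x_2)+G(x_1{+}e,x_2{+}e)$; the single-point contributions cancel and one is left with $\tfrac12\bigl[\mathfrak a((k{+}1)e)+\mathfrak a((k{-}1)e)-2\,\mathfrak a(ke)\bigr]$, where $\mathfrak a$ is the potential kernel. By the asymptotics $\mathfrak a(ne)=\tfrac1\pi\log n+\kappa+O(n^{-2})$ (cf.~\eqref{eq:green}) this second difference is $O(k^{-2})$ and hence tends to $0$ as $d(e_1,e_2)=k\to\infty$ (in the massive case one uses instead the exponential decay of $G_m$).

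\emph{The $O(N)$ model on $\T_{2n}$.} The argument is identical, now using the classical reflection positivity of the nearest-neighbour ferromagnetic $O(N)$ interaction with respect to bond reflections (see e.g.~\cite{FSS1976,FrohlichLieb1978}). Here it is relevant that the circumference $2n$ is even, so that a bond reflection in a line perpendicular to the chosen line of the torus is globally well defined (it has no fixed vertices) and exchanges precisely the pairs of edges at even distance. Taking $F:=\nabla\theta^i(e_1)$, a function of the spins on the half containing $e_1$, reflection positivity yields $\E[F\cdot(F\circ\rho)]\ge0$, and since $F\circ\rho=-\nabla\theta^i(e_2)$ we conclude $\E[\nabla\theta^i(e_1)\nabla\theta^i(e_2)]\le0$.

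The part that requires care is purely the geometry of the reflection: that ``even distance'' is exactly what turns the edge-swapping reflection into a bond reflection through edge-midpoints (so that no vertex lies on the reflection hyperplane and $\nabla\phi(e_1)$, resp.\ $\nabla\theta^i(e_1)$, is genuinely a one-sided observable), and that the orientation reversal built into $\rho$ is what converts the nonnegativity $\E[F\cdot(F\circ\rho)]\ge0$ into the claimed $\le0$. Once this bookkeeping is in place, one simply invokes reflection positivity of the GFF, resp.\ of the $O(N)$ model, together with the second-difference estimate on the potential kernel for the convergence to $0$.
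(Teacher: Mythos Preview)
Your proof is correct, but it takes a genuinely different route from the paper's. The paper argues by conditioning: one chooses two lines $V_S$ perpendicular to the line of $e_1,e_2$ that separate them symmetrically on the torus, uses the domain Markov property to make the two halves conditionally independent, and then invokes the reflection symmetry to obtain $\E[\nabla\theta^i(e_2)\mid\theta_{V_S}]=-\E[\nabla\theta^i(e_1)\mid\theta_{V_S}]$, whence
\[
\E[\nabla\theta^i(e_1)\nabla\theta^i(e_2)]=\E\bigl[\E[\nabla\theta^i(e_1)\mid\theta_{V_S}]\,\E[\nabla\theta^i(e_2)\mid\theta_{V_S}]\bigr]=-\E\bigl[\E[\nabla\theta^i(e_1)\mid\theta_{V_S}]^2\bigr]\le 0.
\]
You instead package the same reflection as a direct appeal to bond reflection positivity (citing \cite{FSS1976,FrohlichLieb1978}), observing that $F:=\nabla\theta^i(e_1)$ is a one-sided observable and $F\circ\rho=-\nabla\theta^i(e_2)$, so $0\le\E[F\,(F\circ\rho)]=-\E[\nabla\theta^i(e_1)\nabla\theta^i(e_2)]$. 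The two arguments share the same geometric core --- the even-distance hypothesis is precisely what makes the edge-swapping reflection pass through edge-midpoints --- but yours is more modular (RP is invoked off the shelf) while the paper's is more self-contained. Your treatment of the decay to~$0$ is also more explicit: you compute the gradient covariance as the second difference $\mathfrak a((k{+}1)e)+\mathfrak a((k{-}1)e)-2\mathfrak a(ke)$ of the potential kernel and read off the $O(k^{-2})$ rate from its asymptotics, whereas the paper subsumes this under ``a similar argument''.
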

	
	\begin{proof}
		All properties follow from a similar argument, which we will give in the case of the $O(N)$ model: we condition on the values of the $O(N)$ models on the vertices $V_S$ of $\T_{2n}$ on the two lines perpendicular to the line of $e_1, e_2$ that separate them symmetrically. Then the two conditional laws on the two connected components of $\T_n \backslash V_S$ are equal and independent. Moreover under this law, by symmetry $\nabla \theta^i(e_1) \sim -\nabla \theta^i(e_2)$. Thus we deduce that 
		\begin{align*}
			\E(\nabla \theta^i(e_1) \nabla \theta^i(e_2)) = \E\left(\E(\nabla \theta^i(e_1)| \theta_{V_S}) \E(\nabla \theta^i(e_2)| \theta_{V_S})\right) = - \E\left(\E(\nabla \theta^i(e_1)| \theta_{V_S})^2\right),
		\end{align*}
	which is clearly negative.
	\end{proof}

\subsection*{Step 4: conclusion.}
We now collect the ingredients to prove Theorem \ref{t.convergence to GFF}.
\begin{proof}[Proof of Theorem \ref{t.convergence to GFF}]
	Take any sequence $\beta\to \infty$ and $n_\beta\to \infty$. We know by Lemma \ref{l.tightness} that the sequence $\sqrt \beta \theta^{1,2}_\beta$ is tight. We will conclude by showing that all accumulation points $\tilde \phi$ have the law of a GFF in $\Z^2$ that takes value $0$ at $0$. By Lemma \ref{l.GFF+harmonic} and \ref{l.rerooting}, we know that the law of $\tilde \phi$ can be written as $\phi + h$, where $\phi$ has the law of the desired GFF and $h$ is an independent random harmonic function. Further, using Corollary \ref{c.harmonic->linear} we see that $h$ is in fact linear.
	
	To finish off, we need to argue that the slope of the linear function $h$ is zero. To do this notice the following: let $e_1$, $e_2$ be two edges along the real axis and $f_1$ and $f_2$ two vertices along the imaginary axis. If the slope of the random linear function $h$ is non-zero with positive probability, then either $\E(\nabla h^i(e_1) \nabla h^i(e_2)) > 0$ or  $\E(\nabla h^i(f_1)\nabla h^i(f_2)) > 0$ uniformly over the distance for any $i$-th coordinate of the function $h$. However, this is not possible thanks to Lemma \ref{l.2-point gradient negative}. From this, we conclude.
\end{proof}

\section{Revisiting the approach of A. Patrascioiu and E. Seiler on disproving exponential decay and geometric interpretation of the two-point correlation function in the spin $O(N)$ model}\label{s.PS}

By perturbation theory and renormalization flow arguments \cite{polyakov1975interaction}, it is conjectured that 2D non-abelian continuous spin models like $O(N)$ for $N \geq 3$ and 4D non-abelian Yang-Mills theories exhibit a mass gap, i.e. exponential decay of correlation at all temperatures. 

For the time being, no mathematical proof of such a statement exists and there are also arguments from the physics community in the opposite direction. Possibly most prominently A. Patrascioiu and E. Seiler have been developing heuristic arguments against the existence of a mass gap. In this section we will revisit their central percolation-based argument, as presented e.g. in \cite{patrascioiu1992phase,patrascioiu1993percolation,patrascioiu2002percolation} (see also the review paper \cite{seiler2003case}) for the Heisenberg model. 

We will reformulate their argument in a setting of metric graph Heisenberg model, and we shall make part of their argument rigorous. However, we will also give a counterexample to another part of their argument and conjecture that their main assumption may in fact not hold. Let us recap their argument to be able to state things more clearly:

\begin{enumerate}
	\item First, the authors claim that it suffices to prove this result on a restricted model: for some $1/2 \gg \delta > 0$, one restricts the $O(3)$ model to satisfy $\theta(x)\cdot \theta(y) \in [1-\delta, 1]$ for any neighbouring vertices $x, y$. \footnote{This restricted modified  model has been successfully analyzed in \cite{aizenman1994slow}, where it was shown that the BKT phase holds for all sufficient small cut-off $\delta$ (which correspond to ``small'' physical temperature in this setting).}	
	\item Now, we pick some {$\eps > 2 \delta$} and separate all vertices in $\Z^2$ into three parts: $E_\eps := \{z: \theta^3(z) \in [-\eps, \eps]\}$, ${N}_\eps := \{z: \theta^3(z) > \eps\}$ and ${S}_\eps := \{z: \theta^3(z) < -\eps\}$. Notice that by the choice of $\eps$, we see that the connected components $S_\eps$ and $N_\eps$ are disjoint.
	\item The authors then claim: if $E_\eps$ does not percolate, then the Heisenberg model does not have exponential decay. This is proved modulo certain very believable hypothesis on percolation properties, and basically goes as follows: 1) one argues that neither $S_\eps$ nor $N_\eps$ percolates via uniqueness of infinite cluster. 2) One deduces that their expected cluster sizes are infinite via a lemma by Russo 3) one deduces that there is no exponential decay.
	\item The authors then state their belief that $E_\eps$ does not percolate 
	\item {Finally}, for the {sake} of security, they also provide a heuristic argument why even if $E_\eps$ should percolate, there would not be exponential decay at sufficiently low temperatures. We will return to this argument.
\end{enumerate}

In this section we will reformulate this strategy by making use of a certain extension of the $O(N)$ model to the metric graph, that changes continuously over edges of $\Z^2$ but whose restriction to the vertices is the original $O(3)$ model. In this setting there is no need to restrict the angles of the $O(3)$ model {as is done in \cite{patrascioiu1992phase,patrascioiu1993percolation})}, because one can separate different sign clusters of $\theta^3$ simply via its zero set, denoted by $E$ for the equator. In this framework 
\begin{itemize}
\item We give a rigorous proof of point 3) above: we show that if $E$ (or more precisely the set $\hat E$ to be defined below in Section \ref{ss.hatE}) does not percolate for some value of $\beta$, then there is {polynomial decay} of correlation and hence no mass gap (see Corollary \ref{pr.PSB}. This relies on a percolation representation of the correlation functions proved in the next subsection.
\item We also provide a counterexample for the heuristic argument of point 5) in the setting of the original $O(3)$ model (see Theorem \ref{th.example}).
\end{itemize}

In particular, this means that if one believes in the conjecture of Polyakov, one is bound to believe that the equator in the metric graph model, $E$, does percolate for any $\beta > 0$. It would be interesting to study this further and to also obtain rigorous results of the type - if $E$ does percolate, then there is exponential decay.

\subsection{Setup: an extension of the $O(N)$ to the metric graph.}

To state our result, we first have to define our percolation model. It will be defined on the metric graph $\tilde \Lambda_n$ associated to $\Lambda_n$ as follows. This extension to the metric graph is inspired by the work \cite{lupu2016loop} and has been used recently in the context of $\Z$ and compact valued spin systems in the works \cite{van2021elementary,aizenman2021depinning,dubedat2022random}. We also refer to these works for the definition of such extensions to the cable graph.

\begin{definition}[A simple extension of $O(N)$ model to the metric graph]\label{def.metricON}
Consider $\Phi=(\tilde \Phi^1,...,\tilde \Phi^{{N}})$ an $N$-vectorial GFF with free boundary in the metric graph $\tilde \Lambda_n$. Define $\tilde \theta (x) := \tilde \Phi(x) /\|\tilde \Phi(x)\|$ for any $x \in \tilde \Lambda_n$.

Then the model, conditioned on the fact that $\|\Phi(v)\|=\sqrt{\beta}$ for all vertices $v\in \Lambda_n$, is called the simple extension of $O(N)$ model to the metric graph. It holds that $\tilde \Phi/\|\tilde \Phi\|=\tilde \Phi/\sqrt{\beta}$ restricted to $\Lambda_n$ has the law of an $O(N)$-model at inverse temperature $\beta$ on $\Lambda_n$.
\end{definition}

The final part comes directly from Proposition \ref{pr.anglesGFF}. In this setting, for the Heisenberg model the equator $E$ is just given by the set $\{x \in \tilde \Lambda_n: \tilde \theta^3(x) = 0\}$. Further, correlation functions of $\theta^3$ can be just rephrased using percolation properties of $\theta^3 > 0$, which in turn can be expressed using loop soups, thanks to the isomorphism theorems (see Theorem \ref{th.corr} ). This is collected in the following proposition.

\begin{proposition} 
 Fix $m,\beta\geq 0$ and let $\tilde \L$ denote a loop soup of parameter $N/2$ with mass $m$ in the metric graph $\tilde \Lambda_n$ conditioned on the fact that its occupation time $L$ restricted to the vertices is equal to $\sqrt{\beta}$. Further, for each loop in $\tilde \L$ choose uniformly a label in $\{1,..,{N}\}$ independently of each other and define $\tilde \L^{(i)}$ to be the set for all loops that share the label $i$. Denote by $\tilde L^{(i)}$ the law of the occupation time of $\tilde \L^{(i)}$ and by $L^{(i)}$ its restriction to the vertices of $\Lambda_n$.

Now, define $(s_i)_{i=1}^N$  independent functions of the metric graph taking value in $\pm 1$, whose law is symmetric and satisfies the following:  $s_i$ is constant on every connected component of $\tilde \L^1 > 0$, and it is independent in different connected components. 

We have that $\beta^{-1/2}(s_1 \tilde \L^{(i)}, \dots, s_N \tilde \L^{(N)})$ is equal in law to the extension of the $O(N)$ model on the metric graph $(\tilde \theta^1, \dots, \tilde \theta^N)$ given just above.
\end{proposition}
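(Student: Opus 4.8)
The plan is to read the proposition as a conditioned version of the signed isomorphism theorem on the metric graph (Theorem~\ref{th.isom2}), combined with the elementary coloring/thinning structure of loop soups, and then to pass to the conditioned laws by a disintegration argument. I would set everything up so that the (uncolored) occupation field of the loop soup and the squared norm of the vectorial GFF are identified \emph{before} conditioning, so that conditioning on the occupation field at the vertices of $\Lambda_n$ is literally the same operation on both sides.

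Concretely, I would proceed in three steps. First, observe that a loop soup $\tilde\L$ of parameter $N/2$ on $\tilde\Lambda_n$ (massive with mass $m$) whose loops are assigned i.i.d.\ uniform colors in $\{1,\dots,N\}$ is, by Poisson thinning, the same object as a vectorial loop soup $(\tilde\L^{(1)},\dots,\tilde\L^{(N)})$ with the $\tilde\L^{(i)}$ independent, each of parameter $\tfrac1N\cdot\tfrac N2=\tfrac12$; hence the occupation fields $\tilde L^{(i)}$ are independent and $\tilde L=\sum_i \tilde L^{(i)}$. Second, apply Theorem~\ref{th.isom2} to $(\tilde\L^{(1)},\dots,\tilde\L^{(N)})$: sampling an independent Rademacher sign $s_i$ for each connected component of $\{\tilde L^{(i)}\neq 0\}$, one gets $(s_1\sqrt{\tilde L^{(1)}},\dots,s_N\sqrt{\tilde L^{(N)}})$ equal in law to an $N$-vectorial (massive) GFF $\tilde\Phi$ with free boundary on $\tilde\Lambda_n$, and this equality holds \emph{jointly} with $\tilde L^{(i)}=(\tilde\Phi^i)^2$ for all $i$, so that $\sum_i \tilde L^{(i)} = \|\tilde\Phi\|^2$ everywhere, in particular at the vertices. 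Third, note that the value of the occupation field on the (finitely many) vertices of $\Lambda_n$ has a density on $\R^{\Lambda_n}$ under both descriptions, so conditioning on its value is a genuine disintegration; since the coupling of Step~2 makes ``value of $(s_i\sqrt{\tilde L^{(i)}})_i$ at the vertices'' a common function of both sides, conditioning that tuple on $\{\tilde L \equiv \beta \text{ on }\Lambda_n\}$ yields exactly the law obtained by conditioning $\tilde\Phi$ on $\{\|\tilde\Phi(v)\|=\sqrt\beta,\ v\in\Lambda_n\}$. Dividing by $\sqrt\beta$ and invoking Definition~\ref{def.metricON}, the latter is by definition the simple extension $(\tilde\theta^1,\dots,\tilde\theta^N)$ of the $O(N)$ model to the metric graph, which is the claim; that its restriction to the vertices is the $O(N)$ model at inverse temperature $\beta$ is Proposition~\ref{pr.anglesGFF}.

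The two points that require genuine care, rather than routine checking, are: (i) the thinning statement of the first step at the level of loops on the \emph{metric} graph, i.e.\ that coloring a parameter-$N/2$ metric-graph loop soup produces independent parameter-$1/2$ metric-graph loop soups with additive occupation fields (standard for Poissonian loop ensembles but worth spelling out in this setting), and (ii) the measure-theoretic justification of the last step, namely that conditioning an a.s.\ equality of two coupled fields on a common measurable function of them preserves the equality of the resulting conditional laws; the densities at the vertices make this rigorous but it is the only place where one has to argue rather than compute. I expect (ii) to be the real ``hard part'' — everything else is bookkeeping on top of Theorem~\ref{th.isom2} and Definition~\ref{def.metricON}.
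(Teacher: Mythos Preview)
Your proposal is correct and is exactly the argument the paper has in mind: the proposition is stated in the paper without proof, as an immediate consequence of the signed isomorphism theorem on the metric graph (Theorem~\ref{th.isom2}) together with Definition~\ref{def.metricON}, and your three steps (Poisson thinning of the colored loop soup into $N$ independent intensity-$1/2$ soups, the signed isomorphism coordinate by coordinate, and disintegration to pass to the conditioned law) constitute the natural way to spell this out. You also implicitly correct several typos in the statement (the vertex occupation time should be $\beta$, the sign $s_i$ should be constant on components of $\{\tilde L^{(i)}>0\}$ rather than $\{\tilde\L^1>0\}$, and the final vector should read $s_i\sqrt{\tilde L^{(i)}}$), which is consistent with how the objects are used downstream in Lemma~\ref{l.connectivity O(n)}.
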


\subsection{Geometric interpretation of the two-point correlation function in the spin $O(N)$ model.}

The main result of this subsection says that the two-point correlation function can be bounded by the probability that $x,y$ belong to the same connected component of $\tilde \L^1 > 0$. We denote the relevant event by $x\stackrel{\tilde \L^1}{\longleftrightarrow }y$.

\begin{theorem}\label{pr.connectivity O(n)}
	Let $x$ and $y$ be two vertices of $\Lambda_n$ and $\theta$ be an $O(N)$-model in $\Lambda_n$ at inverse temperature $\beta$ with free-boundary condition in $\partial \Lambda_n$. We have that
	\begin{align}
		\frac{1}{N} \P(x\stackrel{\tilde \L^1}{\longleftrightarrow} y ) \leq \E\left[\theta(x)\cdot \theta(y) \right] \leq N\, \P(x\stackrel{\tilde \L^1}{\longleftrightarrow } y).
	\end{align}
\end{theorem}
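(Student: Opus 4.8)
The plan is to relate the two-point function of the spin $O(N)$ model to a percolation event for a single colour-class of the loop soup, going through the metric-graph extension of Definition \ref{def.metricON} together with the signed isomorphism theorem (Theorem \ref{th.isom2}) and the FKG inequality. The key observation is that, under the extension to the metric graph, we have $\theta(x)\cdot\theta(y) = \frac{1}{\beta}\,\tilde\Phi(x)\cdot\tilde\Phi(y) = \frac{1}{\beta}\sum_{i=1}^N \tilde\Phi^i(x)\tilde\Phi^i(y)$ on vertices, and each coordinate field can be written as $\tilde\Phi^i = s_i\sqrt{\tilde L^{(i)}}$ where $s_i$ is a $\pm1$-valued sign that is constant on each connected component of $\{\tilde L^{(i)}>0\}$ and independent across components. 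Hence, conditionally on the (unsigned) occupation field, $\E[s_i\sqrt{\tilde L^{(i)}(x)}\cdot s_i\sqrt{\tilde L^{(i)}(y)}]$ is zero unless $x$ and $y$ lie in the same cluster of $\{\tilde L^{(i)}>0\}$, in which case it equals $\E[\sqrt{\tilde L^{(i)}(x)\tilde L^{(i)}(y)}]$. By symmetry in the colours, $\E[\theta(x)\cdot\theta(y)] = \frac{N}{\beta}\,\E\!\left[\sqrt{\tilde L^{(1)}(x)\tilde L^{(1)}(y)}\,\1_{x\stackrel{\tilde\L^1}{\longleftrightarrow}y}\right]$, where the conditioning $L|_{\Lambda_n}=\sqrt\beta$ is kept throughout.

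From here the two bounds come from controlling $\sqrt{\tilde L^{(1)}(x)\tilde L^{(1)}(y)}$ on the event that $x,y$ are connected. For the upper bound I would use Cauchy--Schwarz and the fact that, conditioned on $L|_{\Lambda_n}=\sqrt\beta$, each $\tilde L^{(i)}(x)$ with $x$ a vertex is at most the full occupation time $L(x)=\beta$; more precisely $\tilde L^{(1)}(x)\le L(x)=\beta$ a.s.\ on vertices, so $\sqrt{\tilde L^{(1)}(x)\tilde L^{(1)}(y)}\le\beta$ and the stated $\E[\theta(x)\cdot\theta(y)]\le N\,\P(x\stackrel{\tilde\L^1}{\longleftrightarrow}y)$ follows immediately. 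For the lower bound the natural route is a correlation (FKG) inequality: both $\sqrt{\tilde L^{(1)}(x)\tilde L^{(1)}(y)}$ and $\1_{x\stackrel{\tilde\L^1}{\longleftrightarrow}y}$ are increasing functions of the occupation field $\tilde L^{(1)}$ (adding loops can only increase occupation times and can only create, not destroy, connections), and the law of a loop-soup occupation field enjoys FKG; so $\E[\sqrt{\tilde L^{(1)}(x)\tilde L^{(1)}(y)}\,\1_{x\leftrightarrow y}]\ge \E[\sqrt{\tilde L^{(1)}(x)\tilde L^{(1)}(y)}]\,\P(x\leftrightarrow y)$. It then remains to see $\E[\sqrt{\tilde L^{(1)}(x)\tilde L^{(1)}(y)}]\ge \beta/N^2$, which should follow from Cauchy--Schwarz in the reverse direction together with $\E[\tilde L^{(1)}(x)]=\frac1N\E[L(x)]=\beta/N$ on vertices (the colour being uniform) and a comparison of the two expectations at $x$ and $y$; alternatively one argues directly that $\E[\sqrt{\tilde L^{(1)}(x)}\,\sqrt{\tilde L^{(1)}(y)}\mid x\leftrightarrow y]$ is bounded below by a constant times $\beta/N$.

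The main obstacle I anticipate is making the FKG step fully rigorous in the conditioned, metric-graph setting: one must verify that the law of $\tilde L^{(1)}$ (a thinned loop-soup occupation field on the cable graph, further conditioned on $L|_{\Lambda_n}=\sqrt\beta$) satisfies the FKG lattice condition, and that the conditioning on the vertex values does not break monotonicity. A clean way around this is to use the signed isomorphism theorem to transfer everything to the conditioned vectorial GFF $\tilde\Phi$ and invoke the conditional FKG inequality for the GFF (Lemma \ref{l.FKG}, suitably adapted to the cable graph and to the conditioning $\|\tilde\Phi(v)\|=\sqrt\beta$ on vertices); then $|\tilde\Phi^1|$ is a monotone function of $\tilde\Phi^1$ restricted to one sign-component and the connectivity event $\{x\stackrel{\tilde\L^1}{\longleftrightarrow}y\}$ corresponds to $x,y$ lying in the same component of $\{\tilde\Phi^1\neq0\}$, which is again monotone in $|\tilde\Phi^1|$. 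The bookkeeping of constants is routine once the monotonicity is in place, and the symmetry among the $N$ colours is what produces the clean factor $N$ on both sides.
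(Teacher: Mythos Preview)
Your representation formula and the upper bound are correct and coincide with the paper's argument.

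For the lower bound, you correctly sense that an FKG inequality is needed and that the conditioning $L|_{\Lambda_n}=\beta$ is the delicate point, but both of your proposed routes have gaps. First, your suggestion to invoke Lemma~\ref{l.FKG} for the conditioned GFF does not go through: that lemma gives positive association for the \emph{signed} field $\tilde\Phi^1$, whereas the connectivity event $\{x\stackrel{\tilde\L^1}{\longleftrightarrow}y\}=\{x,y\text{ in the same component of }\tilde\Phi^1\neq 0\}$ and the functional $|\tilde\Phi^1(x)||\tilde\Phi^1(y)|$ are monotone in $|\tilde\Phi^1|$, not in $\tilde\Phi^1$. Second, the unconditioned loop-soup FKG you invoke (``adding loops can only increase\ldots'') does not obviously survive the singular conditioning on the total vertex occupation time.

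The paper's device is much cleaner and avoids both issues: one conditions on the \emph{entire} loop soup $\tilde\L$ (which already incorporates the constraint $L(v)=\beta$). Given $\tilde\L$, the field $\tilde L^{(1)}$ is obtained by independently keeping each loop with probability $1/N$, so it is an increasing functional of an i.i.d.\ Bernoulli family and FKG is immediate. This yields
\[
\E\bigl[\sqrt{L^1(x)L^1(y)}\,\bigm|\,x\stackrel{\tilde\L^1}{\longleftrightarrow}y,\ \tilde\L\bigr]
\ \ge\ \E\bigl[\sqrt{L^1(x)}\,\bigm|\,\tilde\L\bigr]\,\E\bigl[\sqrt{L^1(y)}\,\bigm|\,\tilde\L\bigr],
\]
after which one averages over $\tilde\L$.

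Finally, ``Cauchy--Schwarz in the reverse direction'' is not what produces the constant. The paper uses instead the elementary bound $\sqrt{t}\ge t$ for $t\in[0,1]$: since $L^1(x)\le L(x)=\beta$ on vertices, $\sqrt{L^1(x)/\beta}\ge L^1(x)/\beta$, and by colour symmetry $\E[L^1(x)/\beta\mid\tilde\L]=1/N$. Hence $\E[\sqrt{L^1(x)}\mid\tilde\L]\ge\sqrt\beta/N$, giving the factor $\beta/N^2$.
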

\begin{remark}
Our result is reminiscent of the work \cite{campbell1998isotropic} which obtained FK-type representations of two-point correlation functions of the spin $O(N)$ model in the case $N=3$. As our proof does not rely on Ginibre's inequality (\cite{Ginibre}), our statement holds for any value of $N$. Note that this is also the case in the work \cite{dubedat2022random} which does not rely on Ginibre either. 
\end{remark}

\subsubsection{Representation using local times and the upper bound.}
The first step to prove Theorem \ref{pr.connectivity O(n)} is the following representation of the two point correlation using the loop soup.
\begin{lemma}
	\label{l.connectivity O(n)} Let $x$ and $y$ be two vertices of $\Lambda_n$ and $\theta$ be an $O({N})$-model in $\Lambda_n$ at inverse temperature $\beta$ with free-boundary condition in $\partial \Lambda_n$. We have that
	\begin{align}\label{eq.2ptls}
		\frac{{N}}{\beta}\E\left[\sqrt{L^1}(x)\sqrt{L^1}(y)\mid x\stackrel{\tilde \L^1}{\longleftrightarrow}y\right] \P(x\stackrel{\tilde \L^1}{\longleftrightarrow} y ) = \E\left[\theta(x)\cdot \theta(y) \right] . 
	\end{align}
\end{lemma}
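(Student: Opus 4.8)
The plan is to compute $\E[\theta(x)\cdot\theta(y)]$ by expressing the spin $O(N)$ model as the normalized vectorial GFF on the metric graph conditioned to have norm $\sqrt{\beta}$ at the vertices, as in Definition \ref{def.metricON}, and then to pass through the signed isomorphism theorem on the metric graph (Theorem \ref{th.isom2}) to rewrite everything in terms of the loop soup occupation times. First I would write $\theta(x)\cdot\theta(y) = \sum_{i=1}^{N} \tilde\theta^i(x)\tilde\theta^i(y)$ and exploit the symmetry among the $N$ coordinates: each term has the same law, so $\E[\theta(x)\cdot\theta(y)] = N\,\E[\tilde\theta^1(x)\tilde\theta^1(y)]$. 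Then I would use the identification, coming from the proposition just stated before Theorem \ref{pr.connectivity O(n)} together with Theorem \ref{th.isom2}, that $\sqrt{\beta}\,\tilde\theta^1 = s_1\sqrt{\tilde L^{(1)}}$ on the metric graph, where $s_1$ is the Rademacher sign field, constant on each connected component of $\{\tilde L^{(1)} > 0\}$ and independent across components, and $\tilde L^{(1)}$ is the occupation field of the loops carrying label $1$ (equivalently the first coordinate of the vector-valued loop soup).

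Next, the key observation is that $\tilde\theta^1(x)\tilde\theta^1(y) = \beta^{-1} s_1(x)s_1(y)\sqrt{L^1(x)}\sqrt{L^1(y)}$, where I write $L^1 := L^{(1)}$ restricted to vertices. Now I distinguish two cases according to whether $x$ and $y$ lie in the same connected component of $\{\tilde L^1 > 0\}$ in the metric graph. If they do not — this includes the case where one of $L^1(x), L^1(y)$ vanishes, as well as the case where both are positive but in distinct clusters — then $s_1(x)$ and $s_1(y)$ are conditionally independent given $\tilde L^1$, each symmetric, so the conditional expectation of $s_1(x)s_1(y)$ vanishes and the whole term contributes zero. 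If on the other hand $x \stackrel{\tilde\L^1}{\longleftrightarrow} y$, then $s_1(x) = s_1(y)$ and hence $s_1(x)s_1(y) = 1$ deterministically given this event. Therefore
\begin{align*}
\E[\tilde\theta^1(x)\tilde\theta^1(y)] = \frac{1}{\beta}\,\E\!\left[\sqrt{L^1(x)}\sqrt{L^1(y)}\,\mathbf 1_{x\stackrel{\tilde\L^1}{\longleftrightarrow}y}\right] = \frac{1}{\beta}\,\E\!\left[\sqrt{L^1(x)}\sqrt{L^1(y)}\ \Big|\ x\stackrel{\tilde\L^1}{\longleftrightarrow}y\right]\P\!\left(x\stackrel{\tilde\L^1}{\longleftrightarrow}y\right).
\end{align*}
Multiplying by $N$ gives exactly \eqref{eq.2ptls}.

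The one point that needs a little care — and which I expect to be the main obstacle — is the conditioning in Definition \ref{def.metricON}: the isomorphism of Theorem \ref{th.isom2} is stated for the unconditioned metric-graph GFF / loop soup, whereas here everything is conditioned on $\|\tilde\Phi(v)\| = \sqrt\beta$, i.e. on the total occupation field $L = \sum_i L^i = \beta$ at every vertex of $\Lambda_n$. I would argue that this conditioning is a function of $L$ alone (not of the individual $L^i$ nor of the signs $s_i$), and that the sign fields $s_i$ remain conditionally independent Rademacher fields on the clusters of $\{\tilde L^i > 0\}$ even after conditioning on $L$; this is because the labels assigned to loops and the signs attached to clusters are drawn independently of the geometry of the loop soup, hence independently of $L$. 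Given this, the symmetry argument (all $N$ coordinates play the same role, $s_1(x)s_1(y)$ averages to $0$ off the cluster and equals $1$ on it) goes through verbatim under the conditional measure, and the lemma follows. A secondary technical detail is to make sure the event $\{x \stackrel{\tilde\L^1}{\longleftrightarrow} y\}$ is measurable with respect to $\tilde L^1$ and that $\sqrt{L^1(x)}\sqrt{L^1(y)}$ is integrable under the conditional law, both of which are immediate since $L$ is bounded on the vertices by the conditioning and the metric-graph occupation field has finite moments.
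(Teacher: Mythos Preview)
Your proof is correct and follows essentially the same route as the paper. The paper works directly with the conditioned metric-graph GFF $\tilde\Phi^1$ and uses that, conditionally on the cluster of $x$ in $\{\tilde\Phi^1\neq 0\}$, the law of $\tilde\Phi^1$ on the complement is sign-symmetric; you instead pass explicitly through the signed isomorphism (Theorem \ref{th.isom2}) and argue via the Rademacher signs $s_1$ on the clusters of $\{\tilde L^1>0\}$, but these are two phrasings of the same symmetry, and your discussion of why the conditioning on $\|\Phi(v)\|=\sqrt\beta$ does not break this symmetry is exactly the point the paper uses implicitly.
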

\begin{proof}
	Consider $\Phi=(\tilde \Phi^1,...,\tilde \Phi^{{N}})$ an $N$-vectorial GFF with free boundary in the metric graph $\tilde \Lambda_n$ conditioned on the fact that $\|\Phi(v)\|=\sqrt{\beta}$ for all vertices $v\in \Lambda_n$. Recall from Definition \ref{def.metricON} that then $\tilde \Phi/\|\tilde \Phi\|=\tilde \Phi/\sqrt{\beta}$ restricted to $\Lambda_n$ has the law of an $O(N)$-model at inverse temperature $\beta$ on $\Lambda_n$.
	
	Now let $A \subseteq \tilde \Lambda_n$ denote the (metric graph) connected component of $x$ in the set $\tilde \Phi^1 > 0$. Note that the law of $\tilde \Phi^1$ restricted to $\tilde \Lambda_n\backslash A$ conditioned on the set $A$ is symmetric, in particular $\E\left[\tilde \Phi^1(z)\mid A \right] =0$ for all $z\in \tilde \Lambda_n\backslash A$. Thus,
	\begin{align*}
		\E\left[ \theta(x)\cdot \theta(y)\right]= \frac{N}{\beta}\E\left[\tilde \Phi^1(x) \tilde \Phi^1(y) \right]= \frac{N}{\beta}\E\left[|\tilde \Phi^1(x)| |\tilde \Phi^1(y)|\1_{  x\stackrel{\tilde \L^1}{\longleftrightarrow } y} \right].
	\end{align*}

\end{proof}
The upper bound of Theorem \ref{pr.connectivity O(n)} now follows:
indeed, using that $|\tilde \Phi^1|^2$ is equal in law to $(L^1)^2$ and that $|\tilde \Phi^1(x)||\tilde \Phi^1(y)|\leq \beta$ we conclude from \eqref{eq.2ptls} that
$$\E\left[\theta(x)\cdot \theta(y) \right] 
\leq N\, \P(x\stackrel{\tilde \L^1}{\longleftrightarrow } y )$$

\subsubsection{Conditional FKG for the $O(N)$-model and the lower bound.}
To prove a lower bound $\E\left[\sqrt{L^1}(x)\sqrt{L^1}(y)\mid x\stackrel{\tilde \L^1}{\longleftrightarrow }y\right]$, we observe that there is a nice conditional FKG for the $O(N)$-model.

\begin{lemma}\label{pr.FKG O(n)}
	Let $f,g$ be increasing functions in $\Lambda_n$. We have that
	\begin{align*}
		\E\left[f(\tilde L^1)g(\tilde L^1)\mid \tilde \L\right] \geq \E\left[f(\tilde L^1)\mid \tilde \L\right] \E\left[ g(\tilde L^1) \mid \tilde \L\right] 
	\end{align*}
\end{lemma}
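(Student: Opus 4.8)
The plan is to condition further on the full loop soup $\tilde \L$ (equivalently on the occupation field $\tilde L = \tilde L^1 + \dots + \tilde L^N$ and, in the metric graph picture, on the geometry of the sign clusters of $\tilde\Phi^1$), and then to recognize that under this extra conditioning the law of $\tilde L^1$ becomes a product, over connected components of $\{\tilde L \neq 0\}$ or of the relevant cluster structure, of one-dimensional-in-nature pieces which individually satisfy FKG. Concretely, once $\tilde \L$ is fixed, the only remaining randomness in $\tilde L^1$ comes from the independent $\{1,\dots,N\}$-labels attached to each loop (recall each loop gets a uniform independent label, and $\tilde \L^{(1)}$ is the set of loops labelled $1$); so $\tilde L^1$ is a sum over loops $\ell \in \tilde \L$ of $\mathbf 1_{\text{label}(\ell) = 1}$ times the (deterministic, given $\tilde \L$) occupation field of $\ell$. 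First I would make this representation explicit and observe that $\tilde L^1$ is thus a monotone (coordinatewise increasing) function of the i.i.d.\ Bernoulli$(1/N)$ family $(\mathbf 1_{\text{label}(\ell)=1})_{\ell \in \tilde \L}$.

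Given that representation, the conclusion is essentially automatic: if $f, g$ are increasing functions of $\tilde L^1$, then $f(\tilde L^1)$ and $g(\tilde L^1)$ are increasing functions of the independent Bernoulli family, and FKG for i.i.d.\ (product) measures — Harris' inequality — gives
\begin{align*}
	\E\left[f(\tilde L^1) g(\tilde L^1) \mid \tilde \L \right] \geq \E\left[f(\tilde L^1) \mid \tilde \L\right]\, \E\left[g(\tilde L^1) \mid \tilde \L\right].
\end{align*}
The one point requiring a little care is the direction of monotonicity: increasing more loops to label $1$ can only increase the occupation field $\tilde L^1$ pointwise, so composition with increasing $f,g$ preserves monotonicity in the Bernoulli variables; this is the step where one should double-check that "increasing function in $\Lambda_n$" in the statement means increasing in the pointwise (coordinatewise) partial order on fields indexed by vertices of $\Lambda_n$, which is exactly what makes the composition argument go through.

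The main obstacle, such as it is, is not the FKG step but setting up the conditioning cleanly: one must be careful that conditioning on $\tilde \L$ (the unlabelled loop soup, with its occupation time pinned to $\sqrt\beta$ on vertices as in Definition \ref{def.metricON}) indeed leaves only the loop labels as randomness, and that this is the same conditioning appearing on both sides of the claimed inequality. An alternative, if one prefers to avoid the discrete label representation, is to argue directly on the metric graph: conditionally on $\tilde \L$, equivalently on $|\tilde\Phi^1|$ together with the partition of $\tilde\Lambda_n$ into sign clusters of $\tilde\Phi^1$, the remaining randomness is the independent Rademacher sign per cluster (Theorem \ref{th.isom2}); then $\tilde L^1 = (\tilde\Phi^1)^2$ is actually measurable, so one should instead condition only on the unsigned data and note that $\tilde L^1$ restricted to any single cluster is deterministic there and zero elsewhere, again yielding a product structure across clusters on which Harris applies. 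Either route works; I would present the loop-label version as it is the most transparent and uses only Harris' inequality for product measures.
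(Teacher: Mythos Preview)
Your primary approach is correct and is exactly the paper's proof: conditioned on $\tilde \L$, the labels are i.i.d.\ Bernoulli$(1/N)$ per loop, $\tilde L^1$ is a pointwise increasing function of these labels, and Harris/FKG for product measures gives the inequality. Your alternative metric-graph route is a bit muddled (conditioning on $\tilde \L$ is not the same as conditioning on $|\tilde\Phi^1|$ and the sign-cluster partition, and as you noticed $\tilde L^1$ would then be deterministic), so it is best to simply present the loop-label argument, as you suggest.
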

\begin{proof}
Observe that the law of $\tilde \L^1$ conditioned on $\tilde \L$ is given by sampling a random Bernoulli random variable of parameter $1/N$ for each loop in $\tilde \L$. Thus, $\tilde L^1$ is an increasing function of a Bernoulli percolation and we conclude using FKG inequality for Bernoulli percolation.
\end{proof}

Using this lemma, we can obtain the following bound.
\begin{lemma}\label{l.two-point conditioned}In the context of Theorem \ref{pr.connectivity O(n)}
	\begin{align*}
		\E\left[\sqrt{L^1}(x)\sqrt{L^1}(y)\mid x\stackrel{\tilde \L^1}{\longleftrightarrow}y\right]\geq \frac{\beta}{{N}^2}.
	\end{align*}
\end{lemma}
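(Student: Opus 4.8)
The plan is to condition on the unlabelled loop soup $\tilde\L$ and to apply the conditional FKG inequality of Lemma~\ref{pr.FKG O(n)} twice, together with one elementary pointwise inequality. Recall that, conditionally on $\tilde\L$, the coloured soups $\tilde\L^1,\dots,\tilde\L^N$ are produced by giving each loop an independent uniform label in $\{1,\dots,N\}$; hence $\tilde L^1$ is an increasing function of this Bernoulli percolation, and so are $\sqrt{L^1(x)}$, $\sqrt{L^1(y)}$ and the connectivity indicator $\1_{x\stackrel{\tilde\L^1}{\longleftrightarrow}y}$ (adding label-$1$ loops only enlarges the occupation field $\tilde L^1$, hence the metric-graph set $\{\tilde L^1>0\}$ in which the connection between $x$ and $y$ must take place). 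Two further observations: the conditioning defining the model forces the total occupation time to equal $\beta$ at every vertex of $\Lambda_n$, so that $0\le L^1(x)\le\beta$ a.s.; and, by the uniform choice of labels, $\E[L^1(x)\mid\tilde\L]=L(x)/N=\beta/N$.

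First I would peel off the conditioning on the increasing event $\{x\stackrel{\tilde\L^1}{\longleftrightarrow}y\}$. Writing $f:=\sqrt{L^1(x)}\sqrt{L^1(y)}$, which is increasing as a product of non-negative increasing functions of $\tilde L^1$, the FKG inequality for Bernoulli percolation (as in the proof of Lemma~\ref{pr.FKG O(n)}) gives, on the event $\{\P(x\stackrel{\tilde\L^1}{\longleftrightarrow}y\mid\tilde\L)>0\}$,
\begin{align*}
\E\!\left[f \,\Big|\, x\stackrel{\tilde\L^1}{\longleftrightarrow}y,\ \tilde\L\right]
= \frac{\E\!\left[f\,\1_{x\stackrel{\tilde\L^1}{\longleftrightarrow}y}\mid\tilde\L\right]}{\P(x\stackrel{\tilde\L^1}{\longleftrightarrow}y\mid\tilde\L)}
&\ \ge\ \E[f\mid\tilde\L]\\
&\ \ge\ \E\!\left[\sqrt{L^1(x)}\,\big|\,\tilde\L\right]\E\!\left[\sqrt{L^1(y)}\,\big|\,\tilde\L\right],
\end{align*}
the last inequality being a second use of conditional FKG. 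Since $\sqrt{t}\ge t/\sqrt\beta$ for all $t\in[0,\beta]$ and $L^1(x)\le\beta$, we get $\E[\sqrt{L^1(x)}\mid\tilde\L]\ge\E[L^1(x)\mid\tilde\L]/\sqrt\beta=\sqrt\beta/N$, and similarly for $y$; hence $\E[f\mid x\stackrel{\tilde\L^1}{\longleftrightarrow}y,\tilde\L]\ge\beta/N^2$ for almost every realisation of $\tilde\L$ under the conditioned law.

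It then remains to integrate out $\tilde\L$: multiplying the last bound by $\P(x\stackrel{\tilde\L^1}{\longleftrightarrow}y\mid\tilde\L)$ and taking expectations yields $\E[f\,\1_{x\stackrel{\tilde\L^1}{\longleftrightarrow}y}]\ge(\beta/N^2)\,\P(x\stackrel{\tilde\L^1}{\longleftrightarrow}y)$, which is exactly the claimed inequality after dividing by $\P(x\stackrel{\tilde\L^1}{\longleftrightarrow}y)$. I do not anticipate a real obstacle here; the two points requiring a little care are checking that the connectivity indicator is genuinely increasing with respect to the label percolation on the metric graph, and keeping the nested conditionings (on $\tilde\L$ and on the positive-probability event $\{x\stackrel{\tilde\L^1}{\longleftrightarrow}y\}$) straight.
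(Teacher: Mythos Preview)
Your proposal is correct and follows essentially the same route as the paper: condition on $\tilde\L$, use the conditional FKG of Lemma~\ref{pr.FKG O(n)} first to remove the connectivity conditioning and then to decouple $\sqrt{L^1(x)}$ and $\sqrt{L^1(y)}$, apply the pointwise inequality $\sqrt{t}\ge t/\sqrt\beta$ on $[0,\beta]$ together with $\E[L^1(x)\mid\tilde\L]=\beta/N$, and finally average over $\tilde\L$. Your treatment of the final averaging step (multiplying by $\P(x\stackrel{\tilde\L^1}{\longleftrightarrow}y\mid\tilde\L)$ before taking expectations) is in fact slightly more careful than the paper's.
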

\begin{proof}
As the event $ x\stackrel{\tilde \L^1}{\longleftrightarrow}y$ is increasing in $\tilde L^1$, we conclude from Lemma  \ref{pr.FKG O(n)} that
	\begin{align*}
		\E\left[\sqrt{L^1(x)}\sqrt{L^1(y)}\mid x\stackrel{\tilde \L^1}{\longleftrightarrow }y, \tilde \L\right]&\geq \E\left[\sqrt{ {L^1(x)}}\sqrt{ {L^1(y)}}\mid  \tilde \L \right] \\
		&\geq \E\left[\sqrt{ {L^1(x)}}\mid \tilde \L\right ]\E\left [\sqrt{ {L^1(y)}}\mid  \tilde \L \right].
	\end{align*}
Further, because $L^1\leq \beta$, we have that
\begin{align*}
	 \E\left[\sqrt{\frac{L^1(x)}{\beta}}\mid \tilde \L\right ]\geq \E\left[\frac{L^1(x)}{\beta} \mid  {\tilde \L} \right].
\end{align*}
Finally, due to the symmetry of the construction
\begin{align*}
	\E\left[\frac{L^1(x)}{\beta} \mid  {\tilde \L} \right]= \E\left[\frac{L^i(x)}{\beta} \mid  { \tilde \L} \right]=\frac{1}{ {N}}.
\end{align*}
We conclude now by averaging over all possible $\tilde \L$.
\end{proof}

The upper bound in Theorem \ref{pr.connectivity O(n)} follows directly from this lemma and \eqref{eq.2ptls}

\subsection{Metric graph $O(N)$ model and percolation of its equator.}\label{ss.hatE}
Given Theorem \ref{pr.connectivity O(n)}, the fact that non-percolation of $E$ prohibits exponential decay is an easy corollary. Recall that for the Heisenberg model, the equator $E$ was the zero set (in the cable graph) of $\theta^3(z)$. To talk about percolation, we define $\hat E$ to be the set of all edges of the dual lattice of $\Z^2$ that cross an edge $e$ of the initial lattice that intersects $E$. We claim the following. 

\begin{corollary}\label{pr.PSB}
	If $\hat E$ does not percolate for some $\beta > 0$, then there is some {$C > 0$} such that for some sequence of $x \neq y \in \Z^2$ with $|x-y| \to \infty$, we have that $\E \theta^3(x)\theta^3(y) > C|x-y|^{-{3}}$. 
\end{corollary}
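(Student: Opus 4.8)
The plan is to combine the geometric representation of the two-point function from Theorem~\ref{th.corr} with the classical two-dimensional percolation dichotomy: once $\hat E$ does not percolate, no ``colour'' in the partition of $\Z^2$ into $\{\tilde\theta^3>0\}$, $\{\tilde\theta^3<0\}$ and the equator percolates, so the colouring is ``critical'', its clusters have infinite expected size, and that is incompatible with a summable (in particular an exponentially decaying) two-point function.

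First I would carry out the reduction. By the $O(N)$-symmetry of the spin model, $\E[\theta(x)\cdot\theta(y)]=N\,\E[\theta^3(x)\theta^3(y)]$; and by the metric-graph construction of Section~\ref{s.PS}, together with the signed isomorphism theorem, the event $\{x\stackrel{\tilde\L^1}{\longleftrightarrow}y\}$ has the same probability as $\{x\stackrel{\tilde\L^3}{\longleftrightarrow}y\}$, which is exactly the event that $x$ and $y$ lie in the same connected component of the metric graph minus the equator $E=\{\tilde\theta^3=0\}$, i.e.\ that $x$ and $y$ are \emph{not separated by} $E$. Hence Theorem~\ref{th.corr} gives $\E[\theta^3(x)\theta^3(y)]\ge\frac1{N^2}\,\P(x,y\text{ not separated by }E)$. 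After a routine passage to the infinite-volume limit, it then suffices to prove that if $\hat E$ does not percolate then the component $C_0$ of the origin in the metric graph minus $E$ contains infinitely many lattice vertices in expectation, $\E|C_0|=\sum_{y\in\Z^2}\P(0,y\text{ not separated by }E)=\infty$: since $\sum_{y\in\Z^2}|y|^{-3}<\infty$, this divergence forces $\E[\theta^3(0)\theta^3(y)]>C|y|^{-3}$ for a fixed $C>0$ along a sequence $|y|\to\infty$.

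The heart of the matter is then $\E|C_0|=\infty$, which I would establish in two steps making rigorous the Patrascioiu--Seiler heuristic. Step (i): if $\hat E$ does not percolate, then neither $\{\tilde\theta^3>0\}$ nor $\{\tilde\theta^3<0\}$ has an infinite connected component. By the sign symmetry $\theta^3\mapsto-\theta^3$, if one of them percolated so would the other; but two disjoint infinite clusters in the plane must interlock, forcing their common boundary — which lies in $E$, hence gives an infinite connected subset of $\hat E$ after dualising — to be infinite as well, a contradiction. (If the chosen translation-invariant infinite-volume limit is not ergodic, one first decomposes it into ergodic components; the hypothesis passes to almost every component, and the conclusion, an integral of nonnegative quantities, survives the mixture.) Step (ii): since none of $\{\tilde\theta^3>0\}$, $\{\tilde\theta^3<0\}$, $E$ percolates, planar duality for the sign pattern of $\theta^3$, together with the $\pm$ symmetry and rotation invariance, yields that for $N$ large the box $\Lambda_N$ is crossed from left to right by a path along which $\tilde\theta^3>0$ with probability at least $\tfrac13$, the loss relative to $\tfrac12$ coming from the rare event that the thin interface $E$ is essential for such a crossing, an event whose probability is pushed to $0$ using non-percolation of $\hat E$. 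A left--right crossing cluster has Euclidean diameter at least $2N$, so by translation invariance (writing $C_v$ for the component of $v$, as for $C_0$) $|\Lambda_N|\,\P(\diam(C_0)\ge N)=\E\,\#\{v\in\Lambda_N:\diam(C_v)\ge N\}\ge N\,\P(\Lambda_N\text{ crossed})\ge N/3$, whence $\P(\diam(C_0)\ge N)\ge c/N$ and, summing over $N$, $\E|C_0|\ge c'\,\E[\diam(C_0)]=c'\sum_{N\ge1}\P(\diam(C_0)\ge N)=\infty$.

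I expect the main obstacle to be Step (i) and the ``thin-interface'' control in Step (ii): converting ``$\hat E$ does not percolate'' into these statements about the sign clusters relies on the planar-topology input that two interlocking infinite clusters force an infinite interface, on a careful comparison of the metric-graph equator $E$ with the vertex-level $\pm$ colouring of $\theta^3$, and on handling possibly non-ergodic limits. Each ingredient is classical, but their combination is the only genuinely non-mechanical point; the crossings-to-infinite-expected-cluster estimate, the final summation over $y$, and the reduction through Theorem~\ref{th.corr} are all routine.
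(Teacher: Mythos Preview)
Your reduction through Theorem~\ref{th.corr} and the final summability comparison are fine, and the target $\sum_y \P(0\!\leftrightarrow\! y)=\infty$ is exactly what is needed. But the route you take to that divergence is both more elaborate than necessary and contains real gaps.

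In Step~(i) you assert that if one sign percolates then so does the other, and that two coexisting infinite sign clusters force an infinite \emph{connected} piece of $\hat E$. The first half uses ergodicity plus $\pm$ symmetry, which is fine; the second half is not: two disjoint infinite clusters in $\Z^2$ need not be adjacent, and their ``common boundary'' need not yield a single infinite dual component. What you really need here is uniqueness of the infinite $\omega$-cluster (so that its sign would be a nontrivial translation-invariant event, contradicting ergodicity), but that is Burton--Keane and requires insertion tolerance for $\omega$, which you have not established for the metric-graph $O(N)$ model. In Step~(ii) the $\pm$ symmetry maps $\omega$ to itself and $\hat E$ to itself, so it gives no relation between primal and dual crossing probabilities; your ``$\ge 1/3$'' is therefore not justified, and the ``thin-interface'' correction you mention does not repair this.

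The paper bypasses both steps with a one-line planar duality observation (going back to Russo): if $\hat E$ has no infinite component, then for every $R\ge 1$ the half-axes $\{(n,0):n\ge R\}$ and $\{(0,m):m\ge R\}$ must be $\omega$-connected, because any dual path separating them would have to be infinite. Hence a.s.\ infinitely many pairs $((n,0),(0,m))$ are $\omega$-connected, so by Borel--Cantelli $\sum_{n,m\ge 1}\P\big((n,0)\stackrel{\omega}{\leftrightarrow}(0,m)\big)=\infty$; comparing with the convergent series $\sum_{n,m}(n+m)^{-3}$ yields the desired sequence. No ergodicity, no Burton--Keane, no crossing estimates are needed. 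Your case analysis can in fact be salvaged (if $\omega$ percolates you are done immediately; if not, Step~(i) is vacuous), but Step~(ii) would still need a genuine argument, and the half-axes trick makes the whole detour unnecessary.
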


Although the proof is classical, we still write the short argument here for completeness.

\begin{proof}
{
We shall follow a classical idea which goes back to Proposition 1 in \cite{russo1978note} (see also \cite{van2021elementary,aizenman2021depinning} for a recent use in the context of $XY$ and Villain models).

By construction the random set $\hat E$ (which can be seen as a configuration in $\{0,1\}^{E(\Z^2)^*}$) is the dual of the percolation configuration $\omega\in \{0,1\}^{E(\Z^2)}$ defined by $\omega(e):=1$ if and only if the component $\theta^3(x)$ does not vanish along the cable $x\in e$ (boundary points included). 

If $\hat E$ does not percolate then, it implies that a.s. for any $R\geq 1$, there is a connected cluster which connects the semi-infinite horizontal line $\{(n,0), n\geq R\}$ with the semi-infinite vertical line $\{(0,n), n \geq R\}$. Using Borel-Cantelli Lemma, this readily implies that the series 
\begin{align*}\label{}
\sum_{n\geq 1 ,m\geq 1} \Pb{ (n,0) \overset{\omega
}\longleftrightarrow (0,m)}
\end{align*}
needs to diverge.  This implies that there exists a sequence of pairs of integers $(n_k,m_k)$ such that $n_k+m_k \to \infty$ and 
\begin{align*}\label{}
\Pb{ (n_k,0) \overset{\omega
}\longleftrightarrow (0,m_k)} \geq  \left(\frac 1 {n_k +m_k}\right)^3\,.
\end{align*}
We conclude the proof by noticing that $\Pb{ (n_k,0) \overset{\omega
}\longleftrightarrow (0,m_k)}$ is by definition the probability that there is a connected path $\gamma : (n_k,0) \to (0,m_k)$ along which the component $\theta^3(x)$ does not vanish. Now, by Proposition \ref{pr.connectivity O(n)}, this is the same, up to constraints, as $\EFK{\beta}{}{\theta_{(n_k,0)} \cdot \theta_{(0,m_k)}}$. }
\end{proof}

\subsection{Strongly percolating random environments for the $XY-$model which exhibit exponential decay at arbitrarily low temperatures.}

In this section we provide the counterexample to point 5) of the above sketch and show that even though the $XY$ model undergoes the $BKT$ transition, one can still for every inverse temperature construct {ergodic} and strongly percolating subgraphs of $\Z^2$, converging to $\Z^2$ such that $XY$ model on those environments exhibits exponential decay. We believe that this counterexample could be of independent interest.

First, let us make the argument of A. Patrascioiu and E. Seiler in point 5) a bit more precise (see e.g. p. 13 in\cite{seiler2003case} or argument C4 of \cite{Patriascioiu}).

\begin{claim}[Argument of A. Patrascioiu and E. Seiler]
	Fix some inverse temperature $\beta$ above the $BKT$ transition on $\Z^2$ for the $XY$-model. Now consider some collection of random translation invariant ergodic subgraphs $(G_\delta)_{\delta > 0}$ of $\Z^2$ on the same probability space such that for $\delta < \delta'$ we have $G_\delta \supseteq G_\delta'$ and {$\bigcup_{\delta>0} G_\delta= \Z^2$} almost surely. 
Then there is some $\delta_0$ such that for all $\delta < \delta_0$, the $XY$ model on $G_\delta$ of inverse temperature $\beta$ has no mass gap. 
\end{claim}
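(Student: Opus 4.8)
The statement to prove is the heuristic claim of Patrascioiu and Seiler: if $\beta$ is above the $BKT$ transition on $\Z^2$, and if $(G_\delta)_{\delta>0}$ is a decreasing (in $\delta$) family of translation-invariant ergodic subgraphs of $\Z^2$ exhausting $\Z^2$, then for $\delta$ small enough the $XY$ model on $G_\delta$ at inverse temperature $\beta$ has no mass gap. The plan is to combine two ingredients: first, a monotonicity/coupling argument reducing the question to the existence of a macroscopic well-connected subgraph; and second, a comparison of the $XY$ partition function on $G_\delta$ with that on $\Z^2$ using the fact that $G_\delta$ is ``close to $\Z^2$'' in a quantitative percolation sense.

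First I would make precise the sense in which $G_\delta \to \Z^2$: since $G_\delta \nearrow \Z^2$ almost surely and the $G_\delta$ are translation invariant and ergodic, for any $p<1$ there is $\delta_0$ so that for $\delta<\delta_0$ the probability that a fixed edge belongs to $G_\delta$ exceeds $p$; more usefully, by a standard argument (Liggett--Schonmann--Stacey-type domination, \cite{liggett1997domination}), the law of $G_\delta$ stochastically dominates supercritical Bernoulli edge percolation once $\delta$ is small, so $G_\delta$ a.s. contains a unique infinite cluster which, moreover, is ``ubiquitous'' at every large scale (contains a crossing of every large box with high probability, and the complement has exponentially small holes). This is the point where I would lean on the general theory of supercritical percolation clusters and renormalization (block arguments as in \cite{pisztora1996surface}), so that $G_\delta$ looks, at large scales, like a coarse-grained copy of $\Z^2$.

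The heart of the argument is then a free-energy / correlation-inequality comparison. I would use the Ginibre/FKG inequalities for the $XY$ model to show that the truncated two-point function $\langle \theta(0)\cdot\theta(x)\rangle_{G_\delta,\beta}$ is bounded below by the corresponding quantity on a renormalized lattice with coupling $\beta' = \beta'(\delta)$, where $\beta' \to \beta$ as $\delta\to 0$ (heuristically, the effective coupling across a block is only mildly degraded by the sparse missing edges, since spins can route around small holes, and one loses at most a multiplicative constant per block which affects the effective inverse temperature but not its divergence). Since the $BKT$ phase is open (the set of $\beta$ with power-law decay is an interval unbounded above, and in particular for $\beta$ above the transition a whole neighbourhood is still in the $BKT$ phase — this follows from monotonicity of correlations in $\beta$), one gets that for $\delta$ small enough the renormalized model, hence the $XY$ model on $G_\delta$, still has polynomially (not exponentially) decaying correlations. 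This precludes a mass gap. Alternatively, and perhaps more robustly, I would invoke directly the Fr\"ohlich--Spencer mechanism \cite{FS}: their proof of the $BKT$ transition is based on a multiscale / sine-Gordon representation that is stable under removal of a sparse, well-separated set of edges, so one can rerun it on $G_\delta$ for $\delta$ small. Running it through on $G_\delta$ rather than $\Z^2$ is the real work.

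The main obstacle, I expect, is precisely this last stability step: neither the Ginibre comparison nor the Fr\"ohlich--Spencer argument is automatically insensitive to deleting edges, because deleting edges decreases correlations and a priori could push the model out of the $BKT$ phase. The resolution I would pursue is that $G_\delta$ is not an arbitrary subgraph but a highly connected one — with exponentially clustering complement — so that a coarse-graining at a fixed (large but $\delta$-independent) block scale $\ell$ produces a renormalized system whose block-to-block couplings are uniformly bounded below, and whose law dominates i.i.d. supercritical percolation on the block lattice; then one transfers the $BKT$ phase through this coarse-graining with only a bounded loss in the effective temperature. Making the coarse-graining quantitative enough that the effective inverse temperature stays strictly inside the $BKT$ regime (using that the original $\beta$ is a fixed, strictly supercritical value and that $BKT$ is an open phase) is the delicate point; the ergodicity hypothesis enters to guarantee the block percolation is genuinely supercritical and that correlations are translation covariant throughout.
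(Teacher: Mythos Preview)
The proposal attempts to prove a statement that the paper does \emph{not} prove --- in fact, the paper explicitly \emph{disproves} it. The claim is stated only as a precise formulation of Patrascioiu and Seiler's heuristic (their ``point 5)''), and immediately afterwards the text reads: ``it cannot hold in the case of the original $XY$ model as shown by the counterexample provided in Theorem~\ref{th.example}.'' So you are trying to prove something false.

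Concretely, Theorem~\ref{th.example} constructs, for any fixed $\beta$, a one-parameter family of translation-invariant, strongly mixing subgraphs $G_m \nearrow \Z^2$ such that the $XY$ model on $G_m$ at inverse temperature $\beta$ has exponential decay for \emph{every} $m>0$. These graphs even satisfy the strong connectivity properties you invoke: $(G_m)^c$ is exponentially clustering, $p_c(G_m) \to p_c(\Z^2)$, and the Ising model on $G_m$ has long-range order at $\beta^{\mathrm{Ising}}=1$. So the very regime your coarse-graining argument targets --- sparse, well-separated holes, supercritical block percolation, Ising still ordered --- is exactly where the counterexample lives. The ``main obstacle'' you identify (stability of Fr\"ohlich--Spencer or Ginibre comparison under edge deletion) is not a technical gap to be filled but the actual reason the claim fails: removing a sparse but carefully correlated set of edges can push the $XY$ model into the massive phase at any $\beta$, however large. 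Your Liggett--Schonmann--Stacey step is also unjustified (no finite-range dependence is assumed), but that is secondary: even granting all the percolation inputs, the conclusion is false.
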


In fact, if instead of the standard $XY$ model, one considers an $XY$ model where the angles are restricted  as in the case of the above argument by A. Patrascioiu and E. Seiler (see also \cite{aizenman1994slow}), this claim might well be true. However, it cannot hold in the case of the original $XY$ model as shown by the counterexample provided in Theorem \ref{th.example}. 

\begin{figure}[h!]
	\includegraphics[width=0.48\textwidth]{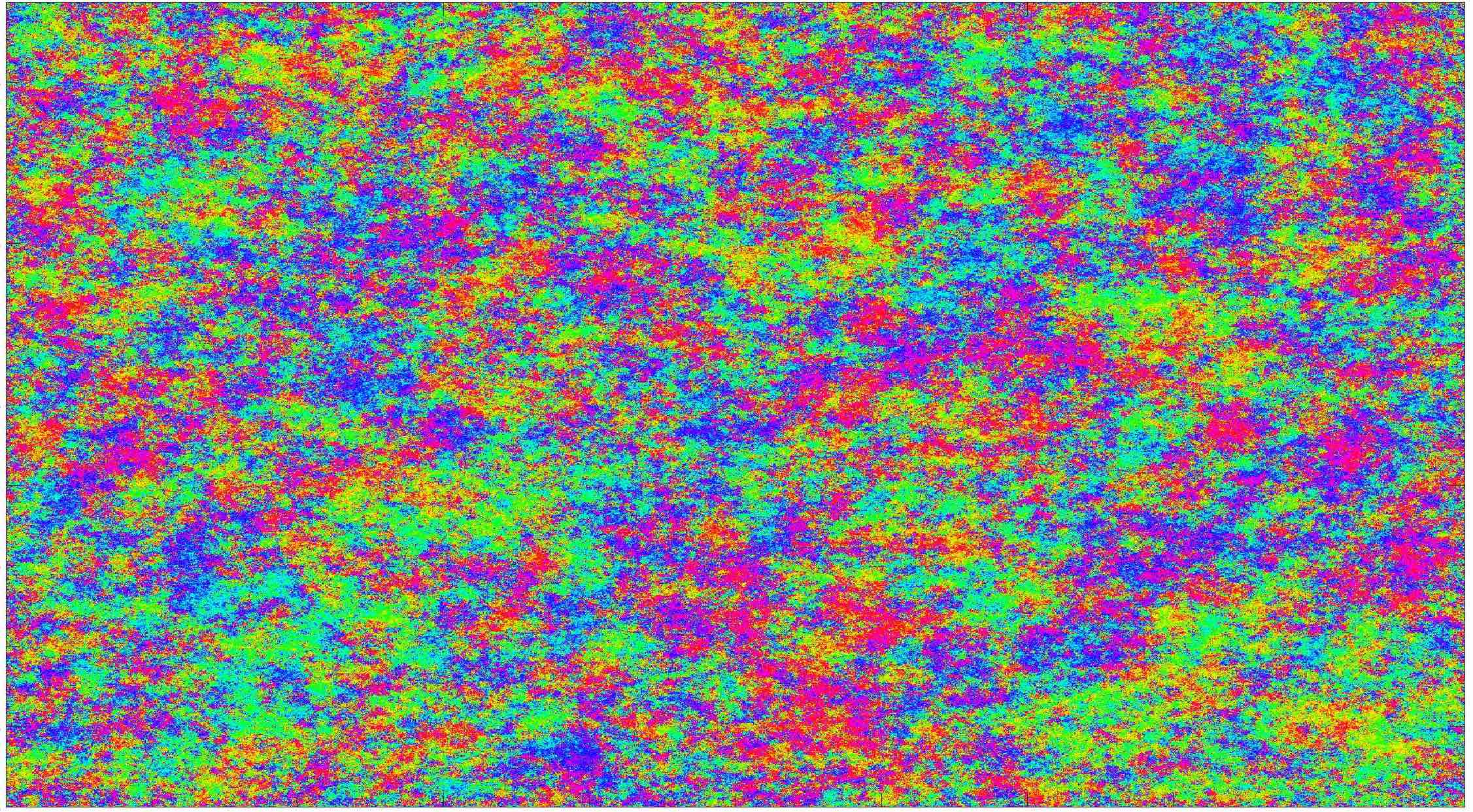}
		\includegraphics[width=0.48\textwidth]{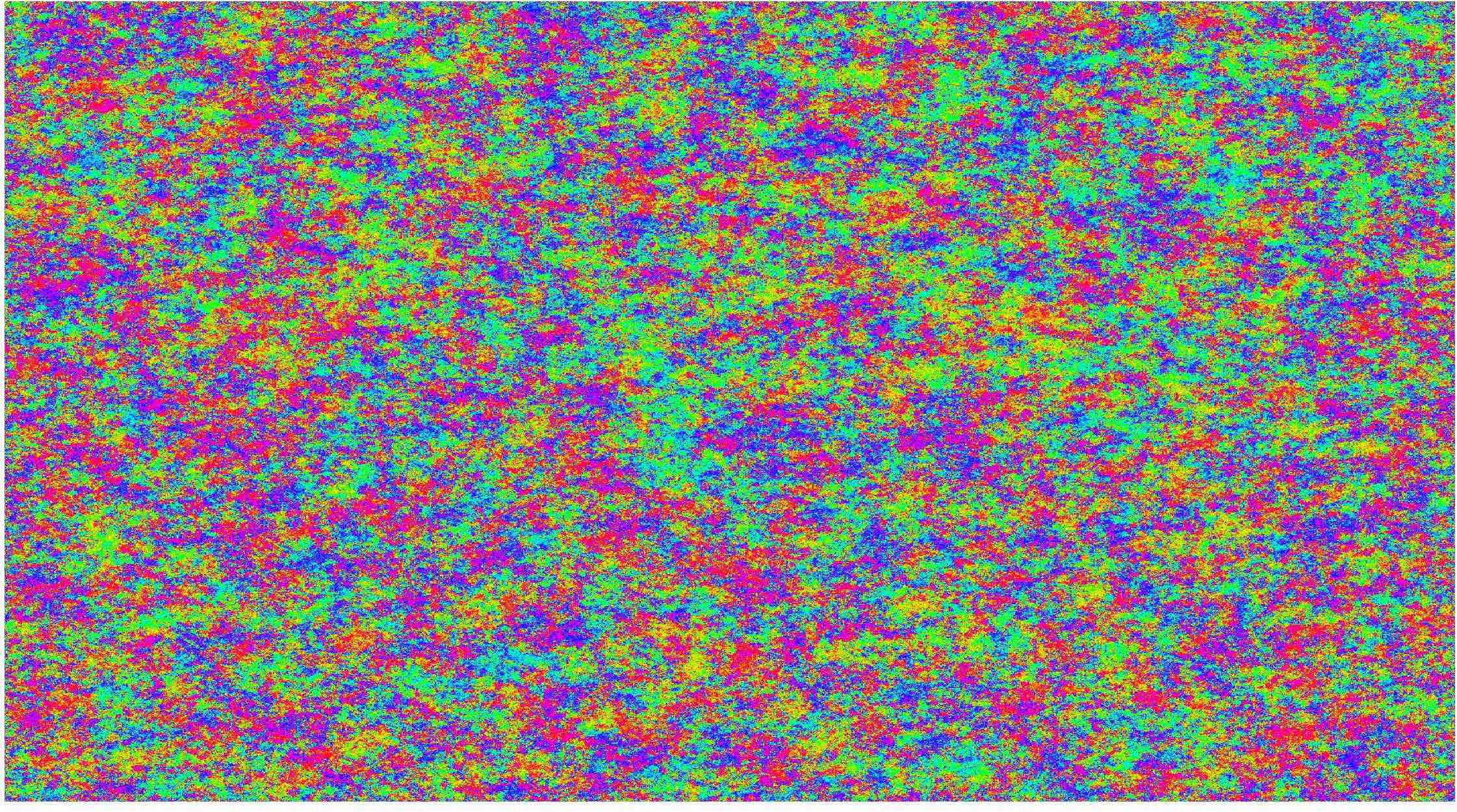}
			\includegraphics[width=0.48\textwidth]{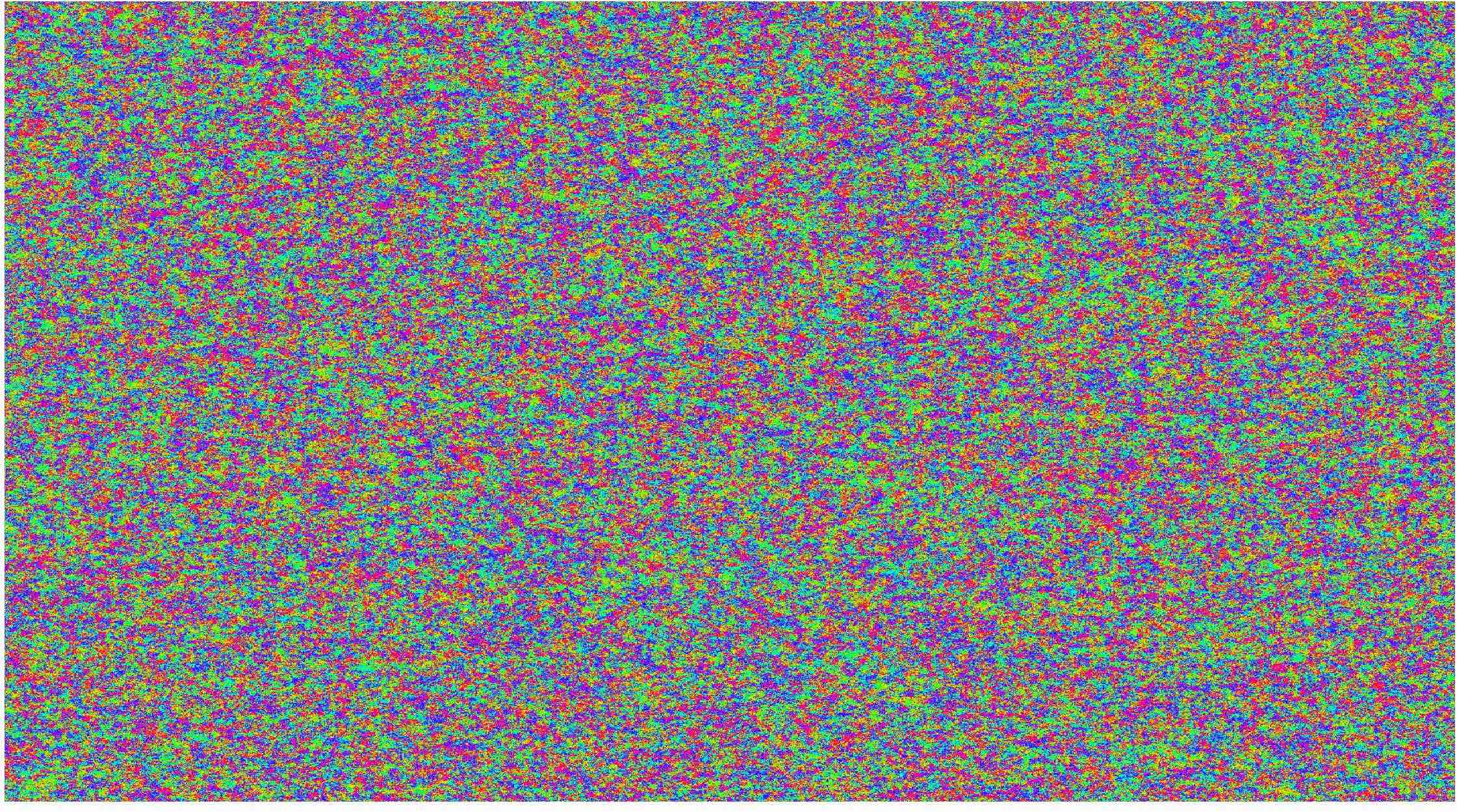}
			\caption{Angles of a massive GFF in $15000\times 15000$. From left to right, the mass is $5/15000$, $12.5/15000$ and $50/15000$.}
\end{figure}

\bigskip

\ni
{\em Proof of Theorem \ref{th.example}.}

\ni
\textbf{Step 1. Construction of the (coupled) graphs.}

Let us couple on the same probability space $m$-massive GFF $\phi^{(m)}: \Z^2 \to \R^N$ (recall definition \ref{d.GFF}) for all $m>0$ in such a way that  
for each $x\in \Z^2$ and each $1\leq i \leq N$, the process
\begin{align*}\label{}
m \mapsto [\phi^{(m)}_i(x)]^2
\end{align*}
is decreasing as a function of the mass $m$. This joint coupling follows from Lemma \ref{l.coupling}. 

For any fixed value of $\beta$, we define the sub-graph $G_m$ of $\Z^2$ to be 
\begin{align}\label{e.Gm}
G_m=G_m(\beta):= \{ x\in \Z^2, \| \phi^{(m)}(x) \|_2 > \sqrt{\beta} \}\,. 
\end{align}
From this definition, we readily obtain the following properties:
\bnum
\item The random graphs $G_m$ are translation-invariant and strongly mixing (see e.g. \cite{rosenblatt72} Thm 3.1 and discussion below that). 
\item By construction, it is clear also that $G_m \supset G_{m'}$ for any $m<m'$. 
\enum

\ni
\textbf{Step 2. Exponential decay for the $XY$ model on $G_m$.}

This property follows easily given our construction of $G_m$. Indeed, for any $x,y \in \Z^2$, we have thanks to Ginibre's inequality (\cite{Ginibre}) and Proposition \ref{pr.anglesGFF}:
\begin{align*}
\Eb{\<{\sigma_x \cdot \sigma_y}_{G_m, \beta} 1_{x,y \in G_m } }
& \leq \Eb{ \frac{\phi^{(m)}(x)} {\| \phi^{(m)}(x)\|} \cdot  \frac{\phi^{(m)}(y)} {\| \phi^{(m)}(y)\|}  1_{x,y \in G_m } } \\
& \leq \frac 1 {\beta} \Eb{ \phi^{(m)}(x) \cdot \phi^{(m)}(y)   1_{x,y \in G_m } }  \text{ (since $\|\phi^{(m)}\| > \sqrt{\beta}$ on $G_m$)} \\
& \leq \frac 1 {\beta} \Eb{ \phi^{(m)}(x) \cdot \phi^{(m)}(y)   } \\
& \leq \frac 1 {\beta} \exp(- C m \| x-y\|_2)\,.
\end{align*}
To conclude, we observe that 
\begin{align*}\label{}
\Eb{\<{\sigma_x \cdot \sigma_y}_{G_m, \beta} \md  x,y \in G_m  } &= \frac 1 {\Pb{x,y \in G_m}} \Eb{\<{\sigma_x \cdot \sigma_y}_{G_m, \beta} 1_{x,y \in G_m } }\,,
\end{align*}
and as we shall see below, $\Pb{x,y \in G_m} \to 1$ uniformly in $x,y$ as $m\to 0$. 

\bigskip
\ni
\textbf{Step 3. The graphs $G_m$ eventually cover the whole plane.}

This is again straight-forward from our construction. To show that a.s. $\bigcup_{m>0} G_m = \Z^2$, it is enough to show that for any fixed $x_0 \in \Z^2$, a.s. $x_0 \in \bigcup_{m > 0}G_m$. Now, $\{x_0 \notin G_m\} = \{\| \phi^{(m_k)} (x_0) \| \leq \sqrt{\beta}\}$. But the probability of this event is upper-bounded by 
\begin{align*}\label{}
O(1) \beta^{N/2} \log(1/m)^{-N/2}\,
\end{align*}
and hence goes to zero as $m \to 0$. As $G_m$ are increasing, we conclude that $\P(x_0 \in \bigcup_{m>0} G_m) = \lim_{m \to \infty} \P(x_0 \in G_m) = 1$.

\bigskip
Items $ii)$, $iii)$ and $iv)$ are the non-trivial part of the construction. Item $ii)$ has already been proved and was the main content of Theorem \ref{th.GFFmGFF}; 
$iv)$ will follow from a standard argument once we have established $iii)$.

\bigskip
\ni
\textbf{Step 4. Proof that $p_c(G_m) \to p_c(\Z^2)$.}

We can use  the same strategy as in Proposition \ref{pr.percolation_GFF} and Proposition \ref{pr.percolation_massive_GFF}. Indeed, we have that $p_c(G_m) > p_c(\Z^2)$. To prove that at any fixed $\beta$, we have $ p_c(G_m) \to p_c(\Z^2)$ as $m \to \infty$, we prove a dual statement. More precisely, it suffices to show that for every $p > p_c(\Z^2) = 1/2$, the random subsets $D_m$ given by 1-neighbourhoods of $G_m^c \cup \omega_{q}$, where $\omega_{q}$ is an independent Bernoulli percolation of parameter $q=1-p < 1/2$ is exponentially clustering. 

This follows exactly like in Proposition \ref{pr.percolation_GFF}, once we prove the following equivalent of Claim \ref{cl.openprob} (and of 	Claim \ref{cl.massive}):
\begin{claim}\label{cl.Perco}
For every $q < 1/2$ and every $\eps > 0$, there exists a scale $n=n(\beta,p,\eps)$ and a mass $\bar m=\bar m(\beta,p,\eps)$ such that for any $m\leq \bar m$, the following holds:
$$\P \otimes \P_q (\square_i \text{ is open}) = \P(\text{there is an open path in }D_m\text{ crossing }\An_i) < \eps.$$
Here the first probability measure samples the random graph $G_m$ (recall that its definition in ~\eqref{e.Gm} depends on the value of $\beta$ and the mass $m$) while the second one samples an independent percolation configuration $\omega_q$. 
\end{claim}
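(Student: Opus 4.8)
The plan is to reduce Claim~\ref{cl.Perco} to Claim~\ref{cl.massive} (equivalently, to the argument of Proposition~\ref{pr.percolation_massive_GFF}) by absorbing the extra Bernoulli layer $\omega_q$ into an inflated jump parameter. Write $\mathcal N_1(U)$ for the $1$-neighbourhood of a set $U$, so that $D_m=\mathcal N_1(G_m^c)\cup\mathcal N_1(\omega_q)$ with $G_m^c=\{x:\|\phi^{(m)}(x)\|_2^2\le\beta\}$. As in Proposition~\ref{pr.percolation_GFF}, via Theorem~\ref{th.isom1} and domination by the loop soup confined to $\An_i\cup\square_i$, the restriction of $G_m^c$ to that box behaves like $\{L_i^{(m)}\le\beta\}$ for a massive loop soup with zero boundary on the outer boundary; so the only genuinely new point compared with Proposition~\ref{pr.percolation_massive_GFF} is how to deal with $\omega_q$.

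First I would control the Bernoulli layer. Since $q<1/2=p_c(\Z^2)$ we are subcritical, and the cluster radius has an exponential tail (the strengthening of Theorem~\ref{th.sharp} coming from the same references): $\P(\mathcal C_v\text{ has diameter}\ge\ell)\le e^{-c_q\ell}$. Choosing $\ell=\ell(n):=\lceil(3/c_q)\log n\rceil$ and taking a union bound over the $O(n^2)$ vertices of $\An_i\cup\square_i$, the event $\mathcal G_n$ that every connected component of $\mathcal N_1(\omega_q)$ meeting $\An_i\cup\square_i$ has diameter at most $\ell$ has probability at least $1-\eps/3$ once $n$ is large (depending only on $q$ and $\eps$).

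Next comes the geometric reduction, which is the heart of the matter. On $\mathcal G_n$, I claim that a crossing $\gamma=(z_0,\dots,z_L)$ of $\An_i$ inside $D_m$ forces a $k$-jump crossing of a slightly smaller concentric annulus $\An_i'$ by $G_m^c$, with $k=k(n):=2\ell+6=O(\log n)$. Indeed, split the indices according to whether $z_j\in\mathcal N_1(G_m^c)$; along a maximal run $I_s=\{a_s,\dots,b_s\}$ of indices with $z_j\notin\mathcal N_1(G_m^c)$ one has $z_j\in\mathcal N_1(\omega_q)$ for every $j\in I_s$, and since consecutive $z_j$'s are adjacent and both lie in $\mathcal N_1(\omega_q)$, the whole run stays inside a single connected component of $\mathcal N_1(\omega_q)$, which on $\mathcal G_n$ has diameter $\le\ell$; hence the two $G_m^c$-neighbour sites $z_{a_s-1},z_{b_s+1}$ flanking $I_s$ are at distance $\le\ell+2$. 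Deleting all runs $I_s$ and replacing each surviving $z_j$ by a neighbour in $G_m^c$ then yields the asserted $k$-jump crossing of $\An_i'$ by $G_m^c$, the shrinking by $\ell$ and the slack in $k$ absorbing the boundary effects when $I_1\ni 0$ or $I_r\ni L$.

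It then remains to bound the probability that $G_m^c$ admits such a $k(n)$-jump crossing, which I would do exactly as in Claim~\ref{cl.massive}: loop-soup domination on $\An_i\cup\square_i$ together with the coupling of massive and non-massive loop soups (Lemma~\ref{l.coupling}) bounds it by $\eps/3$ (coupling failure on $\An_i\cup\square_i$) plus the probability that the non-massive exit set $A_{\sqrt\beta,\,k(n)}$ on $\An_i\cup\square_i$ reaches the inner square $\square_i$. Since $R=\sqrt\beta$ is fixed and $k(n)=O(\log n)$, so that $\log k(n)=O(\log\log n)$, the hypothesis $(R+\log(k+2))^6=o(\log n)$ of Theorem~\ref{th.ExitSet2} is met, this last probability tends to $0$, and hence is $<\eps/3$ for $n$ large; fixing such an $n$ and then $\bar m=\bar m(\beta,p,\eps)$ small enough that Lemma~\ref{l.coupling} applies on $\An_i\cup\square_i$ finishes the proof via $\P\otimes\P_q(\square_i\text{ open})\le\eps/3+\eps/3+\eps/3=\eps$. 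The delicate points are precisely the geometric reduction above and the bookkeeping that the inflated jump scale $k(n)=O(\log n)$ is still admissible in Theorem~\ref{th.ExitSet2}; the logarithmic cluster size of $\omega_q$ is exactly what forces $n$-dependent (though $\beta$-independent) jumps here, in the same spirit as the diverging jumps used for item $iii)$ of Theorem~\ref{th.example}.
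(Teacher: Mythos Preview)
Your strategy is exactly the paper's: control the subcritical Bernoulli clusters by $O(\log n)$ via sharpness (Theorem~\ref{th.sharp}) and then invoke Claim~\ref{cl.massive} with a logarithmically growing jump parameter. The paper's own proof consists of precisely these two sentences, with the geometric reduction left implicit; you attempt to spell it out, and your endgame (checking $(R+\log k)^6=o(\log n)$, choosing first $n$ and then $\bar m$ via Lemma~\ref{l.coupling}) is correct and matches the paper.

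There is, however, a genuine gap in your geometric reduction. You bound the diameter of clusters $\mathcal C_v$ of $\omega_q$ and then assert that this gives the event $\mathcal G_n$ that every connected component of the \emph{fattened} set $\mathcal N_1(\omega_q)$ has diameter at most $\ell$. These are not the same: for $q$ close to $1/2$ the set $\mathcal N_1(\omega_q)$ typically percolates, since a fixed vertex belongs to it with probability $1-(1-q)^{16}$, which already at $q=0.4$ exceeds $0.999$. So $\P(\mathcal G_n)\to 0$ rather than $1-\eps/3$, and your argument breaks exactly in the regime $p\searrow p_c(\Z^2)$ that item~$iii)$ of Theorem~\ref{th.example} is about. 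The paper sidesteps this by controlling clusters of $\omega_q$ itself (not of its fattening), and the reduction is cleanest at the level of bad \emph{edges} (edges closed in the $p$-percolation or incident to $G_m^c$): along an edge-path of bad edges, any maximal run of $\omega_q$-open edges lies in a single $\omega_q$-cluster, because consecutive edges in an edge-path share a vertex; the flanking edges then supply $G_m^c$-vertices at mutual distance $\le C\log n + O(1)$. With the geometric step reformulated this way, the rest of your argument goes through unchanged.
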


{\em Proof of Claim \ref{cl.Perco}.} 
First, using Theorem \ref{th.sharp}, as $q < 1/2$, we have that there exists $C=C(p)>0$ such that for all $n$ large enough.
\begin{align*}\label{}
\FK{q}{}{\text{ some cluster of }\omega_q\text{ in $\An_i$ has diameter larger than }C \log n} < \eps/2\,.
\end{align*}
But now we can just use Claim \ref{cl.massive} choosing $k(n) = C\log n$.
\qed

\bigskip
\ni
\textbf{Step 5. Long-range order for the Ising model $G_m$ at $\beta^{Ising}=1$.}

Finally, we show that one can choose $m$ sufficiently small so that the Ising model on $G_m$ will a.s. have long-range order already at $\beta^{Ising}=1$. (And therefore, $\beta_c^{Ising}(G_m) \leq 1$ a.s.) 
 
Let us consider the infinite volume FK-Ising percolation on $G_m$ with free boundary conditions at infinity (The FK measure on $G_m$ should be unique anyway, but we do not know it yet at this stage) and with parameter $p(\beta^{Ising}):=1-e^{-2 \beta^{Ising}}$. This measure is well defined by standard monotony arguments.
We will use the well-known fact that this measure  stochastically dominates a standard edge percolation on $G_m$ with intensity 
\begin{align}\label{e.pFK}
p= \frac{p(\beta^{Ising})} {p^{\beta^{Ising}} + (1-p(\beta^{Ising}) ) 2} =  \frac{1-e^{- 2 \beta^{Ising}}}
{1+e^{-2 \beta^{Ising}}} \,.
\end{align}
(This can be seen for example by sampling edges one at a time). In compact notations, this means 
\begin{align*}\label{}
\P_p \preccurlyeq \P_{p(\beta), q=2}^{\mathrm{FK}}\,.
\end{align*}
The important fact for us here is that this ratio in~\eqref{e.pFK} is stricly larger than $p_c(\Z^2)=1/2$ for any $\beta^{Ising}\geq 1$ (In fact for any $\beta^{Ising}\geq \frac {\log 3} 2 \approx 0.55$). 
So in particular at $\beta^{Ising}=1$, FK-Ising percolation with free boundary conditions at infinity on $G_m$ stochastically dominates a supercritical $p$-percolation on $G_m$ with $p>1/2$. 

By the previous step (step 4) (and the proof of Theorem \ref{th.GFFmGFF}), we therefore obtain that   this FK-percolation on $G_m$ stochastically dominates an ergodic process with a unique strongly percolating cluster (in the sense that its complement has exponentially decaying components).  This implies the existence of a unique infinite strongly percolating FK-Ising cluster which ends our proof. 
\qed

\subsection*{Acknowledgments.}
We wish to thank Sébastien Ott and Alain-Sol Sznitman for very interesting and useful discussions. The research of J.A is supported by the Eccellenza grant 194648 of the Swiss National Science Foundation; J.A is also part of NCCR SwissMAP. The research of C.G. is supported by the Institut Universitaire de France (IUF) and the French ANR grant ANR-21-CE40-0003. The research of A.S is supported by Grant ANID AFB170001 and FONDECYT iniciación de investigación N° 11200085.
\bibliographystyle{alpha}
\bibliography{biblio}

\newcommand{\etalchar}[1]{$^{#1}$}
\begin{thebibliography}{DCGR{\etalchar{+}}20}

\bibitem[AB87]{aizenman1987sharpness}
Michael Aizenman and David~J Barsky.
\newblock Sharpness of the phase transition in percolation models.
\newblock {\em Communications in Mathematical Physics}, 108(3):489--526, 1987.

\bibitem[AHPS21]{aizenman2021depinning}
Michael Aizenman, Matan Harel, Ron Peled, and Jacob Shapiro.
\newblock Depinning in the integer-valued gaussian field and the bkt phase of
  the 2d villain model.
\newblock {\em arXiv preprint arXiv:2110.09498}, 2021.

\bibitem[Aiz94]{aizenman1994slow}
Michael Aizenman.
\newblock On the slow decay of ${O}(2)$ correlations in the absence of
  topological excitations: Remark on the {P}atrascioiu-{S}eiler model.
\newblock {\em Journal of Statistical Physics}, 77(1):351--359, 1994.

\bibitem[ALS19]{ALS1}
Juhan Aru, Titus Lupu, and Avelio Sep{\'u}lveda.
\newblock The first passage sets of the {2D G}aussian free field.
\newblock {\em Probab. Theory Related Fields}, 2019.
\newblock https://doi.org/10.1007/s00440-019-00941-1.

\bibitem[ALS20]{ALS2}
Juhan Aru, Titus Lupu, and Avelio Sep{\'u}lveda.
\newblock The first passage sets of the 2d gaussian free field: convergence and
  isomorphisms.
\newblock {\em Communications in Mathematical Physics}, 375(3):1885--1929,
  2020.

\bibitem[Aru15]{Aru}
Juhan Aru.
\newblock {\em The geometry of the Gaussian free field combined with {SLE
  processes and the KPZ} relation}.
\newblock PhD thesis, Ecole {N}ormale {S}up{\'e}rieure de {L}yon, 2015.

\bibitem[AS18]{AS2}
Juhan Aru and Avelio Sep{\'u}lveda.
\newblock Two-valued local sets of the {2D} continuum {G}aussian free field:
  connectivity, labels, and induced metrics.
\newblock {\em Electron. J. Probab.}, 23(61), 2018.

\bibitem[ASW17]{ASW}
Juhan Aru, Avelio Sep{\'u}lveda, and Wendelin Werner.
\newblock On bounded-type thin local sets of the two-dimensional {G}aussian
  free field.
\newblock {\em J. Inst. Math. Jussieu}, pages 1--28, 2017.

\bibitem[ASW19]{aru2019bounded}
Juhan Aru, Avelio Sep{\'u}lveda, and Wendelin Werner.
\newblock On bounded-type thin local sets of the two-dimensional gaussian free
  field.
\newblock {\em Journal of the Institute of Mathematics of Jussieu},
  18(3):591--618, 2019.

\bibitem[BDZ95]{bolthausen1995entropic}
Erwin Bolthausen, Jean-Dominique Deuschel, and Ofer Zeitouni.
\newblock Entropic repulsion of the lattice free field.
\newblock {\em Communications in mathematical physics}, 170(2):417--443, 1995.

\bibitem[BHS21]{bauerschmidt2021geometry}
Roland Bauerschmidt, Tyler Helmuth, and Andrew Swan.
\newblock The geometry of random walk isomorphism theorems.
\newblock In {\em Annales de l'Institut Henri Poincar{\'e}, Probabilit{\'e}s et
  Statistiques}, volume~57, pages 408--454. Institut Henri Poincar{\'e}, 2021.

\bibitem[BYB73]{BerBlank}
VL~Berezinskii and A~Ya~Blank.
\newblock Thermodynamics of layered isotropic magnets at low temperatures.
\newblock {\em Zh Eksp Teor Fiz}, 64:725--740, 1973.

\bibitem[Cam13]{camiamass}
Federico Camia.
\newblock Off-criticality and the massive brownian loop soup, 2013.

\bibitem[CC98]{campbell1998isotropic}
M~Campbell and L~Chayes.
\newblock The isotropic ${O}(3)$ model and the {W}olff representation.
\newblock {\em Journal of Physics-London-A mathematical and general},
  31:L255--L260, 1998.

\bibitem[DC22]{duminil2022100}
Hugo Duminil-Copin.
\newblock 100 years of the (critical) ising model on the hypercubic lattice.
\newblock {\em arXiv preprint arXiv:2208.00864}, 2022.

\bibitem[DCGR{\etalchar{+}}20]{duminil2020existence}
Hugo Duminil-Copin, Subhajit Goswami, Aran Raoufi, Franco Severo, and Ariel
  Yadin.
\newblock Existence of phase transition for percolation using the gaussian free
  field.
\newblock {\em Duke Mathematical Journal}, 169(18):3539--3563, 2020.

\bibitem[DCT17]{duminil2017new}
Hugo Duminil-Copin and Vincent Tassion.
\newblock A new proof of the sharpness of the phase transition for bernoulli
  percolation on $\mathbb {Z}^d$.
\newblock {\em L'{E}nseignement math{\'e}matique}, 62(1):199--206, 2017.

\bibitem[DF22]{dubedat2022random}
Julien Dub{\'e}dat and Hugo Falconet.
\newblock Random clusters in the villain and xy models.
\newblock {\em arXiv preprint arXiv:2210.03620}, 2022.

\bibitem[Dub09]{dubedat2009sle}
Julien Dub{\'e}dat.
\newblock {SLE} and the free field: partition functions and couplings.
\newblock {\em Journal of the American Mathematical Society}, 22(4):995--1054,
  2009.

\bibitem[DWW22]{ding2022crossing}
Jian Ding, Mateo Wirth, and Hao Wu.
\newblock Crossing estimates from metric graph and discrete gff.
\newblock In {\em Annales de l'Institut Henri Poincar{\'e}, Probabilit{\'e}s et
  Statistiques}, volume~58, pages 1740--1774. Institut Henri Poincar{\'e},
  2022.

\bibitem[FL78]{FrohlichLieb1978}
J{\"u}rg Fr{\"o}hlich and Elliott~H Lieb.
\newblock Phase transitions in anisotropic lattice spin systems.
\newblock In {\em Statistical Mechanics}, pages 127--161. Springer, 1978.

\bibitem[FS81]{FS}
J{\"u}rg Fr{\"o}hlich and Thomas Spencer.
\newblock The {Kosterlitz-Thouless transition in two-dimensional abelian spin
  systems and the Coulomb gas}.
\newblock {\em Communications in Mathematical Physics}, 81(4):527--602, 1981.

\bibitem[FSS76]{FSS1976}
J{\"u}rg Fr{\"o}hlich, Barry Simon, and Thomas Spencer.
\newblock Infrared bounds, phase transitions and continuous symmetry breaking.
\newblock {\em Communications in Mathematical Physics}, 50(1):79--95, 1976.

\bibitem[FV17a]{friedli2017statistical}
Sacha Friedli and Yvan Velenik.
\newblock {\em Statistical mechanics of lattice systems: a concrete
  mathematical introduction}.
\newblock Cambridge University Press, 2017.

\bibitem[FV17b]{velenikBook}
Sacha Friedli and Yvan Velenik.
\newblock {\em Statistical mechanics of lattice systems: a concrete
  mathematical introduction}.
\newblock Cambridge University Press, 2017.

\bibitem[Gin70]{Ginibre}
Jean Ginibre.
\newblock General formulation of griffiths' inequalities.
\newblock {\em Communications in mathematical physics}, 16(4):310--328, 1970.

\bibitem[GS20]{GS2}
Christophe Garban and Avelio Sep{\'u}lveda.
\newblock {Quantitative bounds on vortex fluctuations in $2 d $ Coulomb gas and
  maximum of the integer-valued Gaussian free field}.
\newblock {\em arXiv preprint arXiv:2012.01400}, 2020.

\bibitem[GS22]{garban2022continuous}
Christophe Garban and Thomas Spencer.
\newblock Continuous symmetry breaking along the {N}ishimori line.
\newblock {\em Journal of Mathematical Physics}, 63(9):093302, 2022.

\bibitem[Hei49]{heilbronn1949discrete}
Hans~A Heilbronn.
\newblock On discrete harmonic functions.
\newblock In {\em Mathematical Proceedings of the Cambridge Philosophical
  Society}, volume~45, pages 194--206. Cambridge University Press, 1949.

\bibitem[Hol74]{holley1974remarks}
Richard Holley.
\newblock Remarks on the fkg inequalities.
\newblock {\em Communications in Mathematical Physics}, 36(3):227--231, 1974.

\bibitem[LJ11]{LeJan2011Loops}
Yves Le~Jan.
\newblock Markov paths, loops and fields.
\newblock In {\em 2008 St-Flour summer school}, volume 2026 of {\em Lecture
  Notes in Math.} Springer, 2011.

\bibitem[LL10]{LawlerLimic2010RW}
Gregory Lawler and Vlada Limic.
\newblock {\em Random walk: a modern introduction}, volume 123 of {\em
  Cambridge Stud. Adv. Math.}
\newblock Cambridge University Press, 2010.

\bibitem[Lup16]{lupu2016loop}
Titus Lupu.
\newblock From loop clusters and random interlacements to the free field.
\newblock {\em The Annals of Probability}, 44(3):2117--2146, 2016.

\bibitem[LW16a]{LW}
Titus Lupu and Wendelin Werner.
\newblock A note on ising random currents, ising-fk, loop-soups and the
  gaussian free field.
\newblock {\em Electronic Communications in Probability}, 21:1--7, 2016.

\bibitem[LW16b]{LupuWerner2016Levy}
Titus Lupu and Wendelin Werner.
\newblock The random pseudo-metric on a graph defined via the zero-set of the
  {G}aussian free field on its metric graph.
\newblock {\em Probab. Theory Related Fields}, pages 1--44, 2016.

\bibitem[Men86]{menshikov1986coincidence}
Mikhail~V Menshikov.
\newblock Coincidence of critical points in percolation problems.
\newblock In {\em Soviet Mathematics Doklady}, volume~33, pages 856--859, 1986.

\bibitem[Mer67]{mermin1967absence}
N~David Mermin.
\newblock Absence of ordering in certain classical systems.
\newblock {\em Journal of Mathematical Physics}, 8(5):1061--1064, 1967.

\bibitem[MS77]{McBryanSpencer}
Oliver~A McBryan and Thomas Spencer.
\newblock On the decay of correlations in $so(n)$-symmetric ferromagnets.
\newblock {\em Communications in Mathematical Physics}, 53(3):299--302, 1977.

\bibitem[MS16]{MS1}
Jason Miller and Scott Sheffield.
\newblock Imaginary geometry {I}: interacting {SLE}s.
\newblock {\em Probab. Theory Related Fields}, 164(3-4):553--705, 2016.

\bibitem[MW66]{mermin1966absence}
N~David Mermin and Herbert Wagner.
\newblock Absence of ferromagnetism or antiferromagnetism in one-or
  two-dimensional isotropic heisenberg models.
\newblock {\em Physical Review Letters}, 17(22):1133, 1966.

\bibitem[Pat00]{Patriascioiu}
A.~Patrascioiu.
\newblock Existence of algebraic decay in nonabelian ferromagnets, 2000.

\bibitem[Pei33]{peierls1933theorie}
Rudolph Peierls.
\newblock Zur theorie des diamagnetismus von leitungselektronen.
\newblock {\em Zeitschrift f{\"u}r Physik}, 80(11):763--791, 1933.

\bibitem[Pol75]{polyakov1975interaction}
Alexander~M Polyakov.
\newblock Interaction of goldstone particles in two dimensions. applications to
  ferromagnets and massive yang-mills fields.
\newblock {\em Physics Letters B}, 59(1):79--81, 1975.

\bibitem[PS92]{patrascioiu1992phase}
A~Patrascioiu and E~Seiler.
\newblock Phase structure of two-dimensional spin models and percolation.
\newblock {\em Journal of statistical physics}, 69(3):573--595, 1992.

\bibitem[PS93]{patrascioiu1993percolation}
A~Patrascioiu and E~Seiler.
\newblock Percolation theory and the existence of a soft phase in 2d spin
  models.
\newblock {\em Nuclear Physics B-Proceedings Supplements}, 30:184--191, 1993.

\bibitem[PS02]{patrascioiu2002percolation}
Adrian Patrascioiu and Erhard Seiler.
\newblock Percolation and the existence of a soft phase in the classical
  heisenberg model.
\newblock {\em Journal of statistical physics}, 106(3):811--826, 2002.

\bibitem[PS19]{peled2019lectures}
Ron Peled and Yinon Spinka.
\newblock Lectures on the spin and loop o (n) models.
\newblock In {\em Sojourns in probability theory and statistical physics-i},
  pages 246--320. Springer, 2019.

\bibitem[PW17]{PW}
Ellen Powell and Hao Wu.
\newblock Level lines of the gaussian free field with general boundary data.
\newblock 53(4):2229--2259, 2017.

\bibitem[R{\etalchar{+}}72]{rosenblatt72}
Murray Rosenblatt et~al.
\newblock Central limit theorem for stationary processes.
\newblock In {\em Proceedings of the sixth Berkeley symposium on probability
  and statistics}, volume~2, pages 551--561, 1972.

\bibitem[Rod14]{PFSPA}
Pierre-François Rodriguez.
\newblock Level set percolation for random interlacements and the gaussian free
  field.
\newblock {\em Stochastic Processes and their Applications}, 124(4):1469--1502,
  2014.

\bibitem[Rod17]{PFFKG}
Pierre-Fran{\c{c}}ois Rodriguez.
\newblock A 0--1 law for the massive gaussian free field.
\newblock {\em Probability Theory and Related Fields}, 169(3):901--930, 2017.

\bibitem[Rus78]{russo1978note}
Lucio Russo.
\newblock A note on percolation.
\newblock {\em Zeitschrift f{\"u}r Wahrscheinlichkeitstheorie und verwandte
  Gebiete}, 43(1):39--48, 1978.

\bibitem[Sei03]{seiler2003case}
Erhard Seiler.
\newblock The case against asymptotic freedom.
\newblock {\em arXiv preprint hep-th/0312015}, 2003.

\bibitem[She05]{sheffield2005random}
Scott Sheffield.
\newblock {\em Random surfaces}.
\newblock Soci{\'e}t{\'e} math{\'e}matique de France Paris, 2005.

\bibitem[Sim84]{simon1984fifteen}
Barry Simon.
\newblock Fifteen problems in mathematical physics.
\newblock {\em Perspectives in mathematics, Birkh{\"a}user, Basel}, 423, 1984.

\bibitem[SS09]{schramm2009contour}
Oded Schramm and Scott Sheffield.
\newblock Contour lines of the two-dimensional discrete {G}aussian free field.
\newblock {\em Acta mathematica}, 202(1):21--137, 2009.

\bibitem[SS13]{schramm2013contour}
Oded Schramm and Scott Sheffield.
\newblock A contour line of the continuum gaussian free field.
\newblock {\em Probability Theory and Related Fields}, 157(1):47--80, 2013.

\bibitem[SSV22]{schoug2022dimensions}
Lukas Schoug, Avelio Sep{\'u}lveda, and Fredrik Viklund.
\newblock Dimensions of two-valued sets via imaginary chaos.
\newblock {\em International Mathematics Research Notices}, 2022(5):3219--3261,
  2022.

\bibitem[Szn12]{Sznitmantopics}
Alain-Sol Sznitman.
\newblock {\em Topics in occupation times and Gaussian free fields}, volume~16.
\newblock European Mathematical Society, 2012.

\bibitem[vEL21]{van2021elementary}
Diederik van Engelenburg and Marcin Lis.
\newblock An elementary proof of phase transition in the planar xy model.
\newblock {\em arXiv preprint arXiv:2110.09465}, 2021.

\bibitem[WP20]{WP}
Wendelin Werner and Ellen Powell.
\newblock Lecture notes on the {Gaussian free field}.
\newblock {\em arXiv preprint arXiv:2004.04720}, 2020.

\bibitem[WW16]{WaWu}
Menglu Wang and Hao Wu.
\newblock Level lines of {G}aussian free field {I}: zero-boundary {GFF}.
\newblock {\em Stochastic Process. Appl.}, 2016.

\bibitem[Zha18]{zhai}
Alex Zhai.
\newblock {Exponential concentration of cover times}.
\newblock {\em Electronic Journal of Probability}, 23:1 -- 22, 2018.

\end{thebibliography}

\end{document}